\documentclass[reqno,a4paper,12pt]{amsbook}
\usepackage{amsmath,amsthm,amsfonts,amssymb}
\usepackage{mathrsfs,dsfont}

\renewcommand\labelenumi{\theenumi}

\addtolength{\hoffset}{-1cm}
\addtolength{\textwidth}{2cm}
\addtolength{\textheight}{-0.5cm}
\parskip .04in

\allowdisplaybreaks 

\def\C{\mathbb{C}}
\def\G{\mathcal{G}}
\def\K{\mathcal{K}}

\def\R{\mathbb{R}}
\def\S{\mathcal{S}}
\def\T{\mathbb{T}}
\def\X{\mathscr{X}}
\def\Z{\mathbb{Z}}
\def\1{\mathds{1}}
\def\eps{\varepsilon}
\def\U{\mathscr{U}}

\newcommand\ft[1]{\widehat{#1}}
\newcommand\dotprod[2]{\langle #1 , #2 \rangle}
\newcommand\var{\operatorname{\mathscr{V}}}

\newcommand\supp{\operatorname{supp}}
\newcommand\prob{\operatorname{\mathscr{P}}}
\newcommand\E{\operatorname{\mathscr{E}}}

\newcommand\lip{\operatorname{Lip}}

\newcommand\deltatwo{\mathit{\Delta}_2}
\newcommand\lowers{\frac1{4}}
\newcommand\uppers{\frac1{3}}

\renewcommand\leq{\leqslant}
\renewcommand\geq{\geqslant}

\theoremstyle{plain}
\newtheorem{theorem}{Theorem}[chapter]
\newtheorem*{theorem*}{Theorem}
\newtheorem{lemma}{Lemma}[chapter]

\newtheorem{proposition}[lemma]{Proposition}
\newtheorem{corollary}[lemma]{Corollary}

\theoremstyle{remark}
\newtheorem*{remark}{Remark}

\newtheorem*{remarks*}{Remarks}

\newenvironment{remarks}
{\begin{remarks*}
\begin{enumerate}
\addtolength{\itemsep}{2pt}
\renewcommand\theenumi{\arabic{enumi}.}}
{\end{enumerate}
\end{remarks*}}

\newenvironment{enumerate-math}
{\begin{enumerate}
\addtolength{\itemsep}{5pt}
\renewcommand\theenumi{(\roman{enumi})}}
{\end{enumerate}}

\newenvironment{enumerate-math-abc}
{\begin{enumerate}
\addtolength{\itemsep}{5pt}
\renewcommand\theenumi{(\alph{enumi})}}
{\end{enumerate}}

\makeatletter
\newcommand\@dotsep{4.5}
\def\@tocline#1#2#3#4#5#6#7{\relax
  \ifnum #1>\c@tocdepth 
  \else
    \par \addpenalty\@secpenalty\addvspace{#2}%
    \begingroup \hyphenpenalty\@M
    \@ifempty{#4}{%
      \@tempdima\csname r@tocindent\number#1\endcsname\relax
    }{%
      \@tempdima#4\relax
    }%
    \parindent\z@ \leftskip#3\relax \advance\leftskip\@tempdima\relax
    \rightskip\@pnumwidth plus1em \parfillskip-\@pnumwidth
    #5\leavevmode\hskip-\@tempdima #6\relax
    \leaders\hbox{$\m@th
      \mkern \@dotsep mu\hbox{.}\mkern \@dotsep mu$}\hfill
    \hbox to\@pnumwidth{\@tocpagenum{#7}}\par
    \nobreak
    \endgroup
  \fi}
\makeatother

\begin{document}

\frontmatter

\begin{titlepage}
\begin{center}
\begin{large}
Tel-Aviv University
\par\vspace{0.25cm}
School of Mathematical Sciences
\par\vspace{2.5cm}
\textbf{\LARGE Piatetski-Shapiro's phenomenon}
\par\vspace{0.25cm}
\textbf{\LARGE and related problems}
\par\vspace{2cm}
Thesis submitted for the degree ``Doctor of Philosophy''
\par\vspace{0.25cm}
by
\par\vspace{0.25cm}
\textbf{Nir Lev}
\par\vspace{2.5cm}
Prepared under the supervision of
\par\vspace{0.25cm}
Prof. Alexander Olevskii
\par\vspace{2cm}
Submitted to the senate of Tel-Aviv University
\par\vspace{0.25cm}
August 2008
\end{large}
\end{center}
\end{titlepage}

\mainmatter

\setcounter{page}{2}

\thispagestyle{empty}
\vspace*{2.5cm}
\begin{center}
\begin{large}
This Ph.D. thesis was prepared under the supervision of
\par\vspace{0.25cm}
\textbf{Professor Alexander Olevskii}
\end{large}
\end{center}

\chapter*{Acknowledgments}

I wish to express my sincere gratitude to my supervisor Professor Alexander
Olevskii, who introduced me into mathematical research, and who has been for me
a role model of a mathematician and a teacher. I am indebted to him for his
guidance and support, and consider it a privilege to having been his student.

I am deeply grateful to Professor Mikhail Sodin for his kind help and constant
support, and for providing me with invaluable advice.

I also wish to thank my friend Ron Peled, for his persistent willingness to encourage
and help, and for stimulating mathematical discussions.


\tableofcontents


\chapter*{Introduction}

This thesis is concerned with some problems in two areas of Fourier Analysis:
uniqueness theory of trigonometric expansions, and the theory of translation
invariant subspaces in function spaces.

Our main result in the first area extends to $\ell_q$ spaces ($q > 2$) a deep
phenomenon found by Piatetski-Shapiro in 1954 for the space $c_0$.

The approach we developed also enabled us to get a result in the second mentioned
area, which a priori does not look connected with the first one. The result (maybe,
a bit surprising) is: one cannot characterize the functions in $\ell_p(\Z)$ or
$L^p(\R)$, $1 < p < 2$, whose translates span the whole space, by the zero set of
their Fourier transform. This should be contrasted against the classical Wiener
theorems related to the cases $p=1,2$.

Below we give a detailed exposition of the main results proved in this thesis.


\section{Piatetski-Shapiro's Phenomenon}

\subsection*{Background}

The theory of uniqueness deals with the question: if a function admits a representation by
a trigonometric series, $f(t) = \sum_{-\infty}^{+\infty} c_n \, e^{int}$, is this representation
unique? The answer depends on the sense in which the convergence of the series is understood.
It was proved by Cantor that if the series converges at every point then the coefficients
$\{c_n\}$ are uniquely defined. On the other hand, for convergence \emph{almost everywhere}
the uniqueness result fails. This was proved by Menshov who constructed a trigonometric
series which converges to zero almost everywhere, but which is not identically zero.

A central concept in this subject is the notion of a \emph{set of uniqueness}. A set $E$ of
measure zero is called a set of uniqueness, if the only trigonometric series which converges
to zero everywhere outside $E$ is the series which is identically zero. Thus if a function
admits a representation by a trigonometric series which converges everywhere outside a set of
uniqueness, then the coefficients of the series are uniquely defined. A set $E$ which is not
a set of uniqueness, is called a \emph{set of multiplicity}. The problem to determine which sets
$E$ of measure zero are sets of uniqueness and which are sets of multiplicity, is very difficult.
It is well-known that a crucial role is played by the arithmetic properties of the set
\cite{bary, zygmund}.

If the set $E$ is \emph{closed} then the following criterion is available: $E$ is a set of
multiplicity if and only if there is a non-zero Schwartz distribution on the circle, which is
supported by $E$, and has Fourier coefficients tending to zero \cite{kahane-salem}. Unfortunately,
this criterion is still difficult to check. However, it is common to use this criterion in order
to show that a given set $E$ is a set of multiplicity, by constructing a 
\emph{measure} which is supported by $E$ and which has Fourier coefficients tending
to zero. The existence of such a measure is a special property of a set of multiplicity, and
in such a case $E$ is called a \emph{set of restricted multiplicity}.

In fact, most known sets of multiplicity are sets of restricted multiplicity,
and it was long believed that the two notions actually coincide. However, in 1954
Piatetski-Shapiro \cite{piatetskii-2} refuted this hypothesis by constructing an example of
a closed set of multiplicity $E$ such that no measure supported by $E$ can have Fourier
coefficients tending to zero. This subject was further developed by K\"{o}rner
\cite{korner:pseudofunc} and Kaufman \cite{kaufman:msets}. During the 1980's, a strong
structural difference between the two notions was found using methods of descriptive
set theory \cite{kechris-louveau}.

\subsection*{Results}
We investigate the following problem. Suppose that $E$ is a closed set on the circle,
which supports a Schwartz distribution $S \sim \sum c_n e^{int}$ whose Fourier coefficients
$\{c_n\}$ belong to a certain space $\X$. Must then $E$ also support a \emph{measure} $\mu$
whose Fourier coefficients belong to the same space $\X$\,?
We say that \emph{Piatetski-Shapiro's phenomenon exists in the space $\X$} if the answer
is negative; that is, if there is a closed set $E$ which supports a distribution with Fourier
coefficients in $\X$, but which does not support such a measure. The theorem of Piatetski-Shapiro
thus states that Piatetski-Shapiro's phenomenon exists in the space $c_0$ of sequences tending
to zero. On the other hand, it is known from potential theory that this is not the case in certain
weighted $\ell_2$ spaces. Precisely, if a closed set $E$ supports a distribution
$S \sim \sum c_n e^{int}$ such that $\sum |n|^{- \alpha} |c_n|^2 < \infty$, $0 < \alpha \leq 1$,
then it also supports a positive measure satisfying this property \cite{kahane-salem}.

What can be said about $\ell_q$ spaces? Only the case $q > 2$ is non-trivial, since only
in this case there exist distributions with coefficients in $\ell_q$ which are not measures.
In our joint work with A. Olevskii \cite{lev-olevskii:piatetski} we proved that Piatetski-Shapiro's
phenomenon does exist in these spaces. That is,

\begin{theorem*}
For any $q > 2$ there is a closed set $E$ such that:
\begin{enumerate-math}
\item
There is a distribution $S \sim \sum c_n e^{int}$ supported by $E$, with
$\sum |c_n|^q < \infty$.
\item
No measure $\mu$ may satisfy this property.
\end{enumerate-math}
\end{theorem*}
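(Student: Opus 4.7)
The plan is to construct $E$ by a Cantor-type procedure, together with an explicit distribution supported on $E$ whose Fourier coefficients lie in $\ell_q$, while ensuring simultaneously that no measure carried by $E$ has Fourier coefficients in $\ell_q$. Concretely, I would fix a rapidly lacunary sequence $\{N_k\}$ of integers and inductively define a decreasing family $E_0 \supseteq E_1 \supseteq \cdots$ of finite unions of intervals, setting $E = \bigcap_k E_k$. At stage $k$, within each surviving interval of $E_{k-1}$ one places a carefully chosen configuration of shorter subintervals of length roughly $1/N_k$, positioned so as to impose strong arithmetic constraints---for instance, so that the dilations $N_k E$ miss certain intervals in $\T$, or so that differences of centers avoid prescribed residue classes modulo $N_k$. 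The precise placement may need to be randomized.

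For part (i), I would take $S$ to be the weak-$*$ limit of trigonometric polynomials $P_k$ approximately supported on $E_k$. A natural model is a Riesz-product-like expression
\[ P_k = \prod_{j \leq k} \bigl( 1 + \alpha_j \cos(N_j t - \theta_j) \bigr), \]
with $\alpha_j \uparrow 1$ slowly enough. The multiplicative structure makes $\widehat{P_k}$ explicitly computable as a signed sum over $\pm 1$-sequences, and the lacunarity of $\{N_j\}$ guarantees near-disjointness of frequency contributions. Since $q > 2$, the parameters can be tuned so that $\|\widehat{P_k}\|_{\ell_q}$ stays uniformly bounded, while $\|\widehat{P_k}\|_{\ell_1}$ diverges, preventing the weak-$*$ limit from being a measure.

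Part (ii) is the principal difficulty. Suppose $\mu$ is a measure on $E$ with $\widehat{\mu} \in \ell_q$. For every trigonometric polynomial $\varphi$, Parseval's identity and H\"older's inequality give
\[ \Bigl| \int \varphi \, d\mu \Bigr| \leq \|\widehat{\varphi}\|_{\ell_{q'}} \, \|\widehat{\mu}\|_{\ell_q}, \qquad q' = \tfrac{q}{q-1} \in (1,2). \]
The goal is to exhibit, for each $R$, a test polynomial $\varphi_R$ with $\|\widehat{\varphi_R}\|_{\ell_{q'}} \leq 1$ yet $\int \varphi_R \, d|\mu| \geq c_R \|\mu\|$, where $c_R \to \infty$, forcing $\|\widehat{\mu}\|_{\ell_q} = \infty$ and contradicting the assumption.

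The main obstacle lies in constructing these test functions. They must be large on $E$ but have small $\ell_{q'}$ Fourier norm---a quantitative refinement of the Helson-type arguments which show that certain sets support no measure with $c_0$ Fourier coefficients. I expect this to require a probabilistic argument sharpening the techniques of K\"orner and Kaufman, carefully tuning the random selection in the set construction to yield $\ell_{q'}$ (rather than merely uniform) estimates on the test functions. Balancing the parameters so that the construction simultaneously witnesses (i) and (ii) will be the most delicate part of the proof.
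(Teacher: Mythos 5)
Your overall architecture (Riesz products for the distribution, a duality/test-function argument against measures, probabilistic concentration in the spirit of K\"orner--Kaufman) is the right one, and it is essentially the shape of the paper's proof. But the two steps you defer are precisely the content of the theorem, and the concrete choices you do make would fail. First, for part (i): a Riesz product $\prod_{j\le k}(1+\alpha_j\cos(N_jt-\theta_j))$ with $\alpha_j\uparrow 1$ has Fourier coefficients $\prod_j(\alpha_j/2)^{|\epsilon_j|}$, so $\|\widehat{P_k}\|_{\ell_q}^q \asymp \prod_{j\le k}(1+2(\alpha_j/2)^q)$, which \emph{diverges} unless $\sum\alpha_j^q<\infty$; you cannot have $\alpha_j\uparrow 1$ and bounded $\ell_q$ norm. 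Moreover, divergence of $\|\widehat{P_k}\|_{\ell_1}$ does not prevent the weak-$*$ limit from being a measure: a positive Riesz product always converges weak-$*$ to a probability measure. The paper's mechanism is different: at each stage of the outer iteration it uses a \emph{block} of $N$ new lacunary frequencies $\nu,\nu^2,\dots,\nu^N$ with amplitude $\sim sN^{-1/q}$, so the block's $\ell_q$ contribution is $(1+O(1/N))^{N/q}=O(1)$; and then --- this is the missing idea --- it averages over $s$ against a \emph{signed} measure $\rho$ with $\int d\rho=1$ and $|\int s^k\,d\rho|<\delta$ (Kahane's lemma), which makes all non-constant coefficients of the block $O(\delta)$ in $\ell_q$. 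Only after this averaging does the infinite product converge in $A_q$ to a nonzero distribution; the signed averaging is also what ultimately produces a non-measure.

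Second, for part (ii), your test functions and the distribution must be built from the \emph{same} block: the paper takes $X=N^{-1/p}\sum_{j=1}^N\cos\nu^jt$, whose $\ell_p$ norm is automatically $\le 1$ by the normalization ($2N$ coefficients of size $N^{-1/p}/2$), and proves by a Bernstein--Hoeffding concentration estimate for the almost-independent system $\{\cos\nu^jt\}$ under $\lambda_s$ that $\lambda_s$ is concentrated (even in $L^2$) on $\{X\approx s\}$. This matching of exponents $N^{-1/q}$ (amplitude in the Riesz product) against $N^{-1/p}$ (normalization of the dual polynomial) is the quantitative engine you leave unspecified. Note also that one does not need $c_R\to\infty$: it suffices to keep $\|\widehat{X}\|_p\le 1$ and $X\ge\frac1{100}$ on $K$ while pushing the spectrum of $X$ into $\{|n|\ge\nu\}$ with $\nu\to\infty$, since the $\ell_q$ tail of $\widehat\mu$ then tends to zero. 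Finally, your pairing $\int\varphi_R\,d|\mu|$ versus $\widehat{\mu}$ hides a genuine subtlety: $\mu\in A_q$ does not imply $|\mu|\in A_q$ (unlike the $c_0$ case, where the Milicer-Gru\.zewska lemma applies), so one must first approximate $|\mu|$ in total variation by measures in $A_q$ via a Lusin-type argument before Parseval can be invoked.
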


As pointed out in \cite{lev-olevskii:piatetski}, the property of our set $E$ can also be formulated
in the language of the uniqueness problem: there is a non-zero trigonometric series $\sum c_n e^{int}$
with $\sum |c_n|^q < \infty$, which converges to zero everywhere outside $E$, but no Fourier-Stieltjes
series may satisfy this property. Our proof is inspired by Kahane's presentation of the K\"{o}rner-Kaufman
results, see \cite[pp. 213--216]{kahane-salem}. The main new ingredients in \cite{lev-olevskii:piatetski}
are Riesz products, and probabilistic concentration estimates.

We continue to study further aspects of Piatetski-Shapiro's phenomenon. For example, how small
can be the set $E$ in the above theorem. In particular, how small can be the Hausdorff dimension
of $E$. Using the ``shrinking method'' of Kaufman we prove the following stronger result: any closed
set which supports a distribution with Fourier coefficients in $\ell_q$,
contains a closed subset which also supports such a distribution, but not such a measure. In particular,
this result enables us to compute the minimal Hausdorff dimension of $E$.
Another question under consideration is whether Piatetski-Shapiro's phenomenon is ``typical'' or ``rare''.
Inspired by K\"{o}rner's ideas \cite{korner:curve} we define the space of all pairs $(E, S)$ where $E$ is
a closed set on the circle, and $S$ is a distribution supported by $E$ with Fourier coefficients in $\ell_q$.
We prove that ``almost all'' pairs $(E,S)$, in the sense of Baire categories, satisfy the above theorem.

Are there spaces other than $c_0$ and $\ell_q$ where Piatetski-Shapiro's phenomenon could exist?
We find a certain class of Orlicz spaces, where this is indeed the case. For example, for any $q > 2$
and $\alpha > 0$, Piatetski-Shapiro's phenomenon exists in the space of sequences $\{c_n\}$ satisfying
$\sum |c_n|^q \log^{\alpha} (e + |c_n|^{-1}) < \infty$.


\section{Generators in $\ell_p$ and Zero Set of Fourier Transform}

\subsection*{Background}

A function $F: \Z \to C$ is called a generator (or a cyclic vector) in the space $\ell_p(\Z)$
if the linear span of its translates is dense in $\ell_p$. In other words, an element 
$F \in \ell_p(\Z)$ is a generator if the closed translation invariant linear subspace generated
by $F$ is the whole $\ell_p$. How to know whether a given $F$ is a generator, or not? The
classical cases are $p = 1,2$. In these cases, Wiener \cite{wiener} characterized the generators
in terms of the zero set $Z_f$ of the Fourier transform
\[
f(t) = \sum_{n \in \Z} F(n) e^{int},
\]
as follows:
\begin{quote}
\emph{$F$ is a generator in $\ell_1$ if and only if $f(t)$ has no zeros.\\[4pt]
$F$ is a generator in $\ell_2$ if and only if $f(t) \neq 0$ almost everywhere.}
\end{quote}
The same characterization is true for $L^1(\R)$ and $L^2(\R)$.

The case $1 < p < 2$ is much less understood. ``Interpolating'' between $p=1$ and $2$, one may
expect that the generators in $\ell_p$ $(1 < p < 2)$ could be characterized by the condition
that the zero set of the Fourier transform is ``small'' in a certain sense. In this context
various metrical, arithmetical and other properties of the zero set for generators and
non-generators have been studied by Beurling \cite{beurling}, Pollard \cite{pollard}, Herz
\cite{herz}, Newman \cite{newman} and other authors. However, none of their results provides a
complete characterization of the generators.

For example, Beurling proved that if the zero set $Z_f$ is sufficiently small in the sense of
Hausdorff dimension, then $F$ is a generator in $\ell_p$. However, this is only under the 
assumption that $F \in \ell_1$. Also, the converse is not true: one can construct a function
$F$ which is a generator in every $\ell_p$ $(1 < p < 2)$, but $Z_f$ has Hausdorff dimension $1$.

\subsection*{Results}
We study the following question: \emph{is it possible at all} to describe the generators
$F$ in $\ell_p$ $(1 < p < 2)$ by the zero set of the Fourier transform? In our joint work with
A. Olevskii \cite{lev-olevskii:generators} we show that, in contrast to the classical cases
$p=1$ and $2$, \emph{the characterization of the generators in $\ell_p$ $(1 < p < 2)$ by the
zero set of their Fourier transforms is impossible in principle.}

Our main result is the following theorem.

\begin{theorem*}
Given $1 < p < 2$ one can find two continuous functions $f$ and $g$
on the circle, with the following properties:
\begin{enumerate-math}
\item They have the same set of zeros.
\item The Fourier transforms $F = \ft{f}$ and $G = \ft{g}$ both belong to $\ell_p(\Z)$,
\item $G$ is a generator in $\ell_p$, but $F$ is not.
\end{enumerate-math}
\end{theorem*}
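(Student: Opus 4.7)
My plan is to recast the statement by duality and then invoke the Piatetski-Shapiro theorem of the previous section. Since $\ell_p$ is reflexive for $1<p<2$, with dual $\ell_{p'}$ where $p'>2$, a sequence $F\in\ell_p$ is a generator iff the only $T\in\ell_{p'}$ for which $\sum_n F(n-k)T(n)=0$ holds for every $k\in\Z$ is $T=0$. On the Fourier side this becomes: if $f=\ft{F}$ is continuous and $\tau$ is a distribution on $\T$ with $\ft{\tau}\in\ell_{p'}$, then $F$ fails to be a generator iff some nonzero such $\tau$ satisfies $f\cdot\tau=0$ distributionally; continuity of $f$ forces $\supp\tau\subset Z_f$. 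Thus the theorem reduces to exhibiting a single closed set $E\subset\T$ and two continuous functions $f,g$ with $Z_f=Z_g=E$ and Fourier coefficients in $\ell_p$, such that $f$ is annihilated by multiplication with some nonzero distribution having Fourier coefficients in $\ell_{p'}$, while $g$ is not.

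The set $E$ I would take from the Piatetski-Shapiro theorem applied with $q=p'$: a closed $E\subset\T$ supporting a nonzero distribution $S$ with $\ft{S}\in\ell_{p'}$, but supporting no nonzero measure whose Fourier coefficients lie in $\ell_{p'}$. For the non-generator $f$ I would take $f(t)=\dist(t,E)^\alpha$ for a fixed $\alpha\in(1/p-\tfrac12,1]$; this function is H\"older of exponent $\alpha$ on $\T$, vanishes precisely on $E$, has $\ft{f}\in\ell_p$ by Bernstein's theorem, and satisfies $fS=0$ as a distribution because $S$ lies in $H^{-s}$ for some $s<\alpha$ (the negative Sobolev embedding following from $\ft{S}\in\ell_{p'}$). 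Consequently $\ft{S}$ is a nonzero annihilator of the translates of $F=\ft{f}$, so $F$ is not a generator.

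For $g$, I would argue by Baire category in the Banach space $\Phi$ of continuous $h$ on $\T$ with $h|_E=0$ and $\ft{h}\in\ell_p$, under the norm $\|h\|_\infty+\|\ft{h}\|_p$. The space $\mathcal{A}$ of distributions $\tau$ with $\ft{\tau}\in\ell_{p'}$ and $\supp\tau\subset E$ is a separable Banach space; choose a countable dense family $\{\tau_n\}$ in its unit sphere. For each $n$, the set $\{h\in\Phi:h\tau_n\neq0\}$ is open in $\Phi$, and its density in $\Phi$ is where the Piatetski-Shapiro property of $E$ is used essentially: since $\tau_n$ is not a measure, there exists $\psi\in C^\infty(\T)$ with $\psi|_E=0$ and $\langle\tau_n,\psi\rangle\neq0$, and then small perturbations $h\mapsto h+\eta\psi$ remain in $\Phi$ and meet the open set. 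A Baire intersection produces a residual family of $h\in\Phi$ annihilated by no nonzero $\tau\in\mathcal{A}$; the additional constraint $Z_g=E$ exactly (not just $Z_g\supset E$) can be arranged by a further perturbation supported on $\T\setminus E$ with controlled Fourier decay, using the explicit $f$ of the previous step as starting point.

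The hard part is the density step. The assertion that a nonzero $\tau\in\mathcal{A}$ cannot annihilate every smooth $\psi$ with $\psi|_E=0$ is, in essence, a statement about failure of spectral synthesis at $E$ for such $\tau$, and deducing it from the bare Piatetski-Shapiro information (no nonzero measure on $E$ has Fourier coefficients in $\ell_{p'}$) seems to require a Whitney-type extension argument combined with the observation that a distribution supported on $E$ which kills every smooth function vanishing on $E$ descends to a continuous functional on $C(E)$, hence to a measure. Realizing the perturbation $h+\eta\psi$ while preserving the exact zero set $E$ and the $\ell_p$ summability of $\ft{\cdot}$ will require careful local mollification. These two ingredients form the technical heart of the construction; the remainder of the argument is the interplay between duality and Piatetski-Shapiro.
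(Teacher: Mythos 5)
Your duality reduction and your construction of the non-generator $f$ are essentially sound (the paper does the same, taking $f$ smooth and invoking the Beurling--Pollard criterion from Kahane--Salem). The proposal breaks down in the construction of the generator $g$, in two places. First, the Baire category scheme is structurally invalid: for a fixed $h$, the set $\{\tau \in \mathcal{A} : h\tau = 0\}$ is a closed linear subspace of $\mathcal{A}$, and a nonzero closed subspace can perfectly well avoid any prescribed countable dense subset $\{\tau_n\}$ of the unit sphere. Hence $h\tau_n \neq 0$ for all $n$ does not imply $h\tau \neq 0$ for all nonzero $\tau$; the generator property is universally quantified over an uncountable, non-compact family of potential annihilators and cannot be certified on a countable dense subfamily. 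Second, the density step rests on the claim that a distribution supported on $E$ which annihilates every smooth function vanishing on $E$ descends to a bounded functional on $C(E)$ and is therefore a measure. The descent to a functional on restrictions is fine, but sup-norm continuity is not automatic: if $t_0$ is a two-sided limit point of $E$, then $\delta'_{t_0}$ annihilates every smooth $\psi$ with $\psi|_E = 0$ (such $\psi$ automatically has $\psi'(t_0)=0$) yet is not a measure. That example is not in $A_q$, but it shows the abstract descent argument proves nothing; whether the implication holds for $\tau \in A_q$ is exactly the kind of synthesis question one cannot extract for free from the bare Piatetski-Shapiro property of $E$. A further unproved point: the localization ``$g\tau = 0$ with $g$ continuous forces $\supp\tau \subset Z_g$'' is classical only for $\ft{g} \in \ell_1$ or for H\"older $g$, and the generator $g$ here cannot be smooth.

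The paper avoids all of this by not decoupling $E$ from $g$. It proves the stronger Theorem \ref{thm:strong-piatetski} by a simultaneous successive-approximation construction: smooth functions $f_n$ converging in $A_q$ to the distribution $S$ (whose supports shrink to $E$) are built together with polynomials $g_n \to g$ and $P_n$ such that $\|1 - P_n g_n\|_{A_p} \to 0$, so the generator property is certified primally ($1$ lies in the closed span of $\{e^{ikt}g(t)\}$ in $A_p$) rather than by excluding annihilators. The engine is a refined Riesz product built on an auxiliary polynomial $\varphi$ with $\|\varphi\|_{A_q}$ small and $\|\varphi\|_{A_p}$ controlled simultaneously (Lemmas \ref{lemma:auxiliary} and \ref{lemma:principal-generators}); this two-norm control is the genuinely new ingredient for which your proposal has no substitute.
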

A similar result is true for $L^p(\R)$ spaces.

We point out the role of the continuity condition in the theorem, which makes precise the
concept of the ``zero set''. Generally, there is no canonical way to define the zero set
of the Fourier transform of an arbitrary element in $\ell_p$ $(1 < p < 2)$.

The proof of the above theorem is based on a modifications and development of the approach
which allowed us to obtain Piatetski-Shapiro's phenomenon in $\ell_q$ spaces $(q > 2)$. In
fact, the result is derived from the construction of a closed set $E$ on the circle, which
satisfies a certain strong form of Piatetski-Shapiro's phenomenon in $\ell_q$, where
$q$ is the exponent conjugate to $p$ satisfying $1/p + 1/q = 1$.

More specifically, let $A_r(\T)$ $(1 \leq r < \infty)$ denote the Banach space of functions or
distributions on the circle with Fourier coefficients in $\ell_r(\Z)$, endowed with the norm
$\|f\|_{A_r} := \|\ft{f}\|_{\ell_r}$. We prove the following theorem.

\begin{theorem*}
Given any $1 < p < 2$ one can construct a closed set $E$ on the circle,
and a function $g \in C(\T) \cap A_p(\T)$, such that:
\begin{enumerate-math}
\item
$Z_g := \{t : g(t)=0\} = E$,
\item
The set $\{P(t)g(t)\}$, where $P$ goes through all trigonometric polynomials,
is dense in $A_p$,
\item
There is a (non-zero) distribution $S$, supported by $E$, which
belongs to $A_q$, where $q = p/(p - 1)$ is the exponent conjugate to $p$.
\end{enumerate-math}
\end{theorem*}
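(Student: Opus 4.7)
The plan is to recognize, via duality, that conditions (ii) and (iii) together amount to a strengthened Piatetski-Shapiro phenomenon in $A_q$ (with $q>2$), and then to produce $E$ and $g$ by a single multi-scale Cantor-type scheme. Since $A_p(\T)^{*}$ is identified with $A_q(\T)$ via the Parseval pairing, (ii) is equivalent to saying that the only $T\in A_q$ with $T(e^{ikt}g)=0$ for every $k\in\Z$ is $T=0$; equivalently, that the distributional product $T\cdot g$ vanishes. For $g$ continuous with $Z_g=E$ and any measure $\mu$ on $E$ belonging to $A_q$, one has $\mu g=0$ automatically, so (ii) already forces $E$ to carry no non-zero measure in $A_q$. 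Combined with (iii) this is exactly the Piatetski-Shapiro phenomenon in $A_q$.

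I would construct $E=\bigcap_{n} E_n$ as a nested Cantor-type union of intervals of scales $\{\delta_n\}$, by the same inductive scheme used in the author's joint work with Olevskii on Piatetski-Shapiro's phenomenon in $\ell_q$: at each stage, randomized Riesz products with matched frequencies produce, in the limit, a distribution $S$ with $\ft{S}\in\ell_q$ supported on $E$ (yielding (iii)), while the probabilistic concentration estimates rule out any measure on $E$ with Fourier coefficients in $\ell_q$. Re-using the scales $\{\delta_n\}$, I would build $g$ as a rapidly convergent series $g(t)=\sum_n c_n \psi_n(t)$, where each $\psi_n$ is a smooth plateau function vanishing identically on a neighbourhood of $E_n$ and supported in a neighbourhood of $E_{n-1}\setminus E_n$. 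A suitable choice of the amplitudes $\{c_n\}$ and of the smoothness scales of $\{\psi_n\}$ makes $g$ continuous with $Z_g=E$ and $\ft{g}\in\ell_p$, the latter using the near-disjointness of the frequency ranges of the $\psi_n$.

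The core of the proof is verifying (ii) in full, i.e.\ showing that $T\in A_q$ with $Tg=0$ forces $T=0$. Since $g$ is continuous and non-vanishing on $\T\setminus E$, a local argument gives $\supp T\subseteq E$. The remaining task is to rule out non-zero $T\in A_q$ supported on $E$ that are annihilated by multiplication by $g$. Measures of this kind are ruled out directly by the Piatetski-Shapiro property of $E$. For distributions of positive order supported on $E$, the calibrated rate at which $g$ vanishes at points of $E$ (dictated by the choice of $c_n$ and $\psi_n$) is arranged so that a Leibniz-type expansion of $Tg$ produces non-trivial terms that cannot be cancelled within $A_q$; as a consistency check, the specific Riesz-product distribution $S$ from (iii) satisfies $Sg\neq 0$ precisely because $S$ is not a measure.

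This simultaneous calibration---strong enough Piatetski-Shapiro at $E$ in $A_q$, retention of the distribution $S$, and sharp control of the order of vanishing of $g$ along $E$---is the main obstacle. Technically it is the step that upgrades the ordinary $\ell_q$ Piatetski-Shapiro phenomenon to the ``strong form'' referred to in the introduction, and I expect it to absorb the bulk of the work: the Riesz-product parameters must be chosen sufficiently lacunary that the probabilistic concentration rules out not just measures but all $A_q$-distributions on $E$ that are killed by $g$, while the plateau functions $\psi_n$ must be sharp enough to witness the vanishing order of $g$ without pushing $g$ out of $A_p$.
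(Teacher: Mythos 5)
Your duality reformulation of (ii) is correct (by Hahn--Banach, density of $\{Pg\}$ in $A_p$ is equivalent to the statement that the only $T\in A_q$ annihilating all products $e^{ikt}g$ is $T=0$), and your observation that (ii) already forces $E$ to carry no non-zero measure in $A_q$ matches the paper's own remark. The overall architecture (nested compacts, Riesz products, a limit distribution $S\in A_q$) is also in the right family. But the proposal has a genuine gap exactly at the step you yourself flag as absorbing ``the bulk of the work'': proving (ii) in full. Your plan is to build $g$ \emph{a posteriori} as a lacunary sum of plateau functions on the complementary intervals of a set $E$ taken from the $\ell_q$ construction, and then to rule out non-zero annihilators $T\in A_q$ supported on $E$ by a ``Leibniz-type expansion'' keyed to the order of vanishing of $g$. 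This is not a workable strategy: a general $T\in A_q$ supported on $E$ has no order/structure decomposition to which a Leibniz argument could be applied, the annihilator of the translates of $g$ is a weak-$*$ closed subspace about which you know nothing beyond the measure case, and any attempt to deduce triviality of that annihilator from metric properties of $Z_g$ runs head-on into the paper's own main message that no zero-set criterion for generators exists. In short, nothing in the proposal produces a certificate that $\{Pg\}$ is dense.

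The paper's proof is constructive precisely where yours is not: $g$ is built simultaneously with explicit approximate inverses. One iterates $g_{n+1}=g_n-hX$, where $h$ is a polynomial close to $g_n$ on the current support and $X$ is a real trigonometric polynomial with the three properties $\|X\|_\infty\leq 1$, $\|X\|_{A_p}<\eps$, and $X>\tfrac1{50}$ on the compact $K$ that carries the next factor of the $A_q$-distribution. This forces $|g_{n+1}|\leq(\tfrac{99}{100})^{n+1}$ on the shrinking supports while keeping $\|g_{n+1}-g_n\|_{A_p}$ tiny, and it lets one exhibit polynomials $P_n$ with $\|1-P_ng\|_{A_p}\to 0$, which \emph{is} statement (ii). The essential new ingredient making such an $X$ possible is the auxiliary polynomial $\varphi=\varphi_\eta$ with $\ft{\varphi}(0)=0$, $\|\varphi\|_\infty<1$, $\|\varphi\|_{L^2}>\tfrac9{10}$, $\|\varphi\|_{A_q}\leq C\eta$ and $\|\varphi\|_{A_p}\leq C\eta^{-1}$, which replaces the cosines in the Riesz product; the smallness of $\|\varphi\|_{A_q}$ keeps the product close to $1$ in $A_q$, while the controlled growth of $\|\varphi\|_{A_p}$ makes $X=N^{-1}\sum_j\varphi(\nu^jt)$ small in $A_p$ yet bounded below on $K$ by the concentration estimates. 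Your proposal contains no analogue of this object, and without it the generator property cannot be certified.
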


The property (ii) means that $\ft{g}$ is a generator. On the other hand, (iii) is equivalent
to the fact that no Fourier transform of a smooth function $f$ vanishing on $E$, could be a
generator. The first theorem is therefore a consequence of the second one.

We remark that the set $E$ satisfies Piatetski-Shapiro's phenomenon in $\ell_q$, since the
property (ii) implies that $E$ cannot support a non-zero measure $\mu$ whose Fourier coefficients
belong to $\ell_q$.


\numberwithin{section}{chapter}
\numberwithin{equation}{section}


\chapter{Piatetski-Shapiro's Theorem}
\label{chap:c0}

In this chapter we present a simplified proof of Piatetski-Shapiro's theorem.

\section{Introduction}

\subsection{}
\label{subsec:about-piatetski-result}

A compact $K$ on the circle is called a \emph{set of uniqueness} if the only
trigonometric series
\[
\sum_{n \in \Z} c(n) e^{int}
\]
which converges to zero everywhere outside $K$, is the series which is identically
zero. Otherwise, $K$ is called a \emph{set of multiplicity}. It is well-known
that $K$ is a set of multiplicity if and only if there is a non-zero Schwartz
distribution $S$, supported by $K$, with Fourier coefficients $\{\ft{S}(n)\}$
tending to zero (see \cite{kahane-salem}, Chapter V). If there is a non-zero
\emph{measure} $\mu$ satisfying these properties, then $K$ is called a \emph{set
of restricted multiplicity}.

It was conjectured that any set of multiplicity is in fact a set of restricted
multiplicity. For example, a symmetric Cantor set with constant dissection ratio
$\xi$, where $0 < \xi < \tfrac1{2}$, is either a set of uniqueness or a set of restricted
multiplicity (see \cite{kahane-salem}, Chapter VI). However, Piatetski-Shapiro
showed in \cite{piatetskii-2} that in general this is not true. He proved the
following theorem.

\begin{theorem}[Piatetski-Shapiro, 1954]
\label{thm:piatetski-theorem-c0}
There exists a compact $K$ on the circle, which supports a distribution
$S$ with Fourier coefficients tending to zero, but which does not support any
measure with this property.
\end{theorem}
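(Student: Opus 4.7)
The plan is to build $K$ so as to separately guarantee two things: a simple arithmetic constraint that rules out any measure $\mu$ on $K$ with $\ft\mu(n)\to 0$, and enough room inside $K$ to support a signed distribution whose Fourier coefficients do decay.

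\textbf{Arithmetic constraint ruling out $M_0$-measures.} Choose a very lacunary integer sequence $\lambda_1<\lambda_2<\cdots$, phases $\phi_k\in\T$, and $\delta_k\downarrow 0$, and place
\[
K\subset\bigcap_{k\geq 1}\bigl\{t\in\T:\cos(\lambda_k t-\phi_k)\geq 1-\delta_k\bigr\}.
\]
For any complex measure $\mu$ on $K$, comparing $\cos(\lambda_k t-\phi_k)$ with $1$ on $\operatorname{supp}\mu$ and integrating via the Jordan decomposition yields
\[
\tfrac12\bigl(e^{-i\phi_k}\ft\mu(-\lambda_k)+e^{i\phi_k}\ft\mu(\lambda_k)\bigr)
=\int\cos(\lambda_k t-\phi_k)\,d\mu=\mu(K)+O(\delta_k\|\mu\|).
\]
If $\ft\mu(n)\to 0$ at infinity, the left-hand side tends to $0$, hence $\mu(K)=0$. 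Applying the same argument to each modulation $e^{-imt}\mu$ (which is still supported on $K$, with Fourier coefficients $n\mapsto\ft\mu(n+m)$ also vanishing at infinity) yields $\ft\mu(m)=0$ for every $m\in\Z$, i.e.\ $\mu\equiv 0$.

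\textbf{Constructing the pseudofunction.} I would exhibit $S$ as a weak-$*$ limit of signed trigonometric polynomials $S_N$ concentrated near $K$, built from Riesz-product building blocks. The basic unit is $\prod_{k=1}^N(1+\cos(\lambda_k t-\phi_k))$, which is non-negative and concentrates on the $N$-th layer of cosine windows; to produce a signed object whose Fourier coefficients decay, I would form a signed combination of two such products corresponding to independent auxiliary phases $\phi'_k$, chosen so that the frequencies $\lambda_k$ and the dissociative lattice they generate are cancelled. The lacunarity of $(\lambda_k)$ makes each $\ft{S_N}(n)$ a sum of few complex numbers with random phases, and Salem--Zygmund-type concentration estimates show that with positive probability $\|\ft{S_N}\|_{\ell^\infty(\{|n|>M\})}\to 0$ for every fixed $M$. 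A diagonal extraction combined with a lower bound $\langle S_N,\psi\rangle\geq c>0$ against a fixed smooth test function $\psi$ bounded below on a neighbourhood of $K$ then produces a non-zero limit $S$ supported on $K$ with $\ft S\in c_0$.

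\textbf{Main obstacle.} The heart of the difficulty is that the very arithmetic rigidity used in the first step pins each $\ft S(\lambda_k)$, \emph{a priori}, to be comparable to $\ft S(0)$, so a positive measure cannot escape the constraint; what saves the signed distribution $S$ is that cancellation is allowed since $S$ is not positive, but organising this cancellation uniformly in all $n\in\Z$ while keeping $S\neq 0$ is the delicate step --- it is precisely here that Riesz products together with probabilistic concentration inequalities do the real work, and is the point at which this simplified argument departs from Piatetski-Shapiro's original construction.
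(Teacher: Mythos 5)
Your first step is internally correct but it defeats the second: the constraint you impose on $K$ forces it to be a set of \emph{uniqueness}, so no non-zero distribution with Fourier coefficients tending to zero can be supported on it at all. Indeed, if $K\subset\{t:\cos(\lambda_k t-\phi_k)\geq 1-\delta_k\}$ for every $k$, then $\lambda_k K \pmod{2\pi}$ is contained in an arc of length $2\arccos(1-\delta_k)\to 0$ about $\phi_k$; passing to a subsequence along which $\phi_k$ converges, the dilates $\lambda_k K$ all avoid one fixed arc, i.e.\ $K$ is an H-set. By Rajchman's classical theorem H-sets are sets of uniqueness, and (by the criterion quoted in Section 1.1) a set of uniqueness supports no pseudofunction. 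So the ``pseudofunction'' you try to build in your second step cannot exist, no matter how the signed Riesz products and Salem--Zygmund estimates are arranged; the weak-$*$ limit $S$ you extract would have to vanish.

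The point you are missing is precisely the one the paper's Lemma \ref{lemma:nomeasure-c0} is designed to capture: one must use a polynomial $X$ that is an \emph{average}, $X(t)=\tfrac{2}{N}\sum_{j=1}^{N}\cos\nu^j t$, with the two properties $\sum_n|\ft X(n)|\leq 2$ and $|1-X|<\eps$ on $K$. The $\ell^1$ bound on $\ft X$ together with the high frequencies still kills any measure with $\ft\mu(n)\to0$ (pair $X$ against $\mu$), but the condition ``the average of the cosines is near $1$'' is vastly weaker than ``each cosine is near $1$'': it leaves room for the Riesz product $\lambda_s=\prod(1+2s\cos\nu^jt)$ with $s$ near $\tfrac12$ to concentrate on $K$ (Chebyshev, Lemma \ref{lemma:lambda-concentration-c0}), while Kahane's signed measure $\rho$ in the parameter $s$ (Lemma \ref{lemma:kahane-lemma}) makes all non-zero Fourier coefficients of $\int\lambda_s\,d\rho(s)$ uniformly small. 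Restricting to $K$, normalizing, smoothing, and iterating the construction over a sequence $\nu_j\to\infty$ then yields the pseudofunction. Your ``main obstacle'' paragraph correctly senses that the tension lies here, but the resolution is not a cleverer signed combination on your rigid set $K$ --- it is relaxing the defining condition of $K$ from individual cosines to their average.
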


It is interesting to remark that Piatetski-Shapiro proved the above theorem by
constructing the following concrete example. Let $K_\gamma$ $(0 < \gamma < \tfrac1{2})$
denote the set of real numbers $t$ in the segment $[0, 2\pi)$, written in binary expansion
\[
t = 2 \pi \sum_{k=1}^{\infty} \eps_k 2^{-k}, \quad \eps_k \in \{0,1\},
\]
which satisfy
\[
\frac1{n} \sum_{k=1}^{n} \eps_k \leq \gamma, \quad n=1,2,3,\cdots \,.
\]
Then Theorem \ref{thm:piatetski-theorem-c0} holds with $K = K_\gamma$,
see \cite{piatetskii-2} or \cite[Section 4.4]{graham-mcgehee}.

\subsection{}
\label{subsec:extensions-of-piatetski-result}
Piatetski-Shapiro's theorem was further extended in the years 1971--73, in the
following direction. A compact $K$ is called a \emph{Helson set} if any function
defined and continuous on $K$ can be extended to a function on the circle with an
absolutely convergent Fourier series. It was proved by Helson \cite{helson} that
a Helson set can never be a set of restricted multiplicity. The problem whether it
can be a set of multiplicity was solved in the affirmative by K\"{o}rner in
\cite{korner:pseudofunc}. A simpler proof of K\"{o}rner's theorem was later found
by Kaufman \cite{kaufman:msets}, who also proved a stronger result: any set of
multiplicity contains a Helson set of multiplicity.

A compact which is not a set of restricted multiplicity is called \emph{a set
of extended uniqueness}. During the 1980's, descriptive set theory was
used to establish a structural difference between the class $\U$ of the closed
sets of uniqueness, and the class $\U_0$ of the closed set of extended uniqueness.
Namely, Debs and Saint-Raymond \cite{debs-saint-raymond} proved that $\U$ has no
Borel basis, while Kechris and Louveau \cite{kechris-louveau} proved that $\U_0$
does admit such a basis. For a detailed discussion see \cite{kechris-louveau}.

\subsection{}
In this chapter, our goal is to present a simplified proof of Piatetski-Shapiro's
theorem. Our approach is based on Kahane's presentation \cite[pp. 213--216]{kahane-salem}
of the K\"{o}rner-Kaufman results mentioned above. The main new ingredient in the proof
presented below is the use of Riesz products, which allows us to simplify certain
arguments from \cite{kahane-salem} related to Fourier series in multiple variables.


\section{Lemmas}

\subsection{Extended uniqueness}
We start with the formulation of a condition on a compact $K$ which implies
that it supports no measure with Fourier coefficients tending to zero. In fact,
one can relax this condition (see \cite{piatetskii-1}, Theorem 5), so the
following lemma is not the strongest statement which is possible. However, it
will be sufficient for us, and its proof is very simple.

\begin{lemma}
\label{lemma:nomeasure-c0}
Suppose a compact $K$ on the circle satisfies the following condition:
for any $\eps > 0$ and any integer $\nu$ there is a real trigonometric polynomial
\begin{equation}
\label{eq:nomeasure-c0-x}
X(t) = \sum_{|n| \geq \nu} \ft{X}(n) e^{int},
\quad \sum |\ft{X}(n)| \leq 2, \quad
\text{$|1 - X(t)| < \ \eps$ on $K$},
\end{equation}
Then $K$ does not support any non-zero measure $\mu$ such that $\ft{\mu}(n)
\to 0$ as $|n| \to \infty$.
\end{lemma}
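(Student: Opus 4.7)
I would argue by contradiction. Suppose $\mu$ is a non-zero measure supported on $K$ with $\ft{\mu}(n)\to 0$. The plan is to show that $\int P\,d\mu = 0$ for every trigonometric polynomial $P$, which by taking $P(t)=e^{-int}$ forces $\ft{\mu}(n)=0$ for all $n$, contradicting $\mu\neq 0$.

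Fix a trigonometric polynomial $P$ of degree $d$ and let $\eps>0$. Choose $\nu>d$ and pick $X$ as in \eqref{eq:nomeasure-c0-x} associated with this $\eps$ and $\nu$. I would split
\[
\int P\,d\mu = \int (1-X)P\,d\mu + \int XP\,d\mu
\]
and estimate the two pieces separately. For the first piece, the hypothesis $|1-X(t)|<\eps$ on $K$ together with the fact that $\mu$ is supported on $K$ gives
\[
\Bigl|\int (1-X)P\,d\mu\Bigr| \leq \eps\,\|P\|_\infty\,\|\mu\|.
\]

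For the second piece, I would expand $XP$ in its (finite) Fourier series and use Parseval-type pairing:
\[
\int XP\,d\mu = \sum_n \ft{XP}(n)\,\ft{\mu}(-n).
\]
Because $X$ has spectrum in $\{|n|\geq \nu\}$ and $P$ has spectrum in $[-d,d]$, the product $XP$ has spectrum in $\{|n|\geq \nu-d\}$. Moreover
\[
\sum_n |\ft{XP}(n)| \leq \Bigl(\sum |\ft{X}(n)|\Bigr)\Bigl(\sum |\ft{P}(n)|\Bigr) \leq 2\,\|P\|_A,
\]
so
\[
\Bigl|\int XP\,d\mu\Bigr| \leq 2\,\|P\|_A\cdot \sup_{|n|\geq \nu-d}|\ft{\mu}(n)|.
\]
Since $\ft{\mu}(n)\to 0$, this last quantity tends to $0$ as $\nu\to\infty$ (with $P$, hence $d$, held fixed). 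Putting the two estimates together, $\bigl|\int P\,d\mu\bigr| \leq \eps\,\|P\|_\infty\,\|\mu\|$; letting $\eps\to 0$ gives $\int P\,d\mu =0$, as desired.

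There is no real obstacle once one sees the right move: apply the hypothesis not directly to $\mu$ but after twisting by an arbitrary trigonometric polynomial $P$. A naive application of $X$ against $\mu$ alone would only yield $\ft{\mu}(0)=0$, which is far too weak. The multiplication by $P$ is harmless because $P\mu$ remains a measure on $K$ whose Fourier coefficients are a finite combination of shifts of $\ft{\mu}$, hence still vanish at infinity; this is what lets us conclude $\mu=0$ rather than merely that $\mu$ has total mass zero.
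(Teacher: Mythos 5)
Your proof is correct and follows essentially the same route as the paper's: the same Parseval-type pairing of $X$ against the (twisted) measure gives the small upper bound via $\sum|\ft{X}(n)|\leq 2$ and $\ft{\mu}(n)\to 0$, and the same use of $|1-X|<\eps$ on $K$ gives the complementary estimate. The only difference is cosmetic — the paper twists $\mu$ by a single exponential $e^{in_0t}$ beforehand so as to assume $\ft{\mu}(0)\neq 0$ and then derives the contradiction $\ft{\mu}(0)=0$, whereas you twist by an arbitrary trigonometric polynomial $P$ and conclude $\int P\,d\mu=0$ for all $P$; these amount to the same idea.
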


%

%

\begin{proof}
Let $\mu$ be a non-zero measure supported by $K$, with Fourier coefficients
tending to zero. Clearly we may assume 
$\int |d \mu| = 1$,
and by mutliplication of $\mu$ by an appropriate exponential $e^{int}$
we may also assume $\ft{\mu}(0) \neq 0$. Given $\eps > 0$, choose $\nu$ such that
$\sup_{|n| \geq \nu} |\ft{\mu}(n)| < \eps / 2$, and let $X$ be a real trigonometric
polynomial satifying \eqref{eq:nomeasure-c0-x}. Then one has
\[
\Big| \int_{\T} X \, d\mu \Big| = \Big| \sum_{|n| \geq \nu}
\ft{X}(n) \, \ft{\mu}(-n) \Big| \leq 2 \sup_{|n| \geq \nu}
|\ft{\mu}(n)| < \eps.
\]
On the other hand $\mu$ is supported by $K$, hence
\[
\Big| \int_{\T} X \, d\mu \Big| = \Big| \int_{K} X \, d\mu \Big|
\geq  \Big| \int_{K} d\mu \Big| - \int_{K} |1 - X| \, d\mu
\geq |\ft{\mu}(0)| - \eps.
\]
Since $\eps$ was arbitrary, it follows that $\ft{\mu}(0) = 0$, a contradiction.
\end{proof}

\subsection{Kahane's lemma}
The following lemma is taken from \cite[p. 214]{kahane-salem}. For completeness,
we include its proof here.

\begin{lemma}[Kahane]
\label{lemma:kahane-lemma}
Let $I$ be a closed interval, $I \subset (0,1)$. Given any $\delta > 0$
there is a (signed) measure $\rho$, supported by a finite subset of $I$,
such that
\begin{equation}
\label{eq:kahane-lemma}
\int d\rho = 1 \qquad \text{and} \qquad
\Big| \int s^k \, d\rho(s) \Big| < \delta
\quad (k=1,2,\dots).
\end{equation}
\end{lemma}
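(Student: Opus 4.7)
The plan is to argue by density and then approximate. For any finite signed Borel measure $\rho$ on $I=[\alpha,\beta]\subset(0,1)$, the moment sequence $(m_k(\rho))_{k\geq 1}$, with $m_k(\rho):=\int s^k\,d\rho$, automatically lies in $c_0$ because $|m_k(\rho)|\leq\beta^k\|\rho\|_{TV}\to 0$. I would first show that the set of such sequences realized by $\rho$'s satisfying $\int d\rho=1$ is dense in $c_0$, then pick such a $\rho$ whose moment sequence has sup-norm less than $\delta/2$, and finally replace it by a finite-support approximation.

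The central step is density. Consider the linear map $\Phi\colon M_0(I)\to c_0$, $\rho\mapsto(m_k(\rho))_{k\geq 1}$, where $M_0(I)=\{\rho\in M(I):\int d\rho=0\}$. By Hahn-Banach with $c_0^*=\ell^1$, density of $\Phi(M_0(I))$ reduces to showing that its annihilator in $\ell^1$ is trivial. A sequence $(c_k)\in\ell^1$ lies in the annihilator iff $\int f\,d\rho=0$ for every $\rho\in M_0(I)$, where $f(s):=\sum_{k\geq 1}c_ks^k$; testing with $\rho=\delta_a-\delta_b$ for $a,b\in I$ shows this is equivalent to $f$ being constant on $I$. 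Since $(c_k)\in\ell^1$ makes $f$ continuous on $\overline{\mathbb{D}}$ and holomorphic on $\mathbb{D}$, constancy on a nondegenerate real subinterval forces $f$ to be constant throughout $\mathbb{D}$ by the identity theorem; together with $f(0)=0$ this gives $f\equiv 0$ and $(c_k)=0$.

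For the last step, given $\rho\in M(I)$ with $\int d\rho=1$ and $\|\Phi(\rho)\|_\infty<\delta/2$, approximate $\rho$ weak-$*$ by finite-support signed measures $\rho_n$, normalized so that $\int d\rho_n=1$, with $\|\rho_n\|_{TV}\leq M$ uniformly in $n$ (e.g.\ by discretizing the Hahn decomposition $\rho=\rho^+-\rho^-$ into Riemann-sum point masses). For $k$ exceeding a threshold $k_0$ with $2M\beta^{k_0}<\delta/2$, both $|m_k(\rho_n)|$ and $|m_k(\rho)|$ are automatically less than $\delta/2$; for the finitely many $k\leq k_0$, weak-$*$ convergence applied to each continuous function $s\mapsto s^k$ gives $|m_k(\rho_n)-m_k(\rho)|<\delta/2$ for $n$ large. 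Hence $|m_k(\rho_n)|<\delta$ for all $k\geq 1$ once $n$ is large, and $\rho_n$ is the desired measure. The main obstacle is handling the countably many moment constraints simultaneously; the analytic rigidity invoked in the density step handles this uniformly, whereas an explicit Lagrange-interpolation construction faces delicate estimates on the higher moments $\int s^k d\rho$ for $k$ beyond the vanishing range, which fail to decay adequately when $\beta$ is close to $1$.
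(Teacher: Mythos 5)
Your proposal is correct and rests on the same core argument as the paper: Hahn--Banach duality between $c_0$ and $\ell^1$, combined with the identity theorem for a power series with summable coefficients that is constant on a subinterval of $(0,1)$. The only organizational difference is that the paper approximates the target sequence $(1,0,0,\dots)$ directly by finite linear combinations of the moment sequences of Dirac masses $\delta_s$, $s \in I$, so the finite support and the normalization come out immediately, whereas you first obtain a general signed measure of mass one with small moments and then add a (routine, correctly handled) weak-$*$ discretization step.
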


\begin{remark}
Note that the properties \eqref{eq:kahane-lemma} are clearly satisfied
by the Dirac measure at the point $0$; however, the idea of Lemma
\ref{lemma:kahane-lemma} is to obtain \eqref{eq:kahane-lemma} under the
constraint that the support of $\rho$ lies in $I$. Also note that the
measure $\rho$ cannot be chosen positive for arbitrarily small values of
$\delta$, as one can prove using a standard application of Helly's theorem.
\end{remark}


\begin{proof}[Proof of Lemma \ref{lemma:kahane-lemma}]
In the space $c_0$ consider the collection $\S$ of all sequences $(1,s,s^2,s^3,\dots)$
corresponding to $s \in I$ (these sequences belong to $c_0$, since $0 < s < 1$). Let
us show that $\S$ forms a complete system in $c_0$. Indeed, a linear functional over $c_0$
which annihilates all elements of $\S$ is a sequence $\{a_k\} \in \ell_1$ such that
$\sum_{k=0}^{\infty} a_k s^k$ vanishes on $I$, hence $\{a_k\} = 0$. The completeness
therefore follows from the Hahn-Banach theorem. In particular, it is possible to
approximate the sequence $(1,0,0,\dots)$, in the $c_0$ norm, by linear combinations
of elements of $\S$. That is, given $\eta > 0$ there exist $s_1, \dots, s_n \in I$
and $c_1, \dots, c_n \in \R$ such that
\[
\Big|\sum_{j=1}^{n} c_j - 1 \Big| < \eta
\qquad \text{and} \qquad
\Big|\sum_{j=1}^{n} c_j s_j^k \Big| < \eta
\quad (k=1,2,\dots).
\]
If $\eta$ is chosen small enough so that $\eta / (1-\eta) < \delta$, the conditions
\eqref{eq:kahane-lemma} are satisfied by the measure 
\[
\rho = \frac{\sum_{j=1}^{n} c_j \, \delta_{s_j}}{\sum_{j=1}^{n} c_j} \, ,
\]
where $\delta_s$ denotes the Dirac measure at the point $s$.
\end{proof}


\section{Riesz products}

The Riesz product, first introduced by Riesz in 1918, is an important tool in
Fourier analysis (see \cite{zygmund}, Volume I, Chapter V, \textsection 7).
In this section we use finite Riesz products to simplify the construction
from \cite{kaufman:msets}.

\subsection{Notation}
Define a finite Riesz product
\begin{equation}
\label{eq:riesz-prod-c0}
\lambda_s(t) = \prod_{j=1}^{N} \big( 1 + 2s \cos \nu^j t \big)
\end{equation}
where the parameters satisfy $0 < s < \tfrac1{2}$ and $\nu$ is an integer
$\geq 3$. One can write $2 \cos \nu^j t = e^{i \nu^j t} + e^{-i \nu^j t}$,
and then expand the product \eqref{eq:riesz-prod-c0}. This would yield
\begin{equation}
\label{eq:lambda-trig-expansion-c0}
\lambda_s(t) = 1 + \sum_{\tau \neq 0} s^{\sum
|\tau_j|} \, e^{i (\tau_1 \nu + \tau_2 \nu^2 + \dots + \tau_N \nu^N) t},
\end{equation}
where the sum is over all non-zero vectors
\[
\tau = (\tau_1, \dots, \tau_N), \quad \tau_j \in \{-1,0,1\}.
\]
Since $\nu \geq 3$, every integer $n$ admits at most one representation
of the form
\[
n = \tau_1 \nu + \tau_2 \nu^2 + \dots + \tau_N \nu^N,
\]
therefore the frequencies in the sum \eqref{eq:lambda-trig-expansion-c0}
are distinct. So \eqref{eq:lambda-trig-expansion-c0} is the Fourier
expansion of $\lambda_s$.


\subsection{Concentration}

The trigonometric polynomial $\lambda_s$ is non-negative and has integral
$= 1$. We therefore may consider a probability measure $\lambda_s(t)
dt / 2\pi$ on the circle, which we shall also denote (with no risk
of ambiguity) by $\lambda_s$. Define a trigonometric polynomial
\begin{equation}
\label{eq:x-cosines-c0}
X(t) = \frac{2}{N} \sum_{j=1}^{N} \cos \nu^j t.
\end{equation}
We regard $X$ as a random variable with respect to the probability
measure $\lambda_s$.


\begin{lemma}
\label{lemma:lambda-concentration-c0}
Given $\delta > 0$ we have
\[
\lambda_s \{t \in \T : |X(t) - 1| > 3 \delta\} \longrightarrow 0
\]
as $N \to \infty$, uniformly in
$s \in \Big( \frac1{2} - \delta, \frac1{2} \Big)$.
\end{lemma}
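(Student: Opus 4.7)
The natural approach is to view $X$ as a random variable on the probability space $(\T, \lambda_s)$ and apply Chebyshev's inequality, after showing that the mean $\E_{\lambda_s}[X]$ is close to $1$ and the variance $\var_{\lambda_s}[X]$ is small. Specifically, I expect $\E_{\lambda_s}[X] = 2s$ (which differs from $1$ by less than $2\delta$ since $s > \tfrac{1}{2} - \delta$) together with $\var_{\lambda_s}[X] = O(1/N)$ uniformly in $s < \tfrac{1}{2}$; granted these, the conclusion follows immediately.

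The first step is to read off the Fourier coefficients of $\lambda_s$ directly from \eqref{eq:lambda-trig-expansion-c0}: namely $\ft{\lambda_s}(n) = s^{\sum|\tau_j|}$ when $n$ admits a (necessarily unique) representation $n = \tau_1 \nu + \dots + \tau_N \nu^N$ with $\tau_j \in \{-1,0,1\}$, and $\ft{\lambda_s}(n) = 0$ otherwise. This immediately gives $\int \cos(\nu^j t)\,d\lambda_s = s$, hence $\E_{\lambda_s}[X] = 2s$. For the second moment, I would expand
\[
\cos(\nu^j t)\cos(\nu^k t) = \tfrac{1}{2}\bigl[\cos((\nu^j + \nu^k)t) + \cos((\nu^j - \nu^k)t)\bigr]
\]
and integrate against $\lambda_s$: for $j \neq k$, each of the frequencies $\pm(\nu^j \pm \nu^k)$ admits a valid three-valued representation of weight $2$, contributing $s^2$, and the cross term becomes $s^2$; for $j = k$, the frequency $2\nu^j$ would force $\tau_j = 2$, which is forbidden, so the diagonal term is $\tfrac{1}{2}$. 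Summing yields $\E_{\lambda_s}[X^2] = 2/N + 4(N-1)s^2/N$, and subtracting $(\E_{\lambda_s}[X])^2 = 4s^2$ gives $\var_{\lambda_s}[X] = 2(1 - 2s^2)/N \leq 2/N$.

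Finally, Chebyshev yields $\lambda_s\{|X - 2s| > \delta\} \leq 2/(N\delta^2)$. Since $|2s - 1| < 2\delta$, the triangle inequality gives the inclusion $\{|X - 1| > 3\delta\} \subset \{|X - 2s| > \delta\}$, so the same bound controls the probability of interest, tending to $0$ as $N \to \infty$ with a rate independent of $s$. The only real obstacle is the variance bookkeeping: one must carefully identify which of the frequencies $2\nu^j$ and $\nu^j \pm \nu^k$ are admissible for the Riesz product and with what weight, but this is a direct consequence of the base-$\nu$ uniqueness ($\nu \geq 3$) already invoked after \eqref{eq:lambda-trig-expansion-c0}, so no new ideas are needed beyond careful accounting.
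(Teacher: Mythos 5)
Your proposal is correct and follows essentially the same route as the paper: computing $\E_{\lambda_s}[X]=2s$ and $\var_{\lambda_s}[X]=2(1-2s^2)/N$ from the Fourier expansion \eqref{eq:lambda-trig-expansion-c0} (the paper phrases the variance step as uncorrelatedness of the $\cos\nu^j t$, which is the same second-moment bookkeeping you carry out), then applying Chebyshev together with $|2s-1|<2\delta$. No gaps.
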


\begin{proof}
Denote $X_j(t) = \cos \nu^j t$, then using
\eqref{eq:lambda-trig-expansion-c0} one can compute
\[
\E(X_j) = s, \quad \var(X_j) = \E (X_j - s)^2 = \tfrac1{2} - s^2,
\]
\[
\E (X_j-s)(X_k - s) = 0, \quad j \neq k.
\]
We therefore have
\[
\E(X) = \frac{2}{N} \sum_{j=1}^{N} \E(X_j) = 2s,
\]
and the fact that the $X_j$'s are uncorrelated allows us to compute
\[
\var(X) = \E(X - 2s)^2 =
\frac{4}{N^2} \sum_{j=1}^{N} \var(X_j) = \frac{4}{N} \, \Big(\tfrac1{2} -
s^2\Big) < \frac{2}{N}.
\]
If $s \in \Big( \frac1{2} - \delta, \frac1{2} \Big)$, then
by Chebyshev's inequality
\[
\lambda_s \{ |X(t) - 1| > 3 \delta\} \leq \lambda_s \{|X(t) - 2s| > \delta\}
\leq \delta^{-2} \var(X) \to 0
\]
as $N \to \infty$, uniformly in $s$.
%
\end{proof}


\section{Proof of Piatetski-Shapiro's theorem}

\subsection{}
We can now prove

\begin{lemma}
\label{lemma:principal-c0}
Let $\eps > 0$ and an integer $\nu \geq 3$ be given. Then there exist
a $C^\infty$ function $f: \T \to \C$ and an integer $N$ such that
\begin{enumerate-math}
\item
$f$ is supported by the compact
\begin{equation}
\label{eq:compact-k-def-co}
K = \{t \in \T : |X(t) - 1| \leq \eps\},
\end{equation}
where $X$ is the trigonometric polynomial defined by \eqref{eq:x-cosines-c0}.
\item
$f(t) = 1 + \sum_{n \neq 0} \ft{f}(n)e^{int}$, where
$|\ft{f}(n)|<\eps$ for all $n \neq 0$.
\end{enumerate-math}
\end{lemma}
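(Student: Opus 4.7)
The plan is to combine Kahane's lemma with the concentration estimate (Lemma \ref{lemma:lambda-concentration-c0}) for the Riesz product $\lambda_s$, and then to localize by a smooth cutoff while controlling the damage to the Fourier coefficients.

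First, apply Lemma \ref{lemma:kahane-lemma} on a closed interval $I \subset (\tfrac{1}{2} - \tfrac{\eps}{6}, \tfrac{1}{2})$ with a parameter $\delta > 0$ to be chosen, obtaining a signed measure $\rho = \sum_j c_j \delta_{s_j}$ supported in $I$ with $\int d\rho = 1$ and $\bigl|\int s^k \, d\rho(s)\bigr| < \delta$ for every $k \geq 1$. Form the weighted Riesz product
$$F_N(t) := \int \lambda_s(t) \, d\rho(s) = \sum_j c_j \lambda_{s_j}(t).$$
By the expansion \eqref{eq:lambda-trig-expansion-c0} and the uniqueness of the representation $n = \tau_1 \nu + \cdots + \tau_N \nu^N$, the Fourier coefficients of $F_N$ are $\ft{F_N}(0) = \int d\rho = 1$ and $\ft{F_N}(n) = \int s^{\sum|\tau_j|} \, d\rho(s)$ at frequencies $n$ of the above form (zero otherwise); in particular $|\ft{F_N}(n)| < \delta$ for every $n \neq 0$.

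Next, fix a $C^\infty$ cutoff $\psi$ on $\T$ with $0 \leq \psi \leq 1$, $\psi \equiv 1$ on $\{t : |X(t) - 1| \leq \eps/2\}$, and $\supp \psi \subset K$, and set $f := (\psi F_N)/c_N$, where $c_N := \ft{(\psi F_N)}(0)$. Then $f$ is $C^\infty$, supported by $K$, and $\ft{f}(0) = 1$. The crux is the following $n$-uniform estimate:
$$\bigl|\ft{(\psi F_N)}(n) - \ft{F_N}(n)\bigr| \leq \int_{\T} |\psi - 1|\,|F_N| \, \frac{dt}{2\pi} \leq \sum_j |c_j| \cdot \lambda_{s_j}\bigl\{t : |X(t) - 1| > \eps/2\bigr\},$$
where the second inequality uses $\supp(1 - \psi) \subset \{|X - 1| > \eps/2\}$ and $|F_N| \leq \sum_j |c_j|\,\lambda_{s_j}$. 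By Lemma \ref{lemma:lambda-concentration-c0} applied with parameter $\eps/6$, each term $\lambda_{s_j}\{|X - 1| > \eps/2\}$ tends to $0$ as $N \to \infty$, uniformly in $s_j \in I$; hence the whole right-hand side vanishes in the limit.

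The main obstacle this estimate sidesteps is that a naive cutoff would turn $\ft{F_N}$ into the convolution $\ft{\psi} \ast \ft{F_N}$, introducing error terms of size $|\ft{\psi}(n)|$ that are not uniformly small in $n$; the $L^1$-concentration of $F_N$ near $K$ instead lets us compare $\ft{(\psi F_N)}$ to $\ft{F_N}$ directly, with an $n$-uniform bound. To conclude, pick $\delta < \eps/3$ in Kahane's lemma and then $N$ large enough that the right-hand side above is less than $\eps/3$; this yields $|c_N - 1| \leq \eps/3$ and $|\ft{(\psi F_N)}(n)| < \delta + \eps/3$ for $n \neq 0$, so after dividing by $c_N$ and, if necessary, shrinking $\delta$ and enlarging $N$ by an innocuous margin, $|\ft{f}(n)| < \eps$ for every $n \neq 0$, which is (ii).
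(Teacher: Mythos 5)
Your proof is correct and follows essentially the same route as the paper: Kahane's measure $\rho$ makes the nonzero Fourier coefficients of the averaged Riesz product uniformly small, and Lemma \ref{lemma:lambda-concentration-c0} gives an $n$-uniform $L^1$ bound on the error introduced by localizing to $K$. The only difference is cosmetic — the paper multiplies by the sharp indicator $\1_{K'}$ and then mollifies to regain smoothness and land inside $K$, whereas you use a smooth cutoff adapted to the level sets of $X$ in one step; both rest on the same estimate.
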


\begin{proof}
The proof follows a similar line as in \cite[p. 213]{kahane-salem}.
Given $\delta > 0$, choose a measure $\rho$ supported by the interval
$\Big( \frac1{2} - \delta, \frac1{2} \Big)$
according to Lemma \ref{lemma:kahane-lemma}. Define
\[
\lambda(t) = \int \lambda_s(t) \, d\rho(s).
\]
By \eqref{eq:lambda-trig-expansion-c0} and \eqref{eq:kahane-lemma},
$\lambda$ is a trigonometric polynomial with Fourier expansion
\[
\lambda(t) = 1 + \sum_{\tau \neq 0}
\Big\{ \int s^{\sum |\tau_j|} d\rho \Big\} \; e^{i(\sum \tau_j \nu^j)t},
\]
so $|\ft{\lambda}(n)| < \delta$ for all $n \neq 0$. Set
\[
K' = \{t : |X(t) - 1| \leq 3 \delta\},
\]
and use Lemma \ref{lemma:lambda-concentration-c0} to choose $N$ such
that
\[
\int_{\T \setminus K'} \lambda_s(t) \, \frac{dt}{2\pi} =
\lambda_s \{t \in \T : |X(t) - 1| > 3 \delta\} < \frac{\delta}{\|\rho\|_M}
\]
for every $s \in \Big( \frac1{2} - \delta, \frac1{2} \Big)$.
The function $h = \lambda \cdot \1_{K'}$ is then supported by $K'$, and
\[
\| \, \ft{\lambda} - \ft{h} \, \|_{\infty} \leq \|\lambda - h\|_{L^1(\T)}
= \|\lambda\|_{L^1(\T \setminus K')}
\leq \int \|\lambda_s\|_{L^1(\T \setminus K')} \, |d\rho(s)| < \delta.
\]
If we choose $\delta$ so that $\delta / (1 - \delta) < \eps$, then 
the function $g(t) = h(t) / \ft{h}(0)$ satisfies
\[
\ft{g}(0) = 1 \qquad \text{and} \qquad
|\ft{g}(n)| < \eps \quad (n \neq 0).
\]
Finally, to obtain a smooth function $f$ we take the convolution of $g$
with a $C^\infty$ non-negative function $\psi$ with integral $= 1$. The
condition $3 \delta < \eps$ will ensure that, if $\psi$ is supported by
a sufficiently small neighbourhood of $0$, then the support of $f$ will be
contained in the compact $K$ from \eqref{eq:compact-k-def-co}.
\end{proof}


\subsection{}
To finish the proof of Theorem \ref{thm:piatetski-theorem-c0} we proceed
in a similar way to \cite{kahane-salem}. For each $j=1,2,\dots$ we define
a number $\eps_j > 0$ and a positive integer $\nu_j \geq 3$, and choose
$f_j$, $X_j$ and $K_j$ according to Lemma \ref{lemma:principal-c0} (with
$\eps = \eps_j$ and $\nu = \nu_j$). We choose the $\{\eps_j\}$ by induction
to satisfy
\[
\eps_1 < 2^{-2} \qquad \textrm{and} \qquad
\|f_1 \cdot f_2 \cdot \ldots \cdot f_j\|_{A} \; \eps_{j+1} <
2^{-2-j} \quad (j=1,2,\dots).
\]
As in \cite[p. 215]{kahane-salem}, this implies that the product
$\prod_{j=1}^{\infty} f_j$ will converge in the space $A_\infty$ to
a nonzero distribution $S$ with Fourier coefficients tending to zero.
Denote $K = \bigcap_{j=1}^{\infty} K_j$, then clearly $S$ is supported
by $K$. On the other hand, we can choose $\{\nu_j\}$ to satisfy $\nu_j
\to \infty$. Since $\sum |\ft{X}_j(n)| \leq 2$, we deduce by Lemma
\ref{lemma:nomeasure-c0} that $K$ supports no nonzero measure with
Fourier coefficients tending to zero.

This concludes the proof of Theorem \ref{thm:piatetski-theorem-c0}.


\chapter{$\ell_q$ Spaces}
\label{chap:lq}

\section{Introduction}

\subsection{}
Chapter \ref{chap:c0} was concerned with Piatetski-Shapiro's theorem, which
states that there exists a compact $K$ on the circle, which supports a
distribution $S$ such that $\ft{S} \in c_0$, but which does not support
such a measure. We indicated this by saying that Piatetski-Shapiro's phenomenon
exists in the space $c_0$ of sequences tending to zero.

It is known from potential theory that no Piatetski-Shapiro phenomenon
exists in certain weighted $\ell_2$ spaces. Precisely, if a compact $K$
supports a distribution $S$ such that
$\sum |n|^{- \alpha} |\ft{S}(n)|^2 < \infty$, for some $0 < \alpha \leq 1$,
then $K$ also supports a positive measure satisfying this condition (see
\cite{kahane-salem}, Chapter III).

In this chapter we study Piatetski-Shapiro's phenomenon in $\ell_q$ spaces.
Only the case $q > 2$ is non-trivial, since only in this case there are
distributions $S$ such that $\ft{S} \in \ell_q$, but which are not measures.
Our main result is that Piatetski-Shapiro's phenomenon exists in $\ell_q$
for any $q > 2$.

\begin{theorem}
\label{thm:piatetski-theorem-lq}
For any $q > 2$ there is a compact $K$ on the circle, which supports
a distribution $S$ with $\ft{S} \in \ell_q$, but does not support such
a measure.
\end{theorem}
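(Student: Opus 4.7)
The plan is to parallel the proof of Theorem 1.1.1, upgrading every ingredient from $c_0$ control to $\ell_q$ control. First I would establish an $\ell_q$-analog of Lemma 1.2.1: a compact $K$ supports no nonzero measure $\mu$ with $\ft\mu\in\ell_q$, provided that for every $\eps>0$ and every integer $\nu$ there exists a real trigonometric polynomial $X$ with spectrum in $\{|n|\geq\nu\}$, with $\|\ft X\|_{\ell_p}\leq 2$ (where $p=q/(q-1)$ is the conjugate exponent), and $|1-X|<\eps$ on $K$. The proof is a line-by-line repeat of Lemma 1.2.1, except that the $\ell_1$--$\ell_\infty$ bound on $|\int X\,d\mu|$ is replaced by the H\"older estimate
\[
\Bigl|\int X\,d\mu\Bigr|\leq \|\ft X\|_{\ell_p}\Bigl(\sum_{|n|\geq\nu}|\ft\mu(n)|^q\Bigr)^{1/q},
\]
whose second factor vanishes as $\nu\to\infty$ whenever $\ft\mu\in\ell_q$. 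The same polynomial $X=(2/N)\sum_{j=1}^N\cos\nu^j t$ used in Chapter 1 remains adequate, since $\|\ft X\|_{\ell_p}^p=2N^{1-p}\leq 2$ for $p\geq 1$.

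The heart of the matter is an $\ell_q$-version of the principal Lemma 1.4.1: given $\eps>0$ and $\nu\geq 3$, one must construct a $C^\infty$ function $f$ with $\supp f\subset K=\{|X-1|\leq\eps\}$, with $\ft f(0)=1$ and $\|\ft f-\delta_0\|_{\ell_q}<\eps$, where $\delta_0$ denotes the sequence $(1,0,0,\ldots)$. As in Chapter 1, one begins with the averaged Riesz product $\lambda=\int\lambda_s\,d\rho(s)$, with $\rho$ coming from Kahane's Lemma 1.2.2 at a small parameter $\delta$. The obstacle is that $\lambda$ has on the order of $3^N$ nonzero Fourier coefficients, each bounded only by $\delta$, so the uniform Kahane bound yields only $\|\ft\lambda-\delta_0\|_{\ell_q}^q\leq\delta^q\cdot 3^N$, which is not small unless $\delta$ is taken exponentially small in $N$---and then the Chebyshev estimate of Lemma 1.3.1 is far too weak to ensure that $\lambda_s$ concentrates on $K'=\{|X-1|\leq 3\delta\}$. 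This is where the probabilistic input enters: one replaces Chebyshev by an exponential Bernstein--Hoeffding type concentration inequality, exploiting the fact that the $\cos\nu^j t$ are bounded and, under $\lambda_s$ and for lacunary $\nu^j$, quasi-independent, to obtain $\lambda_s(\T\setminus K')\leq e^{-cN\delta^2}$. Truncating $h=\lambda\cdot\1_{K'}$ and invoking the Hausdorff--Young inequality $\|\ft\lambda-\ft h\|_{\ell_q}\leq\|\lambda-h\|_{L^p}$ (valid since $p\leq 2$), one can now choose $\delta$ small and $N$ correspondingly large to achieve $\|\ft h-\delta_0\|_{\ell_q}<\eps$. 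Normalization by $\ft h(0)$ and convolution with a small smooth bump of integral $1$ then yield $f$.

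Finally I would carry out the infinite-product argument. Since $A_q$ is not an algebra for $q>2$, the multiplicative bookkeeping of Chapter 1 is unavailable, but it can be replaced by \emph{lacunarity}: choose integers $\nu_j\to\infty$ so rapidly that the frequency bands where the $\ft f_j$'s carry essentially all their $\ell_q$-mass are mutually well separated. Up to tail errors controllable by the smoothness of each $f_j$, the convolutions $\ft f_1*\cdots*\ft f_J$ then behave as if the supports of $\ft f_j-\delta_0$ were literally disjoint, yielding the essentially multiplicative bound
\[
\|\ft F_J\|_{\ell_q}^q \leq \prod_{j\leq J}\bigl(1+\|\ft f_j-\delta_0\|_{\ell_q}^q\bigr) \leq \prod_{j\leq J}(1+\eps_j^q),
\]
which stays bounded whenever $\sum\eps_j^q<\infty$. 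The same reasoning applied to $F_J(f_{J+1}-1)$ shows that $\{F_J\}$ is Cauchy in $A_q$, converging to a nonzero distribution $S$ with $\ft S\in\ell_q$ supported on $K=\bigcap K_j$; with $\nu_j\to\infty$, the first paragraph certifies that $K$ carries no nonzero measure with Fourier coefficients in $\ell_q$. The central obstacle throughout is the one encountered in the second paragraph: balancing an exponentially small $\delta$ against a large $N$ needed for concentration of $\lambda_s$---a balance that the Chebyshev bound of Chapter 1 cannot sustain, but which a sharper probabilistic concentration estimate does.
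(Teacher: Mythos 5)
Your overall architecture matches the paper's, and you have correctly located the two pressure points: the Riesz product has on the order of $3^N$ nonzero coefficients, so a uniform bound $|\ft{\lambda}(n)|<\delta$ is useless in $\ell_q$; and Chebyshev must be upgraded to an exponential concentration inequality. But your proposed resolution of the first difficulty --- keep the Chapter 1 product $\prod(1+2s\cos\nu^j t)$ and take $\delta$ exponentially small in $N$ --- does not close. If $K'=\{|X-1|\leq 3\delta\}$ with $\delta$ of order $3^{-N/q}$, then the Bernstein bound $e^{-cN\delta^2}$ tends to $1$, not to $0$: exponential concentration at scale $\delta$ is only useful when $\delta \gg N^{-1/2}$. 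Widening $K'$ to a window of fixed width does not save you either, because the truncation error must be made smaller than $\delta/\|\rho\|_M$, where $\rho$ is Kahane's measure at parameter $\delta$; Lemma \ref{lemma:kahane-lemma} gives no control of $\|\rho\|_M$ as $\delta\to0$, so you cannot let $\delta$ shrink with $N$ and then choose $N$ large depending on $\|\rho\|_M$ --- the dependencies are circular. The paper breaks the circle by changing the Riesz product itself: $\lambda_s(t)=\prod_{j=1}^N\big(1+2sN^{-1/q}\cos\nu^j t\big)$. The coefficient at the frequency indexed by $\tau$ is then $(sN^{-1/q})^{\sum|\tau_j|}$, so $\sum_{n\neq0}|\ft{\lambda}(n)|^q<\delta^q(1+2/N)^N<e^2\delta^q$, and $\delta$ can be fixed in advance as a function of $\eps$ alone, before $N$ is chosen. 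The price is that $X=N^{-1/p}\sum\cos\nu^j t$ now concentrates around $s\in(\lowers,\uppers)$ rather than around $1$, so the compact is $\{1/100\leq X\leq100\}$; exponential concentration remains indispensable, but for a different purpose than the one you assign it, namely to dominate $\int_{\T\setminus K'}\lambda_s^2$, since $\sup\lambda_s$ can be as large as $\exp(cN^{2/p-1})$ on the bad set (Lemma \ref{lemma:lambda-restriction-lq}).

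Two further remarks. First, because the paper's $X$ is only pinched between $1/100$ and $100$ on $K$ rather than being $\eps$-close to $1$, the exclusion of measures needs a step your sketch omits: the lower bound $\int_K X\,|d\mu|\geq\tfrac1{100}\|\mu\|_{M(K)}$ involves $|\mu|$, which need not belong to $A_q$ when $\mu$ does, so one must first approximate $|\mu|$ in total variation by measures in $A_q\cap M(K)$ (Lemma \ref{lemma:measure-abs-lq-approx}). Second, your lacunarity workaround for the infinite product is unnecessary and, as sketched, far harder to make rigorous than the standard route: although $A_q$ is not an algebra, it is a module over the Wiener algebra $A$, i.e.\ $\|fg\|_{A_q}\leq\|f\|_A\,\|g\|_{A_q}$ by Young's convolution inequality, and since each $f_j$ is smooth the partial products lie in $A$; the Chapter 1 bookkeeping $\|f_1\cdots f_j\|_A\,\eps_{j+1}<2^{-2-j}$ then goes through verbatim.
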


The main property of the compact $K$ in Theorem \ref{thm:piatetski-theorem-lq}
can also be formulated in the language of uniqueness theory. Namely:

\emph{For any $q > 2$ there is a compact $K$ such that
\begin{enumerate-math}
\item
There is a non-zero trigonometric series $\sum c(n) e^{int}$ with coefficients
$\{c(n)\}$ in $\ell_q$, converging to zero everywhere outside $K$.
\item
No Fourier-Stieltjes series may satisfy this property.
\end{enumerate-math}}

The result was published in \cite{lev-olevskii:piatetski}.

\subsection{}
In what follows $q$ is a fixed number, $q > 2$, and let $p = q/(q-1)$ denote the
exponent conjugate of $q$. We denote by $A_q(\T)$ the Banach space of Schwartz
distributions $S$ on the circle, endowed with the norm
$\|S\|_{A_q} := \|\ft{S}\|_{\ell_q}$. We also denote by $M(K)$ the space of finite
(complex) Borel measures supported by $K$, endowed with the total variation norm.


\section{Excluding measures}
We start by presenting a condition, analogous to that of Lemma \ref{lemma:nomeasure-c0},
which implies that a compact $K$ does not support any non-zero measure $\mu \in A_q$.

\begin{lemma}
\label{lemma:nomeasure-lq}
Suppose a compact $K$ on the circle satisfies the following
condition: for any positive integer $\nu$ there is a real
trigonometric polynomial
\[
X(t) = \sum_{|n| \geq \nu} \ft{X}(n) e^{int},
\quad \|\ft{X}\|_p \leq 1, \quad
\text{$\tfrac1{100} \leq X(t) \leq 100$ on $K$}.
\]
Then $K$ does not support a non-zero measure $\mu \in A_q$.
\end{lemma}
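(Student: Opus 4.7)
The plan is to follow the template of Lemma~\ref{lemma:nomeasure-c0}, with $(\ell^1, c_0)$ duality replaced by $(\ell^p, \ell^q)$ duality. Assuming for contradiction that a non-zero $\mu \in A_q$ is supported by $K$, I would bound $|\int PX \, d\mu|$ in two ways, where $X$ is supplied by the hypothesis and $P$ is a carefully chosen auxiliary trigonometric polynomial. The genuine new obstacle compared to the $c_0$ case is that here $X$ is only trapped between $\tfrac{1}{100}$ and $100$ on $K$ (not close to $1$), and $\mu$ is a complex measure, so $\int X \, d\mu$ cannot be compared directly with $\int X \, d|\mu|$.

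The idea is first to twist $\mu$ by its own phase. Write $d\mu = \phi \, d|\mu|$ with $|\phi|=1$ a.e.\ (Radon--Nikodym). Since trigonometric polynomials are dense in $L^1(|\mu|)$ (Weierstrass together with the density of $C(\T)$ in $L^1(|\mu|)$), fix once and for all a trigonometric polynomial $P$ of some degree $M$ with $\|P - \bar\phi\|_{L^1(|\mu|)}$ as small as we please, say less than $\|\mu\|/20000$. Then for any $X$ satisfying the hypothesis with $\nu > M$,
\[
\int_{\T} PX \, d\mu \;=\; \int_{K} X \, d|\mu| \;+\; \int_{K} (P - \bar\phi)\,\phi\, X \, d|\mu|.
\]
The first term is real and at least $\|\mu\|/100$, since $X \geq 1/100$ on $K$. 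The second is bounded in modulus by $100\,\|P-\bar\phi\|_{L^1(|\mu|)} < \|\mu\|/200$, using $X \leq 100$ on $K$. Taking real parts gives $\bigl|\int PX\, d\mu\bigr| \geq \|\mu\|/200$, and this lower bound is uniform in $\nu$ (since both bounds on $X$ over $K$ are).

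For the upper bound, the Fourier support of $PX$ lies in $\{|n|\geq \nu-M\}$, and Young's inequality gives $\|\ft{PX}\|_p \leq \|\ft P\|_1 \|\ft X\|_p \leq \|\ft P\|_1$. Applying H\"older on the high-frequency tail then yields
\[
\Big| \int PX\, d\mu \Big| \;=\; \Big|\sum_{|n|\geq \nu-M} \ft{PX}(n)\,\ft\mu(-n)\Big| \;\leq\; \|\ft P\|_1 \Big(\sum_{|n|\geq \nu-M} |\ft\mu(n)|^q \Big)^{1/q},
\]
which tends to $0$ as $\nu \to \infty$ because $\ft\mu \in \ell^q$. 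This contradicts the uniform lower bound and completes the argument. The single new ingredient compared to Lemma~\ref{lemma:nomeasure-c0} is the phase-matching polynomial $P$; once it is in hand, the proof is a direct transcription of the duality estimate there, with $(\ell^p,\ell^q)$ replacing $(\ell^1, c_0)$.
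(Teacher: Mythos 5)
Your proof is correct and is essentially the paper's argument in a lightly repackaged form: the paper isolates the phase-matching step into a separate lemma (approximating $|\mu|$ in total variation by $\psi\,d\mu$ with $\psi$ a trigonometric polynomial close to the phase $\bar\phi$), and your pairing $\int PX\,d\mu$ is identical to the paper's $\int X\,d\mu_1$ with $d\mu_1 = P\,d\mu$. The two-sided estimate (lower bound from $X \geq \tfrac1{100}$ on $K$ plus the $L^1(|\mu|)$ phase approximation, upper bound from H\"older on the high-frequency tail) matches the paper's step for step.
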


In the proof of Lemma \ref{lemma:nomeasure-lq} we will use:

\begin{lemma}
\label{lemma:measure-abs-lq-approx}
Let $\mu \in A_q$ be a measure supported by a compact $K$. Then the
measure $|\mu|$ belongs to the closure of $A_q \cap M(K)$ in the
$M(K)$ norm.
\end{lemma}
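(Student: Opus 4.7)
The plan is to use the polar decomposition of the measure $\mu$ to write $|\mu|$ as a multiplier of $\mu$, and then to approximate that multiplier by trigonometric polynomials. Concretely, by the Radon--Nikodym theorem there is a Borel function $h$ with $|h| = 1$ everywhere and $d\mu = h\, d|\mu|$. Multiplying by $\bar h$ gives $d|\mu| = \bar h\, d\mu$.

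Given $\varepsilon > 0$, I would approximate $\bar h$ in $L^1(|\mu|)$ by a trigonometric polynomial $P$, so that $\int_K |P - \bar h|\, d|\mu| < \varepsilon$. This is possible in two steps: first, because $|\mu|$ is a finite regular Borel measure on the compact metric space $\T$, the space $C(\T)$ is dense in $L^1(|\mu|)$ (Lusin's theorem); and second, trigonometric polynomials are dense in $C(\T)$ in the sup norm (Fej\'er), hence also in $L^1(|\mu|)$.

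Now consider the measure $\sigma := P\mu$. Since $\mu$ is supported in $K$, so is $\sigma$, giving $\sigma \in M(K)$. For the total variation estimate,
\[
\|\sigma - |\mu|\|_{M(K)} \;=\; \|(P - \bar h)\mu\|_{M(K)} \;=\; \int_K |P - \bar h|\, d|\mu| \;<\; \varepsilon.
\]
It remains to verify $\sigma \in A_q$. The Fourier coefficients satisfy $\ft{\sigma} = \ft{P} * \ft{\mu}$; since $P$ is a trigonometric polynomial, $\ft{P}$ has finite support, so in particular $\ft{P} \in \ell_1(\Z)$. Young's convolution inequality $\ell_1 * \ell_q \subset \ell_q$, together with $\ft{\mu} \in \ell_q$, then yields $\ft{\sigma} \in \ell_q$, i.e.\ $\sigma \in A_q \cap M(K)$.

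I do not foresee a serious obstacle here: the only substantive input is the density of trigonometric polynomials in $L^1(|\mu|)$, which is classical, and the passage from $A_q \ni \mu$ to $A_q \ni P\mu$ via Young's inequality for $\ell_1 * \ell_q$, which is immediate once one writes the Fourier coefficients of a product measure as a convolution. Everything else is bookkeeping around the polar decomposition.
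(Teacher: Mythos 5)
Your proof is correct and follows essentially the same route as the paper: polar decomposition $d|\mu| = \bar h\, d\mu$, approximation of the unimodular density by a trigonometric polynomial (via Lusin and Fej\'er), and multiplication of $\mu$ by that polynomial, which stays in $A_q \cap M(K)$ because convolving $\ft{\mu} \in \ell_q$ with a finitely supported sequence stays in $\ell_q$. The only cosmetic difference is that the paper tracks the Lusin exceptional set and a sup-norm bound explicitly rather than invoking density of $C(\T)$ in $L^1(|\mu|)$ outright.
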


\begin{proof}
There exists a Borel function $\phi: \T \to \C$ with $|\phi(t)|=1$,
such that $|d\mu| = \phi \, d\mu$. Given $\eps > 0$, the theorem of Lusin
(see \cite{rudin}, Theorem 2.24) allows us to choose a continuous function
$\xi: \T \to \C$, $\|\xi\|_{C(\T)} \leq 1$ such that
\[
|\mu| \, \{t \in \T : \phi(t) \neq \xi(t) \} < \eps.
\]
Choose a trigonometric polynomial $\psi$ such that $\|\psi - \xi\|_{C(\T)} < \eps$,
then
\[
|\mu| \, \{t \in \T : |\phi(t) - \psi(t)| > \eps \} < \eps
\]
and $\|\psi\|_{C(\T)} < 2$. Thus the measure $d\mu_1 = \psi \, d\mu$ belongs to
$A_q \cap M(K)$, and
\[
\big\| \; |\mu| -  \mu_1 \; \big\|_{M(K)} = \int_{\T} |\phi - \psi|
\; |d\mu| \leq 3 \, \eps + \eps \, \|\mu\|_{M(K)}.
\qedhere
\]
\end{proof}

\begin{proof}[Proof of Lemma \ref{lemma:nomeasure-lq}]
Suppose that $\mu \in A_q \cap M(K)$. Given $\eps > 0$, by Lemma
\ref{lemma:measure-abs-lq-approx} one can find a measure $\mu_1
\in A_q \cap M(K)$ such that
\[
\big\| \; |\mu| -  \mu_1 \; \big\|_{M(K)} < \eps.
\]
Then
\[
\Big| \int_{\T} X d\mu_1 \Big| = 
\Big| \int_{K} X d\mu_1 \Big| \geq
\int_{K} X |d\mu| - 100 \eps \geq
\tfrac1{100} \|\mu\|_{M(K)} - 100 \eps.
\]
On the other hand,
\[
\Big| \int_{\T} X d\mu_1 \Big| = 
\Big| \sum_{|n| \geq \nu} \ft{X}(n) \ft{\mu}_1(-n) \Big| \leq
\Big(\sum_{|n| \geq \nu} |\ft{\mu}_1(n)|^q\Big)^{1/q} < \eps
\]
for sufficiently large $\nu$. Hence $\mu = 0$.
\end{proof}

\begin{remark}
The reason we need Lemma \ref{lemma:measure-abs-lq-approx} is that given a
measure $\mu \in A_q$, it is \emph{not} necessarily true that also $|\mu|
\in A_q$. One should compare this with the case when instead of $\ell_q$
we have the space $c_0$; by the lemma of Milicer-Gru\u{z}ewska, if the
Fourier coefficients of $\mu$ are tending to zero then also $|\mu|$ has
this property (see \cite{kechris-louveau}, II.5, Lemma 4).
\end{remark}


\section{Probabilistic tools}

\subsection{Exponential concentration}

In Lemma \ref{lemma:lambda-concentration-c0} the Che\-by\-shev inequality
was used to obtain concentration of a Riesz product probability measure.
In the current context a stronger estimate will be needed. We shall
use the following classical exponential estimate of S. N. Bernstein.
For completeness we also provide its proof.

\begin{lemma}
\label{lemma:bernstein-concentration}
Let $X_1, \dots, X_N$ be independent real random variables such that
$\E X_j = 0$ and $|X_j| \leq M_j$ almost surely, for each $1 \leq j
\leq N$. Then
\[
\prob \Big( \big| \sum_{j=1}^{N} X_j \big| > \alpha \Big) \leq
2 \exp \Big( - \tfrac1{2} \, \alpha^2 / \sum M_j^2 \Big).
\]
for every $\alpha > 0$.
\end{lemma}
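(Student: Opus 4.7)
The plan is to use the standard Chernoff exponential moment method. Set $S = \sum_{j=1}^N X_j$ and, for any $\lambda > 0$, apply Markov's inequality to the nonnegative random variable $e^{\lambda S}$ to get
\[
\prob(S > \alpha) \leq e^{-\lambda \alpha}\, \E[e^{\lambda S}] = e^{-\lambda \alpha} \prod_{j=1}^N \E[e^{\lambda X_j}],
\]
where the factorization uses the independence of the $X_j$. The whole problem is then reduced to bounding each moment generating function $\E[e^{\lambda X_j}]$ in terms of $\lambda$ and $M_j$.

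The key technical step (this is the one part that is not pure bookkeeping) is the Hoeffding-type estimate
\[
\E[e^{\lambda X_j}] \leq e^{\lambda^2 M_j^2 / 2}
\quad \text{whenever } \E X_j = 0 \text{ and } |X_j| \leq M_j.
\]
I would prove it by convexity: write $X_j$ as the convex combination $X_j = \tfrac{1}{2}(1 + X_j/M_j) M_j + \tfrac{1}{2}(1 - X_j/M_j)(-M_j)$ and apply the convex function $x \mapsto e^{\lambda x}$ to obtain
\[
e^{\lambda X_j} \leq \tfrac{1}{2}\big(1 + X_j/M_j\big) e^{\lambda M_j} + \tfrac{1}{2}\big(1 - X_j/M_j\big) e^{-\lambda M_j}.
\]
Taking expectations and using $\E X_j = 0$ gives $\E[e^{\lambda X_j}] \leq \cosh(\lambda M_j)$, and comparing Taylor coefficients term by term yields $\cosh(x) \leq e^{x^2/2}$.

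Multiplying these bounds together, the Chernoff step yields
\[
\prob(S > \alpha) \leq \exp\Big( -\lambda \alpha + \tfrac{1}{2}\lambda^2 \sum_{j=1}^N M_j^2 \Big).
\]
Optimizing in $\lambda$ by setting $\lambda = \alpha / \sum M_j^2$ produces the exponent $-\tfrac{1}{2}\alpha^2 / \sum M_j^2$. Applying the same argument to $-X_j$ (which are again zero-mean and bounded by $M_j$) controls $\prob(S < -\alpha)$ by the same quantity, and summing the two one-sided estimates gives the factor of $2$ in the stated bound. The main obstacle, as noted, is really just the Hoeffding lemma on the MGF; the rest is Markov's inequality, independence, and optimization in $\lambda$.
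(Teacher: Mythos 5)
Your proposal is correct and follows essentially the same route as the paper: Markov's inequality applied to $e^{\lambda S}$, the bound $\E e^{\lambda X_j}\leq\cosh(\lambda M_j)\leq e^{\lambda^2 M_j^2/2}$ obtained from convexity of the exponential (your convex-combination identity is exactly the paper's inequality $e^{\lambda x}\leq\cosh\lambda+x\sinh\lambda$), optimization at $\lambda=\alpha/\sum M_j^2$, and a symmetry argument for the two-sided bound. No gaps.
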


\begin{proof}
Suppose first that $X$ is a real random variable, $\E X = 0$ and
$|X| \leq 1$ almost surely. Given any $\lambda \in \R$, by convexity
we have $e^{\lambda x} \leq \cosh \lambda + x \; \sinh \lambda$
for $-1 \leq x \leq 1$. Hence
\begin{equation}
\label{eq:subgaussian-estimate}
\E e^{\lambda X} \leq \cosh \lambda = \sum_{k=0}^{\infty} \frac{\lambda^{2k}}{(2k)!}
\leq \sum_{k=0}^{\infty} \frac{\lambda^{2k}}{2^k k!} = e^{\lambda^2 / 2}.
\end{equation}
Now apply \eqref{eq:subgaussian-estimate} to $M_j^{-1} X_j$ ($1 \leq j \leq N$),
then using Markov's inequality and the independence we get
\[
\begin{aligned}
\prob \Big( \sum_{j=1}^{N} X_j > \alpha \Big)
&= \prob \Big( \prod_{j=1}^{N} e^{\lambda X_j} > e^{\lambda \alpha} \Big)
\leq e^{-\lambda \alpha} \; \E \prod_{j=1}^{N} e^{\lambda X_j}\\	
&= e^{-\lambda \alpha} \; \prod_{j=1}^{N} \E e^{\lambda X_j}
\leq \exp \Big( \tfrac1{2} \, \lambda^2 \sum M_j^2 - \alpha \lambda \Big).
\end{aligned}
\]
Choosing $\lambda = \alpha / \sum M_j^2$ we obtain the one sided estimate
\[
\prob \Big( \sum_{j=1}^{N} X_j > \alpha \Big) \leq
\exp \Big( - \tfrac1{2} \, \alpha^2 / \sum M_j^2 \Big).
\]
The two-sided estimate can now be easily deduced by symmetry.
\end{proof}

It is now easy to pass to the case of non-zero expectation; we omit
the proof.

\begin{lemma}
\label{lemma:bernstein-hoeffding-concentration}
Let $X_1, \dots, X_N$ be independent real random variables such that
$|X_j| \leq M$ almost surely, for each $1 \leq j \leq N$.
Denote $S = \sum_{j=1}^{N} X_j$, then
\begin{equation}
\label{eq:bernstein-hoeffding}
\prob \big\{ |S - \E S| > \alpha \big\} \leq 2 \exp
\Big(- \frac{\alpha^2} {8 M^2 N} \Big)
\end{equation}
for every $\alpha > 0$.
\end{lemma}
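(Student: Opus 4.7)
The plan is to reduce to Lemma \ref{lemma:bernstein-concentration} by centering. Set $Y_j = X_j - \E X_j$ for $1 \leq j \leq N$. Then the $Y_j$'s are independent, $\E Y_j = 0$, and by the triangle inequality
\[
|Y_j| \leq |X_j| + |\E X_j| \leq 2M
\]
almost surely, since $|\E X_j| \leq \E |X_j| \leq M$.

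Now I would apply Lemma \ref{lemma:bernstein-concentration} to the family $Y_1, \dots, Y_N$ with $M_j = 2M$. Since $\sum_{j=1}^N M_j^2 = 4 M^2 N$ and $S - \E S = \sum_{j=1}^N Y_j$, the lemma gives
\[
\prob\bigl\{|S - \E S| > \alpha\bigr\} = \prob\Bigl(\Bigl|\sum_{j=1}^{N} Y_j\Bigr| > \alpha\Bigr) \leq 2 \exp\Bigl(-\tfrac{1}{2}\,\alpha^2/(4M^2 N)\Bigr),
\]
which is exactly the desired bound \eqref{eq:bernstein-hoeffding}. There is essentially no obstacle here; the only point worth flagging is the factor $2$ in the bound $|Y_j|\leq 2M$, which is precisely what produces the constant $8$ in the exponent (as opposed to $2$ in Lemma \ref{lemma:bernstein-concentration}).
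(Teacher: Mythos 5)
Your proof is correct and is exactly the intended argument: the paper omits the proof of this lemma with the remark that it is ``easy to pass to the case of non-zero expectation,'' and your centering reduction (with $|Y_j|\leq 2M$, hence $\sum M_j^2 = 4M^2N$ and the constant $8$ in the exponent) is precisely that passage from Lemma \ref{lemma:bernstein-concentration}.
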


\begin{remark}
The constant $1/8$ in Lemma \ref{lemma:bernstein-hoeffding-concentration}
is not optimal, see Hoeffding \cite[Theorem 2]{hoeffding}. For further results
in this context we refer to Petrov \cite{petrov}.
\end{remark}



\subsection{Stochastic independence}

Let $g$ denote a real $2\pi$-periodic function on $\R$. Given such
$g$, consider the system of functions
\[
\{g(\nu^j t)\}_{j=1}^{N}
\]
on the circle $\T$. We are interested in stochastic properties of
this system.

\begin{lemma}
\label{lemma:stochastic-independence-lebesgue}
Suppose that $g$ is constant on each interval
\begin{equation}
\label{eq:nu-intervals}
I^{(\nu)}_k = \big(\tfrac{2 \pi (k-1)}{\nu}, \tfrac{2 \pi k}{\nu}
\big), \quad 1 \leq k \leq \nu.
\end{equation}
Then the system $\{g(\nu^j t)\}_{j=1}^{N}$ is stochastically independent
on the circle $\T$, with respect to the Lebesgue measure.
\end{lemma}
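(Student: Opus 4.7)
The plan is to exploit the $\nu$-adic structure of Lebesgue measure on $\T$. Writing $t \in [0, 2\pi)$ in base $\nu$ as
\[
t = 2\pi \sum_{k=1}^{\infty} \eps_k \nu^{-k}, \qquad \eps_k \in \{0, 1, \dots, \nu-1\},
\]
one knows (and it is the only nontrivial probabilistic fact I need) that under Lebesgue measure the digits $\{\eps_k\}$ are a sequence of i.i.d.\ random variables, each uniform on $\{0, 1, \dots, \nu-1\}$. The expansion is unique outside a countable, hence null, set of $\nu$-adic rationals.

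The key computation is to express $g(\nu^j t)$ in terms of the digit sequence. For $j \geq 1$ one has
\[
\nu^j t = 2\pi \sum_{k=1}^{j} \eps_k \nu^{j-k} + 2\pi \sum_{m=1}^{\infty} \eps_{j+m} \nu^{-m},
\]
so modulo $2\pi$ we obtain $\nu^j t \equiv 2\pi \sum_{m \geq 1} \eps_{j+m} \nu^{-m}$. This fractional part lies in the interval $I_{\eps_{j+1}+1}^{(\nu)}$ whenever the tail does not collapse to a $\nu$-adic rational, i.e.\ almost surely. Since $g$ is constant on each $I_k^{(\nu)}$, this gives $g(\nu^j t) = h(\eps_{j+1})$ almost everywhere, where $h \colon \{0,\dots,\nu-1\} \to \R$ records the $\nu$ constant values of $g$.

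From here the independence is immediate: the random variables $g(\nu t), g(\nu^2 t), \dots, g(\nu^N t)$ are, a.e., measurable functions of the disjoint single-digit coordinates $\eps_2, \eps_3, \dots, \eps_{N+1}$, and these coordinates are independent. The only points requiring care are the boundary points of the intervals $I_k^{(\nu)}$ and the ambiguous $\nu$-adic rationals; both form countable, hence Lebesgue-null, sets and therefore do not affect the joint distribution. I do not expect any real obstacle in this argument—the content of the lemma is precisely the well-known fact that the base-$\nu$ shift is a Bernoulli shift, together with the observation that, under the hypothesis on $g$, the function $g(\nu^j \cdot)$ is a function of a single digit.
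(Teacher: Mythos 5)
Your argument is correct. It differs in packaging from the paper's proof: you reduce the lemma to the standard fact that the base-$\nu$ digits of $t/2\pi$ are i.i.d.\ uniform under Lebesgue measure, and then observe that the hypothesis on $g$ makes $g(\nu^j t)$ a function of the single digit $\eps_{j+1}$ (almost everywhere, after discarding the countable set of $\nu$-adic rationals and interval endpoints). The paper instead gives a short self-contained induction: each $g(\nu^j t)$ with $j \leq N-1$ is constant on every interval $\big(2\pi(k-1)\nu^{-N}, 2\pi k \nu^{-N}\big)$, while $g(\nu^N t)$ has the same distribution on each such interval, so the last variable is independent of the first $N-1$; one then descends by induction. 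The two arguments exploit exactly the same self-similarity of the $\nu$-adic partition under $t \mapsto \nu t$; yours makes the Bernoulli-shift structure explicit and outsources the probabilistic content to a cited fact, which is conceptually cleaner, whereas the paper's induction proves what it needs from scratch without mentioning digit expansions. Your bookkeeping of the exceptional null sets is adequate, so there is no gap.
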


\begin{proof}
Each of the functions $g(\nu^j t)$, $1 \leq j \leq N-1$, is constant on
each interval $\big(2 \pi (k-1) \nu^{-N}, 2 \pi k \nu^{-N}\big)$, where
$1 \leq k \leq \nu^N$. On the other hand, the function
$g(\nu^N t)$ has the same distribution, with respect to the Lebesgue
measure, on each one of these intervals. To finish the proof one 
continues by induction.
\end{proof}

We next consider stochastic independence of the system
$\{g(\nu^j t)\}_{j=1}^{N}$ not with respect to the Lebesgue measure,
but rather with respect to a certain Riesz product type measure based
on $g$. Precisely, consider the measure
\begin{equation}
\label{eq:g-product-measure}
\prod_{j=1}^{N} \big(1 + r_j \, g(\nu^j t)\big) \,
\frac{dt}{2\pi}, \qquad -1 < r_j < 1.
\end{equation}
We assume that $g$ is constant on each interval
\eqref{eq:nu-intervals}, and satisfies the conditions
\begin{equation}
\label{eq:g-conditions}
-1 \leq g(t) \leq 1, \quad \int_{0}^{2\pi} g(t) \, dt = 0.
\end{equation}
The measure \eqref{eq:g-product-measure} is therefore positive,
and using \eqref{eq:g-conditions} and Lemma
\ref{lemma:stochastic-independence-lebesgue},
\[
\int_{\T} \; \prod_{j=1}^{N} \big(1 + r_j \, g(\nu^j t)\big) \;
\frac{dt}{2\pi} =
\prod_{j=1}^{N} \int_{\T} \big(1 + r_j \, g(\nu^j t)\big) \,
\frac{dt}{2\pi} = 1.
\]
That is, \eqref{eq:g-product-measure} defines a probability
measure on the circle $\T$. We now have

\begin{lemma}
\label{lemma:stochastic-independence-product}
Let $g$ be constant on each interval \eqref{eq:nu-intervals}
and satisfy \eqref{eq:g-conditions}. Then the system
$\{g(\nu^j t)\}_{j=1}^{N}$ is stochastically independent on the
circle $\T$ with respect to the probability measure
\eqref{eq:g-product-measure}.
\end{lemma}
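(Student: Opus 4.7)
The plan is to reduce independence with respect to the Riesz-type measure \eqref{eq:g-product-measure} directly to independence with respect to Lebesgue measure (Lemma \ref{lemma:stochastic-independence-lebesgue}), exploiting the fact that the density factors into functions of the single random variables $g(\nu^j t)$.

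Concretely, to verify independence it suffices to show that for every choice of bounded Borel functions $\phi_1,\dots,\phi_N$ on $[-1,1]$,
\[
\int_{\T}\prod_{j=1}^{N}\phi_j\!\bigl(g(\nu^j t)\bigr)\,d\mu(t)
=\prod_{j=1}^{N}\int_{\T}\phi_j\!\bigl(g(\nu^j t)\bigr)\,d\mu(t),
\]
where $d\mu$ denotes the measure \eqref{eq:g-product-measure}. The first key step is to absorb the density into the test functions: set $\psi_j(x):=\phi_j(x)(1+r_j x)$, so that the integrand on the left equals $\prod_{j=1}^{N}\psi_j\!\bigl(g(\nu^j t)\bigr)\,dt/2\pi$. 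Since each $\psi_j\!\bigl(g(\nu^j t)\bigr)$ is a (bounded measurable) function of $g(\nu^j t)$ only, Lemma \ref{lemma:stochastic-independence-lebesgue} applies and the Lebesgue integral splits as $\prod_{j=1}^{N}\int_{\T}\psi_j(g(\nu^j t))\,dt/2\pi$.

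The second step is to evaluate a single marginal $\int\phi_j(g(\nu^j t))\,d\mu$ by the same trick: pull out the factor $(1+r_j g(\nu^j t))$ and treat each other factor $(1+r_k g(\nu^k t))$ separately. Lemma \ref{lemma:stochastic-independence-lebesgue} again produces a product of one-dimensional Lebesgue integrals, and the hypothesis $\int_0^{2\pi} g=0$ makes each auxiliary factor $\int(1+r_k g(\nu^k t))\,dt/2\pi$ equal to $1$. Thus
\[
\int_{\T}\phi_j\!\bigl(g(\nu^j t)\bigr)\,d\mu
=\int_{\T}\psi_j\!\bigl(g(\nu^j t)\bigr)\,\frac{dt}{2\pi}.
\]
Combining the two computations gives the required factorization.

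There is no genuine obstacle here; the whole argument is a two-line manipulation once one notices that the density \eqref{eq:g-product-measure} factorizes variable-by-variable into the same functional blocks as the test family. The one point that must be stated explicitly is why the hypothesis $\int g=0$ enters: it is precisely what guarantees $\int d\mu=1$ and, at the same time, that integrating out any single coordinate against the remaining density factors returns $1$, so that the marginals of $\mu$ and the corresponding weighted Lebesgue marginals agree.
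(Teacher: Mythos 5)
Your proof is correct and follows essentially the same route as the paper: the key step in both is to absorb the density of \eqref{eq:g-product-measure} coordinate-by-coordinate into the test functions and then invoke Lemma \ref{lemma:stochastic-independence-lebesgue} to split the Lebesgue integral. The only (cosmetic) difference is that the paper specializes to the test functions $e^{i\xi_j x}$ and concludes via factorization of the joint characteristic function, whereas you work with arbitrary bounded Borel test functions and verify the identification of the marginals directly, using $\int g = 0$ exactly where the paper does implicitly.
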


\begin{proof}
Consider $Y_j(t) = g(\nu^j t)$ as random variables with respect to the
probability measure \eqref{eq:g-product-measure}. Let $\xi_1, \dots, \xi_N
\in \R$ be given. Using the independence with respect to the Lebesgue measure
as in Lemma \ref{lemma:stochastic-independence-lebesgue}, one has
\[
\begin{aligned}
\E \exp \Big( i \sum_{j=1}^{N} \xi_j Y_j \Big)
&= \int_{\T} \prod_{j=1}^{N} 
\big(1 + 2 \, r_j \, g(\nu^j t) \big) \;
e^{i \xi_j g(\nu^j t)} \; \frac{dt}{2\pi}\\
&= \prod_{j=1}^{N} \int_{\T}
\big(1 + 2 \, r_j \, g(t) \big) \;
e^{i \xi_j g(t)} \; \frac{dt}{2\pi}.
\end{aligned}
\]
Hence, denoting $\varphi_j(\xi) = \int_{\T} \big(1 + 2 \, r_j \, g(t) \big)
\, e^{i \xi g(t)} \, \frac{dt}{2\pi}$ $(1 \leq j \leq N)$, it is seen that
\[
\E \exp \Big( i \sum_{j=1}^{N} \xi_j Y_j \Big)
= \varphi_1(\xi_1) \, \varphi_2(\xi_2) \, \cdots \, \varphi_N(\xi_N).
\]
By properties of the Fourier transform, we conclude that the
distribution in $\R^N$ of $(Y_1, \dots, Y_N)$ is a product
measure, which proves the independence.
\end{proof}


\section{Riesz products}

\subsection{Notation}
Define a Riesz product
\[
\lambda_s(t) = \prod_{j=1}^{N} \big(1 +
2 s  N^{-1/q} \cos \nu^j t \big).
\]
Here we assume that $\lowers < s < \uppers$ and $\nu \geq 3$. Expanding
the product gives
\begin{equation}
\label{eq:lambda-trig-expansion-lq}
\lambda_s(t) = 1 + \sum_{\tau \neq 0} (s N^{-1/q})^{\sum
|\tau_j|} \, e^{i (\tau_1 \nu + \tau_2 \nu^2 + \dots + \tau_N \nu^N) t},
\end{equation}
where the sum is over all non-zero vectors
\[
\tau = (\tau_1, \dots, \tau_N), \quad \tau_j \in \{-1,0,1\}.
\]
As before, the frequencies in the sum \eqref{eq:lambda-trig-expansion-lq}
are distinct.


\subsection{``Almost independence''}
As before, we also use $\lambda_s$ to denote the probability measure
$\lambda_s(t) dt / 2\pi$ on the circle. Define a trigonometric
polynomial
\begin{equation}
\label{eq:x-cosines-lq}
X(t) = N^{-1/p} \sum_{j=1}^{N} \cos \nu^j t.
\end{equation}

If $\nu$ is sufficiently large, the members of the polynomial $X$
are ``almost independent'' with respect to the measure $\lambda_s$.
Precisely, we can prove the following exponential concentration estimate.

%

\begin{lemma}
\label{lemma:lambda-concentration-lq}
Being given $N$, for $\nu \geq \nu(N)$ one has 
\begin{equation}
\label{eq:lambda-concentration-lq}
\lambda_s \{t \in \T : |X(t)-s| > \alpha\} 
\leq 3 \exp\big(- \tfrac1{8} \alpha^2 N^{2/p \; - \; 1} \big)
\end{equation}
for every $\alpha > 0$ and every $\lowers < s < \uppers$.
\end{lemma}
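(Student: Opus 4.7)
The plan is to exploit the near-independence of the cosines $\cos \nu^j t$ under $\lambda_s$ by passing to a close auxiliary system for which the independence is genuine, apply the Bernstein--Hoeffding inequality (Lemma~\ref{lemma:bernstein-hoeffding-concentration}) in that setting, and then transfer the bound back to $\lambda_s$ and $X$ at the cost of perturbations that can be controlled by taking $\nu$ large relative to $N$.

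Fix $N$ and let $g = g_\nu$ be the step function equal on each interval $I^{(\nu)}_k$ from~\eqref{eq:nu-intervals} to the mean of $\cos$ on that interval. Then $|g|\leq 1$, $\int_{0}^{2\pi} g\,dt = 0$, and $\|g-\cos\|_\infty = O(1/\nu)$. Introduce the auxiliary probability measure
\[
\tilde\lambda_s(t) = \prod_{j=1}^{N}\bigl(1+2sN^{-1/q}\,g(\nu^j t)\bigr)\,\frac{dt}{2\pi}
\]
and the auxiliary step function $\tilde X(t) = N^{-1/p}\sum_{j=1}^{N} g(\nu^j t)$. By Lemma~\ref{lemma:stochastic-independence-product} the variables $Y_j(t) = g(\nu^j t)$ are genuinely independent under $\tilde\lambda_s$, with common expectation $2sN^{-1/q}(2\pi)^{-1}\int_0^{2\pi}g^2\,dt = sN^{-1/q}(1+o(1))$ as $\nu \to \infty$; hence $\E_{\tilde\lambda_s}[\tilde X] = s(1+o(1))$ uniformly in $s \in (\lowers,\uppers)$. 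Since $|Y_j|\leq 1$, Lemma~\ref{lemma:bernstein-hoeffding-concentration} applied to the summands $N^{-1/p} Y_j$, each bounded by $N^{-1/p}$, yields
\[
\tilde\lambda_s\bigl\{|\tilde X - \E_{\tilde\lambda_s}\tilde X| > \beta\bigr\} \leq 2\exp\bigl(-\beta^2 N^{2/p-1}/8\bigr)
\]
for every $\beta > 0$.

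To transfer this to $\lambda_s$ and $X$, I would establish three approximations, each forced below any prescribed $\eta > 0$ by choosing $\nu \geq \nu(N)$ sufficiently large: (i) $\|X - \tilde X\|_\infty \leq N^{1-1/p}\|g-\cos\|_\infty = O(N^{1/q}/\nu)$; (ii) $|\E_{\tilde\lambda_s}\tilde X - s| = O(1/\nu)$; and (iii) $\sup_t |\lambda_s(t)/\tilde\lambda_s(t) - 1| = O(N^{1/p}/\nu)$, obtained by writing each factor ratio as $1 + O(N^{-1/q}/\nu)$ (the denominator is bounded below by $1 - 2sN^{-1/q} \geq 1/3$) and summing $N$ logarithms. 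Combining the inclusions $\{|X-s|>\alpha\} \subseteq \{|\tilde X - \E_{\tilde\lambda_s}\tilde X| > \alpha - 2\eta\}$ with the multiplicative measure comparison gives
\[
\lambda_s\{|X - s| > \alpha\} \leq (1+\eta)\cdot 2\exp\bigl(-(\alpha-2\eta)^2 N^{2/p-1}/8\bigr),
\]
which is at most $3\exp(-\alpha^2 N^{2/p-1}/8)$ once $\eta$ is small; the case of small $\alpha$, where the right-hand side already exceeds $1$, is trivial.

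The main obstacle is estimate (iii): the perturbation between the two Riesz products accumulates over $N$ factors, so $\nu$ must be taken at least of order $N^{1/p}$ times a large constant. This is admissible since the lemma allows the threshold $\nu(N)$ to be chosen freely, but it is the binding constraint of the argument; the other two estimates are comparatively benign.
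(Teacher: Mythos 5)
Your proposal is correct and follows essentially the same route as the paper's proof: replace $\cos$ by its conditional expectation on the intervals $I^{(\nu)}_k$, use the exact independence of the dilated step functions under the corresponding Riesz-product measure (Lemma \ref{lemma:stochastic-independence-product}) together with the Bernstein--Hoeffding bound, and transfer back via the three perturbation estimates you list. The one point to add is that your final absorption of $(\alpha-2\eta)^2$ into $\alpha^2$ cannot hold uniformly for arbitrarily large $\alpha$ at fixed $\eta$; as the paper observes, one must also note that the left-hand side of \eqref{eq:lambda-concentration-lq} vanishes when $\alpha > N^{1/q}+1$ (since $|X(t)|\le N^{1/q}$ and $s<\uppers$), so that only a bounded range of $\alpha$, depending on $N$ alone, actually needs the comparison.
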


\begin{proof}
Define a $2\pi$-periodic function $g$, constant on each interval
\eqref{eq:nu-intervals}, by the requirement
\[
\int_{I^{(\nu)}_k} g(t) \; dt = 
\int_{I^{(\nu)}_k} \cos t \; dt.
\]
Then automatically also \eqref{eq:g-conditions} is satisfied.
Denote $\delta = \max_{t \in \T} |g(t)-\cos t|$, then clearly
$\delta \to 0$ as $\nu \to \infty$. Consider the probability measure
\[
\gamma_s(t) = \prod_{j=1}^{N} \big(1 + 2 s N^{-1/q} \,
g(\nu^j t)\big) \; \frac{dt}{2 \pi}.
\]
Denote $Y = N^{-1/p} \sum_{j=1}^{N} g(\nu^j t)$, and let
$\eta = \int_{\T} Y(t)\, \gamma_s(t) \, \frac{dt}{2\pi}$
be the expectation of $Y$ with respect to $\gamma_s$.
Using Lemma \ref{lemma:stochastic-independence-lebesgue}
and the properties of the function $g$, one can compute that
\[
\eta = 2 s \int_{\T} g(t)^2 \, \frac{dt}{2\pi}.
\]
We now use the following fact: the function $g(t)$, viewed as an element
of $L^2(\T)$, is obtain by orthogonal projection of $\cos t$ on the subspace
of functions constant on each interval \eqref{eq:nu-intervals}. Thus, by
the Pythagorean theorem
\[
s - \eta = 2s \int_{\T} \cos^2 t \; \frac{dt}{2\pi}
- 2s \int_{\T} g(t)^2 \; \frac{dt}{2\pi}
= 2s \int_{\T} (\cos t - g(t))^2 \; \frac{dt}{2\pi} \, ,
\]
which implies $|\eta - s| \leq \delta^2$. Since we also have $|X(t) - Y(t)| \leq 
\delta \, N^{1/q}$, we obtain
\[
|X(t) - s| \leq |Y(t) - \eta| + \delta \, N^{1/q} + \delta^2.
\]
Also, for $\lowers < s < \uppers$ we have
\[
\begin{aligned}
\frac{1 + 2 \, s \, N^{-1/q} \, \cos t}{1 + 2 \, s \, N^{-1/q} \, g(t)}
&= 1 + \frac{2 \, s \, N^{-1/q} \, (\cos t - g(t))}{1 + 2 \, s \, N^{-1/q} \, g(t)}\\
&\leq \exp \Big( \frac{2 \, s \, N^{-1/q} \, \delta}{1 - 2 \, s \, N^{-1/q}} \Big)
\leq \exp (2 \, \delta \, N^{-1/q}),
\end{aligned}
\]
hence $\lambda_s(t) \leq \gamma_s(t) \exp(2 \, \delta \, N^{1/p})$.
Setting
\[
\eps = \max \left\{\exp(2 \delta N^{-1/q}) - 1, \; \delta \, N^{1/q} + \delta^2 \right\},
\]
we have therefore proved that
\begin{equation}
\label{eq:step-approx-estimates}
|X(t) - s| \leq |Y(t) - \eta| + \eps \quad \text{and} \quad
\lambda_s(t) \leq (1 + \eps) \, \gamma_s(t).
\end{equation}
Now using \eqref{eq:step-approx-estimates}, Lemma
\ref{lemma:stochastic-independence-product}
and Lemma \ref{lemma:bernstein-hoeffding-concentration} we deduce the estimate
\begin{align}
\lambda_s \{t : |X(t) - s| > \alpha\} &\leq
(1 + \eps) \, \gamma_s \{t : |Y(t) - \eta| > \alpha - \eps \}\notag\\
\label{eq:step-approx-probability}
& \leq 2 (1 + \eps) \exp\big(- \tfrac1{8} (\alpha - \eps)^2 N^{2/p \; - \; 1} \big).
\end{align}

Note that, since $|X(t)| \leq N^{1/q}$ and $\lowers < s < \uppers$, the left side of
\eqref{eq:lambda-concentration-lq} is zero if $\alpha > N^{1/q} + 1$. It is
therefore enough to obtain \eqref{eq:lambda-concentration-lq} for $\alpha \leq
N^{1/q} + 1$. However, a straightforward calculation shows that this will follows
from \eqref{eq:step-approx-probability} if $\eps$ is chosen sufficiently small, in
a way which depends only on $N$. Since choosing $\nu$ sufficiently large will make
$\eps$ arbitrarily small, this proves the claim.
\end{proof}

\subsection{Concentration in $L^2$}

We use the probabilistic exponential concentration to prove:

\begin{lemma}
\label{lemma:lambda-restriction-lq}
Denote
\[
K' = \big\{ t \in \T \; : \; \tfrac1{90} \leq X(t) \leq
90 \big\}.
\]
Given $\delta > 0$, for any $N \geq N(\delta)$ and any
$\nu \geq \nu(N)$ one has
\[
\int_{\T \setminus K'} \lambda_s^2(t) \; \frac{dt}{2\pi} < \delta
\]
for every $\lowers < s < \uppers$.
\end{lemma}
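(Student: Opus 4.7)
The plan is an exponential-Markov (Chernoff) argument applied directly to the measure $\lambda_s^2\,dt/2\pi$. The key observation is that under this weight the ``centre of mass'' of $X$ is $2s$ rather than $s$, since squaring the Riesz product doubles each cosine coefficient to leading order, and for $s \in (\lowers, \uppers)$ the value $2s \in (\tfrac{1}{2}, \tfrac{2}{3})$ sits comfortably inside $[1/90, 90]$ with generous margins on both sides. I split $\T \setminus K' = \{X < 1/90\} \cup \{X > 90\}$; the second set contributes negligibly since $90 - 2s$ is enormous, so I focus on the dominant first set. For any $\xi > 0$ the pointwise bound $\1_{\{X < 1/90\}} \leq e^{\xi(1/90 - X)}$ gives
\[
\int_{\{X < 1/90\}} \lambda_s^2\,\frac{dt}{2\pi} \;\leq\; e^{\xi/90}\int_\T \lambda_s^2(t)\,e^{-\xi X(t)}\,\frac{dt}{2\pi}.
\]

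Next I almost-factorize the right-hand integral. Its integrand is $\prod_{j=1}^N \Psi(\nu^j t)$ with $\Psi(\theta) = (1 + 2sN^{-1/q}\cos\theta)^2 e^{-\xi N^{-1/p}\cos\theta}$; for $\nu \geq \nu(N)$ large the piecewise-constant approximation device used in Lemma~\ref{lemma:lambda-concentration-lq} (approximate $\Psi$ by a step function constant on each $I_k^{(\nu)}$, then invoke Lemma~\ref{lemma:stochastic-independence-lebesgue}) reduces it to $(1+o(1))\,F(\xi)^N$, where $F(\xi) = \int_\T \Psi(\theta)\,d\theta/(2\pi)$. Writing $u = 2sN^{-1/q}$, $v = \xi N^{-1/p}$ and expanding the Bessel integrals defining $F$, a routine Taylor estimate yields $\log F(\xi) \leq -uv + v^2/4 + u^2/2 + \text{(lower order)}$, and summing $N$ copies gives
\[
\log \!\int_\T \lambda_s^2\, e^{-\xi X}\,\frac{dt}{2\pi} \;\leq\; -2s\xi + \frac{\xi^2}{4\,N^{1-2/q}} + 2s^2 N^{1-2/q} + o\bigl(N^{1-2/q}\bigr).
\]

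Including the $e^{\xi/90}$ prefactor and choosing the minimizer $\xi = 2(2s - 1/90)\,N^{1-2/q}$, one arrives at
\[
\int_{\{X<1/90\}} \lambda_s^2\,\frac{dt}{2\pi} \;\leq\; (1+o(1))\exp\!\Big(\bigl(2s^2 - (2s-1/90)^2\bigr) N^{1-2/q}\Big).
\]
The exponent is $-\eta_s N^{1-2/q}$ with $\eta_s = (2s-1/90)^2 - 2s^2 = 2s^2 - 4s/90 + 1/8100$, which is strictly positive as soon as $s > 1/45$, hence bounded below by a positive constant $\eta$ uniformly for $s \in [\lowers, \uppers]$; so the bound decays like $e^{-\eta N^{1-2/q}} \to 0$. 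Given $\delta > 0$, pick $N(\delta)$ with $e^{-\eta N^{1-2/q}} < \delta/2$, then $\nu(N)$ large enough that the factorization error and the sub-leading Taylor terms together contribute less than $\delta/2$. The main obstacle lies in the almost-factorization step: squaring $\lambda_s$ introduces a $\cos 2\theta$ harmonic in each factor of $\Psi$, so the piecewise-constant approximation from Lemma~\ref{lemma:lambda-concentration-lq} must be refined to handle $\cos\theta$, $\cos 2\theta$, and the extra exponential weight simultaneously, with the total error bound depending only on $\nu$ once $N$ and $\xi$ are fixed.
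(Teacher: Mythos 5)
Your argument is correct in outline but takes a genuinely different route from the paper. The paper never computes exponential moments of $X$ under the squared measure: it writes $\int_{B}\lambda_s^2 \leq \big(\int_{B}\lambda_s\big)\cdot\sup_{B}\lambda_s$ for each bad region $B$, bounds the first factor by the already-proven concentration estimate of Lemma \ref{lemma:lambda-concentration-lq} (applied to the \emph{probability} measure $\lambda_s\,dt/2\pi$), and bounds the second factor by the elementary pointwise inequality $\lambda_s(t)\leq\exp(2sN^{2/p-1}X(t))$; the upper tail $\{X>90\}$ is then handled by slicing into layers $\{k<X\leq k+1\}$, $90\leq k\leq N^{1/q}$, where the sup bound grows linearly in $k$ but the probability decays quadratically. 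Your Chernoff bound applied directly to $\lambda_s^2\,dt/2\pi$ is cleaner conceptually and your computations check out: the tilted mean is indeed $2s+o(1)$, the normalization $\int\lambda_s^2\,dt/2\pi=(1+2s^2N^{-2/q})^N$ accounts exactly for the $2s^2N^{1-2/q}$ term, and the optimized exponent $\big(2s^2-(2s-\tfrac1{90})^2\big)N^{1-2/q}$ is uniformly negative on $\lowers<s<\uppers$. What the paper's route buys is that all the delicate step-function/independence machinery is confined to Lemma \ref{lemma:lambda-concentration-lq}, which is then reused as a black box; what your route buys is a single unified tail bound with better constants. The price you pay is concentrated in the step you correctly flag as the main obstacle: the almost-factorization of $\int\prod_j\Psi(\nu^jt)\,dt/2\pi$ must be re-derived for a factor containing $\cos\theta$, $\cos2\theta$ \emph{and} the exponential tilt, with a multiplicative error of the form $(1+\eps)^N$ controlled by taking $\nu\geq\nu(N)$ (uniformly in $s$ over the compact range, which forces $\xi$ and hence the tilt through a compact set as well). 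This is fillable by the same ratio-comparison used in the paper's proof of Lemma \ref{lemma:lambda-concentration-lq}, but it is real work that your sketch defers; likewise the upper tail $\{X>90\}$, which you dismiss as negligible, does need its own (symmetric, easy) Chernoff bound with weight $e^{+\xi X}$, since $\lambda_s^2$ is largest precisely where $X$ is large.
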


\begin{proof}
We have
\[
\lambda_s(t) = \prod_{j=1}^{N} \big(1 +  2 \, s \, N^{-1/q} \cos
\nu^j t \big) \leq \exp \Big( 2s \, N^{-1/q} \sum_{j=1}^{N}
\cos \nu^j t \Big),
\]
so using \eqref{eq:x-cosines-lq} we obtain an estimate
\begin{equation}
\label{eq:lambda-max-lq}
\lambda_s(t) \leq \exp \left( 2 \, s \, N^{2/p \; - \; 1} \, X(t)
\right).
\end{equation}
Now Lemma \ref{lemma:lambda-concentration-lq} implies that 
\eqref{eq:lambda-concentration-lq} holds for any $\nu \geq \nu(N)$.
Combining with \eqref{eq:lambda-max-lq}, it follows that for
every $\lowers < s < \uppers$
\[
\begin{aligned}
\int_{\big\{t \; : \; X(t) < \frac1{90}\big\}} \lambda_s^2(t) \; \frac{dt} {2\pi}
&\leq \int_{\big\{t \; : \; X(t) < \frac1{90}\big\}} \lambda_s(t) \;
\frac{dt}{2\pi} \cdot \max_{\big\{t \; : \; X(t) < \frac1{90}\big\}}
\lambda_s(t)\\[4pt]
&\leq 3 \exp \left( -\tfrac1{8}(s - \tfrac1{90})^2 N^{2/p \; - \; 1}
\right) \cdot \exp \left(2 \, s \, N^{2/p \; - \; 1} \cdot \tfrac{1}{90}
\right)\\[6pt]
&\leq 3 \exp \left(- 2^{-10} N^{2/p \; - \; 1} \right),
\end{aligned}
\]
and, for any integer $90 \leq k \leq N^{1/q}$,
\[
\begin{aligned}
&\int_{\big\{t \; : \; k < X(t) \leq k+1\big\}} \lambda_s^2(t) \;
\frac{dt}{2\pi}
\leq \int_{\big\{t \; : \; k < X(t) \leq k+1\big\}} \lambda_s(t) \;
\frac{dt}{2\pi}
\cdot \max_{\big\{t \; : \; k < X(t) \leq k+1\big\}} \lambda_s(t)\\[4pt]
&\qquad \qquad \leq 3 \exp \left( -\tfrac1{8}(k - s)^2 N^{2/p \; - \; 1} \right) \cdot
\exp \left(2 \, s \, N^{2/p \; - \; 1} (k+1) \right)\\[6pt]
&\qquad \qquad \leq 3 \exp \left(- \, N^{2/p \; - \; 1} \right).
\end{aligned}
\]
Hence, keeping in mind that $X(t) \leq N^{1/q}$ for every $t$,
we obtain for $\nu \geq \nu(N)$
\[
\int_{\T \setminus K'} \lambda_s^2(t) \; \frac{dt}{2\pi}
\leq 3 \exp \left(- 2^{-10} N^{2/p \; - \; 1} \right) +
3 N^{1/q} \exp \left(- N^{2/p \; - \; 1} \right)
\xrightarrow[N \to \infty]{} 0
\]
uniformly in $s$.
\end{proof}


\section{Proof of main result}

In this section we finish the proof of Theorem \ref{thm:piatetski-theorem-lq}.

\subsection{}
We start with an analog of Lemma \ref{lemma:principal-c0}.

\begin{lemma}
\label{lemma:principal-lq}
Let $\eps > 0$. For $N \geq N(\eps)$ and $\nu \geq \nu(N)$, 
there exist a $C^\infty$ function $f: \T \to \C$ such that
\begin{enumerate-math}
\item
$f$ is supported by
\begin{equation}
\label{eq:k-lq}
K = \big\{ t \in \T \; : \; \tfrac1{100} \leq X(t) \leq 100\big\},
\end{equation}
where $X$ is defined by \eqref{eq:x-cosines-lq}.
\item
$f(t) = 1 + \sum_{n \neq 0} \ft{f}(n)e^{int}$,
where $\Big( \sum_{n \neq 0} |\ft{f}(n)|^q \Big)^{1/q} < \eps$.
\end{enumerate-math}
\end{lemma}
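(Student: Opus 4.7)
The plan is to parallel Lemma \ref{lemma:principal-c0}, but with the Kahane measure supported on the interval $(\lowers, \uppers)$ and all $\ell_\infty$-estimates replaced by $\ell_q$-estimates. Given $\delta > 0$ (to be specified later in terms of $\eps$), I would use Lemma \ref{lemma:kahane-lemma} to pick a signed measure $\rho$ supported by finitely many points of $(\lowers,\uppers)$ with $\int d\rho = 1$ and $\bigl|\int s^k\, d\rho(s)\bigr| < \delta$ for every $k \geq 1$, and set $\lambda(t) = \int \lambda_s(t)\, d\rho(s)$. From \eqref{eq:lambda-trig-expansion-lq} the Fourier coefficient of $\lambda$ at a non-zero frequency $n = \sum \tau_j \nu^j$ equals $N^{-|\tau|/q} \int s^{|\tau|}\, d\rho(s)$ with $|\tau| = \sum |\tau_j|$. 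Counting the $\binom{N}{k}2^k$ vectors $\tau$ of weight $k$ gives
\[
\sum_{n \neq 0} |\ft{\lambda}(n)|^q \leq \delta^q \sum_{k=1}^{N} \binom{N}{k}(2/N)^k \leq \delta^q (e^2 - 1)
\]
uniformly in $N$, which already produces the main $\ell_q$-bound.

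Next I cut $\lambda$ off to the concentration set: let $h = \lambda \cdot \1_{K'}$ with $K'$ as in Lemma \ref{lemma:lambda-restriction-lq}. Since $p < 2$, Hausdorff--Young together with the embedding $L^2 \subset L^p$ on the probability space $\T$ gives $\|\ft{\lambda}-\ft{h}\|_q \leq \|\lambda-h\|_2$, and Cauchy--Schwarz in the $\rho$-integral gives $\lambda(t)^2 \leq \|\rho\|_M \int \lambda_s(t)^2\, d|\rho|(s)$. Hence
\[
\|\lambda-h\|_2^2 = \int_{\T \setminus K'} \lambda^2\, \frac{dt}{2\pi} \leq \|\rho\|_M^2 \sup_{s \in (\lowers,\uppers)} \int_{\T \setminus K'} \lambda_s^2\, \frac{dt}{2\pi},
\]
which is made smaller than any prescribed threshold by Lemma \ref{lemma:lambda-restriction-lq}, once $N \geq N(\delta)$ and $\nu \geq \nu(N)$. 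In particular $|1 - \ft{h}(0)| \leq \|\ft{\lambda} - \ft{h}\|_q$ is small, so $g = h/\ft{h}(0)$ is well defined, is supported in $K'$, satisfies $\ft{g}(0) = 1$, and by the $\ell_q$ triangle inequality
\[
\Bigl( \sum_{n \neq 0} |\ft{g}(n)|^q \Bigr)^{1/q} \leq \frac{\|\ft{\lambda}\|_{\ell_q(n \neq 0)} + \|\ft{\lambda}-\ft{h}\|_q}{|\ft{h}(0)|} \leq C \delta.
\]

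Finally, I would take $f = g * \psi$ for a $C^\infty$ non-negative bump $\psi$ with $\int \psi = 1$ supported in a small neighbourhood of $0$. Then $f \in C^\infty$, $\ft{f}(n) = \ft{g}(n) \ft{\psi}(n)$ with $\ft{\psi}(0) = 1$ and $|\ft{\psi}(n)| \leq 1$, so $\ft{f}(0) = 1$ and the $\ell_q$-bound on non-zero frequencies is preserved. Since $K'$ is a compact subset of the interior of $K$ (as $\tfrac1{100} < \tfrac1{90}$ and $90 < 100$, and $X$ is continuous), $\dist(K', \T \setminus K) > 0$, so for $\supp \psi$ small enough we have $\supp f \subset K$. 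Choosing $\delta$ at the outset so that $C\delta < \eps$ concludes the construction.

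The main obstacle, and what truly distinguishes this proof from its $c_0$-analog, is that the restriction error $\lambda - h$ must now be controlled in $\ell_q$ rather than in $\ell_\infty$; the trivial bound $\|\ft{\lambda}-\ft{h}\|_\infty \leq \|\lambda-h\|_1$ which sufficed in Chapter \ref{chap:c0} is useless here. This forces the $L^2$-based Hausdorff--Young route and, correspondingly, requires the upgraded $L^2$-concentration of Lemma \ref{lemma:lambda-restriction-lq} (in place of the mere convergence in measure from Lemma \ref{lemma:lambda-concentration-c0})---which in turn is the whole point of developing the Bernstein-type exponential concentration earlier in the chapter.
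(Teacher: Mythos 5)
Your proposal is correct and follows essentially the same route as the paper: Kahane's measure on $(\lowers,\uppers)$, the Riesz-product average $\lambda$, the $\ell_q$-count $(1+2/N)^N$ over weight vectors, the cutoff $h=\lambda\cdot\1_{K'}$ controlled via $\|\ft{\lambda}-\ft{h}\|_{\ell_q}\leq\|\lambda-h\|_{L^2}$ and Lemma \ref{lemma:lambda-restriction-lq}, normalization, and smoothing. The only cosmetic difference is that you pass the $L^2$ bound through the $\rho$-integral by Cauchy--Schwarz where the paper uses Minkowski's integral inequality; both yield the same estimate.
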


\begin{proof}
Given $\delta > 0$, according to Lemma \ref{lemma:kahane-lemma} there is
a measure $\rho$ supported by the interval $(\lowers, \uppers)$
such that
\[
\int d\rho = 1 \qquad \text{and} \qquad
\Big| \int s^k \, d\rho(s) \Big| < \delta
\quad (k=1,2,\dots).
\]
Define
\[
\lambda(t) = \int \lambda_s(t) \, d\rho(s).
\]
By \eqref{eq:lambda-trig-expansion-lq}, the Fourier expansion of $\lambda$ is
\[
\lambda(t) = 1 + \sum_{\tau \neq 0}
\Big\{ N^{-1/q \sum |\tau_j|} \int s^{\sum |\tau_j|} d\rho(s) \Big\}
\; e^{i (\sum \tau_j \nu^j) t}.
\]
It follows that
\[
\sum_{n \neq 0} \big|\ft{\lambda}(n)\big|^q
< \delta^q \sum_{\tau \in \{-1,0,1\}^N}
N^{-\sum |\tau_j|}
= \delta^q \, \Big(1 + \frac{2}{N} \Big)^N < e^2 \, \delta^q.
\]
Now use Lemma \ref{lemma:lambda-restriction-lq} to choose $N$ such that
\[
\left( \int_{\T \setminus K'} \lambda_s^2(t) \; \frac{dt}{2\pi}
\right)^{1/2} < \frac{\delta}{\|\rho\|_M}
\]
for any $\lowers < s < \uppers$. Then $h := \lambda \cdot \1_{K'}$
is supported by $K'$, and
\[
\|\lambda - h\|_{A_q} \leq \|\lambda - h\|_{L^2(\T)}
= \|\lambda\|_{L^2(\T \setminus K')}
\leq \int \|\lambda_s\|_{L^2(\T \setminus K')} \; |d\rho(s)|
< \delta.
\]
If $\delta = \delta(\eps)$ is chosen sufficiently small, the
function $g(t) = h(t) / \ft{f}_1(0)$ satisfies
\[
\ft{g}(0) = 1 \qquad \text{and} \qquad
\Big( \sum_{n \neq 0} |\ft{g}(n)|^q \Big)^{1/q} < \eps.
\]
To obtain the smooth function $f$, as before, we take the convolution
of $g$ with a $C^\infty$ non-negative function with integral $= 1$,
which is supported on a sufficiently small neighborhood of
$0$ to ensure that $f$ will be supported by $K$.
\end{proof}

\subsection{}
We now conclude the proof with a procedure similar to \cite[p. 215]{kahane-salem}.

\begin{proof}[Proof of Theorem \ref{thm:piatetski-theorem-lq}]
For a sequence $\{\eps_j\}$ let $f_j$, $X_j$ and $K_j$ be given by Lemma
\ref{lemma:principal-lq}, for an appropriate choice of $N_j = N(\eps_j)$
and $\nu_j = \nu(\eps_j)$. We choose $\{\eps_j\}$ by induction to satisfy
\[
\eps_1 < 2^{-2} \qquad \textrm{and} \qquad
\|f_1 \cdot f_2 \cdot \ldots \cdot f_j\|_{A} \; \eps_{j+1} <
2^{-2-j} \quad (j=1,2,\dots).
\]
This implies that the product $\prod_{j=1}^{\infty} f_j$ will converge
in the $A_q$ norm to a non-zero distribution $S \in A_q$. Indeed, consider
the partial products $S_0 = 1$ and $S_j = f_1 \cdots f_j$, then
\[
\|S_{j+1} - S_j\|_{A_q} = \|f_1 \cdots f_j \cdot (f_{j+1} - 1)\|_{A_q} \leq
\|f_1 \cdots f_j\|_{A} \; \eps_{j+1} < 2^{-2-j},
\]
hence $S_j$ converges in $A_q$ to a limit $S$. We have
\[
\|S - 1\|_{A_q} \leq \sum_{j=0}^{\infty} \|S_{j+1} - S_j\|_{A_q} \leq
\sum_{j=0}^{\infty} 2^{-2-j} < 1
\]
and so $S \neq 0$. Denote $K = \bigcap_{j=1}^{\infty} K_j$, then clearly $S$ is
supported on $K$. We may assume $\nu_j \to \infty$, and we have $\|\ft{X}_j\|_p
\leq 1$, so Lemma \ref{lemma:nomeasure-lq} implies that $K$ supports no non-zero
measure in $A_q$. This completes the proof of Theorem \ref{thm:piatetski-theorem-lq}.
\end{proof}


\chapter{Generators in $\ell_p$ and Zero Set of Fourier Transform}
\label{chap:generators-lp}

\section{Introduction}

\subsection{}

A function $F: \Z \to \C$ is called a \emph{generator}, or a \emph{cyclic vector},
in the space $\ell_p(\Z)$ if the linear span of its translates is dense in the space.
In other words, $F \in \ell_p(\Z)$ is a generator if the closed translation
invariant linear subspace generated by $F$ is the whole $\ell_p$. How to know whether
a given $F$ is a generator, or not? For $p=1$ and $2$, Wiener characterized the generators
by the zero set of the Fourier transform
\[
f(t) := \sum_{n \in \Z} F(n) \, e^{int},
\]
as follows:
\begin{quote}
\emph{$F$ is a generator in $\ell_1$ if and only if $f(t)$ has no zeros.\\[4pt]
$F$ is a generator in $\ell_2$ if and only if $f(t) \neq 0$ almost everywhere.}
\end{quote}
The same characterization is true for $L^1(\R)$ and $L^2(\R)$, see \cite{wiener}.

``Interpolating'' between $p=1$ and $2$ one may expect that the generators
in $\ell_p$ (or $L^p$), $1 < p < 2$, could be characterized by the condition
that the zero set of the Fourier transform is ``small'' in a certain sense.
In this context various metrical, arithmetical and other properties of the
zero set for generators and non-generators have been studied by Beurling
\cite{beurling}, Pollard \cite{pollard}, Herz \cite{herz}, Newman \cite{newman}
and other authors (see \cite{segal:span}, \cite{edwards}, \cite{kinukawa}). However,
none of these results provides a complete characterization of the generators.

For example, Beurling proved in \cite{beurling} the following result:
\begin{quote}
\emph{Let $F \in \ell_1$. If the Hausdorff dimension of the zero set
$Z_f$ is less than $2 - 2/p$ then $F$ is a generator in $\ell_p$ $(1<p<2)$.}
\end{quote}
The converse, however, is not true. Indeed, one can construct $F$ which is
a generator in every $\ell_p$, $1 < p < 2$, but $Z_f$ has Hausdorff
dimension $1$ (see \cite{newman}).

\subsection{}
In this chapter we study the problem: \emph{is it possible at all} to characterize
the generators in $\ell_p$ $(1 < p < 2$) by the zero set of the Fourier transform?

It should be pointed out that for $1 < p < 2$, there is no canonical way to define
the zero set of the Fourier transform of a general element in $\ell_p$. First we
note that no such a problem arises in the cases $p=1,2$. Indeed, if $F \in \ell_1$
then the zero set $Z_f$ is a well-defined closed set, since then $f$ is a continuous
function. If $F \in \ell_2$ then $f \in L^2(\T)$, so in this case the set $Z_f$ is
defined up to measure zero. In the case $1 < p < 2$, one may notice that the set $Z_f$
could also be defined up to measure zero, since by the Hausdorff-Young theorem
$f \in L^q(\T)$, where $q = p/(p-1)$. However, this approach cannot provide a
characterization of the generators by the zero set: there exists $F$ which is not a
generator in $\ell_p$ $(1 < p < 2)$, but $f(t) \neq 0$ almost everywhere (see \cite{segal:span}).

Our main result shows that, unlike the classical cases $p=1$ and $2$, \emph{the characterization
of the generators in $\ell_p$ $(1 < p < 2)$ by the zero set $Z_f$ of the Fourier transform
is impossible}. Precisely, we prove the following theorem:

\begin{theorem}
\label{thm:impossible-integer}
Given $1 < p < 2$ one can find two continuous functions $f$ and $g$
on the circle $\T$, with the following properties:
\begin{enumerate-math}
\item $\{t : f(t) = 0\} = \{t : g(t) = 0\}$,
\item $F := \ft{f}$ and $G := \ft{g}$ are both in $\ell_p(\Z)$,
\item $G$ is a generator in $\ell_p$, but $F$ is not.
\end{enumerate-math}
\end{theorem}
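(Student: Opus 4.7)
The plan is to reduce Theorem \ref{thm:impossible-integer} to a stronger structural result about a set $E$ and a function $g$, and then finish with a short argument.

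The structural result I would establish -- this is the main obstacle and will occupy the rest of the chapter -- is the following. For any $1 < p < 2$ there exist a closed set $E \subset \T$, a function $g \in C(\T) \cap A_p(\T)$ with $Z_g = E$ such that $\{P g : P \text{ trigonometric polynomial}\}$ is dense in $A_p(\T)$, and a non-zero distribution $S \in A_q(\T)$ supported on $E$ (here $1/p+1/q=1$). The construction would adapt the Piatetski-Shapiro build-up of Chapter \ref{chap:lq}: average Riesz products $\prod_j\bigl(1 + 2sN^{-1/q}\cos \nu^j t\bigr)$ against a Kahane-type measure (Lemma \ref{lemma:kahane-lemma}) to produce a nested sequence of compacts $K_j$ whose intersection $E$ supports some $S \in A_q$ (as in Theorem \ref{thm:piatetski-theorem-lq}), together with a companion function $g$ vanishing exactly on $E$ whose multiplicative translates are dense in $A_p$. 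The density condition on $\{Pg\}$ is the genuinely new ingredient compared with Chapter \ref{chap:lq}; the most natural route seems to be a dual argument, showing that any functional in $A_q$ annihilating $Pg$ for every trigonometric polynomial $P$ is represented by a measure supported on $E$, and then invoking Lemma \ref{lemma:nomeasure-lq} to conclude it is zero.

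Granting $(E, g, S)$, the deduction of Theorem \ref{thm:impossible-integer} is brief. Setting $G := \ft{g}$, one has $G \in \ell_p(\Z)$ from $g \in A_p(\T)$, while density of $\{Pg\}$ in $A_p(\T)$ translates on the Fourier side to density of the integer translates of $G$ in $\ell_p(\Z)$, making $G$ a generator. For the companion $f$: since $E$ arises as a nested intersection of Cantor-type compacts it is totally disconnected, so $\T \setminus E$ is a countable disjoint union of open arcs, and summing smooth bumps (one per arc) with coefficients decaying faster than any polynomial produces a $C^\infty$ function $f$ on $\T$ with $Z_f = E$ and $f \not\equiv 0$. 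Smoothness forces $F := \ft{f} \in \ell_p(\Z)$, so properties (i) and (ii) of the theorem hold.

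To see that $F$ is not a generator I use $S$. For any trigonometric polynomial $P$, the function $Pf$ is smooth and vanishes on $E = \supp S$; by a standard cutoff argument (multiplying $Pf$ by a smooth function equal to $1$ off a shrinking neighborhood of $E$, possible because $E$ is totally disconnected and $Pf$ is smooth), one shows that $\langle Pf, S \rangle = 0$, where the pairing is the H\"older-continuous bilinear form $\sum_{n \in \Z} \ft{Pf}(n)\,\ft{S}(-n)$ on $A_p(\T) \times A_q(\T)$. Since $\ft{S} \not\equiv 0$, the functional $h \mapsto \sum_n \ft{h}(n)\,\ft{S}(-n)$ is non-trivial on $A_p(\T)$; it annihilates the closed span of $\{Pf\}$ in $A_p(\T)$, so that span is proper and the closed translation-invariant subspace of $\ell_p(\Z)$ generated by $F$ is not all of $\ell_p(\Z)$. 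Hence $F$ is not a generator, completing (iii) and the theorem.
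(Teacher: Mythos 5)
Your overall reduction is the one the paper uses: Theorem \ref{thm:impossible-integer} is derived from exactly the structural statement you isolate (this is Theorem \ref{thm:strong-piatetski}), with $G=\ft{g}$ a generator because $\{Pg\}$ is dense, and $F=\ft{f}$ a non-generator for any smooth $f$ with $Z_f=E$ because $E$ carries a non-zero $S\in A_q$. Two points, one of which is a genuine gap.

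The gap is in your plan for the density of $\{Pg\}$ in $A_p$. You propose a duality argument: a functional $T\in A_q$ annihilating every $Pg$ would be ``represented by a measure supported on $E$'', after which Lemma \ref{lemma:nomeasure-lq} kills it. But annihilating all $Pg$ only gives $gT=0$, hence $\supp(T)\subset Z_g=E$; there is no reason for $T$ to be a measure, and in fact it cannot be forced to be one: by design $E$ supports a non-zero \emph{distribution} $S\in A_q$, and if $g$ were merely some continuous (or smooth) function vanishing exactly on $E$, that very $S$ would annihilate all $Pg$ without being a measure. This is the whole point of the theorem --- $f$ and $g$ have the same zero set, yet one generates and the other does not --- so no soft argument that uses only ``$\supp T\subset E$'' can establish (ii). The paper instead constructs $g$ \emph{jointly} with $E$ by successive approximation, $g_{n+1}=g_n-hX$ with $\|X\|_{A_p}$ small and $X$ bounded below on the new compact, and produces explicit trigonometric polynomials $P_n$ with $\|1-P_ng\|_{A_p}<2^{-n}$; density of $\{Pg\}$ is then read off directly, with no duality. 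The auxiliary polynomial $\varphi$ of Lemma \ref{lemma:auxiliary}, with $\|\varphi\|_{A_p}$ large and $\|\varphi\|_{A_q}$ small, is what makes the two halves of the construction compatible; your sketch, which reuses the pure-cosine Riesz products of Chapter \ref{chap:lq}, does not have this mechanism.

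A smaller issue: your cutoff argument for $\langle Pf,S\rangle=0$ is not complete as stated. Writing $\langle Pf,S\rangle=\langle Pf(1-\chi_\eps),S\rangle$ is fine, but you then need $\|Pf(1-\chi_\eps)\|_{A_p}\to 0$, and sup-norm smallness of $Pf$ near $E$ does not control the $A_p$ norm for $p<2$; one needs the quantitative Beurling--Pollard estimate bounding $\|h\|_{A_p}$ by $L^2$ norms of $h$ and $h'$, applied using the Lipschitz decay of $Pf$ off $E$ and the derivative of the cutoff. This is exactly the result the paper cites (Kahane--Salem, Chapter IX, \S 6: $f\in\lip(\alpha)$, $\alpha>1/p-1/2$, and $Z_f$ supporting a distribution in $A_q$ imply $F$ is not a generator), so the conclusion is correct, but the step deserves either that citation or the actual estimate.
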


\begin{remarks}
\item
We emphasize the role of the continuity condition in the theorem, which makes precise the
concept of the ``zero set''. The result shows that it does not matter how one may define
the zero set in general; the characterization of the generators in $\ell_p$ $(1 < p < 2)$
by the zero set $Z_f$ is impossible already in the case when $Z_f$ is well-defined.

\item
As we will show, the function $f$ in Theorem \ref{thm:impossible-integer} can in fact be
chosen infinitely smooth. However, $f$ and $g$ cannot both be smooth. We discuss this in
some more detail below.

\item
One may check that Theorem \ref{thm:impossible-integer} is not true for $p > 2$.
However, in no way does this mean that the generators in $\ell_p$ $(p > 2)$ could be
characterized by the zero set of their Fourier transforms. In fact, the Fourier transform
of an element in $\ell_p$ $(p > 2)$ is generally not a function, but rather a distribution,
so one cannot even define its zero set in general.
\end{remarks}

The $L^p(\R)$ version of the result is also true:

\begin{theorem}
\label{thm:impossible-real}
Given $1 < p < 2$ one can find two functions $F$ and $G$ in $L^p(\R)$ with the
following properties. The Fourier transforms $\ft{F}(t)$, $\ft{G}(t)$ are
continuous functions on $\R$; they have the same zero set; the set of translates
$\{G(x - u)\}$, $u \in \R$, spans the whole space, but $\{F(x-u)\}$ does not.
\end{theorem}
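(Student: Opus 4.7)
The proof of Theorem \ref{thm:impossible-real} follows the same scheme as Theorem \ref{thm:impossible-integer}, reducing the assertion to an $\R$-analog of the stronger second theorem in the introduction. Concretely, the plan is to construct a compact set $E \subset \R$ and a function $g \in C(\R) \cap A_p(\R)$ with $\{\xi : g(\xi) = 0\} = E$, such that (i) $G := \check g \in L^p(\R)$ is a generator (equivalently, the linear span of $\{e^{iu\xi} g(\xi) : u \in \R\}$ is dense in $A_p(\R)$), and (ii) $E$ supports a non-zero distribution $S \in A_q(\R)$, where $A_r(\R) := \mathcal{F}(L^r(\R))$ with norm $\|h\|_{A_r} := \|\check h\|_{L^r}$ and $q = p/(p-1)$. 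Granted this, take $G$ as above and, for the non-generator, choose $f \in \mathcal{S}(\R)$ with $\{f=0\} = E$ (obtainable by a Whitney-type construction of smooth functions with prescribed zero set in $\R$) and set $F := \check f \in L^p(\R)$. Then $\ft F = f$ is continuous and shares its zero set with $\ft G = g$, while $\psi := \check S \in L^q(\R)$ is a non-zero annihilator of every translate of $F$: indeed, $f \in C^\infty$ vanishes on $\supp S \subset E$, so $f \cdot S = 0$ as a tempered distribution, which on the convolution side reads $F * \tilde\psi = 0$.

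The construction of $E$, $g$, $S$ on $\R$ adapts the Riesz-product machinery of Chapter \ref{chap:lq}, strengthened as indicated in the introduction, so as to provide, within an arbitrarily small sub-arc of $\T$, a compact $E$ supporting a non-zero $S_\T \in A_q(\T)$ together with a compactly supported $g_\T \in C(\T) \cap A_p(\T)$ satisfying $Z_{g_\T} = E$ and the density property in $A_p(\T)$. Identifying the sub-arc with a bounded interval $I \subset \R$, the distribution $S_\T$ lifts to a compactly supported distribution $S$ on $\R$ belonging to $A_q(\R)$: for a distribution supported in a fixed compact interval, the $L^q(\R)$-norm of its Fourier transform is equivalent to the $\ell_q(\Z)$-norm of the Fourier coefficients of the same distribution regarded on $\T$, by Plancherel--P\'{o}lya-type sampling estimates on entire functions of exponential type.

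The main obstacle is the construction of $g$. Extending $g_\T$ to $\R$ by zero yields a compactly supported $g$, for which $G := \check g$ cannot possibly be a generator in $L^p(\R)$: any Schwartz function supported outside $\supp g$ would furnish a non-trivial annihilator in $L^q(\R)$. Hence $g$ must have full support while preserving $Z_g = E$, continuity, and the density property. A natural choice is $g := g_\T + \phi$, where $\phi \in \mathcal{S}(\R) \cap A_p(\R)$ vanishes on $I$ (hence on $E$) and is strictly non-zero on $\R \setminus I$; this yields $Z_g = E$ automatically. Verifying (i) for this $g$ is the technical heart of the argument: the $\T$-density of $\{P g_\T\}$ in $A_p(\T)$, combined with the norm equivalence above, handles approximation of target functions supported in $\bar I$; a classical Wiener-type argument, using that $\phi$ has no zeros in $\R \setminus I$ and decays only at infinity, handles the complementary part; and a smooth partition of unity in $\R$ combines the two. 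Once this density is established, the scheme of the first paragraph delivers the theorem.
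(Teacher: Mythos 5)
Your overall reduction (build $E$, $g$, $S$ on the line; take $F$ with a smooth Fourier transform having the prescribed zero set; kill the translates of $F$ with $\check S$) is in the right spirit, and your annihilator argument for $F$ essentially matches the paper's, which also lifts $S$ from the circle to the line via the Plancherel--P\'olya equivalence and then invokes Pollard's theorem. But the construction of the pair $(E,g)$ on $\R$ --- the part you yourself call the technical heart --- contains two genuine gaps. First, the ``localized'' circle input you posit is internally inconsistent: if $g_\T$ is compactly supported in a proper sub-arc $I$ and $Z_{g_\T}=E\subset I$, then $g_\T$ vanishes on the complementary arc, which is not contained in $E$; moreover, a smooth bump function carried by that complementary arc is a non-zero element of $A_q(\T)$ annihilating every product $P\cdot g_\T$, so $\{Pg_\T\}$ cannot be dense in $A_p(\T)$. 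Theorem \ref{thm:strong-piatetski} produces a $g$ that necessarily lives on all of $\T$, and nothing in the paper (or its introduction) yields the localized version you invoke. Second, even granting a candidate $g=g_\T+\phi$ with $Z_g=E$, your verification that $\{P(\xi)g(\xi)\}$ is dense in $A_p(\R)$ is not an argument: for $1<p<2$ there is no Wiener-type theorem, so non-vanishing of $\phi$ off $I$ buys you nothing, and a partition of unity does not decouple the problem, because multiplication by a combination of exponentials $e^{iu\xi}$ acts globally and the pieces $Pg_\T$ and $P\phi$ contaminate each other's regions. This density claim is exactly as hard as the theorem itself and is left unproved.

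The paper sidesteps the difficulty with a transfer operator rather than a genuinely new construction on $\R$: fix a Schwartz function $\gamma$ with non-vanishing $\ft{\gamma}$ and set $(Tf)(x)=\sum_n \ft{f}(n)\,\gamma(x+n)$, a bounded map $A_p(\T)\to L^p(\R)$ satisfying $T(e^{int}f)(x)=(Tf)(x+n)$ and $\ft{(Tf)}=\ft{\gamma}\cdot f$. Taking $G=Tg$ with $g$ from Theorem \ref{thm:strong-piatetski}, the density of $\{Pg\}$ in $A_p(\T)$ shows that integer translates of $G$ already span $\gamma$, and $\gamma$ is itself a generator in $L^p(\R)$ by Beurling's theorem; hence $G$ is a generator. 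The common zero set of $\ft{F}$ and $\ft{G}$ is then the $2\pi$-periodization of $E$ rather than a compact subset of $\R$, which is precisely what makes the generator property accessible. This transfer step is the idea missing from your proposal.
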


\begin{remark}
As we show at the end of this chapter, the functions $F$ and $G$ in Theorem
\ref{thm:impossible-real} could be chosen infinitely smooth, and even as the
restrictions to the real line of two entire functions of order $1$.
\end{remark}

The results of this chapter were published in \cite{lev-olevskii:generators}.

\section{Reformulation}

\subsection{}
The Fourier transform
\[
F \in \ell_p(\Z) \; \mapsto \;
f(t) := \sum_{n \in \Z} F(n) \, e^{int}
\]
allows to identify $\ell_p(\Z)$ with the Banach space $A_p(\T)$ of functions on the
circle with Fourier coefficients in $\ell_p$, endowed with the norm
$\|f\|_{A_p} = \|\ft{f}\|_{\ell_p}$. A linear combination of translates of $F$
corresponds via this mapping to a multiplication of $f$ by a trigonometric
polynomial. It is therefore obvious that $F$ is a generator in $\ell_p(\Z)$ if
and only if the set $\{P(t) f(t)\}$, where $P$ goes through all trigonometric
polynomials, is dense in the space $A_p(\T)$.

Theorem \ref{thm:impossible-integer} could be therefore reformulated
in the following way: there exist two continuous functions $f, g \in A_p(\T)$,
having the same zero set, such that the set
\begin{equation}
\label{eq:g-ideal}
\{P(t) g(t) \; : \; \text{$P$ is a trigonometric polynomial}\}
\end{equation}
is dense in $A_p(\T)$, but the set $\{P(t) f(t)\}$ is not dense.

%

\subsection{}
We will prove the following theorem.

\begin{theorem}
\label{thm:strong-piatetski}
For any $1 < p < 2$ one can construct a compact $E \subset \T$,
and a function $g \in C(\T) \cap A_p(\T)$, such that:
\begin{enumerate-math-abc}
\item
\label{item:g-zeros-e}
$Z_g := \{t : g(t)=0\} = E$;
\item
\label{item:g-ideal-dense}
The set \eqref{eq:g-ideal} is dense in $A_p$;
\item
\label{item:e-supp-dist}
There is a (non-zero) distribution $S$, supported by $E$, which
belongs to $A_q$, where $q = p/(p - 1)$ is the conjugate of $p$.
\end{enumerate-math-abc}
\end{theorem}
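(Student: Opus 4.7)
The plan is to combine the Piatetski--Shapiro construction in $A_q$ from Chapter \ref{chap:lq} with a parallel construction of the generator $g$, proving the required density via Banach--space duality between $A_p$ and $A_q$. First I would rephrase condition (ii). Since $A_p^\ast \cong A_q$ (under the pairing $\langle f,T\rangle = \sum \ft f(n)\overline{\ft T(n)}$), the Hahn--Banach theorem says that $\{Pg : P \text{ trig.\ polynomial}\}$ is dense in $A_p$ if and only if the only $T \in A_q$ with $\langle e^{int}g, T\rangle = 0$ for every $n \in \Z$ is $T=0$. But $\langle e^{int}g, T\rangle$ is the $n$-th Fourier coefficient of the distribution $gT$, so the dual reformulation of (ii) reads: the only $T \in A_q$ with $gT = 0$ (as distributions) is $T = 0$. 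Since $gT=0$ together with $g \ne 0$ off $E$ forces $\supp T \subset Z_g = E$, property (ii) amounts to injectivity of multiplication by $g$ on $A_q(E) := \{T \in A_q : \supp T \subset E\}$.

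Next, this injectivity already implies that $E$ cannot carry any nonzero measure $\mu \in A_q$, because $g$ vanishes on $\supp\mu$ and hence $g\mu = 0$ as a measure. Conversely, to force $E$ to support a nonzero distribution in $A_q$ one needs precisely the Piatetski--Shapiro phenomenon of Chapter \ref{chap:lq}. So the construction of $E$ should be the one from Theorem \ref{thm:piatetski-theorem-lq}: take $X_j$, $K_j$ and $f_j$ from Lemma \ref{lemma:principal-lq} with $\nu_j \to \infty$ and the $\eps_j$ chosen inductively so that $S := \prod_{j=1}^\infty f_j$ converges in $A_q$ to a nonzero distribution; set $E = \bigcap K_j$. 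Then Lemma \ref{lemma:nomeasure-lq} (whose hypothesis is preserved by the construction) guarantees that $E$ carries no nonzero measure in $A_q$, and $S$ itself gives (iii).

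For $g$ I would run a \emph{second} inductive construction in parallel with the one for $S$, using the same geometric data $\{X_j, K_j, \nu_j\}$ but different $A_p$--analogues of Lemma \ref{lemma:principal-lq}: produce smooth functions $g_j$ that are close to $1$ in the $A_p$ norm and vanish on an open set containing $E$ (for instance, on $\{X_j \geq \tfrac12\}$ or on a shrinking neighborhood of $K_j$), with $\|g_j - 1\|_{A_p}$ summable fast enough that the infinite product $g := \prod g_j$ converges in $A_p$ to a continuous function. Each $g_j$ kills $E$, so $g$ vanishes on $E$; by arranging the vanishing sets of the $g_j$ to shrink to $E$ and by making the factors near $1$ away from $E$, one can ensure $Z_g = E$ exactly (e.g.\ by fattening the support of each $g_j$ by a $j$-dependent margin). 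The construction of the $g_j$ uses the same Riesz--product / Kahane--measure technology, but run with the $A_p$ norm: since $p<2$, crude $L^2$ estimates dominate $A_p$ estimates, and the exponential concentration already established in Lemma \ref{lemma:lambda-concentration-lq} and Lemma \ref{lemma:lambda-restriction-lq} can be reused.

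The main obstacle will be (ii), i.e.\ showing $g T \ne 0$ for every nonzero $T \in A_q(E)$. Measures are excluded by the Piatetski--Shapiro property, but $A_q(E)$ contains higher--order distributions (in particular $S$ itself), and for those $gT = 0$ is not automatic even though $g\equiv 0$ on $\supp T$. I expect to handle this by computing the Fourier coefficients of $g T$ using the explicit product structures of both $g = \prod g_j$ and of an arbitrary $T$ supported on $E$, and showing that the same Bernstein/exponential concentration estimate (Lemma \ref{lemma:bernstein-hoeffding-concentration}) controlling the Riesz--product cross terms forces any $T \in A_q$ with $gT = 0$ to have all coordinates $\ft T(n) = 0$ in the limit $j \to \infty$. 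Getting a clean quantitative estimate for this cross--interaction, with the linkage between the parameters $\{\eps_j, \nu_j, N_j\}$ used for $f_j$ and those used for $g_j$ carefully coordinated, is the heart of the argument and the step that needs the most care.
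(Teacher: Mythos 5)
Your dual reformulation of (ii) is correct, but the proposal has two genuine gaps. First, the norm inequality you invoke goes the wrong way: for $1<p<2$ one has $\ell_p\subset\ell_2$, hence $\|\cdot\|_{L^2(\T)}=\|\cdot\|_{A_2}\leq\|\cdot\|_{A_p}$, so the $L^2$ concentration estimates of Lemmas \ref{lemma:lambda-concentration-lq} and \ref{lemma:lambda-restriction-lq} do \emph{not} dominate $A_p$ estimates — they dominate $A_q$ estimates for $q>2$, which is why they sufficed in Chapter \ref{chap:lq} but cannot be ``reused'' to make $\|g_j-1\|_{A_p}$ small. Moreover, the cosine-based polynomial $X(t)=N^{-1/p}\sum\cos\nu^jt$ of \eqref{eq:x-cosines-lq} has $\|\ft X\|_p$ bounded but \emph{not} small, whereas the construction of $g$ requires a corrector that is simultaneously bounded below on the compact, bounded by $1$ in sup norm, and \emph{small} in $A_p$. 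This forces a genuinely new ingredient: the paper replaces $\cos t$ in the Riesz product by a special polynomial $\varphi$ (Lemma \ref{lemma:auxiliary}, built from sums of dilations supported on disjoint intervals via Lemma \ref{lemma:sum-dilations}) with the asymmetric norm control $\|\varphi\|_{A_p}\leq C\eta^{-1}$, $\|\varphi\|_{A_q}\leq C\eta$; this is what yields $\|X\|_{A_p}<\eps$ in Lemma \ref{lemma:principal-generators}. Consequently the compact $E$ and the distribution $S$ cannot simply be imported from Theorem \ref{thm:piatetski-theorem-lq}; they must be rebuilt with the new Riesz products.

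Second, and more seriously, your route to (ii) leaves the decisive step unproved. After dualizing, you must show that $gT\neq 0$ for \emph{every} nonzero $T\in A_q$ supported by $E$ — including genuine distributions such as $S$ itself, on whose support $g$ vanishes identically. Your plan to ``compute the Fourier coefficients of $gT$ using the explicit product structures of both $g$ and $T$'' cannot work as stated, because an arbitrary $T\in A_q(E)$ has no product structure, and you concede that this is the heart of the matter without supplying an argument. The paper avoids the dual statement altogether: in the inductive step it constructs, alongside $g_n$, trigonometric polynomials $P_n$ with $\|1-P_n\cdot g_n\|_{A_p}<2^{-n-1}$ (possible because $\|1-g_n\|_{A_p}<1/2$ throughout, by the smallness of $\|X\|_{A_p}$), and the update $g_{n+1}=g_n-h\cdot X$ with $h$ uniformly approximating $g_n$ on $\supp(f_n)$ simultaneously contracts $|g_{n+1}|$ by the factor $99/100$ on the new support and perturbs $g_n$ by only $\eps\|h\|_A$ in $A_p$. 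Passing to the limit gives $\|1-P_n\cdot g\|_{A_p}\to 0$, which exhibits the density of $\{Pg\}$ directly. Unless you can actually prove the injectivity of multiplication by $g$ on $A_q(E)$, you should adopt this constructive ``approximate inverse'' mechanism, and with it the auxiliary polynomial $\varphi$ that makes it quantitatively possible.
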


As explained above, \ref{item:g-ideal-dense} means that $\ft{g}$ is a generator.
On the other hand \ref{item:e-supp-dist} is equivalent to the fact that no Fourier
transform of a smooth function $f$ vanishing on $E$, could be a generator. More
precisely: if $F = \ft{f} \in \ell_1$ is not a generator in $\ell_p$, then $Z_f$
must support a non-zero distribution $S \in A_q$ (see \cite{kahane-salem}, Chapter
III, \textsection 6); and conversely, if $f \in \lip(\alpha)$, $\alpha > 1/p - 1/2$,
and $Z_f$ supports such a distribution, then $F$ is not a generator in $\ell_p$
(see \cite{kahane-salem}, Chapter IX, \textsection 6).

Theorem \ref{thm:impossible-integer} is therefore a direct consequence of Theorem
\ref{thm:strong-piatetski}. Moreover, the function $f$ in Theorem
\ref{thm:impossible-integer} can be chosen infinitely smooth: any $f$ such that
$Z_f = E$ will do. Theorem \ref{thm:impossible-real} also follows from
Theorem \ref{thm:strong-piatetski}, as we show later on.

\subsection{}
Theorem \ref{thm:strong-piatetski} is a strengthening of our result from Chapter
\ref{chap:lq}, where we constructed a compact $K$ which supports a distribution
belonging to $A_q$ $(q > 2)$, but which does not support such a measure (Theorem
\ref{thm:piatetski-theorem-lq}). Indeed, we will now show that from
\ref{item:g-ideal-dense} it follows that $E$ cannot support a non-zero measure
$\mu \in A_q$, hence the compact $E$ from Theorem \ref{thm:strong-piatetski} also
satisfies the result of Theorem \ref{thm:piatetski-theorem-lq}.

To see this, suppose that $\mu \in A_q$ is a measure supported by $E$. Then we have
\[
\sum_{n \in \Z} \ft{g}(n - k) \, \ft{\mu}(-n) = \int_{\T} g(t) e^{ikt} d\mu(t) = 0,
\quad k \in \Z,
\]
since $\mu$ is supported by $Z_g$. The Fourier transform $\ft{\mu}$ is therefore
an element of $\ell_q$ (the dual space of $\ell_p$) which annihilates all the
translates of $\ft{g}$. But $\ft{g}$ is a generator in $\ell_p$, so we must have
$\ft{\mu} = 0$. Hence also $\mu = 0$.



%
%
%

%
%
%


\section{Riesz-type products}

\subsection{Notation}
As before we use finite Riesz product, but instead of the cosine function we now
use a certain trigonometric polynomial $\varphi$. We define
\begin{equation}
\label{eq:riesz-prod-generators}
\lambda_s(t) = \prod_{j=1}^{N} (1 + s \, \varphi(\nu^j t))
\end{equation}
where $N$ and $\nu$ denote positive integers, the parameter $s$
denotes a real number and $\varphi$ is a trigonometric polynomial.
We will assume that
\begin{equation}
\label{eq:assumptions-phi}
\text{$\varphi$ is real}, \quad \ft{\varphi}(0) = 0, \quad
\|\varphi\|_{\infty} \leq 1, \quad
\|\varphi\|_{L^2(\T)} > \tfrac{9}{10}.
\end{equation}

\subsection{Fourier expansion}
Replacing $\varphi$ by its Fourier series in \eqref{eq:riesz-prod-generators}
and expanding the product, we get
\begin{equation}
\label{eq:lambda-trig-expansion-generators}
\lambda_s(t) = 1 + \sum_{k \neq 0} \Big\{ s^{l(k)}
\prod_{k_j \neq 0} \ft{\varphi}(k_j) \Big\}
\; e^{i(k_1 \nu + k_2 \nu^2 + \cdots + k_N \nu^N) t},
\end{equation}
where the sum is over all non-zero vectors
\[
k = (k_1, \dots, k_N) \in \Z^N, \quad |k_j| \leq \deg \varphi,
\]
and $l(k) > 0$ is the number of non-zero coordinates of $k$.
%
We will assume the condition
\begin{equation}
\label{eq:nu-degree}
\nu > 2 \deg \varphi,
\end{equation}
which guarantees that every integer $n$ admits at most one representation
\[
n = k_1 \nu + k_2 \nu^2 + \cdots + k_N \nu^N
\]
with $k$ as above.
The members of the sum \eqref{eq:lambda-trig-expansion-generators} are therefore
exponentials with distinct frequencies, so \eqref{eq:lambda-trig-expansion-generators}
is the Fourier expansion of $\lambda_s$.

\subsection{Probabilistic concentration}
For $0 < s < 1$ it follows from \eqref{eq:riesz-prod-generators},
\eqref{eq:assumptions-phi} and \eqref{eq:lambda-trig-expansion-generators} that the
function $\lambda_s$ is positive and has integral $= 1$. We thus may
consider a probability measure $\lambda_s(t) \, {dt}/{2\pi}$ on the
circle. Define a trigonometric polynomial
\begin{equation}
\label{eq:def-x}
X(t) = \frac1{N} \sum_{j=1}^{N} \varphi(\nu^j t),
\end{equation}
and consider $X$ as a random variable with respect to the measure
$\lambda_s(t) \, {dt}/{2\pi}$.

\begin{lemma}
\label{lemma:concentration-l1}
For $\nu \geq \nu(N, \varphi)$ one has
\begin{equation}
\label{eq:prob-concentration}
\int_{\{t \in \T \; : \; |X(t) - \E(X)| > \alpha\}} \lambda_s(t) \, \frac{dt}{2\pi}
\leq 3 \exp ( - \tfrac1{8} \alpha^2 N)
\end{equation}
which holds for every $\alpha > 0$ and every $s \in \big(\tfrac{8}{10}, \frac{9}{10}\big)$.
\end{lemma}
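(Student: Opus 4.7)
The plan is to follow the same strategy that worked for Lemma \ref{lemma:lambda-concentration-lq}: replace the trigonometric polynomial $\varphi$ by a step-function approximation that makes the factors of the Riesz product stochastically independent under a slightly perturbed measure, apply the Bernstein--Hoeffding estimate there, and then transfer the concentration bound back to $X$ under $\lambda_s$ by controlling two small discrepancies.

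First I would construct a $2\pi$-periodic step function $g$, constant on each interval $I^{(\nu)}_k$ of \eqref{eq:nu-intervals}, by averaging $\varphi$ over each such interval:
\[
\int_{I^{(\nu)}_k} g(t)\,dt = \int_{I^{(\nu)}_k} \varphi(t)\,dt.
\]
Then $g$ is real, $|g|\leq \|\varphi\|_\infty \leq 1$, and $\int_0^{2\pi} g\,dt = \ft{\varphi}(0) = 0$, so \eqref{eq:g-conditions} holds. Uniform continuity of $\varphi$ yields $\delta := \|\varphi - g\|_\infty \to 0$ as $\nu \to \infty$. I would then form the modified measure
\[
\gamma_s(t) = \prod_{j=1}^{N}\bigl(1 + s\,g(\nu^j t)\bigr)\,\frac{dt}{2\pi},
\]
which by Lemma \ref{lemma:stochastic-independence-product} makes the system $\{g(\nu^j t)\}_{j=1}^N$ stochastically independent.

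Next I would apply Lemma \ref{lemma:bernstein-hoeffding-concentration} to $Y(t) := \frac1N\sum_{j=1}^N g(\nu^j t)$: the variables $g(\nu^j t)$ are bounded by $1$, independent under $\gamma_s$, so writing $\eta := \E_{\gamma_s} Y$ one gets
\[
\gamma_s\bigl\{|Y - \eta| > \beta\bigr\} \leq 2\exp\bigl(-\tfrac{1}{8}\beta^2 N\bigr), \qquad \beta > 0.
\]
The remaining task is to transfer this to $X$ under $\lambda_s$. Pointwise $|X - Y| \leq \delta$. For the measures, since $s \in (8/10, 9/10)$ and $|g| \leq 1$ we have $1 + s\,g(\nu^j t) \geq 1 - s > 1/10$, so
\[
\frac{1 + s\,\varphi(\nu^j t)}{1 + s\,g(\nu^j t)} = 1 + \frac{s\,(\varphi-g)(\nu^j t)}{1 + s\,g(\nu^j t)} \leq 1 + 10\delta \leq e^{10\delta},
\]
giving $\lambda_s(t) \leq \gamma_s(t)\,\exp(10 N \delta)$. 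One also checks, integrating and using Lemma \ref{lemma:stochastic-independence-lebesgue} to compute expectations, that $|\E_{\lambda_s} X - \eta|$ is bounded by a quantity tending to $0$ with $\delta$. Setting $\eps_N := \max\{\exp(10N\delta)-1,\ \delta,\ |\E_{\lambda_s} X - \eta|\}$, these comparisons combine to yield
\[
\lambda_s\bigl\{|X - \E_{\lambda_s} X| > \alpha\bigr\} \leq (1+\eps_N)\,\gamma_s\bigl\{|Y - \eta| > \alpha - \eps_N\bigr\} \leq 2(1+\eps_N)\exp\bigl(-\tfrac{1}{8}(\alpha-\eps_N)^2 N\bigr).
\]

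Finally, the desired inequality \eqref{eq:prob-concentration} is trivial when $\alpha > 2$ (the left side vanishes because $|X|\leq \|\varphi\|_\infty \leq 1$, whence $|X - \E X| \leq 2$), so it suffices to handle $\alpha \leq 2$. For this range, choosing $\nu = \nu(N,\varphi)$ large enough makes $\eps_N$ arbitrarily small compared with any fixed threshold depending only on $N$, and a routine calculation shows that the bound above then implies $3\exp(-\tfrac{1}{8}\alpha^2 N)$. The main obstacle is precisely this last balancing step: one must verify that $\eps_N$ can be absorbed uniformly in $s$ and $\alpha$ so that the exponent $\tfrac{1}{8}\alpha^2 N$ (rather than a smaller constant) is preserved, which forces the choice of $\nu$ to depend on $N$ in a way that makes $N\delta$ genuinely small.
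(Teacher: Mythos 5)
Your proposal is correct and is precisely the adaptation the paper intends: the paper omits this proof, stating it is ``very similar'' to that of Lemma \ref{lemma:lambda-concentration-lq}, and your argument reproduces that proof with $\cos t$ replaced by $\varphi$ (step-function approximation on the intervals \eqref{eq:nu-intervals}, independence under the perturbed Riesz measure via Lemmas \ref{lemma:stochastic-independence-lebesgue} and \ref{lemma:stochastic-independence-product}, Bernstein--Hoeffding, then transfer of the bound with errors controlled by $\delta=\|\varphi-g\|_\infty$ and $e^{10N\delta}-1$, made small by taking $\nu\geq\nu(N,\varphi)$). All the quantitative steps check out, including the Pythagorean bound $|\E X-\eta|\leq\delta^2$ and the final absorption of $\eps_N$ for $\alpha\leq 2$.
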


The proof of Lemma \ref{lemma:concentration-l1} is very similar to the proof
of Lemma \ref{lemma:lambda-concentration-lq}, and is therefore omitted.

\subsection{Concentration in $L^2$}
Using \eqref{eq:lambda-trig-expansion-generators} and \eqref{eq:nu-degree} one can calculate
\begin{equation}
\label{eq:x-expectation}
\E (X) = \int_{\T} X(t) \, \lambda_s(t) \, \frac{dt}{2\pi} = s \|\varphi\|_{L^2(\T)}^2.
\end{equation}

\begin{lemma}
\label{lemma:concentration-l2}
Given $\delta > 0$ there is $N(\delta)$ with the following property.
For each $N \geq N(\delta)$ and each trigonometric polynomial $\varphi$
satisfying \eqref{eq:assumptions-phi} one can find $\nu = \nu(N, \varphi)$
satisfying \eqref{eq:nu-degree} such that
\begin{equation}
\label{eq:concentration-l2}
\int_{\{t \in \T \; : \; X(t) < \frac1{40} \}} \lambda_s^2(t) \, \frac{dt}{2\pi}
< \delta
\end{equation}
holds for every $s \in \big(\tfrac{8}{10}, \frac{9}{10}\big)$.
\end{lemma}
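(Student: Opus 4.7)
The plan is to mimic the proof of Lemma~\ref{lemma:lambda-restriction-lq}, with cosines replaced by $\varphi$ and with the concentration bound \eqref{eq:prob-concentration} used in place of Lemma~\ref{lemma:lambda-concentration-lq}. The two essential ingredients will be a pointwise exponential majorization of $\lambda_s$ in terms of $NX(t)$, and the tail estimate \eqref{eq:prob-concentration}.

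First, since $0 < s < 1$, the inequality $1 + s\varphi(\nu^j t) \leq \exp(s\varphi(\nu^j t))$, applied factorwise in \eqref{eq:riesz-prod-generators}, gives
\[
\lambda_s(t) \leq \exp\Big(s \sum_{j=1}^{N} \varphi(\nu^j t)\Big) = \exp(sN\, X(t)).
\]
At the same time, by \eqref{eq:x-expectation} together with $\|\varphi\|_{L^2} > 9/10$ and $s > 8/10$, the expectation $\E X = s\|\varphi\|_{L^2}^2$ exceeds $648/1000$, safely above the threshold $1/40$.

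Next, I would decompose $\{t : X(t) < 1/40\}$ into thin strips $E_k = \{k/N < X \leq (k+1)/N\}$, indexed by integers $k$ from roughly $-N$ (using $X \geq -1$) up to $\lfloor N/40 \rfloor$. On such a strip, setting $b = (k+1)/N \leq 1/40$, I write $\lambda_s^2 = \lambda_s \cdot \lambda_s$, bound one copy pointwise by $e^{sNb}$, and apply Lemma~\ref{lemma:concentration-l1} with $\alpha = \E X - b > 0$ to the other copy, to obtain
\[
\int_{E_k} \lambda_s^2 \, \frac{dt}{2\pi} \leq 3 \exp\!\Big( N \bigl[\, sb - \tfrac{1}{8}(\E X - b)^2 \,\bigr] \Big).
\]

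The main (and only delicate) point is to check that $sb - \tfrac{1}{8}(\E X - b)^2$ is bounded above by a strictly negative constant $-c$, uniformly in $k$ and in $s \in (8/10, 9/10)$. For $b \leq 1/40$ this is a purely numerical inequality: $sb \leq 9/400$ while $(\E X - b)^2/8 \geq (623/1000)^2/8$, leaving a fixed positive gap. (This is precisely where the assumptions pinning $s$ to $(8/10, 9/10)$ and forcing $\|\varphi\|_{L^2}$ close to $1$ enter.) Since there are $O(N)$ strips, summation yields a total bound of order $N e^{-cN}$, which falls below $\delta$ once $N \geq N(\delta)$; the choice of $\nu \geq \nu(N,\varphi)$ is then inherited entirely from Lemma~\ref{lemma:concentration-l1}.
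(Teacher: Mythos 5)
Your proof is correct and follows essentially the same route as the paper: the pointwise majorization $\lambda_s(t) \leq \exp(sN\,X(t))$ combined with the tail estimate \eqref{eq:prob-concentration} and the observation $\E(X) = s\|\varphi\|_{L^2}^2 > \tfrac58$. The only difference is that the paper dispenses with the strip decomposition: since the set $\{X < \tfrac1{40}\}$ is bounded from above, it bounds $\sup \lambda_s \leq \exp(sN/40)$ on the whole set at once and multiplies by the measure bound $3\exp\big(-\tfrac18(\tfrac58 - \tfrac1{40})^2 N\big)$, obtaining $3\exp(-N/50)$ in a single step; your $O(N)$ strips are harmless but unnecessary here (that device is needed only for the upper tail $\{X > 90\}$ in Lemma \ref{lemma:lambda-restriction-lq}, where the pointwise bound grows with $X$).
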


\begin{proof}
Given $N$ and $\varphi$, use Lemma \ref{lemma:concentration-l1} to choose
$\nu = \nu(N, \varphi)$. We have the estimate
\begin{equation}
\label{eq:estimate-lambda-x}
\lambda_s(t) \leq \exp \Big(s \sum_{j=1}^{N} \varphi(\nu^j t) \Big)
= \exp \Big(s N \, X(t) \Big).
\end{equation}
From \eqref{eq:x-expectation} it follows that $\E (X) > \tfrac{5}{8}$,
so using \eqref{eq:prob-concentration} and \eqref{eq:estimate-lambda-x}
we get
\[
\begin{aligned}
\int_{\{t \; : \; X < \frac1{40}\}} \lambda_s^2(t) \, \frac{dt}{2\pi} &\leq 
\bigg( \int_{\{t \; : \; X < \frac1{40}\}} \lambda_s(t) \, \frac{dt}{2\pi} \bigg)
\bigg( \sup_{\{t \; : \; X(t) < \frac1{40}\}} \lambda_s(t) \bigg)\\[4pt]
&\leq 3 \exp \big( - \tfrac1{8} (\tfrac{5}{8} - \tfrac1{40})^2 N \big)
\exp \big( s N \cdot \tfrac1{40} \big)\\[4pt]
&< 3 \exp \big( - \tfrac1{50} N \big),\\[4pt]
\end{aligned}
\]
and so \eqref{eq:concentration-l2} holds for any sufficiently large $N$.
\end{proof}


\section{Auxiliary function construction}

The trigonometric polynomial $\varphi$ used in the Riesz product
\eqref{eq:riesz-prod-generators} will be constructed in the following lemma.

\begin{lemma}
\label{lemma:auxiliary}
Given $0 < \eta < 1$ there is a real trigonometric polynomial
$\varphi = \varphi_{\eta}$ such that
\[
\ft{\varphi}(0) = 0, \quad
\|\varphi\|_{\infty} < 1, \quad
\|\varphi\|_{L^2} > \tfrac{9}{10}, \quad
\|\varphi\|_{A_p} \leq C \, \eta^{-1}, \quad
\|\varphi\|_{A_q} \leq C \, \eta,
\]
where $C$ is an absolute constant.
\end{lemma}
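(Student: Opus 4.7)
My plan is to realize $\varphi_\eta$ as a real trigonometric polynomial whose Fourier transform is supported on roughly $M = M(\eta)$ non-zero frequencies (with $\pm$-symmetric support so $\varphi$ is real), with $|\ft{\varphi}(n)|$ of order $M^{-1/2}$ on each frequency in its support. A direct computation then gives
\[
\|\ft{\varphi}\|_p \asymp M^{1/p - 1/2}, \qquad \|\ft{\varphi}\|_q \asymp M^{1/q - 1/2} = M^{-(1/p - 1/2)},
\]
so $\|\ft{\varphi}\|_p \cdot \|\ft{\varphi}\|_q \asymp 1$, which matches the universal lower bound $\|\varphi\|_{L^2}^2$. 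The scaling is therefore forced: picking $M$ so that $M^{1/p - 1/2} \asymp \eta^{-1}$ delivers both $\|\varphi\|_{A_p} \leq C\eta^{-1}$ and $\|\varphi\|_{A_q} \leq C\eta$ simultaneously, with $C$ absolute.

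The core difficulty is that we also need $\|\varphi\|_\infty < 1$ together with $\|\varphi\|_{L^2} > 9/10$. Since $\|\varphi\|_{L^2}^2 = \|\ft{\varphi}\|_2^2$ is already of order $1$ from the above, the real content of these two bounds is that the ratio $\|\varphi\|_\infty / \|\varphi\|_{L^2}$ must lie below $10/9$ for arbitrarily large $M$. This is much sharper than a Rudin-Shapiro construction provides (which yields only the ratio $\sqrt{2}$), so I would invoke a Kahane-type \emph{ultra-flat} polynomial: for any $\eps > 0$ and $M$ large, there exists a unimodular polynomial $P_M(t) = \sum_{k=1}^{M} e^{i\theta_k} e^{i n_k t}$ on $\T$ satisfying $(1-\eps)\sqrt{M} \leq |P_M(t)| \leq (1+\eps)\sqrt{M}$ on $\T$. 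Normalizing by $\sqrt{M}$ and symmetrizing carefully (for example, pairing $P_M$ with a second ultra-flat block on a disjoint frequency range so that the two squared moduli sum to a near-constant, and taking a real linear combination) yields a real trigonometric polynomial $\varphi$ with $\ft{\varphi}(0) = 0$, $\|\varphi\|_\infty \leq 1 - o(1)$, $\|\varphi\|_{L^2} \geq 1 - o(1)$ as $M \to \infty$, and $|\ft{\varphi}(n)|$ still of order $M^{-1/2}$ on $\sim M$ frequencies.

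With such a flat polynomial in hand, the remainder is algebra: fix $\eps$ small enough that $\|\varphi\|_\infty < 1$ and $\|\varphi\|_{L^2} > 9/10$ hold strictly for every large $M$, then pick $M = M(\eta)$ so that $M^{1/p - 1/2}$ matches $\eta^{-1}$ up to an absolute constant; the two stated $A_p$ and $A_q$ bounds follow. The frequency set $\{n_k\}$ can moreover be chosen lacunary, which later guarantees that $\varphi_\eta$ plugged into the Riesz product \eqref{eq:riesz-prod-generators} will respect the non-overlap condition \eqref{eq:nu-degree}. The main obstacle, as indicated, is producing a \emph{real} polynomial with the required flatness ratio below $10/9$ while keeping the spectrum spread over $M$ frequencies; once the ultra-flat construction (and its real symmetrization) is granted, the lemma is essentially bookkeeping.
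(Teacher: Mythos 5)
Your scaling analysis is right and matches the paper's (flat spectrum of size $M$ with coefficients of order $M^{-1/2}$, $M^{1/p-1/2}\asymp\eta^{-1}$, and the product $\|\ft{\varphi}\|_p\|\ft{\varphi}\|_q\gtrsim\|\varphi\|_{L^2}^2$ showing the two bounds are jointly sharp). But the construction you propose does not deliver the pair of conditions $\|\varphi\|_\infty<1$, $\|\varphi\|_{L^2}>\tfrac{9}{10}$, and this is precisely the non-trivial content of the lemma. If $P_M=\sum_k e^{i\theta_k}e^{in_kt}$ is ultraflat, then $M^{-1/2}\operatorname{Re}P_M$ has $L^2$ norm $\to 1/\sqrt2\approx0.707$ while its sup norm is $\geq 1-o(1)$ (indeed $\operatorname{Re}P_M=|P_M|\cos(\arg P_M)$ and the phase sweeps through $0$), so the $L^\infty$-to-$L^2$ ratio is $\sqrt2$, far above $10/9$. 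Your proposed repair --- a real linear combination of two ultraflat blocks on disjoint frequency ranges whose squared moduli sum to a near constant --- does not close this: for spectrally disjoint real pieces the $L^2$ norms add in quadrature while the sup norm of the sum does not decrease (it is still $\geq$ the $L^2$ norm, and generically close to the sum of the individual sup norms), so the ratio stays $\geq\sqrt2$. The identity $(\operatorname{Re}P)^2+(\operatorname{Im}P)^2\approx M$ cannot be converted into a single real trigonometric polynomial of nearly constant modulus. A real function with $\|\varphi\|_{L^2}/\|\varphi\|_\infty>9/10$ must be close (in $L^2$) to a two-valued function $\pm c$, i.e.\ a square wave, which is a genuinely different object from the real part of a flat unimodular polynomial. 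Separately, your final remark that the spectrum $\{n_k\}$ "can moreover be chosen lacunary" is impossible: by Sidon's theorem a bounded function with lacunary spectrum has absolutely summable coefficients, whereas you need $M$ coefficients of size $M^{-1/2}$, giving $\sum|\ft{\varphi}(n)|\asymp M^{1/2}\to\infty$. (Lacunarity is also not needed: condition \eqref{eq:nu-degree} only asks that $\nu>2\deg\varphi$, which one arranges afterwards.)

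The paper resolves the real-flatness issue by a different device: it partitions $[0,2\pi]$ into $m$ intervals $I_j$, puts on each a smooth bump $\Psi_{I_j}$ multiplied by a highly oscillating smoothed square wave $\Phi(\nu^jt)$ with a \emph{different} dilation rate $\nu^j$ on each interval, and lets $\nu\to\infty$. Pointwise $|\varphi(t)|\leq\Psi_{I_j}(t)<1$, the $L^2$ mass tends to $\int_0^1\Psi^2>9/10$, and Lemma \ref{lemma:sum-dilations} together with the estimate $\|\Psi_I\|_{A_r}\leq C|I|^{(r-1)/r}$ gives $\|\varphi\|_{A_r}\lesssim m^{(2-r)/r}$, so choosing $m^{(2-p)/p}\asymp\eta^{-1}$ yields both norm bounds (effectively the spectrum is spread over $\sim m^2$ frequencies, consistent with your $M$). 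If you want to keep an "equidistributed spectrum'' construction in your spirit, a random-sign smoothed square wave on $M$ intervals would work, but you would then have to prove the $A_p$ upper bound by hand; ultraflat polynomials, as proposed, cannot be made to satisfy the hypotheses of the lemma.
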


We use the common notation $A(\T) = A_1(\T)$ for the Banach space of absolutely
convergent Fourier series. In order to prove Lemma \ref{lemma:auxiliary} we need:

\begin{lemma}
\label{lemma:sum-dilations}
Let $T_1, \dots, T_m \in A_r(\T)$, $1 \leq r < \infty$, $f \in A(\T)$,
$\ft{f}(0) = 0$. Then
\begin{equation}
\label{eq:sum-dilations}
\lim_{\nu \to \infty}
\Big\| \sum_{j=1}^{m} f(\nu^j t) \, T_j \Big\|_{A_r}
= \|f\|_{A_r} \Big( \sum_{j=1}^{m} \|T_j\|_{A_r}^r \Big)^{1/r}.
\end{equation}
\end{lemma}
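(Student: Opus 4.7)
The plan is to reduce to the case in which $f$ and all $T_j$ are trigonometric polynomials, carry out an exact frequency-disjointness calculation there, and then transfer the identity to the general case by a uniform-in-$\nu$ approximation argument.

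For the reduction, I use two facts. First, Young's convolution inequality on $\Z$ gives $\|uv\|_{A_r} \leq \|u\|_A \|v\|_{A_r}$. Second, for each positive integer $M$ the dilation $u(t) \mapsto u(Mt)$ is an isometry of both $A(\T)$ and $A_r(\T)$, since it merely supports the Fourier coefficients of $u$ along the sublattice $M\Z$. Given $\eps > 0$, choose a trigonometric polynomial $\tilde f$ (for instance a partial Fourier sum of $f$) with $\ft{\tilde f}(0)=0$ and $\|f - \tilde f\|_A < \eps$, and polynomials $\tilde T_j$ with $\|T_j - \tilde T_j\|_{A_r} < \eps$. Then for every $\nu$,
\[
\bigl\| f(\nu^j t)T_j(t) - \tilde f(\nu^j t)\tilde T_j(t) \bigr\|_{A_r} \leq \|f - \tilde f\|_A \,\|\tilde T_j\|_{A_r} + \|f\|_A \,\|T_j - \tilde T_j\|_{A_r},
\]
so the $A_r$ norm on the left-hand side of \eqref{eq:sum-dilations} and its polynomial analogue differ by $O(\eps)$ uniformly in $\nu$; by continuity of the norms, the same is true of the right-hand sides.

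For the polynomial case, write $f = \sum_k \ft f(k) e^{ikt}$ (a finite sum with $\ft f(0)=0$) and $T_j = \sum_l \ft{T_j}(l) e^{ilt}$, so
\[
\sum_{j=1}^{m} f(\nu^j t) T_j(t) = \sum_{j=1}^m \sum_{k\neq 0,\,l} \ft f(k)\,\ft{T_j}(l)\, e^{i(k\nu^j + l)t}.
\]
For $\nu$ larger than $2(\deg f + \max_j \deg T_j)$, the map $(j,k,l) \mapsto k\nu^j + l$ is injective on the active index set: within a fixed $j$ the spacing $\nu^j$ between distinct values of $k\nu^j$ exceeds $2\deg T_j$, and for $j<j'$ the set of $|k\nu^j+l|$ lies below $(\deg f)\nu^j + \deg T_j$ while the set of $|k'\nu^{j'}+l'|$ lies above $\nu^{j'} - \deg T_{j'}$. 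Hence all the frequencies $k\nu^j+l$ appearing in the double sum are pairwise distinct, and
\[
\Bigl\|\sum_{j=1}^m f(\nu^j \cdot) T_j\Bigr\|_{A_r}^r = \sum_{j=1}^m \sum_{k\neq 0,\,l} |\ft f(k)|^r |\ft{T_j}(l)|^r = \|f\|_{A_r}^r \sum_{j=1}^m \|T_j\|_{A_r}^r.
\]

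Letting $\nu \to \infty$ gives \eqref{eq:sum-dilations} exactly in the polynomial case (in fact for all $\nu$ beyond a threshold), and the uniform approximation then yields the general case upon letting $\eps \to 0$. The assumption $\ft f(0) = 0$ is essential: without it, the constant term of $f(\nu^j t)$ would contribute $\ft f(0)T_j(t)$ to every summand, and the frequencies in $\supp \ft{T_j}$ would overlap across $j$, destroying the disjointness. The only real subtlety to handle carefully is ensuring that the polynomial approximation remains uniform in $\nu$, and this is precisely what the isometric action of the integer dilation on $A(\T)$ and $A_r(\T)$ provides.
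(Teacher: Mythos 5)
Your proposal is correct and follows essentially the same route as the paper's proof: reduce to trigonometric polynomials via the uniform-in-$\nu$ bound $\|uv\|_{A_r}\leq\|u\|_A\|v\|_{A_r}$ together with partial Fourier sums, then observe that for large $\nu$ the frequencies $k\nu^j+l$ are pairwise distinct so the $A_r$ norm factors exactly. The only difference is that you spell out the frequency-disjointness count explicitly, which the paper leaves as "easy to check."
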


\begin{proof}
Denote by $P$ and $Q_1, \dots, Q_m$ trigonometric polynomials obtained as partial
sums of the Fourier series of $f$ and $T_1, \dots, T_m$ respectively. We have
\[
\begin{aligned}
&\Big\| \sum_{j=1}^{m} f(\nu^j t) \, T_j - \sum_{j=1}^{m} P(\nu^j t) \, Q_j \Big\|_{A_r}\\
&\qquad \leq \sum_{j=1}^{m} \Big\|f(\nu^j t) (T_j - Q_j)\Big\|_{A_r}
+ \sum_{j=1}^{m} \Big\|(f(\nu^j t) - P(\nu^j t)) Q_j \Big\|_{A_r}\\
&\qquad \leq \|f\|_{A} \sum_{j=1}^{m} \|T_j - Q_j\|_{A_r} + \|f - P\|_{A} \sum_{j=1}^{m}
\| T_j \|_{A_r}.
\end{aligned}
\]
Given $\eps > 0$, we may therefore choose $P$ and $Q_1, \dots, Q_m$ such that
\begin{equation}
\label{eq:sum-dilations-approx}
\Big\| \sum_{j=1}^{m} f(\nu^j t) \, T_j - \sum_{j=1}^{m} P(\nu^j t) \, Q_j \Big\|_{A_r} < \eps.
\end{equation}
If $\nu$ is sufficiently large then each Fourier coefficient of $\sum P(\nu^j t) \, Q_j$
is a product $\ft{P}(k) \ft{Q}_j(l)$, and it is easy to check that this implies
\begin{equation}
\label{eq:sum-dilations-poly}
\Big\| \sum_{j=1}^{m} P(\nu^j t) \, Q_j \Big\|_{A_r}
= \|P\|_{A_r} \Big( \sum_{j=1}^{m} \|Q_j\|_{A_r}^r \Big)^{1/r}.
\end{equation}
By an appropriate choice of $P, Q_1, \dots, Q_m$ the right hand side of
\eqref{eq:sum-dilations-poly} will differ by not more than $\eps$ from the right
hand side of \eqref{eq:sum-dilations}. On the other hand, \eqref{eq:sum-dilations-approx}
allows us to replace $P$ and $Q_j$ in the left hand side of \eqref{eq:sum-dilations-poly}
by $f$ and $T_j$, and the $A_r$ norm of the error will be smaller than $\eps$.
This proves \eqref{eq:sum-dilations}.
\end{proof}

\begin{proof}[Proof of Lemma \ref{lemma:auxiliary}]
Choose and fix a function $\Psi(u)$ on $[0,1]$, infinitely smooth and
vanishing in a neighborhood of the points $0$ and $1$, such that
\[
0 \leq \Psi(u) < 1, \quad  \int_{0}^{1} \Psi(u)^2 \, du  > 9/10.
\]
We shall denote by $C_1, C_2, \dots$ positive constants which depend only on
$\Psi$. We associate with each interval $I = [a, a+h]$ a function $\Psi_I$,
defined on $I$ by
\[
\Psi_I(a + uh) := \Psi(u), \quad u \in [0,1],
\]
and vanishes outside $I$. Let $|I|$ denote the length of $I$, then one may check that
\begin{equation}
\label{eq:interval-dilation-ap}
\|\Psi_I\|_{A_r} \leq C_1 \, |I|^{(r-1)/r}, \;
\text{for any interval $I \subset [0, 2\pi]$ and $1 \leq r < \infty$.}
\end{equation}
Define a $2 \pi$-periodic function $\Phi(t)$ by
\[
\Phi(t) = \Psi_{[0, \pi]}(t) - \Psi_{[\pi, 2 \pi]}(t), \quad
t \in [0, 2\pi].
\]
Partition the segment $[0, 2\pi]$ into $m$ segments $I_1, \dots, I_m$
of equal lengths, and set
\[
\varphi_\nu(t) := \sum_{j=1}^{m} \Phi(\nu^j t) \; \Psi_{I_j}(t), \quad
t \in [0, 2\pi].
\]
Clearly $\|\varphi_\nu\|_\infty < 1$. Now apply Lemma \ref{lemma:sum-dilations},
and use \eqref{eq:interval-dilation-ap} to conclude that
\[
\lim_{\nu \to \infty} \| \varphi_\nu \|_{A_r}
= \|\Phi\|_{A_r} \Big( \sum_{j=1}^{m} \|\Psi_{I_j}\|_{A_r}^r \Big)^{1/r}
\leq C_2 \, m^{(2-r)/r}
\]
for every $1 \leq r < \infty$. Choose $m$ such that
$3^{-1} \eta^{-1} < m^{(2-p)/p} < 3 \eta^{-1}$, then
\[
\lim_{\nu \to \infty} \|\varphi_\nu\|_{A_p} \leq C_3 \, \eta^{-1}, \quad
\lim_{\nu \to \infty} \|\varphi_\nu\|_{A_q} \leq C_3 \, \eta.
\]
We also have
\[
\lim_{\nu \to \infty} \| \varphi_\nu \|_{L^2}
= \|\Phi\|_{L^2} \Big( \sum_{j=1}^{m} \|\Psi_{I_j}\|_{L^2}^2 \Big)^{1/2}
= \int_{0}^{1} \Psi(u)^2 \, du > 9/10,
\]
and, since $\ft{\Phi}(0) = 0$,
\[
\lim_{\nu \to \infty} \ft{\varphi}_\nu(0) = 0.
\]
It is therefore possible to obtain the polynomial $\varphi$ of the lemma by
convolving the function $\varphi_\nu(t) - \ft{\varphi}_\nu(0)$, where $\nu$ is
sufficiently large, with an appropriate Fej\'{e}r kernel.
\end{proof}


\section{Proof of main result}

\subsection{The main lemma}

In this section we prove:

\begin{lemma}
\label{lemma:principal-generators}
Given $\eps > 0$ there is a compact $K$ on the circle, an infinitely differentiable
function $F$ and a real trigonometric polynomial $X$ such that:
\begin{enumerate-math}
\item
$F$ is supported by $K$, \, $\|1 - F\|_{A_q} < \eps$,
\item
$\|X\|_{\infty} \leq 1$, \, $\|X\|_{A_p} < \eps$, \, $X(t) > \frac1{50}$ on $K$.
\end{enumerate-math}
\end{lemma}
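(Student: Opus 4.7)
The plan is to imitate the proof of Lemma \ref{lemma:principal-lq}, using a Riesz product \eqref{eq:riesz-prod-generators} built from the trigonometric polynomial $\varphi = \varphi_\eta$ of Lemma \ref{lemma:auxiliary} in place of the cosine. First I would fix $\eta = 1/2$, take $\varphi = \varphi_\eta$, and set $X(t) = \frac{1}{N}\sum_{j=1}^N \varphi(\nu^j t)$ as in \eqref{eq:def-x}. The bound $\|X\|_\infty \le 1$ is immediate from $\|\varphi\|_\infty < 1$. Under condition \eqref{eq:nu-degree}, the spectra of the $N$ functions $\varphi(\nu^j t)$ are pairwise disjoint, so $\|X\|_{A_p}^p = N^{1-p}\|\varphi\|_{A_p}^p \le C^p N^{1-p} \eta^{-p}$, which becomes less than $\eps^p$ once $N$ is sufficiently large, since $1-p<0$.

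Next, I would take a signed measure $\rho$ from Kahane's lemma \ref{lemma:kahane-lemma} on the interval $(8/10, 9/10)$ with a parameter $\delta > 0$ to be fixed later, and set $\lambda(t) = \int \lambda_s(t)\,d\rho(s)$, where $\lambda_s$ is the Riesz product \eqref{eq:riesz-prod-generators}. Using the expansion \eqref{eq:lambda-trig-expansion-generators}, the disjointness of the frequencies $\sum k_j \nu^j$ guaranteed by \eqref{eq:nu-degree}, and $\ft\varphi(0)=0$, a direct computation gives
\[
\|1 - \lambda\|_{A_q}^q
\;\le\; \delta^q \sum_{k \ne 0} \prod_{k_j \ne 0} |\ft\varphi(k_j)|^q
\;=\; \delta^q\bigl[(1 + \|\varphi\|_{A_q}^q)^N - 1\bigr]
\;\le\; \delta^q\,e^{NC^q\eta^q},
\]
where the factorization of the sum over the $N$ coordinates uses $\ft\varphi(0)=0$ to turn the inner per-coordinate sum into $\|\varphi\|_{A_q}^q$. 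With $\eta$ and $N$ already chosen, $\delta$ can then be taken small enough to make this less than $(\eps/3)^q$.

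The next step is to localize and smooth. Setting $K' := \{t \in \T : X(t) \ge 1/40\}$ and noting that $\varphi$ satisfies \eqref{eq:assumptions-phi}, Lemma \ref{lemma:concentration-l2} allows one to choose $\nu = \nu(N,\varphi)$ so large that $\|\lambda_s\|_{L^2(\T\setminus K')} < \eps/(3\|\rho\|_M)$ uniformly in $s \in (8/10, 9/10)$. Putting $h := \lambda \cdot \1_{K'}$ and using the continuous inclusion $L^2 \hookrightarrow A_q$ (valid since $q \ge 2$) together with Minkowski in $\rho$, one gets
\[
\|\lambda - h\|_{A_q} \;\le\; \|\lambda\|_{L^2(\T\setminus K')} \;\le\; \int \|\lambda_s\|_{L^2(\T\setminus K')}\,|d\rho(s)| \;<\; \eps/3,
\]
whence $\|1 - h\|_{A_q} < 2\eps/3$. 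To get a $C^\infty$ function, convolve $h$ with a non-negative $C^\infty$ bump $\psi$ of integral one and sufficiently small support and set $F := h*\psi$; since $\ft\psi(0)=1$ and $|\ft\psi(n)|\le 1$ for all $n$, a termwise comparison gives $\|1-F\|_{A_q} \le \|1-h\|_{A_q} < \eps$. Taking $\supp \psi$ small enough ensures that $K := K' + \supp \psi$ is contained in the open set $\{t : X(t) > 1/50\}$ by continuity of $X$, so $F$ is supported in $K$ and $X > 1/50$ on $K$, as required.

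The hard part will be balancing the two opposing constraints: the $A_p$-estimate on $X$ wants $\|\varphi\|_{A_p}$, i.e.\ $\eta^{-1}$, not too large (so $\eta$ not too small), whereas the $A_q$-estimate on $1-\lambda$ wants $\|\varphi\|_{A_q}$, i.e.\ $\eta$, not too large. Lemma \ref{lemma:auxiliary} is precisely the ingredient designed to achieve smallness in both $A_p$ and $A_q$ simultaneously, while keeping $\varphi$ pointwise bounded by $1$ and $L^2$-large; this is exactly what lets the cosine-based Riesz-product construction of the previous chapter be carried over to the $A_p$-generator setting needed here.
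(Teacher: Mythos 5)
Your architecture is the right one — Kahane's measure $\rho$, the Riesz product $\lambda_s$ built from $\varphi_\eta$, the truncation to $K'=\{X\geq \tfrac1{40}\}$ via Lemma \ref{lemma:concentration-l2}, and mollification — and all the individual estimates you write are correct. But the choice $\eta=\tfrac12$ creates a circular dependence among the parameters that breaks the argument. With $\eta$ fixed, your bound $\|1-\lambda\|_{A_q}^q\leq \delta^q e^{NC^q\eta^q}$ grows exponentially in $N$, so $\delta$ must be chosen \emph{after} $N$, exponentially small in $N$; the measure $\rho$ then comes after $\delta$, and $\|\rho\|_M$ is uncontrolled (it necessarily blows up as $\delta\to0$). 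But the concentration step does not work the way you invoke it: Lemma \ref{lemma:concentration-l2} gives $\int_{\T\setminus K'}\lambda_s^2\,\tfrac{dt}{2\pi}\lesssim e^{-N/50}$ for $\nu\geq\nu(N,\varphi)$ — the smallness comes from taking $N$ large, not from taking $\nu$ large with $N$ fixed. To achieve $\|\lambda_s\|_{L^2(\T\setminus K')}<\eps/(3\|\rho\|_M)$ you therefore need $N\gtrsim\log\|\rho\|_M$, i.e.\ $N$ must be chosen \emph{after} $\rho$. You thus need $N$ after $\rho$, $\rho$ after $\delta$, and $\delta$ after $N$: the loop does not close.

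The paper resolves exactly this tension by coupling $\eta$ to $N$: it takes $\eta=\eps^{-1}N^{-1/q}$. Then $N\|\varphi\|_{A_q}^q\leq C^q\eta^qN=C^q\eps^{-q}$ is bounded \emph{uniformly in $N$}, so $\|1-\lambda\|_{A_q}\leq C(p,\eps)\,\delta^{1/q}$ with a constant independent of $N$ and $\rho$; hence $\delta=\delta(p,\eps)$ and $\rho$ can be fixed first, then $N\geq N(\delta,\rho)$ taken large enough for the concentration estimate, then $\varphi=\varphi_\eta$, then $\nu$. The $A_p$ bound still closes because $\|X\|_{A_p}=N^{1/p-1}\|\varphi\|_{A_p}\leq CN^{1/p-1}\eta^{-1}=C\eps$, using $1/p+1/q=1$. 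Your closing paragraph correctly identifies that Lemma \ref{lemma:auxiliary} is the tool for balancing the $A_p$ and $A_q$ demands, but the balancing is only achieved by letting $\eta$ shrink with $N$ at the rate $N^{-1/q}$; a constant $\eta$ does not do it.
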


\begin{proof}
For $\delta > 0$ to be chosen, use Lemma \ref{lemma:kahane-lemma} to find a measure
$\rho$ supported by the interval $\big(\tfrac{8}{10}, \tfrac{9}{10}\big)$ and
satisfying \eqref{eq:kahane-lemma}. Define
\[
\lambda(t) = \int \lambda_s(t) \, d\rho(s).
\]
In view of \eqref{eq:lambda-trig-expansion-generators}, $\lambda$ is a trigonometric
polynomial whose Fourier expansion is
\[
\lambda(t) = \sum_{k \neq 0} \Big\{ \int s^{l(k)} d\rho(s) 
\prod_{k_j \neq 0} \ft{\varphi}(k_j) \Big\}
\; e^{i(k_1 \nu + k_2 \nu^2 + \cdots + k_N \nu^N) t}.
\]
We therefore have $\ft{\lambda}(0) = 1$, and
\begin{equation}
\label{eq:lambda-sum-q-powers}
\sum_{n \neq 0} |\ft{\lambda}(n)|^q < \delta
\sum_{k \neq 0} \prod_{k_j \neq 0} |\ft{\varphi}(k_j)|^q
= \delta \, (1 + \|\varphi\|_{A_q}^q)^N.
\end{equation}
Consider the compact
\[
K' = \{t \in \T : X(t) \geq \tfrac1{40}\}
\]
where $X$ is given by \eqref{eq:def-x}.
By Lemma \ref{lemma:concentration-l2}, for any $N \geq N(\delta, \rho)$
and for any $\varphi$ satisfying \eqref{eq:assumptions-phi} we can
find $\nu = \nu(N, \varphi) > 2 \deg \varphi$, such that
\[
\Big( \; \int_{\T \setminus K'} \lambda_s^2(t) \, \frac{dt}{2\pi} \;
\Big)^{1/2} < \frac{\delta}{\|\rho\|_M}
\]
holds for all $s \in \supp(\rho)$.
We now choose $\varphi = \varphi_\eta$ to be the trigonometric polynomial
given by Lemma \ref{lemma:auxiliary} with
\begin{equation}
\label{eq:def-eta}
\eta = \eps^{-1} N^{-1/q},
\end{equation}
where $N$ is taken large enough such that $0 < \eta < 1$.
Then
\[
(1 + \|\varphi\|_{A_q}^q)^N
\leq \exp (N \|\varphi\|_{A_q}^q)
\leq \exp (C^q \eta^q  N)
\leq \exp (C^q  \eps^{-q}),
\]
so from \eqref{eq:lambda-sum-q-powers} it follows that
\[
\|1 - \lambda\|_{A_q} \leq C(p, \eps) \; \delta^{1/q}.
\]
Denote $H = \lambda \cdot \1_{K'}$, then
\[
\|\lambda - H\|_{A_q} \leq \|\lambda - H\|_{L^2(\T)}
= \|\lambda\|_{L^2(\T \setminus K')} \leq
\int_{\T \setminus K'} \|\lambda_s(t)\|_{L^2(\T \setminus K')} \; d|\rho|
< \delta.
\]
It is therefore clear that if $\delta = \delta(p, \eps)$ is chosen sufficiently
small, we get $\|1 - H\|_{A_q} < \eps$.
We then define the function $F$ as the convolution of $H$ with a smooth
non-negative function with integral $= 1$, which is supported by a sufficiently
small neighbourhood of $0$ to ensure that $X(t) > \tfrac1{50}$ on $K := \supp(F)$.

Finally, we show that the trigonometric polynomial $X$ satisfied the required
conditions. Indeed, $\|X\|_{\infty} \leq 1$, and using the fact that
$\nu > 2 \deg \varphi$ we also have
\[
\|X\|_{A_p} = N^{1/p-1} \|\varphi\|_{A_p}
\leq C \, N^{1/p - 1} \, \eta^{-1}
= C \, \eps.
\]
The lemma is thus proved.
\end{proof}

\subsection{Successive approximations}
For each $n = 0,1,2,\dots$ we shall construct an infinitely differentiable
function $f_n: \T \to \C$ and two trigonometric polynomials $g_n$ and
$P_n$, such that the following properties hold. For all $n \geq 0$,
\begin{enumerate-math}
\renewcommand{\labelenumi}{$\text{\theenumi}_n$}
\item
\label{item:gn-small}
$|g_n(t)| \leq (\frac{99}{100})^n$ on $\supp(f_n)$,
\item
\label{item:chi-almost-inverse}
$\|1 - P_n \cdot g_n\|_{A_p} < 2^{-n-1}$,
\end{enumerate-math}
and for all $n \geq 1$,
\begin{enumerate-math}
\renewcommand{\labelenumi}{$\text{\theenumi}_n$}
\setcounter{enumi}{2}
\item
\label{item:gn-approx-c}
$\|g_{n-1} - g_n\|_{\infty} \leq (\frac{99}{100})^{n-1}$,
\item
\label{item:gn-approx-ap}
$\|g_{n-1} - g_n\|_{A_p} < 2^{-n-1}(1 + \|P_0\|_A + \|P_1\|_A + \dots + \|P_{n-1}\|_A)^{-1}$,
\item
\label{item:fn-approx}
$\|f_{n-1} - f_n\|_{A_q} < 2^{-n-1}$,
\item
\label{item:fn-support}
$\supp(f_{n}) \subset \supp(f_{n-1})$.
\end{enumerate-math}

The construction is done by induction. We define $f_0 = g_0 = P_0 = 1$,
and note that the properties $\text{\ref{item:gn-small}}_{0}$ and
$\text{\ref{item:chi-almost-inverse}}_{0}$ are satisfied.

Suppose now that $f_k$, $g_k$ and $P_k$ have been constructed for all
$0 \leq k \leq n$. Then $f_{n+1}$, $g_{n+1}$ and $P_{n+1}$ can be
constructed as follows. The property $\text{\ref{item:gn-small}}_{n}$ ensures that one
can find a trigonometric polynomial $h$ such that $\|h\|_{\infty} \leq
(\frac{99}{100})^n$ and
\begin{equation}
\label{eq:gn-h-approx-supp-fn}
|g_{n}(t) - h(t)| < \tfrac1{100} \cdot (\tfrac{99}{100})^n,
\quad \text{for every $t \in \supp(f_n)$.}
\end{equation}
For $\eps > 0$ to be chosen, let $K$, $F$ and $X$ be given by Lemma
\ref{lemma:principal-generators}, and set
\begin{equation}
\label{eq:fn-gn-definition}
f_{n+1} := f_n \cdot F, \quad
g_{n+1} := g_n - h \cdot X.
\end{equation}
Indeed $f_{n+1}$ is an infinitely differentiable, and $g_{n+1}$ is a trigonometric
polynomial. Since we have
\[
\|g_{n} - g_{n+1}\|_{A_p} = \| h \cdot X \|_{A_p} \leq 
\| h \|_{A} \, \| X \|_{A_p} \leq \eps \| h \|_{A},
\]
and
\[
\|f_{n} - f_{n+1}\|_{A_q} = \|f_n \cdot (1 - F)\|_{A_q} \leq
\|f_n\|_A \, \|1 - F\|_{A_q} \leq \eps \|f_n\|_A,
\]
for a sufficiently small $\eps$ the properties
$\text{\ref{item:gn-approx-ap}}_{n+1}$ and $\text{\ref{item:fn-approx}}_{n+1}$
will be satisfied. Also
\[
\|g_{n} - g_{n+1}\|_{\infty} = \| h \cdot X \|_{\infty} \leq 
\| h \|_{\infty} \, \| X \|_{\infty} \leq (\tfrac{99}{100})^n,
\]
which gives $\text{\ref{item:gn-approx-c}}_{n+1}$. Next,
note that $\supp(f_{n+1}) \subset \supp(f_n) \cap K$ and so property
$\text{\ref{item:fn-support}}_{n+1}$ is satisfied. Also, the fact that
$|1 - X(t)| < \frac{49}{50}$ on $K$, together with \eqref{eq:gn-h-approx-supp-fn},
allows to conclude that for every $t \in \supp(f_{n+1})$
\[
\begin{aligned}
|g_{n+1}(t)| &= |g_n(t) - h(t) X(t)|
= |g_n(t) - h(t) + (1 - X(t)) h(t)|\\
&\leq |g_n(t) - h(t)| + |1 - X(t)| \cdot |h(t)|\\
&\leq \tfrac1{100} \cdot (\tfrac{99}{100})^n + \tfrac{49}{50} \cdot (\tfrac{99}{100})^n
= (\tfrac{99}{100})^{n+1},
\end{aligned}
\]
hence $\text{\ref{item:gn-small}}_{n+1}$ holds as well.
Finally, note that $\text{\ref{item:gn-approx-ap}}_{1}$,
$\text{\ref{item:gn-approx-ap}}_{2}$,
\dots, $\text{\ref{item:gn-approx-ap}}_{n+1}$ imply that
\begin{equation}
\label{eq:gn-g0-dist-ap}
\|1 - g_{n+1}\|_{A_p} \leq \sum_{k=1}^{n+1} \|g_{k-1} - g_{k}\|_{A_p} <
\sum_{k=1}^{n+1} 2^{-k-1} < 2^{-1},
\end{equation}
hence the trigonometric polynomial $g_{n+1}$ is non-zero. One can therefore find
a trigonometric polynomial $P_{n+1}$ satisfying $\text{\ref{item:chi-almost-inverse}}_{n+1}$ 
(see \cite{kahane-salem}, Chapter VIII, \textsection 6).

The constructed $f_{n+1}$, $g_{n+1}$ and $P_{n+1}$ thus satisfy all
the required properties.

\subsection{Conclusion of the proof}

%
The properties $\text{\ref{item:fn-approx}}_{n}$ and $\text{\ref{item:fn-support}}_{n}$,
together with $f_0 = 1$, imply that the sequence $f_n$ converges in $A_q$ to a non-zero
distribution $S$ such that $\supp(S) \subset \bigcap_{n=1}^{\infty} \supp(f_n)$. The
properties $\text{\ref{item:gn-approx-c}}_{n}$ and $\text{\ref{item:gn-approx-ap}}_{n}$
imply that the sequence $g_n$ converges both uniformly and in $A_p(\T)$ to a function
$g \in C(\T) \cap A_p(\T)$. From $\text{\ref{item:gn-small}}_{n}$ it is seen that $g(t) = 0$
for every $t \in \supp(S)$. In addition, using $\text{\ref{item:gn-approx-ap}}_{n}$ we
have
\[
\|P_n \cdot (g_n - g)\|_{A_p}
\leq \|P_n\|_A \sum_{k = n+1}^{\infty} \|g_{k-1} - g_{k}\|_{A_p}
\leq \sum_{k = n+1}^{\infty} 2^{-k-1} = 2^{-n-1},
\]
and therefore
\[
\|1 - P_n \cdot g\|_{A_p} \leq \|1 - P_n \cdot g_n\|_{A_p} + \|P_n \cdot (g_n - g)\|_{A_p}
< 2^{-n-1} + 2^{-n-1} = 2^{-n}.
\]
That is, the function $1$ can be approximated in $A_p(\T)$ by functions of the
form $P \cdot g$, where $P$ is a trigonometric polynomial. This
easily implies that any trigonometric polynomial can be approximated by these
functions, which shows that the set \eqref{eq:g-ideal} is dense in $A_p(\T)$.
The function $g$ therefore satisfies the two conditions given in Theorem
\ref{thm:strong-piatetski}, so the proof is complete.


\section{The $L^p(\R)$ version}

In this section we prove Theorem \ref{thm:impossible-real}.

\subsection{}
Being given a Schwartz function $\gamma(x)$ on $\R$, consider the linear
operator $T$ which maps each trigonometric polynomial
\[
f(t) = \sum_{|n| \leq N} \ft{f}(n) e^{int}
\]
to the function $Tf : \R \to \C$ defined by
\begin{equation}
\label{eq:def-t-gamma}
(T f)(x) = \sum_{|n| \leq N} \ft{f}(n) \, \gamma(x + n), \quad x \in \R.
\end{equation}
Clearly $Tf$ is a Schwartz function, for every trigonometric polynomial $f$.

\begin{lemma}
\label{lemma:t-operator-bounded}
$\|Tf\|_{L^p(\R)} \leq M \|f\|_{A_p(\T)}$, the constant $M$
not depending on $f$.
\end{lemma}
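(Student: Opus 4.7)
The plan is to view $T$, via Fourier coefficients, as a linear operator from $\ell_p(\Z)$ into $L^p(\R)$ and to establish the desired inequality by Riesz--Thorin interpolation between the endpoints $p=1$ and $p=2$. Concretely, identifying $A_p(\T)$ with $\ell_p(\Z)$ (counting measure), the map $\{\ft{f}(n)\} \longmapsto \sum_n \ft{f}(n)\,\gamma(\cdot+n)$ defined on finitely supported sequences will be shown to be bounded $\ell_1 \to L^1$ and $\ell_2 \to L^2$; Riesz--Thorin then upgrades this to $\ell_p \to L^p$ for every $1 \leq p \leq 2$, with constant depending only on $p$ and $\gamma$.

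The case $p=1$ is immediate from the triangle inequality and translation invariance of Lebesgue measure:
\[
\|Tf\|_{L^1(\R)} \leq \sum_{|n|\leq N} |\ft{f}(n)|\,\|\gamma(\cdot+n)\|_{L^1(\R)}
= \|\gamma\|_{L^1(\R)}\,\|f\|_{A_1(\T)}.
\]
For the case $p=2$, I would pass to the Fourier transform on $\R$. Since $\widehat{\gamma(\cdot+n)}(\xi) = e^{in\xi}\,\ft{\gamma}(\xi)$, we have $\widehat{Tf}(\xi) = f(\xi)\,\ft{\gamma}(\xi)$, where on the right $f(\xi) = \sum_n \ft{f}(n)\,e^{in\xi}$ is understood as the $2\pi$-periodic function it defines on $\R$. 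Plancherel's theorem and folding the integral into a single period yield
\[
2\pi\,\|Tf\|_{L^2(\R)}^2
= \int_{\R} |f(\xi)|^2\,|\ft{\gamma}(\xi)|^2\,d\xi
= \int_{0}^{2\pi} |f(\xi)|^2\,\Phi(\xi)\,d\xi,
\qquad
\Phi(\xi) := \sum_{k\in\Z} |\ft{\gamma}(\xi+2\pi k)|^2.
\]
Because $\gamma$ is Schwartz, $\ft{\gamma}$ has rapid decay, so $\Phi \in L^\infty(\T)$, and Parseval on $\T$ gives $\|Tf\|_{L^2(\R)} \leq C_\gamma\,\|\ft{f}\|_{\ell_2} = C_\gamma\,\|f\|_{A_2(\T)}$.

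With both endpoints in hand, the Riesz--Thorin interpolation theorem applied to the linear map above, regarded as acting between $L^p$ spaces of counting measure on $\Z$ and Lebesgue measure on $\R$, produces the required bound $\|Tf\|_{L^p(\R)} \leq M\,\|f\|_{A_p(\T)}$ for all $1 \leq p \leq 2$, with $M$ independent of $f$. There is no real obstacle here; the only structural input beyond soft functional analysis is the Schwartz assumption on $\gamma$, which enters exclusively through the boundedness of the periodization $\Phi$ in the $L^2$ estimate (in fact polynomial decay of order larger than $1/2$ for $\ft{\gamma}$ already suffices).
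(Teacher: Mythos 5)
Your proof is correct, but it takes a genuinely different route from the paper's. You establish the two endpoint bounds --- $\ell_1 \to L^1$ by the triangle inequality and $\ell_2 \to L^2$ by Plancherel together with the boundedness of the periodization $\Phi(\xi) = \sum_k |\ft{\gamma}(\xi + 2\pi k)|^2$ --- and then invoke Riesz--Thorin. The paper instead gives a direct one-step argument: it splits
\[
|c(n)|\,|\gamma(x+n)| \;=\; \bigl(|c(n)|^p\,|\gamma(x+n)|\bigr)^{1/p}\,\bigl(|\gamma(x+n)|\bigr)^{1/q}
\]
and applies H\"older's inequality in $n$ against the weight $|\gamma(x+n)|$, so that after raising to the power $p$ and integrating in $x$ only the two quantities $\sup_x \sum_n |\gamma(x+n)|$ and $\|\gamma\|_{L^1}$ enter. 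The paper's argument is more elementary (no Plancherel, no interpolation theorem), works verbatim for every $1 \leq p < \infty$ rather than just $1 \leq p \leq 2$, and imposes hypotheses only on $\gamma$ itself (summable translates) rather than on $\ft{\gamma}$; your argument is slightly heavier machinery but isolates cleanly which decay of $\ft{\gamma}$ is actually used at the $L^2$ endpoint, as you note. Both yield the lemma with a constant depending only on $\gamma$ (and, in your case, on $p$), which is all that is needed.
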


\begin{proof}
We must show that for any $N$ and scalars $\{c(n)\}$ one has
\[
\Big\| \sum_{|n| \leq N} c(n) \, \gamma(x + n) \Big\|_{L^p(\R)}
\leq M \, \big\|\{c_n\}\big\|_{\ell_p}
\]
for some $M = M(\gamma)$. 
An application of H\"{o}lders's inequality yields
\[
\Big| \sum_{|n| \leq N} c(n) \, \gamma(x + n) \Big|
\leq
\Big( \sum_{|n| \leq N} |c(n)|^p \, |\gamma(x + n)| \Big)^{1/p}
\Big( \sum_{|n| \leq N} |\gamma(x + n)| \Big)^{1/q}.
\]
Choose $M$ large enough so that
\[
\sup_{x \in \R} \; \sum_{n \in \Z} |\gamma(x + n)| \; \leq \; M.
\]
Then
\[
\Big| \sum_{|n| \leq N} c(n) \, \gamma(x + n) \Big|^p \leq 
M^{p-1} \sum_{|n| \leq N} |c(n)|^p \, |\gamma(x + n)|.
\]
If $M$ is sufficiently large then also $\int_{\R} |\gamma(x)| \, dx \leq M$,
thus integration yields
\[
\int_{\R} \Big| \sum_{|n| \leq N} c(n) \, \gamma(x + n) \Big|^p \; dx
\leq M^p \sum_{|n| \leq N} |c(n)|^p.
\]
This proves the lemma.
\end{proof}

Lemma \ref{lemma:t-operator-bounded} allows to extend the linear operator $T$
by continuity to a bounded operator $T: A_p(\T) \to L^p(\R)$. It follows from
\eqref{eq:def-t-gamma} that $T$ has the properties
\begin{equation}
\label{eq:t-prop-mult}
T(e^{int} f) (x) = (Tf)(x + n),
\end{equation}
and
\begin{equation}
\label{eq:t-prop-ft}
\ft{{(T f)}} (t) = \ft{\gamma}(t) f(t), \; t \in \R,
\end{equation}
where in \eqref{eq:t-prop-ft} the function $f$ is considered as a
$2\pi$-periodic function, and the equality is understood in the sense
of distributions, or almost everywhere.

\subsection{}
Choose a Schwartz function $\gamma$ whose Fourier transform $\ft{\gamma}$ has no zeros,
and let $g \in C(\T) \cap A_p(\T)$ be the function given by Theorem \ref{thm:strong-piatetski}.
Define a function $G \in L^p(\R)$ by $G := T g$. For each trigonometric polynomial $P$,
\[
\|\gamma - T(P \cdot g)\|_{L^p(\R)} = \|T(1 - P \cdot g)\|_{L^p(\R)}
\leq \|T\| \cdot \|1 - P \cdot g\|_{A_p(\T)}.
\]
The set $\{P(t) g(t)\}$ is dense in $A_p(\T)$, so in particular the constant function
which is equal to $1$ identically, can be approximated by products $P(t)g(t)$. It follows
that $\gamma$ can be approximated in $L^p(\R)$ by functions of the form $T(P \cdot g)$.
Observe that $T(P \cdot g)$ is a finite linear combination of translates of $G$ (in fact,
integer translates), according to \eqref{eq:t-prop-mult}. The function $\gamma$ therefore
belongs to the closed linear subspace of $L^p(\R)$ generated by the translates of $G$. But
$\gamma$ is a Schwartz function with a non-vanishing Fourier transform, so it is a generator
in $L^p(\R)$ (see \cite{beurling}). Hence $G$ is a generator as well.

On the other hand, choose an infinitely differentiable function $f: \T \to \C$ such that
$Z_f = Z_g$, and define $F := Tf \in L^p(\R)$. By \eqref{eq:t-prop-ft} we have $\ft{F}(t)
= \ft{\gamma}(t) f(t)$ and $\ft{G}(t) = \ft{\gamma}(t) g(t)$, and since $\ft{\gamma}$ has
no zeros it follows that $\ft{F}$ and $\ft{G}$ are continuous functions having the same
zero set. We also note that $F$ is a Schwartz function. Let us show that $F$ is not a
generator. Indeed, the set $Z_f \subset \T$ supports a non-zero distribution $S$ belonging
to $A_q(\T)$. Since $Z_f$ is not the whole circle, we may identify $S$ with a distribution
on $\R$, supported by an interval of length $< 2\pi$, whose Fourier transform belongs to
$L^q(\R)$ (see \cite{boas}, Corollary 10.6.6, p. 197). But the zero set of $\ft{F}$ (i.e.
the $2\pi$-periodization of $Z_f$) contains the supports of $S$, hence $F$ cannot be a
generator (see \cite{pollard}, Theorem B).

This completes the proof of Theorem \ref{thm:impossible-real}.

\subsection{}
We conclude this chapter by clarifying our remark made earlier, that the functions $F$ and
$G$ in Theorem \ref{thm:impossible-real} could in fact be chosen as the restrictions to the
real line of two entire functions of order $1$.

Indeed, we can choose our function $\gamma$ such that its Fourier transform $\ft{\gamma}$
decays arbitrarily fast. In particular, we may assume that for every positive number $r$,
\begin{equation}
\label{eq:gamma-ft-decay}
|\ft{\gamma}(t)| = O(e^{- |t|^{r}}), \quad |t| \to \infty.
\end{equation}
In this case the integral
\begin{equation}
\label{f-extension-entire}
F(z) = \frac1{2\pi} \int_{\R} \ft{F}(t) \, e^{izt} \, dt, \quad z = x + iy,
\end{equation}
will converge absolutely and uniformly in every strip $|y| \leq y_0$, hence a standard
argument shows that $F(z)$ is an analytic function in the whole complex plane.
Let us prove that it has order $1$. It will be enough to show that
$|F(z)| = O(e^{|z|^{\alpha}})$ for every $\alpha > 1$. Obviously there is the estimate
\[
|F(z)| \leq C \int_{\R} |\ft{\gamma}(t)| \, e^{|y| \cdot |t|} \, dt, \quad C := \|f\|_\infty.
\]
Let now $\beta$ denote the exponent conjugate to $\alpha$ satisying
$1/\alpha + 1/\beta = 1$. Then by Young's inequality
$|y| \cdot |t| \leq \tfrac1{\alpha} |y|^{\alpha} + \tfrac1{\beta} |t|^{\beta}$,
and consequently
\[
|F(z)| \leq C \exp \big( \tfrac1{\alpha} |y|^{\alpha} \big)
\int_{\R} |\ft{\gamma}(t)| \exp \big( \tfrac1{\beta} |t|^{\beta} \big) dt.
\]
The condition \eqref{eq:gamma-ft-decay} now implies the convergence of the integral
on the right hand side, so we get $|F(z)| = O(\exp(\tfrac1{\alpha} |y|^{\alpha}))$.
This shows that $F$ is an entire function of order $1$, and by similar arguments the
same conclusion is true for the function $G$.


\chapter{Related Problems}

In this chapter we study further aspects of Piatetski-Shapiro's phenomenon in $\ell_q$
$(q>2)$, and obtain several strengthenings of Theorem \ref{thm:piatetski-theorem-lq}.
First we determine how small could be a compact satisfying Piatetski-Shapiro's phenomenon,
and in particular, how small can be the Hausdorff dimension of such a compact. Also
the relation with a certain Fourier-analytic dimension will be discussed. Another question
under consideration is whether Piatetski-Shapiro's phenomenon is ``typical'' or ``rare''.
Lastly, we study the relation with sets of interpolation for the class $(A_p \cap C)(\T)$,
namely $p$-Helson sets.


\section{Shrinking method, Hausdorff dimension}
\label{sec:shrinking-hausdorff}

\subsection{Introduction}
In Section \ref{subsec:extensions-of-piatetski-result} we referred to the theorem
of Kaufman: any set of multiplicity contains a Helson set of multiplicity. In this
section we are interested in the following corollary of Kaufman's theorem:

\emph{Let $K$ be a compact which supports a non-zero distribution $S$ with
Fourier coefficients tending to zero. Then there is a compact $K_1 \subset K$,
which also supports a non-zero distribution $S_1$ with Fourier coefficients
tending to zero, but which does not support such a measure.}

In other words, any compact of multiplicty can be ``shrinked'' to a compact of 
Piatetski-Shapiro type. It follows, for example, that such a compact can be
``as small as desired'' in the sense of Hausdorff dimension. Indeed, it was
proved by Ivashev-Musatov \cite{ivashev-musatov-2} that
for any continuous non-decreasing function $h(t)$ defined for $t \geq 0$,
$h(0) = 0$, there exists a set of multiplicity (even in the restricted sense)
with Hausdorff $h$-measure zero. It then follows that there is a compact $K$
which has this property and which satisfies Piatetski-Shapiro's theorem.

It is interesting to remark that the Hausdorff dimension of the original
Piatet\-ski-Shapiro compact $K_\gamma$ $(0 < \gamma < 1/2)$ (see Section
\ref{subsec:about-piatetski-result}) was computed by Besicovitch
\cite{besicovitch}. It is the number $\alpha \in (0,1)$ determined by the equation
\[
2^\alpha = \frac1{\gamma^\gamma (1 - \gamma)^{1 - \gamma}}.
\]
Observe that by a convenient choice of $\gamma$ the dimension $\alpha$ can be
made arbitrarily small, but it remains positive.

%
%

\subsection{Result}
Here we obtain a strengthening of Theorem \ref{thm:piatetski-theorem-lq}
which is analogous to the above mentioned ``shrinking theorem''.
We prove the following result.

\begin{theorem}
\label{thm:kaufman-theorem-lq}
Suppose a compact $K \subset \T$ supports a non-zero distribution
$S$ such that $\ft{S} \in \ell_q$ $(q > 2)$. Then there is a compact
$K_1 \subset K$, which also supports a non-zero distribution $S_1$ with
$\ft{S}_1 \in \ell_q$, but which does not support such a measure.
\end{theorem}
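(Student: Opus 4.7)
The plan is to adapt Kaufman's ``shrinking method'' (originally developed for the $c_0$ case in \cite{kaufman:msets}) to the $\ell_q$ setting, using the probabilistic machinery of Chapter~\ref{chap:lq}. The target is a compact $K_1 \subset K$ of the form $K_1 = K \cap \bigcap_{j \ge 1} K_j$, with $K_j = \{1/100 \le X_j \le 100\}$ attached as in Lemma~\ref{lemma:principal-lq} to a Riesz-product polynomial $X_j$ whose frequency base $\nu_j$ tends to infinity, together with a non-zero distribution $S_1 \in A_q$ supported in $K_1$. Once these are produced, Lemma~\ref{lemma:nomeasure-lq} (using $\|\ft{X}_j\|_p \le 1$ and $\nu_j \to \infty$) rules out non-zero measures in $A_q$ on $K_1$, proving the theorem.

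I would construct $S_1$ by iterated refinement of the given distribution. At each stage apply Lemma~\ref{lemma:principal-lq} to produce a smooth bump $f_j$ and form the distribution $S_n := f_n \cdots f_1 \cdot S \in A_q$; this satisfies $\|S_n\|_{A_q} \le \|f_1 \cdots f_n\|_A\,\|S\|_{A_q}$ by Young's inequality $\ell_1 * \ell_q \subset \ell_q$, and $\supp(S_n) \subset K \cap \bigcap_{j \le n} K_j$ since the product of a $C^\infty$ function with a distribution has support inside the intersection of the two supports. With the bookkeeping of Section~2.5 (pick $\eps_{j+1}$ small relative to the accumulated $A$-norm $\|f_1 \cdots f_j\|_A$) the sequence $\{S_n\}$ stays bounded in $A_q$. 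By Banach--Alaoglu, a subsequence $\{S_{n_k}\}$ converges weak-$*$ in $A_q$ to a limit $S_1 \in A_q$ whose support is contained in $K_1 = K \cap \bigcap_{j \ge 1} K_j$, as weak-$*$ limits inherit the intersection of nested supports.

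The main obstacle is to guarantee $S_1 \neq 0$, since a weak-$*$ limit of a bounded sequence may vanish. The natural strategy is to prove a fixed lower bound $|\ft{S}_n(0)| \ge c > 0$ for all $n$, which would pass to $|\ft{S}_1(0)| \ge c$. Since $\ft{S}_n(0) = \tfrac{1}{2\pi}\langle S, f_1 \cdots f_n \rangle$ and $\ft{f_1 \cdots f_n}(0) = 1$, this reduces to showing that the error pairing
\[
\langle S, f_1 \cdots f_n - 1 \rangle \;=\; 2\pi \sum_{m \neq 0} \ft{f_1 \cdots f_n}(m)\,\ft{S}(-m)
\]
stays small. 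Such control cannot be had for free from Lemma~\ref{lemma:principal-lq}: each $f_j$ vanishes outside $K_j \subsetneq \T$ and therefore $\|f_j - 1\|_A \ge \|f_j - 1\|_\infty \ge 1$, which precludes a direct $A$-norm estimate. Here one must use Kaufman-style refinements of the bump construction, exploiting the probabilistic structure of the Riesz products (the concentration estimates from Section~2.3 applied to the measure $S$ rather than to Lebesgue measure) to ensure that the pairing against $S$ stays small, even though the $A$-norm of $f_1 \cdots f_n - 1$ does not. This is the technical heart of the adaptation to $\ell_q$ and the principal ingredient beyond Chapter~\ref{chap:lq}; with it in place, the rest of the argument is identical to Section~2.5.
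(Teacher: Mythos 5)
Your setup is right, and you have correctly located the crux: the products $f_1\cdots f_n$ do not tend to $1$ in the $A$-norm (indeed $\|f_j-1\|_\infty\geq 1$), so the non-vanishing of the limit distribution cannot be extracted from Lemma \ref{lemma:principal-lq} as it stands. But the resolution you sketch --- ``concentration estimates applied to the measure $S$ rather than to Lebesgue measure'' --- does not work and is not what is needed: $S$ is an arbitrary (signed, non-positive) distribution in $A_q$, so there is no probability space on which to run the Bernstein--Hoeffding machinery of Section 2.3, and no analogue of those estimates applies to it.

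The missing idea is purely Fourier-analytic and is the actual content of Kaufman's shrinking method: before multiplying $S$ by the bump $f$ from Lemma \ref{lemma:principal-lq}, replace $f$ by its dilate $f_m(t)=f(mt)$ (and $X$, $K$ by $X_m$, $E_m$ accordingly; the spectrum of $X_m$ only moves further out, so Lemma \ref{lemma:nomeasure-lq} still applies). For $m$ large the non-zero Fourier coefficients of $S\cdot(f_m-1)$ are, up to a small error, the products $\ft{S}(j)\,\ft{(f-1)}(k)$ sitting at distinct frequencies, whence
\[
\lim_{m\to\infty}\|S f_m - S\|_{A_q} \;=\; \|S\|_{A_q}\,\|f-1\|_{A_q},
\]
and the right-hand side is small because Lemma \ref{lemma:principal-lq} controls $\|f-1\|_{A_q}$ (not $\|f-1\|_A$). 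This is Lemma \ref{lemma:multiplication-lq-norm} in the text. With it, each step of the iteration moves $S$ by less than $\eps_j$ in the $A_q$ \emph{norm}; choosing $\sum\eps_j<\|S\|_{A_q}$ makes the sequence $S_j$ norm-convergent to a limit $S_1$ with $\|S_1-S\|_{A_q}<\|S\|_{A_q}$, hence $S_1\neq 0$. In particular no weak-$*$ compactness argument is needed, and the lower bound on $|\ft{S}_1(0)|$ you were seeking is replaced by a norm estimate on $S_1-S$. Without this dilation lemma (or an equivalent substitute) your proof has a genuine gap precisely at the point you flagged.
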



In particular, this allows us to determine the smallest possible Hausdorff
dimension of a compact satisfying Theorem \ref{thm:piatetski-theorem-lq}.
It is known that a compact $K$ which supports a distribution $S$ with $\ft{S}
\in \ell_q$, has Hausdorff dimension $\geq 2/q$ (see \cite{kahane-salem},
Chapter VIII, \textsection 4). On the other hand, there are examples of
compacts $K$ of Hausdorff dimension $= 2 / q$ which support such a distribution
(and even a positive measure); this may be deduced from a result of K\"{o}rner
\cite[Theorem 1.2]{korner:ivashev-musatov} or, alternatively, in
\cite[Theorem 5]{rosenblatt-shuman} it is explained how to deduce this from
results of Kahane \cite{kahane:random}. By applying Theorem
\ref{thm:kaufman-theorem-lq} to such a compact we obtain:

\begin{corollary}
For any $q > 2$ there exists a compact $K$ on the circle, which
satisfies the result of Theorem \ref{thm:piatetski-theorem-lq}, and
which has Hausdorff dimension $2 / q$.
\end{corollary}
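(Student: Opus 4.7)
The plan is essentially a one-line combination of two facts collected in the paragraph preceding the corollary. First, I would invoke the result of K\"{o}rner \cite[Theorem 1.2]{korner:ivashev-musatov} (alternatively, the deduction from Kahane's random constructions given in \cite{rosenblatt-shuman}) to obtain a starting compact $K_0 \subset \T$ whose Hausdorff dimension equals $2/q$ and which already supports a (positive) measure with Fourier transform in $\ell_q$; in particular $K_0$ carries a non-zero distribution with that property.

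Next, I would feed $K_0$ into the shrinking theorem just proved, Theorem \ref{thm:kaufman-theorem-lq}. This yields a compact $K \subset K_0$ which still supports a non-zero distribution $S_1$ with $\ft{S}_1 \in \ell_q$, but which supports no non-zero measure $\mu$ with $\ft{\mu} \in \ell_q$. Thus $K$ satisfies the conclusion of Theorem \ref{thm:piatetski-theorem-lq}.

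It remains to pin down the Hausdorff dimension of $K$. Since $K \subset K_0$ we have the upper bound $\dim_H K \leq \dim_H K_0 = 2/q$. For the matching lower bound I would apply the classical estimate from \cite[Chapter VIII, \textsection 4]{kahane-salem}: any compact supporting a non-zero distribution whose Fourier coefficients belong to $\ell_q$ has Hausdorff dimension at least $2/q$. Applied to $K$ and $S_1$, this gives $\dim_H K \geq 2/q$, so $\dim_H K = 2/q$ as required.

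There is no real obstacle here — both the existence of the $2/q$-dimensional starting compact $K_0$ and the sharp dimension lower bound are taken from the cited literature, and the shrinking step is exactly Theorem \ref{thm:kaufman-theorem-lq}. The only point worth stating explicitly in the write-up is that Hausdorff dimension is monotone under inclusion, which is what makes the dimension of $K_0$ transfer to the subcompact produced by the shrinking procedure.
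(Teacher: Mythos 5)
Your argument is exactly the paper's: it takes the known $2/q$-dimensional compact supporting a positive measure with Fourier coefficients in $\ell_q$ (from K\"{o}rner, or from Kahane's random constructions via Rosenblatt--Shuman), applies Theorem \ref{thm:kaufman-theorem-lq} to shrink it, and pins down the dimension of the resulting subcompact by monotonicity from above and the classical $\geq 2/q$ bound from below. This is correct and matches the paper's reasoning in every step.
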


\subsection{Shrinking lemma}
We will use the shrinking method of Kaufman, which corresponds to the multiplication
of a distribution $S$ by a function $f$ in order to ``shrink'' its support, see
\cite[Chapter VII]{kechris-louveau} and \cite[Theorem 4.6.2]{graham-mcgehee} for
more details. The proof of Theorem \ref{thm:kaufman-theorem-lq} will depend on the
following replacement for Lemma \ref{lemma:principal-lq}.

\begin{lemma}
\label{lemma:principal-lq-kaufman}
Let $K$ be a compact on the circle, and suppose that $S \in A_q$ is a
non-zero distribution supported by $K$. Being given $\eps > 0$ and a positive
integer $\nu$, one can find a compact $K_1 \subset K$ with the following
properties:
\begin{enumerate-math}
\item
$K_1$ supports a distribution $S_1 \in A_q$,
$\|S_1 - S\|_{A_q} < \eps$.
\item
There is a real trigonometric polynomial
\[
X(t) = \sum_{|n| \geq \nu} \ft{X}(n) e^{int},
\quad \|\ft{X}\|_p \leq 1, \quad
\text{$\tfrac1{100} \leq X(t) \leq 100$ on $K_1$}.
\]
\end{enumerate-math}
\end{lemma}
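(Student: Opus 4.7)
The plan is to parallel the proof of Lemma \ref{lemma:principal-lq}, with the modification that, rather than producing the required function from scratch, we multiply the given distribution $S$ by the smooth function constructed there. Specifically, I would invoke Lemma \ref{lemma:principal-lq} with some small parameter $\eps' > 0$ (to be chosen) and the given $\nu$, obtaining a $C^\infty$ function $f$ with $\ft{f}(0)=1$, $\bigl(\sum_{n \neq 0} |\ft{f}(n)|^q\bigr)^{1/q} < \eps'$, and $\supp f \subset \{t : \tfrac{1}{100} \leq X(t) \leq 100\}$, where $X$ is the trigonometric polynomial from \eqref{eq:x-cosines-lq}. By its explicit form, $X$ has frequencies $\pm \nu^j$ (all with $|n| \geq \nu$) and $\|\ft{X}\|_p \leq 1$, so this $X$ already meets condition~(ii) on the set $K_1 := K \cap \supp f$. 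Setting $S_1 := f \cdot S$ (multiplication of the distribution $S$ by the smooth function $f$), we automatically have $\supp S_1 \subset \supp S \cap \supp f \subset K_1 \subset K$.

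The crux of the proof is the estimate $\|S_1 - S\|_{A_q} = \|(\ft{f} - \delta_0) * \ft{S}\|_{\ell_q}$, where $\delta_0$ stands for the Kronecker symbol at the origin. I would split $\ft{S} = \ft{P} + (\ft{S} - \ft{P})$, where $P$ is a trigonometric polynomial approximating $S$ (for instance, a truncation of $\ft{S}$), and apply Young's convolution inequality to each piece:
\[
\|(\ft{f} - \delta_0) * \ft{S}\|_{\ell_q}
\leq \|\ft{P}\|_{\ell_1} \cdot \|\ft{f} - \delta_0\|_{\ell_q}
+ \|\ft{f} - \delta_0\|_{\ell_1} \cdot \|\ft{S} - \ft{P}\|_{\ell_q}.
\]
The first term is at most $\|\ft{P}\|_{\ell_1}\,\eps'$, controlled by taking $\eps'$ small relative to $\|\ft{P}\|_{\ell_1}$; the second term is controlled by taking the polynomial approximation $P$ sufficiently fine in $A_q$.

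The main obstacle is the interlocking dependence of the parameters: the quantity $\|\ft{f} - \delta_0\|_{\ell_1}$ is not controlled directly by Lemma \ref{lemma:principal-lq} (only the $\ell_q$ norm is small), and $\|\ft{P}\|_{\ell_1}$ grows as $P$ approaches $S$. I would handle this by ordering the choices carefully: first fix a polynomial approximant $P$ to $S$ (determining $\|\ft{P}\|_{\ell_1}$); next choose $\eps' \ll \eps / \|\ft{P}\|_{\ell_1}$ and construct $f$ via Lemma \ref{lemma:principal-lq}, which fixes the finite quantity $L := \|\ft{f} - \delta_0\|_{\ell_1}$; finally, refine $P$ if necessary to obtain $\|\ft{S} - \ft{P}\|_{\ell_q} < \eps/(2L)$. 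Any enlargement of $\|\ft{P}\|_{\ell_1}$ caused by the refinement can in turn be absorbed by taking $\eps'$ still smaller and re-running the construction; the rates at which $L$ grows as $\eps' \to 0$, and at which $\|\ft{P}\|_{\ell_1}$ grows as the approximation tightens, can be estimated from the explicit constructions, and by checking that they close up consistently one obtains parameters satisfying all the constraints simultaneously.
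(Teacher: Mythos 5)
Your setup is right --- take the $f$, $X$ and $K'$ produced by Lemma \ref{lemma:principal-lq} and pass to the product $f\cdot S$ --- but the step you yourself identify as the main obstacle is a genuine gap, and the parameter-juggling you sketch cannot be made to close for a general $S\in A_q$. In your scheme the quantity $L(\eps')=\|\ft{f}-\delta_0\|_{\ell_1}$ necessarily blows up as $\eps'\to 0$: the construction of $f$ is a Riesz product with $N=N(\eps')\to\infty$ factors, whose $\ell_1$ norm of coefficients is of order $(1+2sN^{-1/q})^N\approx \exp(cN^{1/p})$. On the other side, the rate at which $\|\ft{S}-\ft{P}\|_{\ell_q}\to 0$ as $P$ ranges over polynomials of bounded $\ell_1$ norm is entirely dictated by $S$ and can be arbitrarily slow. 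So for the inequality $\|\ft{P}\|_{\ell_1}\,\eps' + L(\eps')\,\|\ft{S}-\ft{P}\|_{\ell_q}<\eps$ one would need, for \emph{every} $S\in A_q$, some $\eps'$ with $\dist_{A_q}\bigl(S, \{P:\|\ft{P}\|_{\ell_1}\leq \eps/(2\eps')\}\bigr) < \eps/(2L(\eps'))$; since $L$ grows at a definite rate while the left-hand side can decay slower than any prescribed function of $\eps/(2\eps')$, this fails for suitably chosen $S$. There is no choice of ordering of the parameters that escapes this.

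The paper resolves exactly this difficulty with Kaufman's shrinking method: instead of $f\cdot S$ one takes $S_m = f_m\cdot S$ with $f_m(t)=f(mt)$. For large $m$ the Fourier support of $f_m$ is so spread out that each nonzero coefficient of $f_m\cdot S$ is essentially a single product $\ft{f}(j)\ft{S}(k)$, whence (Lemma \ref{lemma:multiplication-lq-norm}) $\|S(f_m-1)\|_{A_q}\to \|S\|_{A_q}\,\|f-1\|_{A_q}$ as $m\to\infty$. Only the $A_q$ norm of $f-1$ enters --- the uncontrollable $\ell_1$ norm disappears --- so it suffices to run Lemma \ref{lemma:principal-lq} with $\eps'=\eps/\|S\|_{A_q}$. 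The dilation costs nothing elsewhere: $X_m(t)=X(mt)$ still has frequencies of modulus $\geq\nu$, the same $\ell_p$ coefficient norm, and satisfies $\tfrac1{100}\leq X_m\leq 100$ on $E_m\supset K\cap E_m=:K_1$. You should add this dilation step; without it the proof does not go through.
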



In order to prove Lemma \ref{lemma:principal-lq-kaufman} we shall need the following
lemma (also compare with Lemma \ref{lemma:sum-dilations}).

\begin{lemma}
\label{lemma:multiplication-lq-norm}
Let $f$ be a function in $A$, and $S$ be a distribution in $A_q$.
For any integer $m$ we denote $f_m(t) = f(mt)$, and consider the
product $S f_m$. Then
\[
\lim_{m \to \infty} \|S f_m\|_{A_q} = \|f\|_{A_q} \|S\|_{A_q}.
\]
\end{lemma}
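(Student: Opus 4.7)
The plan is to first settle the case where both $f$ and $S$ are trigonometric polynomials, and then reduce the general case to this one by an approximation argument using the contractive estimate $\|uv\|_{A_q}\leq\|u\|_{A}\|v\|_{A_q}$ (which follows from Young's convolution inequality applied to Fourier coefficients).

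For the polynomial case, suppose $\hat{f}$ is supported in $[-J,J]$ and $\hat{S}$ in $[-M,M]$. Since $\widehat{f_m}(n)=\hat{f}(n/m)$ when $m\mid n$ and $0$ otherwise, the Fourier coefficients of the product are
\[
\widehat{S f_m}(n)=\sum_{|j|\leq J}\hat{S}(n-mj)\,\hat{f}(j).
\]
The terms on the right are supported in the translated intervals $mj+[-M,M]$, and as soon as $m>2M$ these intervals are pairwise disjoint. Hence each $n$ picks out at most a single $j$, and
\[
\|Sf_m\|_{A_q}^{\,q}=\sum_{|j|\leq J}\sum_{|k|\leq M}|\hat{f}(j)\hat{S}(k)|^{q}=\|f\|_{A_q}^{\,q}\,\|S\|_{A_q}^{\,q}.
\]
So for polynomials the claimed identity actually holds exactly for $m$ large enough.

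For the general case, fix $\eps>0$, pick a trigonometric polynomial $P$ with $\|f-P\|_{A}<\eps$ (possible since trigonometric polynomials are dense in $A$), and pick a trigonometric polynomial $T$ with $\|S-T\|_{A_q}<\eps$ (possible since the partial sums $\sum_{|n|\leq M}\hat{S}(n)e^{int}$ converge to $S$ in the $A_q$ norm). Writing
\[
Sf_m-TP_m=(S-T)f_m+T(f-P)_m,
\]
the bound $\|uv\|_{A_q}\leq\|u\|_{A}\|v\|_{A_q}$ together with $\|f_m\|_{A}=\|f\|_{A}$ and $\|(f-P)_m\|_{A}=\|f-P\|_{A}$ gives
\[
\|Sf_m-TP_m\|_{A_q}\leq\|f\|_{A}\,\|S-T\|_{A_q}+\|T\|_{A_q}\,\|f-P\|_{A}<\eps(\|f\|_{A}+\|S\|_{A_q}+\eps).
\]
This bound is uniform in $m$. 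By the polynomial case, for $m$ sufficiently large $\|TP_m\|_{A_q}=\|P\|_{A_q}\|T\|_{A_q}$, and by the choice of $P,T$ the latter differs from $\|f\|_{A_q}\|S\|_{A_q}$ by at most $O(\eps)$. Letting $\eps\to 0$ and $m\to\infty$ in the right order yields the limit in the lemma.

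No real obstacle is anticipated: the disjointness argument in the polynomial case is elementary, and the approximation step only relies on the standard multiplicative inequality $\|uv\|_{A_q}\leq\|u\|_{A}\|v\|_{A_q}$ together with the fact that $\|f_m\|_{A}$ is invariant under dilation by integers. The one point to be attentive about is that the approximation of $S$ in $A_q$ by trigonometric polynomials uses nothing more than $\hat{S}\in\ell_q$, and that $T$, being a polynomial, makes $\|T\|_{A_q}$ finite so the error estimate is legitimate.
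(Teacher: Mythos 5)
Your proposal is correct and follows essentially the same route as the paper's proof: exact multiplicativity of the $A_q$ norm for trigonometric polynomials once $m$ is large enough to separate the frequency blocks, followed by approximation of $f$ in $A$ and of $S$ in $A_q$ by polynomials, with the error controlled uniformly in $m$ via $\|uv\|_{A_q}\leq\|u\|_{A}\|v\|_{A_q}$. Your write-up merely makes explicit the disjointness computation that the paper leaves as ``easy to check.''
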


\begin{proof}
Given two trigonometric polynomials $P$ and $Q$, consider the product
$Q P_m$, where we denote $P_m(t) = P(mt)$. It is easy to check that if
$m$ is sufficiently large then $\|Q P_m\|_{A_q} = \|Q\|_{A_q} \|P\|_{A_q}$,
since each non-zero Fourier coefficient of $Q P_m$ will be a product $\ft{P}(j)
\ft{Q}(k)$.

Given $\eps > 0$, we choose $P$ and $Q$ as partial sums of the Fourier series
of $f$ and $S$, respectively. By an appropriate choice we may have the
difference between $\|Q\|_{A_q} \|P\|_{A_q}$ and $\|S\|_{A_q} \|f\|_{A_q}$
be smaller than $\eps$ by modulus. In addition, by a standard argument we have
\[
\|S f_m - Q P_m\|_{A_q} \leq
\|S - Q\|_{A_q} \|f\|_{A} + \|Q\|_{A_q} \|f - P\|_{A} \, ,
\]
so we may also satisfy
\[
\|S f_m - Q P_m\|_{A_q} < \eps
\]
for every $m$. It follows that for sufficiently large $m$, the difference
between $\|S f_m\|_{A_q}$ and $\|f\|_{A_q} \|S\|_{A_q}$ is smaller that
$2 \eps$ by modulus.
\end{proof}

\begin{proof}[Proof of Lemma \ref{lemma:principal-lq-kaufman}]
Using Lemma \ref{lemma:principal-lq} one can find a compact $E \subset \T$,
a $C^\infty$ function $f: \T \to \C$ and a real trigonometric polynomial $X$
such that
\begin{enumerate-math}
\item
$f$ is supported by $E$, $\|f - 1\|_{A_q} < \eps / \|S\|_{A_q}$.
\item
$X(t) = \sum_{|n| \geq \nu} \ft{X}(n) e^{int}$, $\|\ft{X}\|_p \leq 1$
and $\tfrac1{100} \leq X(t) \leq 100$ on $E$.
\end{enumerate-math}
Define $f_m(t) = f(mt)$, $X_m(t) = X(mt)$ and let $E_m$ denote
the compact defined by $\1_{E_m}(t) = \1_{E}(mt)$. Then for every
positive integer $m$ the conditions (i) and (ii) are also satisfied
by $E_m$, $f_m$ and $X_m$ instead of $E$, $f$ and $X$. Let $S_m$
denote the product $S f_m$, then due to Lemma
\ref{lemma:multiplication-lq-norm}
\[
\lim_{m \to \infty} \|S_m - S\|_{A_q} = \lim_{m \to \infty}
\|S(f_m - 1)\|_{A_q} = \|S\|_{A_q} \|f - 1\|_{A_q} < \eps.
\]
Hence for sufficiently large $m$, the compact $K \cap E_m$ satisfies
the conditions (i) and (ii) of Lemma \ref{lemma:principal-lq-kaufman}.
\end{proof}

\subsection{Conclusion of the proof}
We now deduce Theorem \ref{thm:kaufman-theorem-lq} from Lemma
\ref{lemma:principal-lq-kaufman}. Let $K$ be a compact on the circle,
and let $S$ be a non-zero distribution in $A_q$ supported by $K$. Choose
numbers $\eps_j > 0$, $\sum_{j=1}^{\infty} \eps_j < \|S\|_{A_q}$
and positive integers $\nu_j$ such that
 and $\nu_j \to \infty$.
$(j \to \infty)$. Using Lemma \ref{lemma:principal-lq-kaufman} one may
construct by induction compacts $K_j$ on the circle ($K_0 = K$),
distributions $S_j$ in $A_q$ ($S_0 = S$) and real trigonometric
polynomials $X_j$ such that, for every $j \geq 1$
\begin{enumerate-math}
\item
$K_j \subset K_{j-1}$.
\item
$S_j$ is supported by $K_j$.
\item
$\|S_j - S_{j-1}\|_{A_q} < \eps_j$.
\item
$X_j(t) = \sum_{|n| \geq \nu_j} \ft{X}_j(n) e^{int}$,
$\|\ft{X}_j\|_p \leq 1$ and $\frac1{100} \leq X_j(t) \leq 100$ on $K_j$.
\end{enumerate-math}

It follows that the sequence $S_j$ conveges in the $A_q$ norm to a non-zero
distribution $S_\infty$ supported by $K_\infty = \bigcap_{j=1}^{\infty} K_j$.
By Lemma \ref{lemma:nomeasure-lq}, $K_\infty$ does not support a non-zero
measure $\mu$ in $A_q$. We have thus obtained a compact $K_\infty$ as required,
so the theorem is proved.


\section{Fourier-type dimension}

\subsection{}
The \emph{Fourier dimension} of a compact $K \subset \T$ is usually defined
as the supremum of the numbers $0 \leq \beta \leq 1$ for which there is a positive
(non-zero) measure $\mu$, supported by $K$, such that $|\ft{\mu}(n)| = 
O(|n|^{-\beta/2})$. A well-known property of the Fourier dimension is that
it can never exceed the Hausdorff dimension of $K$ (see \cite{kahane-salem},
Chapter VIII). The Fourier dimension need not coincide with the Hausdorff
dimension in general. This is not surprising since the Hausdorff dimension is
related to metrical properties of the set, while the Fourier dimension is
closely related to its arithmetical properties.

In this section we consider a quantity similar to the Fourier dimension. We
define the number $q_0 = q_0(K)$ to be the infimum of all $q > 2$ for which
there is a positive measure $\mu$, supported by $K$, such that $\ft{\mu}
\in \ell_q$. The number $q_0$, like the Fourier dimension, could be used to
estimate the Hausdorff dimension from below: the set $K$ has Hausdorff
dimension $\geq 2/q_0$ (see Section \ref{sec:shrinking-hausdorff}).

Rosenblatt and Shuman in \cite{rosenblatt-shuman} used results of Beurling
\cite{beurling}, Salem \cite{salem} and Kahane \cite{kahane:random}
to construct compact sets $K$ with $q_0$ being any pre\-assigned number,
$q_0 > 2$. They constructed such examples in which $K$ supports a positive
measure with Fourier coefficients belonging to $\ell_{q_0}$, and also other
examples in which $K$ does not even support such a distribution.

Here we construct an example which exhibits a different ``threshold behavior''.
The following strengthening of Theorem \ref{thm:piatetski-theorem-lq} is true:
there is a compact $K$ which does not support any measure $\mu$ with $\ft{\mu}
\in \ell_{q_0}$, $q_0 = q_0(K)$, but which does support such a \emph{distribution}.
In fact, we prove a bit more than that:

\begin{theorem}
\label{thm:piatetski-theorem-lq-measures}
For any $q_0 > 2$ there exists a compact $K$ such that
\begin{enumerate-math}
\item
$K$ supports a positive measure $\mu$ such that $\ft{\mu} \in \bigcap_{q > q_0} \ell_q$.
\item
$K$ does not support any measure with Fourier coefficients belonging to $\ell_{q_0}$.
\item
$K$ supports a distribution $S$ with Fourier coefficients belonging to $\ell_{q_0}$.
\end{enumerate-math}
\end{theorem}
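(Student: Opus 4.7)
The plan is to run the Riesz-product construction from the proof of Theorem \ref{thm:piatetski-theorem-lq} in parallel with a second construction, on the same sequence of concentration sets, that yields a positive measure with the required Fourier decay. I inductively choose integers $N_j,\nu_j\to\infty$ and the polynomials $X_j(t)=N_j^{-1/p_0}\sum_{k=1}^{N_j}\cos\nu_j^k t$ (with $p_0=q_0/(q_0-1)$), defining $K_j=\{t\in\T:\tfrac1{100}\le X_j(t)\le 100\}$. Following the proof of Theorem \ref{thm:piatetski-theorem-lq}, Lemma \ref{lemma:principal-lq} (used with Kahane's signed measure $\rho$) produces $C^\infty$ functions $f_j$ supported on $K_j$ with $\|f_j-1\|_{A_{q_0}}$ arbitrarily small, so that $S:=\prod_j f_j$ converges in $A_{q_0}$ to a non-zero distribution supported on $K:=\bigcap_j K_j$. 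Properties (ii) and (iii) then follow from Lemma \ref{lemma:nomeasure-lq} applied to the $X_j$'s and from the construction of $S$.

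For (i), I rerun the construction of Lemma \ref{lemma:principal-lq} on the same $K_j$, but with the Kahane measure $\rho$ replaced by a Dirac mass $\delta_{s_0}$ at a fixed $s_0\in(\tfrac14,\tfrac13)$. With this positive choice of $\rho$, the trigonometric polynomial $\lambda$ appearing in that proof is simply the non-negative Riesz product $\lambda_{s_0}^{(j)}$, and the truncation $h=\lambda_{s_0}^{(j)}\1_{K_j'}$ followed by convolution with a non-negative bump $\psi_j$ produces a non-negative $C^\infty$ function $g_j$, supported on $K_j$, with $\ft{g_j}(0)=1$. A direct summation from \eqref{eq:lambda-trig-expansion-lq} gives
\[
\sum_{n\ne 0}\bigl|\ft{\lambda_{s_0}^{(j)}}(n)\bigr|^q
=(1+2s_0^q N_j^{-q/q_0})^{N_j}-1 = O\bigl(N_j^{\,1-q/q_0}\bigr),
\]
which tends to $0$ as $N_j\to\infty$ for every $q>q_0$ (since then $q/q_0>1$). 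After the truncation and smoothing steps this yields $\|g_j-1\|_{A_q}$ as small as desired for any fixed $q>q_0$, at the cost of losing any control in the critical space $A_{q_0}$ itself.

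Choosing $N_j$ growing fast enough (say $N_j=2^j$, which is compatible with the parameters needed for the $f_j$ construction) and the free parameters $\eps_j$ so that, for every $q>q_0$, $\sum_j\|g_j-1\|_{A_q}$ is summable and the inductive estimates as in the proof of Theorem \ref{thm:piatetski-theorem-lq} go through, the infinite product $\prod_j g_j$ converges in each $A_q$, $q>q_0$, to a non-zero limit $\mu$. Each partial product $\prod_{j\le J}g_j$ is a non-negative function supported on $\bigcap_{j\le J}K_j$; passing to the weak limit of these non-negative functions viewed as finite Borel measures on $\T$ produces a positive measure $\mu$ supported on $K$, whose Fourier coefficients therefore lie in $\bigcap_{q>q_0}\ell_q$, giving (i). The main technical obstacle is the rigorous convergence of $\prod g_j$ to a non-trivial positive measure on $K$: because the truncation by $\1_{K_j'}$ destroys the lacunarity of the Fourier supports of the $g_j$'s, the partial products are not automatically probability densities, and the cross terms in $\int g_1\cdots g_J\,dt/2\pi$ must be controlled (for instance by $L^2$-estimates coming from Lemma \ref{lemma:lambda-restriction-lq}) to ensure the total masses stay bounded away from $0$ as $J\to\infty$.
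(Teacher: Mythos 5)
Your proposal follows essentially the same route as the paper: the paper's Lemma \ref{lemma:principal-lq-measures} produces, alongside $f_j$, exactly the positive function $g_j$ you describe (a single Riesz product $\lambda_s$ with fixed $s$, truncated to $K'$, normalized and smoothed), with the same estimate $\sum_{n\neq 0}|\ft{\lambda}_s(n)|^{q+\eps}\leq\exp(N^{-\eps/q})-1$, and the paper likewise obtains $\mu=\prod g_j$ converging in $A_q$ for every $q>q_0$ via exponents $q_j=q_0+\eps_j\downarrow q_0$. The ``main technical obstacle'' you flag at the end is in fact a non-issue: the total mass of the positive partial product $G_J=g_1\cdots g_J$ is exactly $\ft{G}_J(0)$, and since $|\ft{h}(0)|\leq\|h\|_{A_q}$, the telescoping bounds $\|G_{j+1}-G_j\|_{A_{q_{j+1}}}\leq\|G_j\|_A\,\|g_{j+1}-1\|_{A_{q_{j+1}}}<2^{-2-j}$ already give $|\ft{\mu}(0)-1|<1$, so the masses stay bounded away from $0$ with no separate analysis of cross terms.
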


\subsection{}
In order to prove Theorem \ref{thm:piatetski-theorem-lq-measures} we add
a new ingredient to Lemma \ref{lemma:principal-lq}.

\begin{lemma}
\label{lemma:principal-lq-measures}
Let $q > 2$ be given. For any $\eps > 0$ and any positive integer $\nu$
one can find a compact $K$ on the circle, two $C^\infty$ functions
$f, g: \T \to \C$ and a real trigonometric polynomial $X$ such that
\begin{enumerate-math}
\item
$f$ is supported by $K$.
\item
$f(t) = 1 + \sum_{n \neq 0} \ft{f}(n)e^{int}$, $\|f - 1\|_{A_q} < \eps$.
\item
$g$ is supported by $K$.
\item
$g(t) \geq 0$, $t \in \T$.
\item
$g(t) = 1 + \sum_{n \neq 0} \ft{g}(n)e^{int}$, $\|g - 1\|_{A_{q + \eps}} < \eps$.
\item
$X(t) = \sum_{|n| \geq \nu} \ft{X}(n) e^{int}$,
$\|\ft{X}\|_p \leq 1$, $\tfrac1{100} \leq X(t) \leq 100$ on $K$.
\end{enumerate-math}
\end{lemma}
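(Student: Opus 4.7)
Lemma \ref{lemma:principal-lq-measures} is Lemma \ref{lemma:principal-lq} with the additional requirement of a non-negative smooth function $g$ supported on the same compact $K$ and close to $1$ in the weaker space $A_{q+\eps}$. The plan is to apply Lemma \ref{lemma:principal-lq} verbatim to obtain $K=\{t:\tfrac{1}{100}\leq X(t)\leq 100\}$, the polynomial $X$ and the function $f$ satisfying (i), (ii), (vi), together with the parameters $N,\nu$. All that remains is to build $g$ on the same compact.

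The key idea is that a \emph{single} Riesz product --- rather than the $\rho$-averaged $\lambda=\int\lambda_s\,d\rho(s)$ used for $f$ --- is already close to $1$ in $A_{q+\eps}$. Fix any $s_0\in(\tfrac14,\tfrac13)$ and set
\[
\lambda_{s_0}(t)=\prod_{j=1}^{N}\bigl(1+2s_0\,N^{-1/q}\cos\nu^j t\bigr)\;\geq\;0.
\]
Grouping the terms in \eqref{eq:lambda-trig-expansion-lq} by $l=\sum_j|\tau_j|$ yields
\[
\sum_{n\neq 0}|\ft{\lambda}_{s_0}(n)|^{q+\eps}
=\bigl(1+2 s_0^{q+\eps}N^{-(q+\eps)/q}\bigr)^{N}-1
\;\sim\;2 s_0^{q+\eps}N^{-\eps/q},
\]
which tends to $0$ as $N\to\infty$. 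The gain of $\eps/q$ in the exponent of $N$, obtained simply by passing to the weaker norm $A_{q+\eps}$, is precisely what allows us to dispense with Kahane's lemma and thereby to preserve the positivity of $\lambda_{s_0}$.

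Next, truncate by $h=\lambda_{s_0}\cdot\1_{K'}\geq 0$, with $K'=\{t:\tfrac{1}{90}\leq X(t)\leq 90\}$ as in Lemma \ref{lemma:principal-lq}. Lemma \ref{lemma:lambda-restriction-lq} makes $\|\lambda_{s_0}-h\|_{L^2}=\|\lambda_{s_0}\|_{L^2(\T\setminus K')}$ arbitrarily small for $N$ large and $\nu\geq\nu(N)$; since $q+\eps>2$, Plancherel combined with $\ell_2\subset\ell_{q+\eps}$ gives $\|\,\cdot\,\|_{A_{q+\eps}}\leq\|\,\cdot\,\|_{L^2}$, so also $\|\lambda_{s_0}-h\|_{A_{q+\eps}}$ is small. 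In particular $\ft{h}(0)$ is close to $\ft{\lambda}_{s_0}(0)=1$, so normalizing $g_0=h/\ft{h}(0)$ preserves $\ft{g_0}(0)=1$ and keeps $\|g_0-1\|_{A_{q+\eps}}$ small. Finally set $g=g_0*\psi$ where $\psi$ is a non-negative $C^\infty$ bump of integral $1$ whose support is small enough that $\supp g\subset K$; then $g$ is non-negative, infinitely smooth, supported in $K$, with $\ft{g}(0)=1$ and $\|g-1\|_{A_{q+\eps}}<\eps$, giving (iii)--(v). Both constructions can be run with a single pair $(N,\nu)$, chosen large enough to simultaneously handle the estimates for $f$ and for $g$.

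The only substantive new ingredient is the $A_{q+\eps}$ bound on $\lambda_{s_0}-1$ above; once this is in hand, the truncation, normalization, and smoothing steps are essentially bookkeeping that parallels the proof of Lemma \ref{lemma:principal-lq}.
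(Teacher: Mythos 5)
Your proof is correct and follows essentially the same route as the paper: take $f$, $K$, $X$ from Lemma \ref{lemma:principal-lq}, and build $g$ from a single (hence positive) Riesz product $\lambda_s$ with fixed $s$, using the extra factor $N^{-\eps/q}$ gained in the $A_{q+\eps}$ norm to dispense with Kahane's averaging, followed by the same truncation, normalization, and smoothing. The paper's estimate $\sum_{n\neq 0}|\ft{\lambda}_s(n)|^{q+\eps}\leq \exp(N^{-\eps/q})-1$ is exactly your computation.
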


\begin{proof}
We choose $f$, $K$ and $X$ according to Lemma \ref{lemma:principal-lq}. Using the
notations of this lemma, we now explain how to choose the function $g$.
In view of 
\ref{eq:lambda-trig-expansion-lq}, for any $\lowers < s < \uppers$ we have
\[
\sum_{n \neq 0} \big|\ft{\lambda}_s(n)\big|^{q + \eps}
< \sum_{0 \neq \bar{\tau} \in \{-1,0,1\}^N}
\big( \tfrac1{2} \, N^{-(q + \eps)/q} \big)^{\sum |\tau_j|}
= \big(1 + N^{- 1 - \eps/q} \big)^N - 1,
\]
from which we deduce the estimate
\begin{equation}
\label{eq:lambda-norm-lq-epsilon}
\sum_{n \neq 0} \big|\ft{\lambda}_s(n)\big|^{q + \eps}
\leq \exp \big( N^{- \eps / q} \big) - 1 \, .
\end{equation}
Let $g_1(t) = \lambda_s \cdot \1_{K'}$ for some arbitrary choice of
$\lowers < s < \uppers$. Then $g_1$ is a positive function supported by
$K'$, and
\[
\|\lambda_s - g_1\|_{A_{q + \eps}} \leq \|\lambda_s - g_1\|_{L^2(\T)}
= \|\lambda_s\|_{L^2(\T \setminus K')} < \delta.
\]
The right side of \eqref{eq:lambda-norm-lq-epsilon} is also smaller
than $\delta$, if $N$ is sufficiently large. So, as before, we can define
$g_2(t) = g_1(t) / \ft{g}_1(0)$ and take $g$ to be the convolution of $g_2$
with an appropriate smooth kernel.
\end{proof}

\subsection{}
We now conclude the proof with a similar procedure as before.

\begin{proof}[Proof of Theorem \ref{thm:piatetski-theorem-lq-measures}]
As in the proof of Theorem \ref{thm:piatetski-theorem-lq}, we choose by induction
a sequence $\{\eps_j\}$ and let $f_j$, $g_j$, $X_j$ and $K_j$ be given by Lemma
\ref{lemma:principal-lq-measures} (with $q = q_0$). If the $\{\eps_j\}$ are
tending to zero fast enough then the product $\prod_{j=1}^{\infty} f_j$ will
converge in $A_{q_0}$ to a non-zero distribution $S$, which is supported by
the compact $K = \bigcap_{j=1}^{\infty} K_j$. Lemma \ref{lemma:nomeasure-lq}
implies that $K$ supports no non-zero measure in $A_{q_0}$.

We define the measure $\mu$ as the product $\prod_{j=1}^{\infty} g_j$. Denote
$q_j = q_0 + \eps_j$, $G_0 = 1$ and $G_j = g_1 \cdots g_j$. Given any $q > q_0$,
we have $q_j < q$ for all sufficiently large $j$, which implies
\[
\|G_{j+1} - G_j\|_{A_{q}} \leq \|G_{j+1} - G_j\|_{A_{q_{j+1}}} 
\leq \|G_j\|_{A} \cdot \|g_{j+1} - 1\|_{A_{q_{j+1}}}.
\]
Hence if we choose
\[
\eps_1 < 2^{-2} \qquad \textrm{and} \qquad
\|g_1 \cdot g_2 \cdot \ldots \cdot g_j\|_{A} \; \eps_{j+1} <
2^{-2-j} \quad (j=1,2,\dots)
\]
then the product $\prod_{j=1}^{\infty} g_j$ will converge in $A_{q}$, for all $q > q_0$,
to a distribution $\mu$. For each $j$, the function $g_j$ is positive and supported by
$K_j$, so $\mu$ is a positive measure supported by $K$. Finally, we have
\[
|\ft{G}_{j+1}(0) - \ft{G}_j(0)| \leq \|G_{j+1} - G_j\|_{A_{q_{j+1}}} < 2^{-2-j},
\]
and therefore $|\ft{\mu}(0) - 1| < \sum_{0}^{\infty} 2^{-2-j} < 1$, so $\mu$ is
non-zero.
\end{proof}

\section{Baire category}

In this section we consider the question: is the phenomenon of Theorem
\ref{thm:piatetski-theorem-lq} ``typical'' or ``rare''?

One possible approach to such type of questions is via probability theory.
In this context, the introduction of a probability measure space allows one
to consider an almost sure event as ``typical'', and the complement of such
an event as ``rare''. Probabilistic methods in the theory of thin sets
were used by Kahane and Salem, see \cite{kahane-salem} and \cite{kahane:random}.

Another approach is via Baire category. In this case, the introduction of a 
complete metric space is required. Rare phenomena then correspond to sets of
first category, also known as meager sets, while typical phenomena correspond
to their complements, namely the residual sets (we recall the precise definitions
below). In connection with thin sets, Baire category arguments were used by
Kaufman \cite{kaufman:functional}, Kahane \cite{kahane:rearrangements} and
K\"{o}rner \cite{korner:curve}. For more details see \cite{kahane:baire}.

Inspired by K\"{o}rner's paper \cite{korner:curve} we use the Baire category
approach to show that the phenomenon of Theorem \ref{thm:piatetski-theorem-lq}
is ``typical'' in an appropriate sense.

\subsection{Preliminaries}

We briefly recall the notions of Baire category theory. Let $\X$ be a complete
metric space. A subset of $\X$ is called \emph{nowhere dense} if its closure has
no interior points. Equivalently, a set is nowhere dense if its complement contains
an open, dense set. A subset of $\X$ is called a \emph{set of first category},
or a \emph{meager} set, if it is contained in the union of countably many nowhere
dense sets. The complement of a set of first category is called a \emph{residual}
set. Equivalently, a set is residual if it contains the intersection of countably
many open dense sets. The Baire category theorem reads as follows.

\begin{theorem*}[Baire]
In a complete metric space, any residual set is dense.
\end{theorem*}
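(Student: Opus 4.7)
The plan is to reduce to the following equivalent statement: if $U_1, U_2, \ldots$ are open dense subsets of a complete metric space $\X$, then $\bigcap_{n=1}^{\infty} U_n$ is dense. Indeed, by definition a residual set $R$ contains some such countable intersection, so density of the intersection immediately gives density of $R$. To prove density of $\bigcap_n U_n$ it suffices to show that it meets every non-empty open set $V \subset \X$.

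The key step is to construct by induction a nested sequence of closed balls $\overline{B(x_n, r_n)}$, with $0 < r_n < 1/n$, such that
\[
\overline{B(x_n, r_n)} \subset V \cap U_1 \cap U_2 \cap \dots \cap U_n.
\]
At the first step, $V \cap U_1$ is non-empty (by density of $U_1$) and open, hence contains a closed ball of radius $<1$. Assuming $\overline{B(x_n, r_n)}$ has been chosen, density of $U_{n+1}$ makes $B(x_n, r_n) \cap U_{n+1}$ a non-empty open set, so it contains a closed ball $\overline{B(x_{n+1}, r_{n+1})}$ with $r_{n+1} < 1/(n+1)$. The only thing being used here is that in a metric space any non-empty open set contains a closed ball of arbitrarily small prescribed radius.

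To finish, I would invoke completeness. Since $x_m \in B(x_n, r_n)$ for all $m \geq n$ and $r_n \to 0$, the sequence $\{x_n\}$ is Cauchy and hence converges to some $x \in \X$. For each fixed $n$ the tail $\{x_m\}_{m \geq n}$ lies inside the closed ball $\overline{B(x_n, r_n)}$, so the limit $x$ lies there as well, giving $x \in V \cap U_n$ for every $n$. This yields $x \in V \cap \bigcap_{n=1}^{\infty} U_n \subset V \cap R$, so $R$ is dense.

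There is no real obstacle here: this is the classical Baire argument, and the only point requiring attention is to impose a summable (or at least null) upper bound on the radii $r_n$ so that the constructed centers form a Cauchy sequence and their limit is trapped inside each of the chosen closed balls. Completeness of $\X$ is used exactly once, to produce the limit point $x$.
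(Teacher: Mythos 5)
Your proof is correct and is the standard classical argument for Baire's theorem; the paper itself states this result without proof, as a well-known fact justifying the use of category methods in the space $\G_q$. The one point worth keeping explicit, as you do, is that the closed ball $\overline{B(x_{n+1},r_{n+1})}$ is chosen inside the \emph{open} ball $B(x_n,r_n)$ intersected with $U_{n+1}$, so that nesting of the closed balls and membership in each $U_n$ both persist to the limit; with the radii forced to tend to zero, completeness is invoked exactly once to produce the limit point, and the argument is complete.
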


The meager sets in the theory of Baire category play the same role as the sets of
measure zero in probability theory. If the set of elements $x \in \X$ satisfying
a certain property $P$ is residual, we will say that $P$ is ``typical'' or
``quasi-sure''. It is also common to use statements like ``$P$ holds quasi-surely''
or ``quasi-all elements $x \in \X$ satisfy $P$''.

\subsection{Result}

We denote by $\K(\T)$ the collection of all non-empty, compact subsets
of $\T$. We define the \emph{$\eps$-neighborhood} of a compact $K \in \K(\T)$
to be the set
\[
K_\eps = \bigcup_{t \in K} (t - \eps, t + \eps).
\]
In other words, $K_\eps$ is the open set consisting of all points whose distance
from $K$ is smaller than $\eps$. Given two compacts $K, K' \in \K(\T)$, we define
the \emph{Hausdorff distance} between $K$ and $K'$ to be
\[
\delta(K, K') = \inf \{\eps > 0 : \text{$K \subset K'_\eps$ and $K' \subset K_\eps$}\}.
\]
It is well-known that the space $\K(\T)$, equipped with the Hasudorff distance,
is a complete metric space (see \cite{kechris-louveau}, IV.2).

Following the theme and notation of K\"{o}rner \cite{korner:curve}, we now introduce
the metric space suitable for our purpose, as follows. Being given a number $q > 2$,
we define $\G_q$ to be the collection of all ordered pairs $(K, S)$ such that
$K \in \K(\T)$ and $S$ is a distribution in $A_q$ supported by $K$. We equip the
space $\G_q$ with the metric
\begin{equation}
\label{eq:gq-metric}
d((K,S), (K', S')) = \delta(K, K') + \|S - S'\|_{A_q}.
\end{equation}
It is obvious that $\G_q$ is a metric space. Moreover, we have

\begin{proposition}
\label{prop:gq-complete}
The metric space $\G_q$ is complete.
\end{proposition}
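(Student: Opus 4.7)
The plan is to take a Cauchy sequence $(K_n, S_n)$ in $\G_q$ and produce a limit $(K, S) \in \G_q$ by combining completeness of the two coordinate spaces with an argument that the support condition passes to the limit.

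First, the definition of the metric \eqref{eq:gq-metric} makes $n \mapsto K_n$ Cauchy in the Hausdorff metric on $\K(\T)$, and $n \mapsto S_n$ Cauchy in $A_q$. The space $\K(\T)$ equipped with the Hausdorff distance is a complete metric space (and the non-emptiness of the limit follows from the fact that a Hausdorff limit of non-empty compacts is non-empty), hence $K_n \to K$ for some $K \in \K(\T)$. The space $A_q(\T)$ is isometric to $\ell_q(\Z)$ via the Fourier transform, so it is complete as well, and $S_n \to S$ in $A_q$ for some distribution $S \in A_q$. The remaining task, and the only non-routine point, is to verify that $S$ is supported by $K$; once this is done, $(K, S) \in \G_q$ and $d((K_n, S_n), (K, S)) \to 0$ by construction.

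To show $\supp(S) \subset K$, let $\phi$ be any $C^\infty$ test function on $\T$ with $\supp(\phi) \cap K = \emptyset$. Since both sets are compact and disjoint, there exists $\eps > 0$ with $\supp(\phi) \cap K_\eps = \emptyset$. The convergence $K_n \to K$ in the Hausdorff metric yields $K_n \subset K_{\eps/2}$ for all sufficiently large $n$, and therefore $\supp(\phi) \cap K_n = \emptyset$ for such $n$. Because $S_n$ is supported by $K_n$, this forces $\langle S_n, \phi \rangle = 0$ for all large $n$.

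It remains to pass to the limit in the pairing. Since $\phi$ is $C^\infty$, its Fourier coefficients are rapidly decreasing, in particular $\ft{\phi} \in \ell_p$ where $p = q/(q-1)$; and $S_n \to S$ in $A_q$ means $\ft{S_n} \to \ft{S}$ in $\ell_q$. By H\"older's inequality,
\[
|\langle S_n, \phi \rangle - \langle S, \phi \rangle|
= \Big| \sum_{k \in \Z} (\ft{S_n}(k) - \ft{S}(k)) \, \ft{\phi}(-k) \Big|
\leq \|\ft{S_n} - \ft{S}\|_{\ell_q} \, \|\ft{\phi}\|_{\ell_p} \to 0,
\]
so $\langle S, \phi \rangle = 0$. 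Since this holds for every test function $\phi$ with support disjoint from $K$, we conclude $\supp(S) \subset K$, completing the proof. The only subtlety is the passage from ``support contained in $K_n$'' to ``support contained in $K$'', and the above $\eps$-neighborhood argument using Hausdorff convergence handles it cleanly.
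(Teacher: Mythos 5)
Your proof is correct and uses the same essential ingredients as the paper's: completeness of the product $\K(\T)\times A_q(\T)$ reduced to closedness of $\G_q$, verified by pairing with a $C^\infty$ test function vanishing near $K$, controlling supports via the Hausdorff $\eps$-neighborhood, and passing to the limit with H\"older's inequality in $\ell_p$--$\ell_q$ duality. The paper phrases the closedness contrapositively (the complement of $\G_q$ is open) while you argue sequentially, but this is only a cosmetic difference.
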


The fact that $\G_q$ is a complete metric space justifies the use of Baire
category. The result of this section is that Piatetski-Shapiro's phenomenon is
``typical'' in the space $\G_q$. Precisely, we prove:

\begin{theorem}
\label{thm:piatetski-theorem-lq-baire}
For quasi-all pairs $(K, S) \in \G_q$, the following holds:
\begin{enumerate-math}
\item The distribution $S$ supported by $K$ is non-zero.
\item The compact $K$ supports no non-zero measure $\mu \in A_q$.
\end{enumerate-math}
\end{theorem}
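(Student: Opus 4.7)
The plan is to exhibit, for each $\nu = 0, 1, 2, \ldots$, an open dense subset $U_\nu \subset \G_q$, so that Baire's theorem yields $\bigcap_\nu U_\nu$ residual; pairs in the intersection will satisfy both (i) and (ii). Setting $U_0 = \{(K, S) \in \G_q : S \neq 0\}$ handles (i): it is open by the definition of $d$, and it is dense because any $(K, 0)$ can be approximated by $(K_\delta, t \psi)$, where $K_\delta$ is the closed $\delta$-neighborhood of $K$ and $\psi$ is a non-zero $C^\infty$ bump supported in $K_\delta$ (with $\delta$ and $t$ small).

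For $\nu \geq 1$ I define $U_\nu$ to be the set of pairs $(K, S) \in \G_q$ for which there exists a real trigonometric polynomial $X$ with $\ft{X}(n) = 0$ for $|n| < \nu$, $\|\ft{X}\|_p \leq 1$, and $\tfrac1{100} < X(t) < 100$ for every $t \in K$. By Lemma \ref{lemma:nomeasure-lq}, every pair in $\bigcap_{\nu \geq 1} U_\nu$ satisfies (ii). Openness of $U_\nu$ follows from the compactness of $K$ and uniform continuity of a witness $X$: one has $\tfrac1{100} + \gamma \leq X \leq 100 - \gamma$ on $K$ for some $\gamma > 0$, and the same $X$ continues to witness all $(K', S') \in \G_q$ with $\delta(K, K')$ sufficiently small.

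The main step is density of $U_\nu$, which is a Baire-category adaptation of the shrinking Lemma \ref{lemma:principal-lq-kaufman}. Given $(K, S) \in \G_q$ and $\eps > 0$, I first smooth $S$ by convolving with a bump $\phi_\delta \geq 0$, $\int \phi_\delta = 1$, $\supp \phi_\delta \subset (-\delta, \delta)$: the function $S^{(\delta)} := S \ast \phi_\delta$ is $C^\infty$, supported in the $\delta$-neighborhood $K_\delta$, and $\|S^{(\delta)} - S\|_{A_q} < \eps/4$ once $\delta$ is small (by dominated convergence applied to the multipliers $\ft{\phi_\delta}(n) - 1$). Next, applying Lemma \ref{lemma:principal-lq} with a slightly tightened range $\tfrac1{90} \leq X \leq 90$ on $\supp f$, I obtain a $C^\infty$ function $f$ with $\|f - 1\|_{A_q}$ as small as wanted and a trigonometric polynomial $X$ whose Fourier coefficients are supported in $\{\pm \nu_0^j\}_{j=1}^N$, with $\|\ft{X}\|_p \leq 1$ and $\tfrac1{90} \leq X \leq 90$ on $E := \supp f$. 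For a large integer $m$ I put $f_m(t) = f(mt)$, $X_m(t) = X(mt)$, $E_m = \{t \in \T : mt \bmod 2\pi \in E\}$, and
\[
S_1 := S^{(\delta)} \cdot f_m, \qquad K_1 := K_\delta \cap E_m;
\]
then $S_1 \in A_q$ is supported on $K_1$, and by Lemma \ref{lemma:multiplication-lq-norm}
\[
\|S_1 - S^{(\delta)}\|_{A_q} = \|S^{(\delta)}(f_m - 1)\|_{A_q}
\xrightarrow[m\to\infty]{} \|f - 1\|_{A_q} \, \|S^{(\delta)}\|_{A_q},
\]
which is $< \eps/4$ for $\|f - 1\|_{A_q}$ small enough.

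The main obstacle is controlling $\delta(K, K_1)$: the shrinking step alone leaves $K_1 \subset K$, potentially very far from $K$ in Hausdorff distance. This is overcome by combining the thickening $K \mapsto K_\delta$ with the observation that $E$, as a level set of a real trigonometric polynomial, contains a nondegenerate interval, so that $E_m$ becomes $O(1/m)$-dense in $\T$. Hence for $m$ large, every point of $K$ lies within $\delta$ of a point of $E_m$, which automatically belongs to $K_\delta \cap E_m = K_1$; so $\delta(K, K_1) \leq \delta$. Finally, $X_m$ has Fourier support in $\{\pm m \nu_0^j\}$, so $\ft{X_m}(n) = 0$ for $|n| < m \nu_0$ (which we arrange to be $\geq \nu$); $\|\ft{X_m}\|_p = \|\ft{X}\|_p \leq 1$; and $X_m(t) = X(mt) \in [\tfrac1{90}, 90] \subset (\tfrac1{100}, 100)$ for $t \in E_m \supseteq K_1$. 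Thus $(K_1, S_1) \in U_\nu$ with $d((K,S), (K_1, S_1)) < \eps$, completing the density.
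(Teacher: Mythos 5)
Your proof is correct, and its skeleton --- a countable family of open dense sets $U_\nu$ whose members carry a witness polynomial $X$, combined with Lemma \ref{lemma:nomeasure-lq} and the Baire category theorem --- is the same as the paper's. The genuine difference is in how density of $U_\nu$ is established, specifically in how the Hausdorff distance $\delta(K,K_1)$ is controlled after the shrinking step. The paper (Lemma \ref{lemma:principal-lq-baire}) first reduces to the dense class of pairs with $K=\supp(S)$ via Lemma \ref{lemma:quasi-surely-support}, keeps $K'=K\cap E_m\subset K$, and then shows $K'$ is Hausdorff-close to $K$ by placing $C^\infty$ bumps $\varphi_t$ at a finite $\eps/4$-net of $K$ and checking that $\dotprod{Sf_m}{\varphi_t}\neq 0$, so that $\supp(Sf_m)\subset K'$ meets each bump's support. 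You instead thicken $K$ to $K_\delta$ and exploit the $2\pi/m$-periodicity of $E_m$: since $E$ contains a nondegenerate interval, $E_m$ is $O(1/m)$-dense in $\T$, so $K_\delta\cap E_m$ is automatically $\delta$-close to $K$. This is more elementary --- it dispenses with both the reduction to $K=\supp(S)$ and the test-function argument --- at the innocuous cost of having $K_1\not\subset K$. Two minor remarks: the mollification $S^{(\delta)}=S\ast\phi_\delta$ is superfluous, since $S$ is already supported in $K\subset K_\delta$ and hence $Sf_m$ is supported in $K_1$; and the justification that $E$ contains an interval is cleaner via $\supp f\supset\{t:f(t)\neq 0\}$, a non-empty open set (the bare phrase ``level set of a trigonometric polynomial'' needs the additional observation that this level set has positive measure and finite boundary), but the claim is true either way.
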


\subsection{Basic facts}
We first describe some basic properties of the space $\G_q$ which will be needed.
Their proofs are essentially the same as those given in \cite{korner:curve}.
We start with the proof of the fact that $\G_q$ is complete.

\begin{proof}[Proof of Proposition \ref{prop:gq-complete}]
The product space $\K(\T) \times A_q(\T)$, endowed with the metric \eqref{eq:gq-metric},
is certainly a complete metric space. It is therefore enough to show that $\G_q$ is a
closed subspace of $\K(\T) \times A_q(\T)$.

Let $(K,S) \in \K(\T) \times A_q(\T)$, and suppose that $(K, S) \notin \G_q$.
Then $S$ is not supported by $K$, hence there exists a $C^\infty$ function
$\varphi: \T \to \C$ which vanishes on some open set $U \subset \T$ containing $K$,
and such that $\dotprod{S}{\varphi} \neq 0$. Choose $\eps > 0$ such that $K_\eps
\subset U$ and $\eps \|\varphi\|_{A_p} < |\dotprod{S}{\varphi}|$. We claim that the
open ball with center $(K,S)$ and radius $\eps$ is disjoint from $\G_q$. Indeed,
suppose that $(K',S') \in \K(\T) \times A_q(\T)$ is such that $d((K,S), (K',S')) < \eps$.
Then $\delta(K, K') < \eps$, which implies $K' \subset K_\eps$, and therefore $\varphi$
vanishes on an open set containing $K'$. Also $\|S - S'\|_{A_q} < \eps$, which implies
$|\dotprod{S}{\varphi} - \dotprod{S'}{\varphi}| < \eps  \|\varphi\|_{A_p}$, and
therefore $\dotprod{S'}{\varphi} \neq 0$. This shows that  $(K',S') \notin \G_q$. We
have thus confirmed that the complement of $\G_q$ is an open set, which proves the claim.
\end{proof}

We will use the following fact about the topology of $\G_q$.

\begin{lemma}
\label{lemma:gq-topology}
For any open set $U \subset \T$, the sets
\[
\{(K, S) \in \G_q : K \subset U\} \quad \text{and} \quad
\{(K, S) \in \G_q : \supp(S) \cap U \neq \emptyset\}
\]
are open sets in $\G_q$.
\end{lemma}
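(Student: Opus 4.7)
The statement asserts that two subsets of $\G_q$ are open, so for each, I will start from an arbitrary pair $(K,S)$ in the set and exhibit an explicit radius $\eps > 0$ such that the open $d$-ball around $(K,S)$ remains inside the set. The first set depends only on the compact coordinate, so I will use purely metric/topological facts about the Hausdorff distance on $\K(\T)$. The second set depends only on the distribution coordinate, so I will use duality between $A_p$ and $A_q$ and the definition of the support of a Schwartz distribution.

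\textbf{First set.} Suppose $K \subset U$. Since $K$ is compact and $U$ is open, the distance $\eps := \dist(K,\T\setminus U)$ is strictly positive (with $\eps = +\infty$ if $U = \T$), and in particular $K_\eps \subset U$. If $(K',S') \in \G_q$ satisfies $d((K,S),(K',S')) < \eps$, then $\delta(K,K') < \eps$, which by the definition of the Hausdorff distance forces $K' \subset K_\eps \subset U$. Hence the whole $\eps$-ball around $(K,S)$ lies in the first set, proving it is open.

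\textbf{Second set.} Suppose $\supp(S) \cap U \neq \emptyset$. By the very definition of the support of a distribution, there exists a $C^\infty$ test function $\varphi:\T\to\C$ with $\supp(\varphi) \subset U$ and $\dotprod{S}{\varphi} \neq 0$. Since $\varphi$ is smooth it lies in $A_p(\T)$, so by H\"{o}lder's inequality
\[
|\dotprod{S'}{\varphi} - \dotprod{S}{\varphi}| \leq \|S' - S\|_{A_q}\,\|\varphi\|_{A_p}
\]
for every $S' \in A_q(\T)$. Choose $\eps > 0$ with $\eps\,\|\varphi\|_{A_p} < |\dotprod{S}{\varphi}|$. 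If $(K',S') \in \G_q$ and $d((K,S),(K',S')) < \eps$, then $\|S'-S\|_{A_q} < \eps$, so $\dotprod{S'}{\varphi} \neq 0$. Because $\varphi$ is supported in $U$, this means $S'$ does not vanish on $U$, i.e.\ $\supp(S') \cap U \neq \emptyset$. Thus the $\eps$-ball around $(K,S)$ lies in the second set.

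\textbf{Main obstacle.} The only nontrivial ingredient is the $A_p$--$A_q$ duality used in the second part, together with the standard fact that a point in the support of a distribution admits test functions, supported in any prescribed neighborhood, on which $S$ does not vanish; both are routine. The first part is a direct consequence of the definition of the Hausdorff metric together with compactness of $K$. There is no serious obstacle; the proof is essentially a careful bookkeeping of the product-type metric \eqref{eq:gq-metric}, exactly in the spirit of the completeness proof given in Proposition \ref{prop:gq-complete}.
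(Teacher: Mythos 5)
Your proof is correct and follows exactly the route the paper intends: the paper omits the proof of Lemma \ref{lemma:gq-topology}, noting only that it uses the same arguments as Proposition \ref{prop:gq-complete}, and your two steps (Hausdorff-metric containment $\delta(K,K')<\eps \Rightarrow K'\subset K_\eps$ for the first set, and pairing against a smooth test function supported in $U$ via the $A_p$--$A_q$ duality for the second) are precisely those arguments.
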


The proof of Lemma \ref{lemma:gq-topology} is based on similar arguments as
in the proof of Proposition \ref{prop:gq-complete}, so we omit it.

\begin{lemma}
\label{lemma:quasi-surely-support}
Quasi-all $(K,S) \in \G_q$ have $K = \supp(S)$.
\end{lemma}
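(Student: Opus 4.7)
Since $\supp(S)\subseteq K$ by the very definition of $\G_q$, my goal is to prove that the ``bad'' set
\[
\mathcal{B} := \{(K,S)\in\G_q : K\not\subseteq \supp(S)\}
\]
is meager. First I would fix a countable basis $\{U_n\}_{n\geq 1}$ of open arcs in $\T$ and note that $K\not\subseteq \supp(S)$ if and only if there exists some $n$ with $K\cap U_n \neq \emptyset$ and $\supp(S)\cap U_n=\emptyset$. Writing
\[
A_n = \{(K,S)\in\G_q : K\cap U_n \neq \emptyset,\ \supp(S)\cap U_n=\emptyset\},
\]
we have $\mathcal{B}=\bigcup_n A_n$, so it suffices to show that each $A_n$ is nowhere dense.

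Next I would identify the closure of $A_n$. By Lemma \ref{lemma:gq-topology} the condition $\supp(S)\cap U_n=\emptyset$ is closed, while a compactness argument in the Hausdorff metric shows that the closure of $\{K : K\cap U_n\neq\emptyset\}$ inside $\K(\T)$ is $\{K : K\cap\overline{U_n}\neq\emptyset\}$. The reverse inclusion is easy: for any $t\in K\cap\overline{U_n}$ pick $t_j\in U_n$ with $t_j\to t$ and observe that $K_j := K\cup\{t_j\}$ lies in $A_n$ (together with the same $S$) and converges to $(K,S)$ in $\G_q$. Consequently
\[
\overline{A_n} = \{(K,S)\in\G_q : K\cap\overline{U_n}\neq\emptyset,\ \supp(S)\cap U_n=\emptyset\}.
\]

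The key step is to verify that this closure has empty interior. Given $(K_0,S_0)\in\overline{A_n}$ and $\eps>0$, I would select a point $t_0\in K_0\cap\overline{U_n}$, then choose $t_1\in U_n$ with $|t_1-t_0|<\eps/2$ and a non-negative $C^\infty$ bump $\varphi$ supported in a tiny neighbourhood of $t_1$ contained in $U_n$. Setting
\[
S' = S_0 + \lambda\varphi, \qquad K' = K_0 \cup \supp(\varphi),
\]
with $\lambda>0$ chosen so that $\lambda\|\varphi\|_{A_q}<\eps/2$, produces an element of $\G_q$ with $d((K_0,S_0),(K',S'))<\eps$; here the estimate $\delta(K_0,K')<\eps/2$ follows because every point of $\supp(\varphi)$ is within $\eps/2$ of $t_0\in K_0$. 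Since $\supp(S_0)\cap U_n=\emptyset$ and $\supp(\varphi)\subset U_n$ the two supports are disjoint, hence $\supp(S')\cap U_n=\supp(\varphi)\neq\emptyset$ and $(K',S')\notin\overline{A_n}$. Therefore $\overline{A_n}$ contains no open ball, $A_n$ is nowhere dense, and the countable union $\mathcal{B}$ is meager, completing the proof.

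The only real subtlety is the boundary case in which the witness $t_0$ lies on $\partial U_n$ rather than in the interior; this is precisely why the passage through an interior point $t_1$ is needed, and why it is convenient to formulate $\overline{A_n}$ using $\overline{U_n}$ while the condition on $\supp(S)$ is kept open. All other ingredients are routine consequences of Lemma \ref{lemma:gq-topology} and elementary facts about the Hausdorff metric.
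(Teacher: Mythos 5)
Your proof is correct and follows essentially the same route as the paper: decompose the bad set into a countable union indexed by arcs, then kill each piece by adding a small smooth bump to $S$ supported in the arc near a point of $K$. The only difference is that the paper indexes by \emph{closed} arcs $J$ with rational endpoints, so that $A(J)$ is already closed by Lemma \ref{lemma:gq-topology} and only density of the complement needs checking, whereas your open basis arcs $U_n$ force the (correctly handled) extra step of identifying $\overline{A_n}$ and treating the boundary case $t_0\in\partial U_n$.
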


\begin{proof}
Let $(K,S) \in \G_q$. If $\supp(S) \neq K$, then there is a closed arc $J \subset \T$
such that $K \cap J \neq \emptyset$, and such that $S$ vanishes on the interior of $J$.
Moreover, we can choose $J$ to be an arc with rational endpoints. The collection of
all pairs $(K,S) \in \G_q$ such that $\supp(S) \neq K$ is therefore contained in a
countable union of sets of the form
\[
A(J) = \{(K,S) \in \G_q : K \cap J \neq \emptyset, \;
\text{$S$ vanishes on the interior of $J$}\},
\]
where $J$ denotes a closed arc. It is therefore enough to prove that $A(J)$ is nowhere
dense, for any such $J$.

From Lemma \ref{lemma:gq-topology} it follows that $A(J)$ is closed, so we only need
to show that its complement is dense. To this end, let $(K,S) \in A(J)$ and $\eps > 0$
be given. Since $K \cap J \neq \emptyset$, we can choose a closed arc $I$, of length
smaller than $\eps/2$, which is contained in the interior of $J$ and such that $I \subset
K_{\eps/2}$. Define $K' = K \cup I$, then $\delta(K, K') \leq \eps/2$. Next choose a non-zero
$C^\infty$ function $\varphi: \T \to \C$, supported by $I$, such that $\|\varphi\|_{A_q}
< \eps/2$. Define $S' = S + \varphi$, then $S'$ does not vanish on the interiour of $J$,
and $\|S - S'\|_{A_q} < \eps/2$. We therefore found $(K',S') \in \G_q$ such that
$d((K,S), (K',S')) < \eps$ and $(K',S') \notin A(J)$. This proves that $A(J)$ is nowhere
dense.
\end{proof}

An immediate consequence of Lemma \ref{lemma:quasi-surely-support} is the fact that
$S \neq 0$ for quasi-all $(K,S) \in \G_q$.

\subsection{Proof of main result}

\begin{lemma}
\label{lemma:principal-lq-baire}
Let $(K,S) \in \G_q$. Given any $\eps > 0$ and any positive integer $\nu$,
we can find $(K',S') \in \G_q$, $d((K,S), (K',S')) < \eps$, such that the
following holds: there is a real trigonometric polynomial
\[
X(t) = \sum_{|n| \geq \nu} \ft{X}(n) e^{int},
\quad \|\ft{X}\|_p \leq 1, \quad
\text{$\tfrac1{100} \leq X(t) \leq 100$ on $K'$}.
\]
\end{lemma}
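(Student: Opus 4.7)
The plan is to exploit the dilation structure of the Kaufman-type argument used in Lemma \ref{lemma:principal-lq-kaufman}, but in a form tailored to the Hausdorff metric on $\K(\T)$. The difficulty with a direct appeal to that lemma is that the compact $K_1 \subset K$ produced there can be very far from $K$ in the Hausdorff sense; shrinking the support in a single fixed window does not preserve Hausdorff closeness. The resolution is to dilate the auxiliary construction of Lemma \ref{lemma:principal-lq} by a large integer $m$, producing a periodic ``test compact'' $E_m$ that is uniformly dense in $\T$; intersecting $E_m$ with a small neighborhood of $K$ then yields a perturbation close to $K$ in both coordinates of the metric $d$.

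In detail, I would first invoke Lemma \ref{lemma:principal-lq} with a small parameter $\eps_0$ (to be fixed at the end) to produce a $C^{\infty}$ function $f$ with $\ft{f}(0) = 1$, $\|1 - f\|_{A_q} < \eps_0$, supported on $K_0 = \{t : \tfrac1{100} \leq X(t) \leq 100\}$, together with the real trigonometric polynomial $X$ satisfying $\ft{X}(0) = 0$, $\|\ft{X}\|_p \leq 1$, and having spectrum at the frequencies $\pm \nu_0^j$. Then I would dilate by a large integer $m$: set $f_m(t) = f(mt)$, $X_m(t) = X(mt)$, $E_m = \{t \in \T : mt \bmod 2\pi \in K_0\}$, and take
\[
K' := E_m \cap \overline{K_{\eps/4}}, \qquad S' := S \cdot f_m.
\]

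Three verifications remain. For the Hausdorff closeness $\delta(K, K') < \eps/2$, note that $E_m$ is $(\pi/m)$-dense in $\T$, since as $t'$ ranges over any interval of length $2\pi/m$ the value $mt' \bmod 2\pi$ sweeps out all of $\T$ and hence meets $K_0$; so for $m \geq 4\pi/\eps$ every $t \in K$ has a point of $E_m$ within $\eps/4$ of $t$, which then lies in $\overline{K_{\eps/4}}$ and hence in $K'$, while the inclusion $K' \subset \overline{K_{\eps/4}}$ gives the other direction. For the $A_q$-closeness, Lemma \ref{lemma:multiplication-lq-norm} applied to $1 - f$ gives
\[
\limn \|S - S f_m\|_{A_q} \;=\; \|S\|_{A_q}\,\|1-f\|_{A_q} \;<\; \eps_0 \, \|S\|_{A_q},
\]
so if $\eps_0$ is taken smaller than $\eps/(2(1+\|S\|_{A_q}))$ then $\|S-S'\|_{A_q} < \eps/2$ for $m$ sufficiently large; the case $S = 0$ is trivial. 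Finally $X_m$ has spectrum $\{\pm m \nu_0^j\}$, all of modulus $\geq \nu$ once $m$ is large, $\|\ft{X_m}\|_p = \|\ft{X}\|_p \leq 1$, and $\tfrac1{100} \leq X_m \leq 100$ on $E_m \supset K'$. Since $\supp S' \subset K \cap E_m \subset K'$, we conclude $(K', S') \in \G_q$ with $d((K,S),(K',S')) < \eps$.

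The main obstacle, and the only genuinely new point, lies in the first paragraph: recognizing that Kaufman's shrinking method is incompatible with the Hausdorff metric as it stands, but that by letting the period $2\pi/m$ of the auxiliary compact shrink one can make the shrinking happen on a scale finer than $\eps$, keeping $K'$ close to $K$.
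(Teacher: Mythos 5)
Your proof is correct, and while it uses the same core machinery as the paper (Lemma \ref{lemma:principal-lq}, dilation by a large integer $m$, and Lemma \ref{lemma:multiplication-lq-norm} for the $A_q$-estimate), it handles the Hausdorff-distance step by a genuinely different and simpler device. The paper takes $K' = K \cap E_m$, which forces it to first reduce to the case $K = \supp(S)$ (via Lemma \ref{lemma:quasi-surely-support} and the Baire category theorem, using the density of such pairs) and then to run a test-function argument: a finite $\eps/4$-net $T \subset K$, smooth functions $\varphi_t$ with $\dotprod{S}{\varphi_t} \neq 0$, and a perturbation estimate showing $\dotprod{S'}{\varphi_t} \neq 0$, so that $\supp(S') \subset K'$ meets every interval of the net. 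Your choice $K' = E_m \cap \overline{K_{\eps/4}}$ sidesteps all of this: since $E_m$ is $(\pi/m)$-dense in $\T$, the inclusions $K \subset K'_{\eps/2}$ and $K' \subset K_{\eps/2}$ are immediate for $m \geq 4\pi/\eps$, no hypothesis on $\supp(S)$ is needed, and the case $S = 0$ is trivial rather than excluded. The price is that $K'$ is no longer a subset of $K$, but the lemma does not require that, and the containments $\supp(S') \subset K \cap E_m \subset K'$ and $K' \subset E_m$ (where $\tfrac1{100} \leq X_m \leq 100$) are exactly what the application to the density of $W_\nu$ needs. All the remaining verifications in your write-up (the spectrum $\{\pm m\nu_0^j\}$ lying in $\{|n| \geq \nu\}$ for large $m$, the invariance of $\|\ft{X}\|_p$ under dilation, and the limit $\|S(1-f_m)\|_{A_q} \to \|S\|_{A_q}\|1-f\|_{A_q}$) are sound.
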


\begin{proof}
According to Lemma \ref{lemma:quasi-surely-support}, the collection
of all pairs $(K,S) \in \G_q$ such that $K = \supp(S)$ is residual. In
particular, by the Baire category theorem, this collection is dense.
We may therefore assume that $S \neq 0$ and $K = \supp(S)$.

We start with the same argument as in Lemma \ref{lemma:principal-lq-kaufman}.
Given $\eta > 0$, using Lemma \ref{lemma:principal-lq} we can find a
compact $E \subset \T$, a $C^\infty$ function $f: \T \to \C$ and a real
trigonometric polynomial $X$ such that
\begin{enumerate-math}
\item
$f$ is supported by $E$, $\|f - 1\|_{A_q} < \eta$.
\item
$X(t) = \sum_{|n| \geq \nu} \ft{X}(n) e^{int}$, $\|\ft{X}\|_p \leq 1$
and $\tfrac1{100} \leq X(t) \leq 100$ on $E$.
\end{enumerate-math}
Define $f_m(t) = f(mt)$, $X_m(t) = X(mt)$ and let $E_m$ denote the compact
defined by $\1_{E_m}(t) = \1_{E}(mt)$. Then for every positive integer $m$
the conditions (i) and (ii) are also satisfied by $E_m$, $f_m$ and $X_m$
instead of $E$, $f$ and $X$.

We claim that by choosing first a sufficiently small $\eta$, and then
a sufficiently large $m$, the required pair $(K', S')$ could be found by
taking $S'$ to be the product $S f_m$, and $K' = K \cap E_m$.

Indeed, to make the distance between $S$ and $S'$ small, we may argue as in
Lemma \ref{lemma:principal-lq-kaufman} that if $m$ is sufficiently large then
$\|S' - S\|_{A_q}$ will be arbitrarily close to $\|S\|_{A_q} \|f - 1\|_{A_q}$.
So we may assume $\|S' - S\|_{A_q} < \eta \|S\|_{A_q}$. In particular, if
$\eta$ is sufficiently small then $\|S' - S\|_{A_q} < \eps / 2$.

To make the distance between $K$ and $K'$ small, we argue as follows. First,
we choose a finite set $T \subset K$ such that $K \subset T_{\eps/4}$. Then,
since $K = \supp(S)$, for every $t \in T$ we can choose a $C^\infty$ function
$\varphi_t: \T \to \C$, supported by $\big(t - \frac{\eps}{4}, t + 
\frac{\eps}{4}\big)$, such that $\dotprod{S}{\varphi_t} \neq 0$. We have
\[
|\dotprod{S'}{\varphi_t} - \dotprod{S}{\varphi_t}| =
|\dotprod{S' - S}{\varphi_t}| < \eta \|S\|_{A_q} \|\varphi_t\|_{A_p},
\]
so if $\eta$ is sufficiently small, $\dotprod{S'}{\varphi_t} \neq 0$ for
every $t \in T$ (note that $T$ was chosen before $\eta$). It follows that
$\supp(S')$ intersects each of the segments
$\big(t - \frac{\eps}{4}, t + \frac{\eps}{4}\big)$, $t \in T$. Since
$\supp(S') \subset K'$, this shows that $T \subset K'_{\eps/4}$. It follows
that $K \subset K'_{\eps / 2}$. On the other hand, $K' \subset K$, so we
conclude that $\delta(K', K) \leq \eps / 2$.

We have proved that for an appropriate choice of $\eta$ and $m$ we have
$\|S' - S\|_{A_q} < \eps / 2$ and $\delta(K', K) \leq \eps / 2$, hence $d((K,S),
(K',S')) < \eps$ as needed.
\end{proof}

\begin{proof}[Proof of Theorem \ref{thm:piatetski-theorem-lq-baire}]
For any positive integer $\nu$, let $W_\nu$ denote the set of all pairs
$(K,S) \in \G_q$ for which there exists a real trigonometric polynomial
\begin{equation}
\label{eq:nomeasure-lq-condition}
X(t) = \sum_{|n| \geq \nu} \ft{X}(n) e^{int}, \quad
\|\ft{X}\|_{p} \leq 1, \quad \text{$\frac1{200} < X(t) < 200$ on $K$}.
\end{equation}
From Lemma \ref{lemma:gq-topology} it follows that $W_\nu$ is an open
set, while Lemma \ref{lemma:principal-lq-baire} implies that $W_\nu$
is dense. It follows that the intersection $\bigcap_{\nu=1}^{\infty}
W_\nu$ is a residual set. That is, quasi-all pairs $(K,S) \in \G_q$
satisfy the following condition: for every $\nu$ there is a real
trigonometric polynomial $X(t)$ satisfying \eqref{eq:nomeasure-lq-condition}.
We may therefore use Lemma \ref{lemma:nomeasure-lq} (the precise constant
which appears there is different, but clearly plays no special role),
which implies that quasi-surely $K$ does not support a non-zero measure
$\mu \in A_q$. On the other hand, Lemma \ref{lemma:quasi-surely-support}
ensures that $S \neq 0$ quasi-surely, so the theorem is proved.
\end{proof}

\section{$p$-Helson sets}

\subsection{}

A compact $K$ on the circle $\T$ is called a \emph{$p$-Helson set} if any
function, defined and continuous on $K$, can be extended to a continuous
function on the circle with Fourier coefficients belonging to $\ell_p$.

When $p=1$ these are the usual Helson sets, which have been much studied
(see \cite{kahane:absolument}). Helson \cite{helson} proved that such a set
cannot support a measure with Fourier coefficients tending to zero. On the
other hand, K\"{o}rner \cite{korner:pseudofunc} proved that a Helson set can
support a \emph{distribution} with this property, that is, there exists a
Helson set of multiplicity. Kaufman \cite{kaufman:msets} strengthened the
result by proving that any set of multiplicity contains a Helson set of
multiplicity.

We now turn to the case $1 < p < 2$. Olevskii \cite{olevskii:localization,
olevskii:general} constructed a compact set of measure zero, which is not
$p$-Helson for any $p < 2$. On the other hand Demenko \cite{demenko:plane,
demenko:ndim} proved that if $K$ is sufficiently small in the sense of Hausdorff
dimension, or precisely, if $\dim K < 2 - 2/p$, then $K$ is a $p$-Helson set.
We point out that no analog is true for $p=1$; there exist countable compacts
which are not Helson sets. An analog of Helson's theorem was proved by Gregory
in \cite{gregory}, and by Demenko in \cite{demenko:plane}. Precisely, they
proved that a $p$-Helson set $(1 < p < 2)$ cannot support a measure with
Fourier transform belonging to $\ell_q$, where $q = p/(p-1)$ is the conjugate
of $p$. We remark that a $p$-Helson set can support a measure with Fourier
transform tending to zero, as shown in \cite{gregory}.

Could a $p$-Helson set support a \emph{distribution} with Fourier transform
belonging to $\ell_q$? In this section we answer this question affirmatively.
We prove the following strengthening of Theorem \ref{thm:piatetski-theorem-lq},
analogous to K\"{o}rner's theorem about the existence of a Helson set of multiplicity.

\begin{theorem}
\label{thm:p-helson-multiplicity}
For any $1 < p < 2$ there exists a $p$-Helson set which supports a non-zero
distribution with Fourier coefficients belonging to $\ell_q$, $q = p / (p-1)$.
\end{theorem}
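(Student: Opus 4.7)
The plan is to augment the construction from Theorem \ref{thm:piatetski-theorem-lq} with randomized translations of the Riesz products, in the spirit of K\"orner--Kaufman's construction of a Helson set of multiplicity, adapted now to the $\ell_q$ setting. A useful reduction is the Gregory--Demenko theorem (cited earlier in this section): a $p$-Helson set automatically carries no nonzero measure with Fourier coefficients in $\ell_q$. Thus it is enough to produce a $p$-Helson compact $K \subset \T$ that still supports a nonzero distribution in $A_q$; the ``no measure in $A_q$'' condition will then come for free.

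First I would introduce random phases into the Riesz product of Lemma \ref{lemma:principal-lq}, replacing
\[
\lambda_s(t) = \prod_{j=1}^{N}\bigl(1 + 2 s N^{-1/q}\cos \nu^j t\bigr)
\qquad\text{by}\qquad
\lambda_s^{\omega}(t) = \prod_{j=1}^{N}\bigl(1 + 2 s N^{-1/q}\cos(\nu^j t - \theta_j(\omega))\bigr),
\]
where $\theta_1,\dots,\theta_N$ are independent uniform random phases on $\T$. All the Fourier-side and concentration estimates of Chapter \ref{chap:lq} --- the expansion \eqref{eq:lambda-trig-expansion-lq}, the exponential concentration of Lemma \ref{lemma:lambda-concentration-lq} and the $L^2$-localization --- carry over verbatim to $\lambda_s^{\omega}$, since the laws of the relevant random variables are translation-invariant. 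Consequently one obtains a randomized version of Lemma \ref{lemma:principal-lq}: for each $\eps>0$ and almost every $\omega$, there is a smooth $f^{\omega}$ supported on a random compact $K^{\omega}$ with $\|f^{\omega}-1\|_{A_q}<\eps$ and an associated $X^{\omega}$ satisfying the hypothesis of Lemma \ref{lemma:nomeasure-lq}.

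The new step is a probabilistic Helson-type estimate: with probability tending to $1$ as $N\to\infty$, the random compact $K^{\omega}$ satisfies
\[
\|P\|_{C(K^{\omega})} \;\geq\; c\,\|P\|_{A_p(\T)}
\]
uniformly over trigonometric polynomials $P$ of degree at most $\nu^{N}$, for some absolute constant $c>0$ (depending only on $p$). I would prove this by first taking an $\eps$-net of unit-ball polynomials in $A_p$, and then for each fixed $P$ in the net applying Bernstein's exponential inequality (Lemma \ref{lemma:bernstein-concentration}) to the random sum controlling $P(t)$ along the level structure of $X^{\omega}$, bounding the probability that $P$ stays small on $K^{\omega}$ by $\exp(-N^{\gamma})$ for some $\gamma>0$; a union bound across the net, whose logarithmic cardinality can be arranged to grow much slower than $N^{\gamma}$, yields the uniform estimate. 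By duality this inequality is equivalent to the $p$-Helson property of $K^{\omega}$ with an absolute constant.

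Finally I would iterate as in the proof of Theorem \ref{thm:piatetski-theorem-lq}: choose $\eps_j$ decreasing fast enough and, at each stage, pick a realization $\omega_j$ in the full-measure event where both the product-convergence estimate $\|f_1\cdots f_j\|_A\cdot\eps_{j+1}<2^{-2-j}$ and the uniform Helson bound at scale $N_j$ hold. The product $S=\prod_j f_j^{\omega_j}$ then converges in $A_q$ to a nonzero distribution supported on $K=\bigcap_j K_j^{\omega_j}$. Since $K\subset K_j^{\omega_j}$ for every $j$, the Helson inequality is inherited by $K$, so $K$ is $p$-Helson. The main obstacle will be the probabilistic Helson estimate: calibrating the $\eps$-net cardinality against the Bernstein tail so the bound is uniform over polynomials, and ensuring the resulting Helson constant is independent of the iteration parameters so it survives the limit; extending from bounded-degree polynomials to all of $A_p\cap C(\T)$ is then a routine density argument.
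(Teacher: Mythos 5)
Your plan breaks down at its central ``new step''. The inequality $\|P\|_{C(K^{\omega})}\geq c\,\|P\|_{A_p(\T)}$ cannot hold for all trigonometric polynomials $P$ when $K^{\omega}$ is a proper compact subset of $\T$: any non-zero $P$ vanishing on a neighbourhood of $K^{\omega}$ has $\|P\|_{C(K^{\omega})}=0$, so no probabilistic argument can establish it. The duality is also the wrong way round: a lower bound for the restriction operator $(A_p\cap C)(\T)\to C(K)$ corresponds to surjectivity of its \emph{adjoint}, whereas the $p$-Helson property (surjectivity of the restriction map) is dual to a lower bound on \emph{measures} supported by $K$. The workable dual formulation is Gregory's criterion (Proposition \ref{prop:p-helson-gregory}): $K$ is $p$-Helson iff $|\mu|(K)=0$ for every measure $\mu\in A_q$, and this is what has to be verified.

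There is a second, equally serious obstruction: the cosine-based Riesz products of Lemma \ref{lemma:principal-lq} cannot yield a $p$-Helson set, with or without random phases. The polynomial $X(t)=N^{-1/p}\sum_{j=1}^{N}\cos(\nu^j t-\theta_j)$ has $2N$ Fourier coefficients of modulus $\tfrac12 N^{-1/p}$, hence $\|X\|_{A_p}=2^{(1-p)/p}$, an absolute constant independent of $N$ and of $\omega$; randomizing the phases changes nothing on the Fourier side. To run Gregory's criterion (as in Lemma \ref{lemma:p-helson-condition}) one needs, for every $\eps>0$, a polynomial $X$ with $\|X\|_{\infty}\leq 1$, $X>\tfrac1{50}$ on $K$, \emph{and} $\|X\|_{A_p}<\eps$ — a genuinely stronger requirement than the bound $\|\ft{X}\|_{\ell_p}\leq 1$ used to exclude measures in $A_q$. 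This forces the replacement of $\cos t$ by the auxiliary polynomial $\varphi$ of Lemma \ref{lemma:auxiliary}, which simultaneously satisfies $\|\varphi\|_{A_q}\leq C\eta$ and $\|\varphi\|_{A_p}\leq C\eta^{-1}$; that construction (a sum of dilates localized on disjoint intervals) is the real new ingredient, and it is deterministic. The paper's proof then simply iterates Lemma \ref{lemma:principal-generators} and applies Lemma \ref{lemma:p-helson-condition} — no randomization, no $\eps$-net, no Bernstein bound over polynomials.
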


\subsection{}
Let $p$ be a fixed number, $1 < p < 2$, and let $q = p/(p-1)$ be the exponent
conjugate of $p$. We use the following characterization of $p$-Helson sets,
which is due to Gregory \cite{gregory}.

\begin{proposition}[Gregory]
\label{prop:p-helson-gregory}
A compact $K$ is a $p$-Helson set if and only if $|\mu|(K) = 0$ for every measure
$\mu \in A_q$.
\end{proposition}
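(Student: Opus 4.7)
The natural setting is the Banach space $B_p := (A_p \cap C)(\T)$ equipped with the norm $\|f\|_{B_p} := \max(\|f\|_\infty, \|\ft f\|_{\ell_p})$. By definition, $K$ is $p$-Helson if and only if the restriction operator $R : B_p \to C(K)$ is surjective, and by the open mapping theorem this is equivalent to $R^* : M(K) \to B_p^*$ being bounded below: $\|R^*\nu\|_{B_p^*} \geq c \|\nu\|_{M(K)}$ for some $c > 0$ and all $\nu \in M(K)$. The isometric embedding $f \mapsto (f,f)$ of $B_p$ into $A_p \oplus C$, together with Hahn--Banach, identifies $B_p^* \cong (A_q(\T) \oplus M(\T))/N$, where $N$ is the ``zero-representation'' subspace $\{(-\ft m, m) : m \in A_q\}$. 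Consequently, the $B_p^*$-norm of a functional is the infimum of $\|a\|_{A_q} + \|m\|_M$ over all decompositions into an $A_q$-part plus a measure part. This will be the technical framework of the whole argument.

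For the direction ``$|\mu|(K) = 0$ for every $\mu \in A_q \Rightarrow K$ is $p$-Helson'' I would argue by contrapositive. Density of $R(B_p)$ in $C(K)$ is automatic, since any $\nu \in M(K)$ annihilating $R(B_p)$ must annihilate every exponential $e^{int} \in B_p$, forcing $\ft\nu \equiv 0$ and hence $\nu = 0$. If $R$ were not surjective, by open mapping there would exist $\nu_n \in M(K)$ with $\|\nu_n\|_{M(K)} = 1$ and $\|R^*\nu_n\|_{B_p^*} \to 0$. Picking decompositions $\nu_n = a_n + m_n$ with $a_n \in A_q$, $m_n \in M(\T)$ and $\|a_n\|_{A_q} + \|m_n\|_M \to 0$, the measure $\tilde\mu_n := \nu_n - m_n$ satisfies $\ft{\tilde\mu}_n = \ft a_n \in \ell_q$, so $\tilde\mu_n \in A_q$, while $|\tilde\mu_n|(K) \geq |\nu_n|(K) - \|m_n\|_M = 1 - o(1) > 0$ for large $n$, contradicting the hypothesis applied to $\tilde\mu_n$.

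For the converse, suppose $K$ is $p$-Helson and let $\mu \in A_q$ with $|\mu|(K) > 0$; I derive a contradiction. The idea is to produce a non-zero $A_q$ measure concentrating on $K$ from $\mu$, then invoke the $(\Leftarrow)$-style obstruction. Using a Lusin-based approximation exactly as in Lemma~\ref{lemma:measure-abs-lq-approx}, approximate the polar factor $\bar\phi$ of $\mu$ (where $d\mu = \phi\, d|\mu|$) by a trigonometric polynomial $\psi$ with $\|\psi\|_\infty \leq 2$ and $\int|\psi - \bar\phi|\,d|\mu| < \eps$. Then $\psi \mu \in A_q$ (since $\psi \in A(\T)$ and Young's inequality gives $\ell_1 \ast \ell_q \subset \ell_q$), and $\int_K d(\psi\mu) = \int_K \psi\phi\, d|\mu| \approx |\mu|(K)$ shows $|\psi\mu|(K) \geq |\mu|(K) - \eps > 0$. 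Testing the $p$-Helson lower bound $\|R^*(\psi\mu)_K\|_{B_p^*} \geq c|\psi\mu|(K)$ against the upper bound obtained from the decomposition $(\psi\mu)_K = \psi\mu + (-\psi\mu|_{K^c})$, namely $\|R^*(\psi\mu)_K\|_{B_p^*} \leq \|\psi\mu\|_{A_q} + |\psi\mu|(K^c)$, and optimizing over refinements of the approximation then produces the desired contradiction.

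The main obstacle is the converse direction: the clean contradiction in the first direction has no exact mirror in the forward one, because the approximating polynomial $\psi$ has $A$-norm that is not uniformly controlled, and the measure $\psi\mu$ lives on all of $\T$, not just $K$. The trick is that the $p$-Helson hypothesis allows one to shift the weight onto the $M$-part of the decomposition in $B_p^*$, and the Lusin approximation parallels exactly the mechanism of Lemma~\ref{lemma:measure-abs-lq-approx}; the delicate point is tracking the cancellation between the $A_q$-part and the measure-part so that their sum can be made arbitrarily small in $B_p^*$ while $(\psi\mu)_K$ keeps mass bounded below. This delicate interplay between the two components of $B_p^*$ is the technical core of Gregory's argument.
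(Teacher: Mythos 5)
The thesis does not prove this proposition --- it is quoted from Gregory's paper --- so there is no internal proof to compare against; I judge your plan on its own terms. Your duality framework is set up correctly: $B_p^* \cong (A_q \oplus_1 M)/N$ with the quotient norm equal to the infimum of $\|a\|_{A_q} + \|m\|_M$ over representations, and surjectivity of the restriction $R$ equivalent to $R^*$ being bounded below. Your proof of the implication ``$|\mu|(K)=0$ for every $\mu \in A_q$ implies $K$ is $p$-Helson'' is sound: density of $R(B_p)$ is clear, failure of surjectivity yields $\nu_n \in M(K)$ of unit mass with near-optimal decompositions $\nu_n = a_n + m_n$, $\|a_n\|_{A_q} + \|m_n\|_M \to 0$, and $a_n = \nu_n - m_n$ is then a measure in $A_q$ with $|a_n|(K) \geq 1 - \|m_n\|_M > 0$, contradicting the hypothesis. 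Notably, this is the only direction the thesis actually uses (in Lemma~\ref{lemma:p-helson-condition}).

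The converse direction, however, has a genuine gap. For $\mu \in A_q$ with $|\mu|(K) > 0$ you compare the lower bound $c\,|\psi\mu|(K)$ with the upper bound $\|\psi\mu\|_{A_q} + |\psi\mu|(K^c)$ coming from the decomposition $\1_K \cdot (\psi\mu) = \psi\mu - \1_{K^c}\cdot(\psi\mu)$. But the first term of that upper bound is of order $\|\mu\|_{A_q}$ and does not become small under any refinement of the Lusin approximation: $\psi$ is close to $\bar{\phi}$ only in $L^1(|\mu|)$, and $\|\psi\mu\|_{A_q} \geq |\widehat{\psi\mu}(0)| \approx |\mu|(\T)$ stays bounded away from zero. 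The two bounds are therefore simply compatible, and ``optimizing over refinements'' produces no contradiction. The missing ingredient is the mechanism of Helson's theorem: one must show that the $p$-Helson inequality self-improves so that only the high-frequency tail of the $A_q$-component matters --- for instance by testing against modulations $e^{iNt}\nu$ or Riesz-product multiples and exploiting that $\big(\sum_{|n| \geq N} |\ft{\mu}(n)|^q\big)^{1/q} \to 0$ --- thereby driving the $A_q$-part of the decomposition to zero while the mass on $K$ is preserved. Without some such tail or modulation argument the forward implication is not established.
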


We also need the following lemma:

\begin{lemma}
\label{lemma:measure-abs-restrict-lq-approx}
Let $K$ be a compact on the circle, and $\mu$ be a measure in $A_q(\T)$. Then
the measure $\1_K \cdot |\mu|$ belongs to the closure of $A_q \cap M(\T)$ in the
$M(\T)$ norm.
\end{lemma}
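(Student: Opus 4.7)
The plan is to follow the template of Lemma \ref{lemma:measure-abs-lq-approx} almost verbatim, with one small adjustment: instead of applying Lusin's theorem to the polar function $\phi$ alone, I would apply it to the bounded Borel function $\1_K \cdot \phi$. Concretely, write $|d\mu| = \phi \, d\mu$ for a Borel function $\phi \colon \T \to \C$ with $|\phi(t)| = 1$; then $\1_K \cdot |d\mu| = (\1_K \cdot \phi) \, d\mu$, and $\1_K \phi$ is a bounded Borel function on $\T$ with $|\1_K \phi| \leq 1$. The presence of the discontinuous cut-off $\1_K$ is harmless because Lusin's theorem applies to arbitrary bounded Borel functions relative to any finite regular Borel measure.

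Given $\eps > 0$, I would use Lusin's theorem with respect to the finite measure $|\mu|$ to obtain a continuous function $\xi \colon \T \to \C$, $\|\xi\|_{C(\T)} \leq 1$, such that the set $\{t : \1_K(t)\phi(t) \neq \xi(t)\}$ has $|\mu|$-measure less than $\eps$. Then I would approximate $\xi$ uniformly by a trigonometric polynomial $\psi$ with $\|\psi - \xi\|_{C(\T)} < \eps$ and $\|\psi\|_{C(\T)} < 2$. Setting $d\mu_1 = \psi \, d\mu$, the measure $\mu_1$ lies in $A_q \cap M(\T)$, since multiplication by a trigonometric polynomial is a finite convolution on the Fourier side and therefore preserves $\ell_q$.

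Finally, the approximation estimate is the same routine bound as before:
\[
\big\| \1_K \cdot |\mu| - \mu_1 \big\|_{M(\T)}
= \int_{\T} |\1_K \phi - \psi| \; d|\mu|
\leq 2\eps + \eps \, \|\mu\|_{M(\T)},
\]
which is arbitrarily small as $\eps \to 0$. Since there is no real obstacle, this is essentially a cosmetic strengthening of Lemma \ref{lemma:measure-abs-lq-approx}; the only point worth noting is that the restriction $\1_K$ does not need to be continuous, because Lusin's theorem swallows it inside the $|\mu|$-a.e. approximation step.
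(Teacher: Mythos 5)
Your proof is correct and is precisely the adaptation the paper has in mind: the paper omits the argument, stating only that it is ``very similar'' to that of Lemma \ref{lemma:measure-abs-lq-approx}, and applying Lusin's theorem to the bounded Borel function $\1_K\phi$ rather than to $\phi$ is exactly the intended modification. (The only nitpick is the constant in the final bound, which should be $3\eps$ rather than $2\eps$ if one uses $\|\psi\|_{C(\T)}<2$, but this is immaterial.)
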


The proof of Lemma \ref{lemma:measure-abs-restrict-lq-approx} is very similar
to that of Lemma \ref{lemma:measure-abs-lq-approx}, so it is omitted.

\begin{lemma}
\label{lemma:p-helson-condition}
Suppose a compact $K$ on the circle satisfies the following
condition: for any $\eps > 0$ there is a real trigonometric polynomial
$X$ such that 
\[
\|X\|_{\infty} \leq 1, \quad \|X\|_{A_p} < \eps, \quad
\text{$X(t) > \tfrac1{50}$ on $K$}.
\]
Then $K$ is a $p$-Helson set.
\end{lemma}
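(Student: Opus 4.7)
The plan is to reduce the statement to Gregory's characterization (Proposition \ref{prop:p-helson-gregory}): it suffices to show that $|\mu|(K)=0$ for every measure $\mu \in A_q$. The proof will then proceed exactly in the spirit of Lemma \ref{lemma:nomeasure-lq}, using Lemma \ref{lemma:measure-abs-restrict-lq-approx} as the essential replacement for the fact that, in general, $|\mu| \notin A_q$ even when $\mu \in A_q$.

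So suppose, for contradiction, that $|\mu|(K) > 0$ for some $\mu \in A_q$, and set $\beta := \tfrac{1}{50} |\mu|(K) > 0$. By Lemma \ref{lemma:measure-abs-restrict-lq-approx}, one can pick a measure $\nu \in A_q \cap M(\T)$ with
\[
\big\| \, \1_K \cdot |\mu| - \nu \, \big\|_{M(\T)} < \beta/4.
\]
With $\nu$ now fixed, invoke the hypothesis on $K$ to produce a real trigonometric polynomial $X$ with $\|X\|_\infty \leq 1$, $X(t) > \tfrac{1}{50}$ on $K$, and $\|X\|_{A_p} < \beta/(4\|\nu\|_{A_q})$. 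The order here is the main subtlety: since the hypothesis only guarantees $\|X\|_{A_p}$ is small, and the bound $\|\nu\|_{A_q}$ coming from Lemma \ref{lemma:measure-abs-restrict-lq-approx} is not controlled, one must first choose $\nu$ and only afterwards choose $X$ with $\eps$ small enough to compensate for $\|\nu\|_{A_q}$.

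The contradiction will then come from two estimates of $\int_\T X \, d\nu$. On one hand, since $X > \tfrac{1}{50}$ on $K$ and $\|X\|_\infty \leq 1$,
\[
\Big| \int_\T X \, d\nu \Big| \geq \int_K X \, d|\mu| - \tfrac{\beta}{4}
\geq \beta - \tfrac{\beta}{4} = \tfrac{3\beta}{4}.
\]
On the other hand, by H\"older's inequality applied to the Fourier series,
\[
\Big| \int_\T X \, d\nu \Big| = \Big| \sum_{n} \ft{X}(n) \, \ft{\nu}(-n) \Big|
\leq \|X\|_{A_p} \, \|\nu\|_{A_q} < \tfrac{\beta}{4},
\]
which is incompatible with the previous bound. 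Hence $|\mu|(K)=0$, and the lemma follows from Gregory's proposition. The only step requiring any care is keeping the order of the quantifiers straight (choose $\nu$ first, then $X$), which is precisely what makes the argument work despite the absence of an a priori bound on $\|\nu\|_{A_q}$.
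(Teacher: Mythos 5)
Your proof is correct and follows essentially the same route as the paper's: reduce to Gregory's criterion, approximate $\1_K\cdot|\mu|$ in total variation by a measure in $A_q$ via Lemma \ref{lemma:measure-abs-restrict-lq-approx}, and then choose $X$ \emph{after} fixing that approximant so that $\|X\|_{A_p}$ compensates for its uncontrolled $A_q$ norm. The quantifier order you flag as the key subtlety is exactly the one used in the paper.
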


\begin{proof}
Let $\mu \in A_q \cap M(\T)$. Given $\eps > 0$, by Lemma
\ref{lemma:measure-abs-restrict-lq-approx} one can find a measure
$\mu_1 \in A_q(\T)$ such that
\[
\big\| \; \1_K \cdot |\mu| -  \mu_1 \; \big\|_{M(\T)} < \eps.
\]
Our assumption on the compact $K$ now allows us to choose a real trigonometric
polynomial $X$ satisfying $\|X\|_{\infty} \leq 1$, $\|X\|_{A_p} \|\mu_1\|_{A_q} < \eps$,
$X(t) > \tfrac1{50}$ on $K$. Then
\[
\Big| \int_{\T} X d\mu_1 \Big| \geq 
\int_{K} X |d\mu| - \eps \|X\|_{\infty} \geq
\tfrac1{50} |\mu| (K) - \eps.
\]
On the other hand,
\[
\Big| \int_{\T} X d\mu_1 \Big| = 
\Big| \sum_{n \in \Z} \ft{X}(n) \ft{\mu}_1(-n) \Big| \leq
\|X\|_{A_p} \|\mu_1\|_{A_q} < \eps.
\]
It follows that $|\mu|(K) = 0$, for every measure $\mu \in A_q$. By Proposition
\ref{prop:p-helson-gregory}, this is equivalent to the fact that $K$ is a $p$-Helson set.
\end{proof}

\begin{proof}[Proof of Theorem \ref{thm:p-helson-multiplicity}]
For a sequence $\{\eps_j\}$ tending to zero, we choose $K_j$, $F_j$ and $X_j$
according to Lemma \ref{lemma:principal-generators}. As before, we can choose
the $\eps_j$ by induction, in such a way that the product $\prod_{j=1}^{\infty}
F_j$ will converge in $A_q$ to a non-zero distribution $S$ supported by
$K := \bigcap_{j=1}^{\infty} K_j$. Lemma \ref{lemma:p-helson-condition} now
implies that $K$ is a $p$-Helson set, so the theorem is proved.
\end{proof}

\chapter{Orlicz Spaces}

In this chapter we show how the method developed in Chapter \ref{chap:lq}
can be adopted to further extend Piatetski-Shapiro's phenomenon to spaces of
sequences more general than $\ell_q$ spaces. As mentioned before, it is
known that no Piatetski-Shapiro phenomenon exists in certain weighted $\ell_2$
spaces. A substantial difference between $\ell_q$ spaces and weighted spaces
is that $\ell_q$ spaces are rearrangement invariant, while weighted spaces are
in general not. By ``rear\-range\-ment invariant space'' we mean a space $\X$
of sequences on $\Z$ such that, for every sequence $\{x_n\} \in \X$ and every
permutation (i.e. bijection) $\sigma: \Z \to \Z$, the sequence $\{x_{\sigma(n)}\}$
also belongs to $\X$.

Orlicz spaces, whose definion we recall below, is a well-known class of rearrangement
invariant spaces of sequences. They are therefore natural candidates for additional
spaces where Piatatski-Shapiro's phenomenon could exist. We show that for a certain
class of Orlicz spaces, this is indeed the case.


\section{Introduction}

\subsection{Orlicz spaces}
Let us start by recalling the basics of Orlicz sequence spaces, following
\cite{linden-tzaf}. An Orlicz function $\phi$ is a continuous non-decreasing and
convex function on $[0, \infty)$ such that $\phi(0) = 0$ and $\lim_{t \to \infty}
\phi(t) = \infty$. To any Orlicz function $\phi$ there corresponds the so-called
Orlicz space $\ell_\phi$ consisting of all sequences of scalars
$x = \{x_n\}_{n \in \Z}$ such that $\sum_{n \in \Z} \phi(|x_n| / \rho) < \infty$
for some $\rho > 0$. The space $\ell_\phi$ equipped with the norm
\begin{equation}
\label{eq:orlicz-norm}
\|x\|_\phi = \inf \{\rho > 0 : \sum_{n \in \Z} \phi(|x_n| / \rho) \leq 1\}
\end{equation}
becomes a Banach space. Orlicz spaces generalize $\ell_p$ spaces in a natural way,
since $\ell_p$ is the Orlicz space which corresponds to $\phi(t) = t^p$.

If $\phi(t_0) = 0$ for some $t_0 > 0$, that is, $\phi$ vanishes in some neighborhood
of zero, we say that $\phi$ is a \emph{degenerate} Orlicz function. For a degenerate
Orlicz function $\phi$, as can be easily checked, the space $\ell_\phi$ coincides with
$\ell_\infty$. Since this case is not of interest in our context we shall consider from
now on only non-degenerate Orlicz functions.

Two Orlicz functions $\phi_1$, $\phi_2$ are said to be \emph{equivalent at zero}
if there exist constants $C$, $M$ and $t_0 > 0$ such that, for all $0 \leq t \leq t_0$,
\[
C^{-1} \, \phi_2(M^{-1} t) \leq \phi_1(t) \leq C \, \phi_2(Mt).
\]
In this case, $\ell_{\phi_1} = \ell_{\phi_2}$ (i.e. both spaces consist of the same sequences)
and the norms $\|\cdot\|_{\phi_1}$ and $\|\cdot\|_{\phi_2}$ are equivalent.

An Orlicz function $\phi$ is said to satisfy the $\deltatwo$-condition at zero if
\begin{equation}
\label{eq:delta-two-condition}
\limsup_{t \to 0} \frac{\phi(2 t)}{\phi(t)} < \infty.
\end{equation}
It is easily verified that \eqref{eq:delta-two-condition} implies that
$\limsup_{t \to 0} \phi(M t) / \phi(t) < \infty$ for every $M > 0$.
The role of the $\deltatwo$-condition is illustrated by the following:

\begin{proposition}[{see \cite{linden-tzaf}, Proposition 4.a.4}]
\label{prop:deltatwo-equivalent}
For an Orlicz function $\phi$ the following conditions are equivalent:
\begin{enumerate-math}
\item $\phi$ satisfies the $\deltatwo$-condition at zero.
\item $\ell_\phi$ is separable.
\item
The system of unit vectors $e_n = (\dots, 0, \overset{n}{1}, 0, \dots)$, $n \in \Z$,
is complete in $\ell_\phi$.
\end{enumerate-math}
\end{proposition}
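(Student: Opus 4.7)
The plan is to prove the cycle \ref{item:i} $\Rightarrow$ \ref{item:iii} $\Rightarrow$ \ref{item:ii} $\Rightarrow$ \ref{item:i}, where I relabel as (i)=$\deltatwo$, (ii)=separability, (iii)=completeness of $\{e_n\}$. Two of the three implications are short, and the content lies in (ii)$\Rightarrow$(i), which I would prove by contrapositive by manufacturing an uncountable family in $\ell_\phi$ whose pairwise distances are bounded below.

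\emph{The easy links.} For (iii)$\Rightarrow$(ii), the countable set of finite linear combinations of unit vectors with Gaussian rational coefficients is dense, because on the finite-dimensional subspace spanned by $\{e_{n_1},\dots,e_{n_J}\}$ the Orlicz norm is continuous in the coefficients (each $\|e_n\|_\phi = 1/\phi^{-1}(1)$ is a fixed finite number, using non-degeneracy of $\phi$). For (i)$\Rightarrow$(iii) I would show that the truncations $x^{(N)}$, obtained by zeroing $x_n$ for $|n|>N$, converge in norm to $x \in \ell_\phi$. Fix $\rho > 0$ with $\sum_n \phi(|x_n|/\rho) \leq 1$ and an arbitrary $\eps \in (0,\rho)$, and put $M=\rho/\eps$. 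Iterating \eqref{eq:delta-two-condition} yields constants $C(M)$ and $t_0(M) > 0$ such that $\phi(Mt) \leq C(M)\phi(t)$ for $0\leq t\leq t_0(M)$. Since the sum $\sum \phi(|x_n|/\rho)$ is finite, $|x_n|/\rho \to 0$, so for $N$ large enough every $|n|>N$ has $|x_n|/\rho\leq t_0(M)$, and then
\[
\sum_{|n|>N} \phi(|x_n|/\eps) \leq C(M)\sum_{|n|>N}\phi(|x_n|/\rho) \tendsn 0,
\]
which gives $\|x-x^{(N)}\|_\phi \leq \eps$.

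\emph{The main step} is (ii)$\Rightarrow$(i). Assume $\phi$ fails the $\deltatwo$-condition; then there is a sequence $t_k \downarrow 0$ with $\phi(2t_k) \geq 2^k \phi(t_k)$. Passing to a subsequence we may also assume $\phi(t_k) \leq 2^{-k-1}$, and pick integers $n_k$ with $n_k \phi(t_k) \in [2^{-k-1}, 2^{-k}]$. Choose pairwise disjoint blocks $B_k \subset \Z$ of cardinality $n_k$, and for each $A \subseteq \N$ define $x_A \in \ell_\phi$ to equal $t_k$ on $B_k$ for $k \in A$ and zero elsewhere. Then
\[
\sum_n \phi(|x_A(n)|) = \sum_{k \in A} n_k \phi(t_k) \leq \sum_k 2^{-k} \leq 1,
\]
so $\|x_A\|_\phi \leq 1$. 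For $A \neq B$ pick $k \in A \triangle B$ and restrict $x_A - x_B$ to $B_k$: this is a block of $n_k$ entries of modulus $t_k$, whose Orlicz norm $\rho_k$ satisfies $n_k \phi(t_k/\rho_k) = 1$, hence $\phi(t_k/\rho_k) = 1/n_k \leq 2^{k+1}\phi(t_k) \leq 2\phi(2t_k)$. Standard convexity of $\phi$ with $\phi(0)=0$ gives $\phi(s)/s$ non-decreasing, so $\phi(4t_k) \geq 2\phi(2t_k) \geq \phi(t_k/\rho_k)$, and monotonicity of $\phi$ forces $\rho_k \geq 1/4$. Since the Orlicz norm is monotone with respect to block restriction, $\|x_A - x_B\|_\phi \geq \rho_k \geq 1/4$.

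The key obstacle, and where I would concentrate the write-up, is controlling the lower bound on $\rho_k$: it relies crucially on the $\deltatwo$-failure through the estimate $\phi(2t_k) \geq 2^k\phi(t_k)$ together with the convexity inequality $\phi(4t_k) \geq 2\phi(2t_k)$, so that the large ratio $2^k$ cannot be produced merely by dilating by a bounded factor. Once we have the uncountable $(1/4)$-separated family $\{x_A\}_{A \subseteq \N}$, it directly contradicts separability of $\ell_\phi$, completing the contrapositive and the cycle.
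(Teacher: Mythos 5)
Your proof is correct. Note that the paper itself offers no proof of this proposition --- it is quoted from Lindenstrauss--Tzafriri, Proposition 4.a.4 --- so there is nothing to match against; your argument is essentially the classical one from that reference. The two easy implications are fine, and your main step (ii)$\Rightarrow$(i) is the standard block construction: Lindenstrauss--Tzafriri actually extract from the failure of $\deltatwo$ a subspace isomorphic to $\ell_\infty$, of which your uncountable $\tfrac14$-separated family $\{x_A\}$ is exactly the non-separability core, so your route is marginally more elementary while proving just what is needed. One small point worth making explicit in the write-up: the step ``$\phi(t_k/\rho_k)\leq\phi(4t_k)$ forces $\rho_k\geq\tfrac14$'' requires \emph{strict} monotonicity of $\phi$ on $(0,\infty)$, not mere monotonicity; this does hold here, since a non-degenerate convex $\phi$ with $\phi(0)=0$ satisfies $\phi(a)\leq\frac{a}{b}\phi(b)<\phi(b)$ for $0<a<b$, and the standing assumption of non-degeneracy (made just before the proposition in the paper) is what licenses it.
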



\subsection{Result}
If no further conditions on $\phi$ are imposed, then it is easy to exhibit
spaces $\ell_\phi$ where Piatetski-Shapiro's phenomenon does exist. For let
$K$ be the compact given by Theorem \ref{thm:piatetski-theorem-c0} and $S$ be a
distribution supported by $K$ such that $\ft{S} \in c_0$. Then one may construct
a (non-degenerate) Orlicz function $\phi$ such that $\ft{S} \in \ell_\phi$ (for
this it will be enough that $\phi$ has a sufficiently fast decay at zero).
Since $\phi$ is non-degenerate we have $\ell_\phi \subset c_0$, and it follows
immediately that Piatetski-Shapiro's phenomenon exists in $\ell_\phi$.

Our last remark shows that if $\phi$ decays sufficiently fast at zero, then
Piatetski-Shapiro's phenomenon does exist in $\ell_\phi$. On the other hand, if
$\phi$ decays sufficiently slowly at zero then no Piatetski-Shapiro phenomenon can
exist in $\ell_\phi$. Precisely, if $\liminf_{t \to 0} \phi(t) / t^2 > 0$ then
$\ell_\phi \subset \ell_2$, hence any distribution $S$ such that $\ft{S} \in
\ell_\phi$ is automatically a measure (in fact, an $L^2$ function).

In order to give non-trivial conditions under which Piatetski-Shapiro's phenomenon
exists in $\ell_\phi$, we introduce the following definition. We say that an Orlicz
function $\phi$ is \emph{submultiplicative at zero} if there exist constants $M$
and $t_0 > 0$ such that, for every $0 < s,t < t_0$ one has
\begin{equation}
\label{eq:orlicz-submultiplicative}
\phi(st) \leq M \phi(s) \phi(t).
\end{equation}
Submultiplicative Orlicz functions appear in the theory of operators in Orlicz
spaces, see \cite{krasno-rutickii}. We can now formulate the result of this chapter.

\begin{theorem}
\label{thm:piatetski-theorem-orlicz}
Let $\phi$ be a non-degenerate Orlicz function satisying the following
conditions: (i) $\lim_{t \to 0} \phi(t)/t^2 = 0$; (ii) $\phi$ satisfies the
$\deltatwo$-condition at zero; and (iii) $\phi$ is submultiplicative at zero.
Then there is a compact $K$, which supports a distribution $S$ such that
$\ft{S} \in \ell_\phi$, but which does not support such a measure.
\end{theorem}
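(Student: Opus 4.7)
I would follow the scheme of Chapter \ref{chap:lq}, with the $\ell_q$--$\ell_p$ scales replaced by quantities calibrated through $\phi$ and its Young conjugate $\phi^*$. The first step is an analog of Lemma \ref{lemma:nomeasure-lq}: if for every $\nu$ the compact $K$ admits a real trigonometric polynomial $X$ spectrally supported in $\{|n|\geq\nu\}$, with $\|\ft{X}\|_{\ell_{\phi^*}}\leq 1$ and $\tfrac1{100}\leq X\leq 100$ on $K$, then $K$ supports no nonzero measure $\mu$ with $\ft{\mu}\in\ell_\phi$. The proof copies the one in Chapter \ref{chap:lq} using the duality $(\ell_\phi)^*=\ell_{\phi^*}$, once an Orlicz version of Lemma \ref{lemma:measure-abs-lq-approx} is in place; here the $\deltatwo$-condition (ii) is essential, both to identify the dual and to guarantee the usual Lusin-type approximation of $|\mu|$ in $A_\phi \cap M(K)$.

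The heart of the proof is a Riesz product
\[
\lambda_s(t)=\prod_{j=1}^N\bigl(1+2s\alpha_N\cos\nu^j t\bigr),\qquad
X(t)=\beta_N\sum_{j=1}^N\cos\nu^j t,
\]
with scales $\alpha_N:=\phi^{-1}(1/N)$ and $\beta_N:=2\phi^{*-1}(1/(2N))$. The choice of $\beta_N$ is forced by $\|\ft{X}\|_{\ell_{\phi^*}}\leq 1$, since $\ft{X}$ has $2N$ nonzero entries of value $\beta_N/2$. Using the standard Young-duality relation $\phi^{-1}(u)\phi^{*-1}(u)\approx u$, one has $\E_{\lambda_s}(X)=s\alpha_N\beta_N N\approx s$, staying of positive constant order for $s\in(\tfrac14,\tfrac13)$, while $\var_{\lambda_s}(X)\leq \tfrac12\beta_N^2 N \approx 1/(N\phi^{-1}(1/N)^2)$. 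Hypothesis (i), which is equivalent to $\phi^{-1}(1/N)\gg 1/\sqrt N$, drives this variance to zero and, via Lemma \ref{lemma:bernstein-hoeffding-concentration}, yields exponential concentration of $X$ around a positive constant, just as in Lemma \ref{lemma:lambda-concentration-lq}. The $L^2$-version analogous to Lemma \ref{lemma:lambda-restriction-lq} follows by combining this with the pointwise bound $\lambda_s(t)\leq\exp(2s\alpha_N X(t)/\beta_N)$.

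To obtain the analog of Lemma \ref{lemma:principal-lq} I would integrate $\lambda_s$ against a Kahane measure $\rho$ on $(\tfrac14,\tfrac13)$ with $|\int s^k\,d\rho|<\delta'$ for $k\geq 1$. This is where submultiplicativity (iii) plays its decisive role: iterating $\phi(st)\leq M\phi(s)\phi(t)$ gives $\phi(\delta'\alpha_N^k/\eps)\leq M^k\phi(\delta'/\eps)\phi(\alpha_N)^k$, and since $\phi(\alpha_N)=1/N$,
\[
\sum_{n\neq 0}\phi\bigl(|\ft{\lambda}(n)|/\eps\bigr)\leq \phi(\delta'/\eps)\bigl[(1+2M\phi(\alpha_N))^N-1\bigr]\leq C\phi(\delta'/\eps),
\]
which is $\leq 1$ once $\delta'$ is chosen small enough in terms of $\eps$, so $\|\ft{\lambda}-1\|_{\ell_\phi}<\eps$. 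Truncating $\lambda$ by the indicator of the concentration set (the $L^2\to \ell_\phi$ passage uses (i) in the form $\phi(t)\leq Ct^2$ near zero, giving a local embedding $\ell_2\subset\ell_\phi$), mollifying, and iterating through a countable product exactly as in the proof of Theorem \ref{thm:piatetski-theorem-lq} produces the compact $K=\bigcap K_j$. The main obstacle I foresee is the bookkeeping around the submultiplicative regime, which is assumed only near zero: the constants $M$ and the admissible range in (iii), together with those in (ii), must be tracked carefully when summing the Riesz-product coefficients, to ensure that the inductive choice of parameters stays within the range where all these estimates apply.
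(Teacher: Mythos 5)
Your proposal is correct, and its architecture coincides with the paper's: a Riesz product at scale $r \approx \phi^{-1}(1/N)$ (equivalently $N \approx 1/\phi(r)$), a Kahane measure on $(\tfrac14,\tfrac13)$, exponential concentration of $X$ driven by $Nr^2 \approx r^2/\phi(r) \to \infty$ (condition (i)), submultiplicativity to bound $\sum_{\tau \neq 0}\phi(r^{\sum|\tau_j|}) \leq (1+2M\phi(r))^N - 1 = O(1)$ followed by a rescaling to get $\|\ft{\lambda}-1\|_{\ell_\phi} < \eps$, the embedding $\|\cdot\|_\phi \leq C\|\cdot\|_2$ (from $\phi(t) \leq Ct^2$ near zero) for the truncation step, and the $\deltatwo$-condition for the tail estimate in the exclusion-of-measures lemma. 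The one place where you genuinely diverge is the normalization of $X$ and the dual estimate: you take $\|\ft{X}\|_{\ell_{\phi^*}} \leq 1$ and invoke the identification $(\ell_\phi)^* = \ell_{\phi^*}$ together with the Young relation $u \leq \phi^{-1}(u)\,\phi^{*-1}(u) \leq 2u$. The paper deliberately avoids the dual Orlicz space: it normalizes $X(t) = \frac{1}{Nr}\sum_j \cos\nu^j t$ and proves directly, from convexity of $\phi$ and the definition of $N$, the elementary estimate $\big|\frac{1}{Nr}\sum_{n=1}^N c_n\big| \leq \|\{c_n\}\|_\phi$ (Lemma \ref{lemma:orlicz-dual-estimate}), which is all that is needed since $\ft{X}$ is supported on the $2N$ frequencies $\pm\nu^j$. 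The two normalizations agree up to a bounded factor precisely because of the Young relation you quote, so nothing is lost either way; the paper's version is more self-contained, while yours makes the calibration $\beta_N \approx \phi^{*-1}(1/N)$ conceptually transparent but obliges you to verify that $\phi^*$ is non-degenerate (which does follow from $\phi(t)/t \to 0$ at zero, itself a consequence of (i)). One further remark: the paper keeps the explicit bound $2^{11}\phi(r)/r^2$ in the $L^2$-concentration lemma so as to prove the theorem under the weaker hypothesis (i)$'$, namely $\liminf_{t\to 0}\phi(t)/t^2 = 0$ together with $\limsup_{t\to 0}\phi(t)/t^2 < \infty$, by choosing $r$ along a subsequence where $\phi(r)/r^2$ is small; your use of the full limit in (i) suffices for the theorem as stated.
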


\begin{remarks}
\item
As mentioned above, the condition that $\phi$ be non-degenerate is
required in order to exclude the case $\ell_\phi = \ell_\infty$.

\item
The condition (i) can be relaxed a bit. In fact, as we will show, the
theorem remains true if (i) is replaced by the weaker condition (i)$'$
defined by
\begin{equation}
\label{eq:condition-theorem-orlicz-relaxed}
\text{(i)$'$} \quad \liminf_{t \to 0} \phi(t)/t^2 = 0 \quad
\text{and} \quad \limsup_{t \to 0} \phi(t)/t^2 < \infty.
\end{equation}
It can be checked that (i)$'$ holds if and only if there exists a
constant $C$ such that $\| \cdot \|_\phi \leq C \| \cdot \|_2$, but
the norms are not equivalent. 
As explained above, the condition $\liminf_{t \to 0} \phi(t)/t^2 = 0$
is crucial in order to exclude the case $\ell_\phi \subset \ell_2$.


\item
We leave open the question whether the theorem remains true in the
case $\limsup_{t \to 0} \phi(t)/t^2 = \infty$, and whether
the conditions (ii) and (iii) could be relaxed or not.


\item
By choosing the function $\phi(t) = t^q$ $(q > 2)$ one can see that
Theorem \ref{thm:piatetski-theorem-orlicz} is in fact a strengthening of 
Theorem \ref{thm:piatetski-theorem-lq}, since in this case we have
$\ell_\phi = \ell_q$.
\end{remarks}

%
%

\subsection{Example}
Let us explicitly describe a family of Orlicz function to which Theorem
\ref{thm:piatetski-theorem-orlicz} can be applied. Fix two numbers $q > 2$
and $\alpha > 0$. Define
\begin{equation}
\label{eq:orlicz-example}
\phi(t) = t^q \log^{\alpha} \frac1{t}
\end{equation}
for $t > 0$, and $\phi(0) = 0$.
It can be checked that $\phi$ is continuous, non-decreasing and convex on
some interval $[0, t_0]$. It is therefore possible to extend $\phi$ to the
whole segment $[0, \infty)$ so that it becomes an Orlicz function.
Note that the corresponding space $\ell_\phi$ will be the same regardless of how
$\phi$ was extended, and the norms associated with two distinct extensions
might be different but equivalent \cite[p. 139]{linden-tzaf}. It can
now be checked directly using \eqref{eq:orlicz-example} that the function
$\phi$ satisfies the conditions of Theorem \ref{thm:piatetski-theorem-orlicz}.
It can also be checked that a sequence $\{x_n\}$ belongs to $\ell_\phi$ if and
only if $\sum' |x_n|^q \log^{\alpha} (e + |x_n|^{-1}) < \infty$, where the
notation $\sum'$ is used to indicate summation only over indices $n$
such that $x_n \neq 0$. In view of these remarks, the conclusion of Theorem
\ref{thm:piatetski-theorem-orlicz} in this case can be stated as follows.

\begin{corollary}
For any $q > 2$ and any $\alpha > 0$ there is a compact $K$ on the circle,
which supports a distribution $S$ such that
\[
\sideset{}{'}\sum_{n \in \Z} |\ft{S}(n)|^q \log^{\alpha} (e + |\ft{S}(n)|^{-1}) < \infty,
\]
but does not support such a measure.
\end{corollary}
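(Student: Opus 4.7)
My plan is to derive the corollary as an immediate consequence of Theorem~\ref{thm:piatetski-theorem-orlicz}: I would verify that $\phi(t) := t^q\log^\alpha(1/t)$, after a suitable extension, satisfies the three hypotheses of that theorem, and then translate membership in $\ell_\phi$ into the stated summability condition.

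The first step is to set up the Orlicz function. I would fix $t_0 > 0$ small enough that $\phi$ is continuous, non-decreasing and strictly convex on $[0, t_0]$. A direct computation gives
\[
\phi''(t) = t^{q-2}\log^{\alpha-2}(1/t)\bigl[q(q-1)\log^2(1/t) + O(\log(1/t))\bigr],
\]
which is positive on $(0, t_0]$ for $t_0$ sufficiently small; monotonicity likewise follows from the leading term of $\phi'(t) = t^{q-1}\log^{\alpha-1}(1/t)\bigl(q\log(1/t) - \alpha\bigr)$. I would then extend $\phi$ to $[0, \infty)$ by gluing on a linear function of sufficiently large slope at $t_0$; the result is a non-degenerate Orlicz function, and by the remark in the paper $\ell_\phi$ is unchanged (up to equivalence of norms) under such a choice of extension.

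Next I would verify the three hypotheses of Theorem~\ref{thm:piatetski-theorem-orlicz}. Condition (i) is immediate, as $\phi(t)/t^2 = t^{q-2}\log^\alpha(1/t) \to 0$ when $t \to 0$ because $q > 2$. The $\deltatwo$-condition follows from
\[
\frac{\phi(2t)}{\phi(t)} = 2^q\Bigl(1 - \frac{\log 2}{\log(1/t)}\Bigr)^{\alpha} \xrightarrow[t \to 0]{} 2^q.
\]
For submultiplicativity, I would restrict to $0 < s, t < 1/e^2$ and use the elementary inequality $(\log(1/s) - 1)(\log(1/t) - 1) \geq 1$, which rearranges to
\[
\log(1/(st)) = \log(1/s) + \log(1/t) \leq \log(1/s)\log(1/t).
\]
Raising to the $\alpha$-th power and multiplying by $(st)^q = s^q t^q$ yields $\phi(st) \leq \phi(s)\phi(t)$ on a neighbourhood of the origin, which is what condition (iii) demands.

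With the hypotheses verified, Theorem~\ref{thm:piatetski-theorem-orlicz} produces a compact $K$ and a distribution $S$ supported by $K$ with $\ft{S} \in \ell_\phi$, such that no non-zero measure supported by $K$ has Fourier transform in $\ell_\phi$. To finish I would identify $\ell_\phi$ with the sequence space of the corollary. By the $\deltatwo$-condition, membership of $\{c_n\}$ in $\ell_\phi$ is equivalent to $\sum_n \phi(|c_n|) < \infty$; since any element of $\ell_\phi$ tends to $0$, only finitely many $|c_n|$ exceed $t_0$ and contribute a bounded amount, while for $|c_n| < t_0$ the identity $\phi(|c_n|) = |c_n|^q \log^\alpha(1/|c_n|)$ together with $\log(e + |c_n|^{-1}) = \log(1/|c_n|) + O(1)$ yields the comparison $\phi(|c_n|) \asymp |c_n|^q \log^\alpha(e + |c_n|^{-1})$. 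The only mildly delicate point in the whole argument is the submultiplicativity check, but it reduces to the single elementary inequality above; everything else is either a routine computation or a direct appeal to the main Orlicz theorem.
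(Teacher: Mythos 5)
Your proposal is correct and follows exactly the route the paper intends: the paper's Example subsection asserts without detail that $\phi(t)=t^q\log^\alpha(1/t)$ is convex near $0$, extends to an Orlicz function, satisfies conditions (i)--(iii) of Theorem \ref{thm:piatetski-theorem-orlicz}, and that $\ell_\phi$ coincides with the stated sequence space, and you have simply supplied the routine verifications (your submultiplicativity argument via $(\log(1/s)-1)(\log(1/t)-1)\geq 1$ and the comparison $\log(e+|c_n|^{-1})=\log(1/|c_n|)+O(1)$ are both correct). No gaps.
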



\section{Lemmas}

\subsection{Notation}
From now on we assume that $\phi$ is a non-degenerate Orlicz function
satisfying conditions (ii) and (iii) of Theorem \ref{thm:piatetski-theorem-orlicz},
while instead of condition (i) we will assume the weaker condition (i)$'$
given by \eqref{eq:condition-theorem-orlicz-relaxed}.
We denote
by $A_\phi$ the space of Schwartz dis\-tributions $S$ on the circle $\T$
such that $\ft{S} \in \ell_\phi$. Equipped with the norm $\|S\|_{A_\phi} :=
\|\ft{S}\|_{\phi}$ the space $A_\phi$ is clearly a Banach space.


It will be convenient to extend the definition of the Orlicz norm
$\| \cdot \|_{\phi}$ to sequences defined not only on $\Z$, but on
any finite or countable set $I$. Precisely, given a sequence of scalars
$\{x_n\}_{n \in I}$ we define
\begin{equation}
\label{eq:orlicz-norm-finite}
\|\{x_n\}\|_{\phi} = \inf \{\rho > 0 : \sum_{n \in I} \phi(|x_n| / \rho) \leq 1\}.
\end{equation}

In what follows $r$ will denote a positive (small) number. Given $r > 0$,
we let $N = N(r)$ denote the unique integer satisying
\begin{equation}
\label{eq:def-n-orlicz}
\frac1{\phi(r)} \leq N < \frac1{\phi(r)} + 1.
\end{equation}
Note that $N \to \infty$ as $r \to 0$, since $\phi$ is continuous
and $\phi(0)=0$.

\subsection{Excluding measures}

In Lemma \ref{lemma:nomeasure-lq} we considered a trigonometric
polynomial $X(t)$ whose Fourier coefficients satisfy $\|\{\ft{X}(n)\}\|_p
\leq 1$. This was used in order to deduce that
\[
\Big| \sum c(n) \ft{X}(n) \Big| \leq \|\{c(n)\}\|_q
\]
for any choice of scalars $\{c(n)\}$, where $p^{-1} + q^{-1} = 1$.
In the context of Orlicz spaces, the analog claim is related to the
description of the dual space of $\ell_\phi$. It is known
\cite[pp. 147--148]{linden-tzaf} that if $\phi$ satisfies the
$\deltatwo$-condition at zero then the dual space $\ell^{\, *}_\phi$ may be
identified with another Orlicz space $\ell_\psi$, where $\psi$ is a certain
Orlicz function ``complementary'' to $\phi$. However, we shall not need
such a result. The following lemma, which can be proved directly, would be
sufficient for us.

\begin{lemma}
\label{lemma:orlicz-dual-estimate}
Given $r > 0$, let $N = N(r)$ be defined by \eqref{eq:def-n-orlicz}.
Then for every choice of scalars $c_1, \dots, c_N$ one has
\begin{equation}
\label{eq:orlicz-dual-estimate}
\Big| \frac1{N r} \sum_{n=1}^{N} c_n \Big| \leq \|\{c_n\}\|_{\phi}
\end{equation}
\end{lemma}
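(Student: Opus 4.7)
The plan is to derive the estimate directly from convexity of $\phi$ and the definition of the Orlicz norm, using the defining relation $N \phi(r) \geq 1$. Throughout, one may assume $\rho := \|\{c_n\}\|_\phi > 0$, since non-degeneracy of $\phi$ forces $c_n = 0$ for all $n$ when the norm vanishes, and then \eqref{eq:orlicz-dual-estimate} is trivial.

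First, I would unwind the norm: by the definition \eqref{eq:orlicz-norm-finite} and continuity of $\phi$, we have $\sum_{n=1}^N \phi(|c_n|/\rho) \leq 1$. Combining this with the defining inequality $N \geq 1/\phi(r)$ gives $\sum_{n=1}^N \phi(|c_n|/\rho) \leq N\phi(r)$, i.e.
\[
\frac1N \sum_{n=1}^N \phi\!\left(\frac{|c_n|}{\rho}\right) \leq \phi(r).
\]
Next I would apply Jensen's inequality, using that $\phi$ is convex, to conclude
\[
\phi\!\left(\frac1{N\rho}\sum_{n=1}^N |c_n|\right) \leq \frac1N \sum_{n=1}^N \phi\!\left(\frac{|c_n|}{\rho}\right) \leq \phi(r).
\]

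The remaining step is to invert $\phi$. Here I would use the observation that a convex function with $\phi(0)=0$ satisfies that $t \mapsto \phi(t)/t$ is non-decreasing on $(0,\infty)$; combined with non-degeneracy of $\phi$ (so $\phi(t) > 0$ for all $t > 0$), this forces $\phi$ to be strictly increasing on $(0,\infty)$. Hence from the previous display
\[
\frac1{N\rho}\sum_{n=1}^N |c_n| \leq r,
\]
and the triangle inequality $|\sum c_n| \leq \sum |c_n|$ gives the desired bound $|N^{-1}r^{-1}\sum c_n| \leq \rho = \|\{c_n\}\|_\phi$.

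No step looks genuinely difficult; the only subtle point is the passage from $\phi(a) \leq \phi(r)$ to $a \leq r$, which relies on the strict monotonicity deduced above from convexity plus non-degeneracy (so it is essential that degenerate $\phi$ were excluded at the outset). All three ingredients---the definition of $\|\cdot\|_\phi$, the inequality $N\phi(r) \geq 1$, and convexity of $\phi$---are used exactly once.
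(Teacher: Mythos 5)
Your proposal is correct and follows essentially the same route as the paper: the defining inequality $1/N\leq\phi(r)$, Jensen's inequality via convexity, and strict monotonicity of $\phi$ (from non-degeneracy) to invert $\phi(a)\leq\phi(r)$. The only cosmetic difference is that the paper first normalizes $\|\{c_n\}\|_\phi=1$ by homogeneity, whereas you carry $\rho$ explicitly and also note the degenerate case $\rho=0$.
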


\begin{proof}
Since both sides of \eqref{eq:orlicz-dual-estimate} are homogeneous,
it would be enough to prove the claim in the case $\|\{c_n\}\|_{\phi} = 1$.
According to the definition \eqref{eq:orlicz-norm-finite} of the Orlicz
norm, and due to the continuity of $\phi$, this is equivalent to $\sum_{n=1}^{N}
\phi(|c_n|) = 1$. By the convexity of $\phi$, and using \eqref{eq:def-n-orlicz},
we have
\begin{equation}
\label{eq:inequality-convexity-phi}
\phi \Big( \frac1{N} \sum_{n=1}^{N} |c_n| \Big) \leq
\frac1{N} \sum_{n=1}^{N} \phi(|c_n|) = \frac1{N} \leq \phi(r).
\end{equation}
However, $\phi$ is strictly increasing, since it is non-degenerate, so from
\eqref{eq:inequality-convexity-phi} it follows that $\frac1{N} \sum_{n=1}^{N}
|c_n| \leq r$. That is,
\[
\Big| \frac1{N r} \sum_{n=1}^{N} c_n \Big| \leq 1,
\]
as we had to show.
\end{proof}

With the aid of Lemma \ref{lemma:orlicz-dual-estimate} we may now
prove an analog of Lemma \ref{lemma:nomeasure-lq}.

\begin{lemma}
\label{lemma:nomeasure-orlicz}
Let $K$ be a compact on the circle. Suppose that for any positive
integer $\nu$ there exists $r > 0$ such that the following holds: the
trigonometric polynomial defined by
\[
X(t) = \frac1{N r} \sum_{j=1}^{N} \cos \nu^j t,
\]
where $N = N(r)$ is defined by \eqref{eq:def-n-orlicz},
satisfies $\tfrac1{100} \leq X(t) \leq 100$ on $K$.
Then $K$ does not support a measure $\mu \in A_\phi$.
\end{lemma}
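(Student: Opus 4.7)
My plan is to follow the strategy of Lemma \ref{lemma:nomeasure-lq}, with H\"older's inequality in the duality $\ell_p$/$\ell_q$ replaced by the Orlicz estimate of Lemma \ref{lemma:orlicz-dual-estimate}. Assume for contradiction that $\mu\in A_\phi\cap M(K)$ is non-zero. The first preparatory step is an Orlicz analog of Lemma \ref{lemma:measure-abs-lq-approx}: for every $\eps>0$ one can find $\mu_1\in A_\phi\cap M(K)$ with $\bigl\||\mu|-\mu_1\bigr\|_{M(K)}<\eps$. The Lusin-plus-Weierstrass proof of that lemma transfers verbatim, since it uses only that multiplication by a trigonometric polynomial preserves $A_\phi$, which follows at once from translation invariance of $\ell_\phi$.

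The crucial second step is the tail decay
\[
\tau(\nu):=\bigl\|\{\ft{\mu}_1(n)\}_{|n|\geq\nu}\bigr\|_\phi\longrightarrow 0\qquad\text{as }\nu\to\infty,
\]
valid for each fixed $\mu_1\in A_\phi$. This is precisely where the $\deltatwo$-condition enters: by Proposition \ref{prop:deltatwo-equivalent} it makes the unit vectors a Schauder basis of $\ell_\phi$, so every element is approximated in norm by its truncations, which is a reformulation of the tail statement. A direct argument uses that some $\rho_0>0$ witnesses $\sum_n\phi(|\ft{\mu}_1(n)|/\rho_0)<\infty$, and iterating $\phi(2t)\leq C\phi(t)$ lets one replace $\rho_0$ by an arbitrarily small $\rho>0$ at the cost of a multiplicative constant.

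With these two ingredients I would finish as follows. Given $\eps>0$, choose $\mu_1$ by the first step, then $\nu$ so large that $\tau(\nu)<\eps$, and finally invoke the hypothesis of the lemma to produce $r>0$ and the polynomial $X(t)=\tfrac1{Nr}\sum_{j=1}^N\cos\nu^j t$ with $\tfrac1{100}\leq X\leq 100$ on $K$. Since $|\mu|$ and $\mu_1$ are both supported by $K$ and $X\leq 100$ there, the $M(K)$-approximation gives the lower bound
\[
\Bigl|\int_{\T}X\,d\mu_1\Bigr|\geq\int_K X\,d|\mu|-100\eps\geq\tfrac1{100}\|\mu\|_{M(K)}-100\eps.
\]
For the upper bound, set $c_j:=\int\cos(\nu^j t)\,d\mu_1=\tfrac12\bigl(\ft{\mu}_1(\nu^j)+\ft{\mu}_1(-\nu^j)\bigr)$, so that $\int X\,d\mu_1=\tfrac1{Nr}\sum_{j=1}^N c_j$ and Lemma \ref{lemma:orlicz-dual-estimate} yields $\bigl|\int X\,d\mu_1\bigr|\leq\|\{c_j\}\|_\phi$. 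A short convexity bookkeeping, using $\phi(\tfrac{a+b}{2\rho})\leq\tfrac12\phi(a/\rho)+\tfrac12\phi(b/\rho)$ together with the disjointness of the frequencies $\{\pm\nu^j\}$ inside $\{|n|\geq\nu\}$, bounds $\|\{c_j\}\|_\phi$ by $\tau(\nu)<\eps$. Combining the two estimates gives $\|\mu\|_{M(K)}\leq 10100\,\eps$; letting $\eps\to0$ forces $\mu=0$, a contradiction.

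I expect the main obstacle to be the tail estimate of the second step: the $\deltatwo$-condition is exactly the right hypothesis, but verifying it cleanly requires quantitative control on the constants $C_M$ appearing in $\phi(Mt)\leq C_M\phi(t)$ as $M$ varies, in order to pass from the scale $\rho_0$ one gets for free to the arbitrarily small $\rho$ needed inside the defining sum of the Orlicz norm.
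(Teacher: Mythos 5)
Your proposal is correct and follows essentially the same route as the paper: approximate $|\mu|$ by $\mu_1\in A_\phi\cap M(K)$, pair $X$ with $\mu_1$ via Lemma \ref{lemma:orlicz-dual-estimate}, and kill the resulting Orlicz norm of $\{\tfrac12(\ft{\mu}_1(\nu^j)+\ft{\mu}_1(-\nu^j))\}$ using the tail decay supplied by the $\deltatwo$-condition through Proposition \ref{prop:deltatwo-equivalent}. Your convexity bookkeeping correctly supplies the step the paper leaves as ``easy to deduce,'' and the worry in your last paragraph is unnecessary: completeness of the unit vectors already gives norm convergence of the truncations, so no quantitative control of the $\deltatwo$ constants is needed.
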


\begin{proof}
Suppose that $\mu \in A_\phi \cap M(K)$.
As in Lemma \ref{lemma:measure-abs-lq-approx}, given $\eps > 0$ we can
approximate $|\mu|$ in the $M(K)$ norm by a measure $\mu_1 \in
A_\phi$. We have
\[
\int_{\T} X d\mu_1 = \sum_{n \in \Z} \ft{X}(n) \ft{\mu}_1(-n)
= \frac1{Nr} \sum_{j=1}^{N} \frac{\ft{\mu}_1(\nu^j)
+ \ft{\mu}_1(-\nu^j)}{2} \, ,
\]
hence it follows by Lemma \ref{lemma:orlicz-dual-estimate} that
\begin{equation}
\label{eq:nomeasure-estimate-orlicz}
\Big| \int_{\T} X d\mu_1 \Big| \leq
\Big\| \Big\{ \frac{\ft{\mu}_1(\nu^j)
+ \ft{\mu}_1(-\nu^j)}{2} \Big\}_{j=1}^{N} \Big\|_{\phi} \, .
\end{equation}
We now use the $\deltatwo$-condition:
according to Proposition \ref{prop:deltatwo-equivalent}, the
system of unit vectors $\{e_n\}_{n \in \Z}$ is complete in $\ell_\phi$,
hence the Orlicz norm of the ``tail'' sequence
$\{\ft{\mu}_1(n)\}_{|n| \geq \nu}$ is arbitrarily small if $\nu$
is sufficiently large. From this it is easy to deduce that the
quantity on the right side of \eqref{eq:nomeasure-estimate-orlicz}
is smaller than $\eps$ for suffi\-ciently large $\nu$. Once this has
been established, the proof can then be finished as in Lemma
\ref{lemma:nomeasure-lq}.
\end{proof}

\subsection{Riesz products}
The appropriate Riesz product in this context is
\[
\lambda_s(t) = \prod_{j=1}^{N} \big(1 + 2 s r \cos \nu^j t \big),
\]
where $0 < r < 1$, $N = N(r)$ is defined by \eqref{eq:def-n-orlicz},
$\lowers < s < \uppers$ and $\nu \geq 3$. As before, this
defines a probability measure $\lambda_s$ on the circle $\T$ such that
\begin{equation}
\label{eq:lambda-coeff-orlicz}
\ft{\lambda}_s \Big(\sum_{j=1}^{N} \tau_j \nu^j\Big) = 
(sr)^{\sum |\tau_j|} \; , \quad
\bar\tau = (\tau_1,\dots,\tau_N) \in \{-1,0,1\}^N.
\end{equation}
Our corresponding trigonometric polynomial $X(t)$ will be
defined by
\begin{equation}
\label{eq:x-cosines-orlicz}
X(t) = \frac1{N r} \sum_{j=1}^{N} \cos \nu^j t,
\end{equation}
and a straightforward calculation shows again that the expectation of $X$
with respect to the measure $\lambda_s$ is equal to $s$. We have the 
following lemma, whose proof we omit, as it is basically the same as the
proof of Lemma \ref{lemma:lambda-concentration-lq}.

\begin{lemma}
\label{lemma:lambda-concentration-orlicz}
Being given $0 < r < 1$, for $\nu \geq \nu(r)$ one has
\begin{equation}
\label{eq:lambda-concentration-orlicz}
\lambda_s \{t \in \T : |X(t) - s| > \alpha\} 
\leq 3 \exp\big(- \tfrac1{8} \alpha^2 N r^2 \big)
\end{equation}
for every $\alpha > 0$ and every $\lowers < s < \uppers$.
\end{lemma}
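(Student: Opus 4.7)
The plan is to follow the same scheme as in the proof of Lemma \ref{lemma:lambda-concentration-lq}, with the scaling factor $N^{-1/q}$ replaced by $r$ throughout. First I would introduce the auxiliary step function $g$: define a $2\pi$-periodic function $g$, constant on each interval $I^{(\nu)}_k$ from \eqref{eq:nu-intervals}, by requiring
\[
\int_{I^{(\nu)}_k} g(t)\, dt = \int_{I^{(\nu)}_k} \cos t\, dt.
\]
Then $g$ automatically satisfies \eqref{eq:g-conditions}, and $\delta := \max_{t\in\T}|g(t)-\cos t|$ tends to $0$ as $\nu\to\infty$ (with $r$, $N$ fixed in advance).

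Next I would pass to the auxiliary ``truly independent'' Riesz-type measure
\[
\gamma_s(t) = \prod_{j=1}^{N}\bigl(1 + 2sr\, g(\nu^j t)\bigr)\,\frac{dt}{2\pi}
\]
and the companion polynomial $Y(t) = \frac{1}{Nr}\sum_{j=1}^{N} g(\nu^j t)$. By Lemma \ref{lemma:stochastic-independence-product}, the variables $g(\nu^j t)$ are stochastically independent with respect to $\gamma_s$. A short computation, identical to the one in Lemma \ref{lemma:lambda-concentration-lq}, gives $\eta:=\E_{\gamma_s}(Y) = 2s\int_\T g^2\, dt/2\pi$, and by the Pythagorean identity applied to the $L^2$-projection of $\cos t$ onto the step functions, $|s-\eta|\leq \delta^2$. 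Since each summand $(Nr)^{-1}g(\nu^j t)$ is bounded in absolute value by $(Nr)^{-1}$, Lemma \ref{lemma:bernstein-hoeffding-concentration} yields
\[
\gamma_s\bigl\{t\in\T : |Y(t)-\eta|>\beta\bigr\} \leq 2\exp\bigl(-\tfrac1{8}\beta^2 N r^2\bigr)
\]
for every $\beta>0$.

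The final step is to transfer the estimate from $(\gamma_s,Y)$ to $(\lambda_s,X)$. Clearly $|X(t)-Y(t)|\leq \delta/r$, and the pointwise bound
\[
\frac{1 + 2sr\cos t}{1 + 2sr\, g(t)} \leq \exp\Bigl(\frac{2sr\,\delta}{1-2sr}\Bigr)
\]
(valid for $\lowers<s<\uppers$ and any $r\in(0,1)$) gives $\lambda_s(t)\leq \gamma_s(t)\exp(c(s,r)\,N\delta)$. Setting
\[
\eps = \max\bigl\{\exp(c(s,r)N\delta)-1,\ \delta/r+\delta^2\bigr\},
\]
one obtains $|X(t)-s|\leq |Y(t)-\eta|+\eps$ and $\lambda_s(t)\leq (1+\eps)\gamma_s(t)$, and hence
\[
\lambda_s\{|X(t)-s|>\alpha\} \leq 2(1+\eps)\exp\bigl(-\tfrac1{8}(\alpha-\eps)^2 N r^2\bigr).
\]
Since $|X(t)|\leq 1/r$, the left-hand side of \eqref{eq:lambda-concentration-orlicz} vanishes whenever $\alpha>1/r+1$, so it is enough to treat $\alpha\leq 1/r+1$. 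For such $\alpha$, choosing $\nu\geq\nu(r)$ sufficiently large makes $\eps$ arbitrarily small (because $\delta\to 0$ while $r$ and $N$ are fixed), and a direct calculation then promotes the previous display to \eqref{eq:lambda-concentration-orlicz} with the constant $3$ in place of $2(1+\eps)$.

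The only real difficulty is the bookkeeping in this last transfer step: one must verify that the three error terms $\delta/r$, $\delta^2$ and $\exp(c(s,r)N\delta)-1$ can all be made negligible uniformly in $s\in(\lowers,\uppers)$ by enlarging $\nu$ alone, which is possible precisely because the dependence $N=N(r)$ is frozen before $\nu$ is chosen. Once this is checked, the proof proceeds exactly as in Lemma \ref{lemma:lambda-concentration-lq}.
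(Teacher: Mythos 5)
Your proposal is correct and follows exactly the route the paper intends: the paper omits this proof precisely because it is the proof of Lemma \ref{lemma:lambda-concentration-lq} with the scaling $N^{-1/q}$ replaced by $r$ (so that the summands of $Y$ are bounded by $(Nr)^{-1}$ and Lemma \ref{lemma:bernstein-hoeffding-concentration} yields the exponent $-\tfrac18\alpha^2 Nr^2$), and your bookkeeping of the error terms $\delta/r$, $\delta^2$ and $\exp(cN\delta)-1$, together with the cutoff $\alpha\leq 1/r+1$, matches that argument step for step.
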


%
%

The next lemma is an analog of Lemma \ref{lemma:lambda-restriction-lq}.
Note however that the estimate \eqref{lemma:lambda-restriction-rough-estimate}
is given explicitly in terms of the function $\phi$. This will allows us to
prove the result under the weaker condition (i)$'$ instead of (i).

\begin{lemma}
\label{lemma:lambda-restriction-orlicz}
Denote
\[
K' = \big\{ t \in \T \; : \; \tfrac1{90} \leq X(t) \leq
90 \big\}.
\]
Then for $\nu \geq \nu(r)$ and for every $\lowers < s < \uppers$,
\begin{equation}
\label{lemma:lambda-restriction-rough-estimate}
\int_{\T \setminus K'} \lambda_s^2(t) \; \frac{dt}{2\pi} < 2^{11} \cdot
\frac{\phi(r)}{r^2}
\end{equation}
where $C > 0$ is an absolute constant.
\end{lemma}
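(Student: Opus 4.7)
The plan is to mirror the proof of Lemma \ref{lemma:lambda-restriction-lq} almost verbatim, with $N^{2/p-1}$ replaced throughout by $Nr^2$, and then add one new step that converts an exponential-type decay bound in $Nr^2$ into a bound of the form $C\,\phi(r)/r^2$. This conversion is essential because, unlike in the $\ell_q$ case, the quantity $Nr^2$ need not tend to infinity here: from $N \geq 1/\phi(r)$ we only know $Nr^2 \geq r^2/\phi(r)$, which under condition (i)$'$ can remain bounded.

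First I would establish the pointwise estimate $\lambda_s(t) \leq \exp(2sNr^2\, X(t))$, obtained by applying $1+x \leq e^x$ to each factor of the Riesz product and recognising, from the definition \eqref{eq:x-cosines-orlicz} of $X$, that $\sum_{j=1}^{N}\cos \nu^j t = Nr \cdot X(t)$. Next, split $\T \setminus K' = A \cup B$ with $A = \{X < 1/90\}$ and $B = \{X > 90\}$. On each piece bound $\int \lambda_s^2 \leq (\max \lambda_s) \cdot \lambda_s(\text{piece})$. On $A$, the maximum is at most $\exp(2sNr^2/90)$, while Lemma \ref{lemma:lambda-concentration-orlicz} with $\alpha = s - 1/90$ gives $\lambda_s(A) \leq 3\exp\bigl(-\tfrac{1}{8}(s-1/90)^2 Nr^2\bigr)$. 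The total exponent is $Nr^2 \cdot \bigl[2s/90 - \tfrac{1}{8}(s-1/90)^2\bigr]$, and a direct one-variable calculation shows that this bracket is uniformly negative on $(1/4,1/3)$, attaining its maximum $\approx -1/634$ at $s = 1/4$. On $B$, I would decompose into unit rings $\{k < X \leq k+1\}$ with $90 \leq k \leq 1/r$ (using $|X(t)| \leq 1/r$); each ring contributes at most $3\exp\bigl(Nr^2(2s(k+1) - \tfrac{1}{8}(k-s)^2)\bigr)$, and for $k \geq 90$ the exponent is of order $-k^2 Nr^2$, so the total contribution of $B$ is negligible compared to that of $A$.

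At this stage one has $\int_{\T \setminus K'} \lambda_s^2(t)\, dt/(2\pi) \leq C_1 \exp(-Nr^2/634)$ for an absolute constant $C_1$. To produce the announced right-hand side, I would invoke the elementary inequality $\max_{u>0} u\, e^{-\alpha u} = 1/(\alpha e)$, applied with $\alpha = 1/634$ and $u = Nr^2$, which gives $\exp(-Nr^2/634) \leq 634/(e\, Nr^2)$. Combined with $N \geq 1/\phi(r)$, hence $1/(Nr^2) \leq \phi(r)/r^2$, this yields a bound of the form $C\,\phi(r)/r^2$, and tracking the constants shows that $C$ is comfortably below $2^{11}$.

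The main obstacle I expect is purely numerical bookkeeping: verifying that the bracket in the $A$-estimate is uniformly negative on $(1/4, 1/3)$ with a sufficiently strong constant, and that the combined multiplicative factor emerging at the end does not exceed $2^{11}$. All other steps are essentially the same as in the proof of Lemma \ref{lemma:lambda-restriction-lq}; the genuinely new feature is only the conversion $\exp(-c\,Nr^2) \mapsto C\,\phi(r)/r^2$ via the inequality $ue^{-\alpha u} \leq 1/(\alpha e)$, which is what allows the statement to remain meaningful in the regime where $Nr^2$ is bounded.
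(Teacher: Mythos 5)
Your strategy coincides with the paper's: the pointwise bound $\lambda_s(t)\leq\exp\left(2sNr^2X(t)\right)$, the split of $\T\setminus K'$ into $\{X<\tfrac1{90}\}$ and the unit rings $\{k<X\leq k+1\}$ with $90\leq k\leq 1/r$, the concentration estimate of Lemma \ref{lemma:lambda-concentration-orlicz} on each piece, and the final passage from $1/(Nr^2)$ to $\phi(r)/r^2$ via $N\geq 1/\phi(r)$. The one step that does not hold as you state it is the intermediate claim that the whole integral is at most $C_1\exp(-Nr^2/634)$ with an \emph{absolute} constant $C_1$. For the rings this would require bounding $\sum_{90\leq k\leq 1/r}\exp\left(-\tfrac1{10}k^2Nr^2\right)$ by $O(e^{-cNr^2})$ uniformly; but in precisely the regime the lemma is designed for --- where $Nr^2$ stays bounded (one only knows $Nr^2\geq r^2/\phi(r)$, which under (i)$'$ is bounded below by a $\phi$-dependent constant and need not be large) --- the number of rings grows like $1/r$ while the individual terms need not be small, and the sum is in fact of order $1/(Nr^2)$. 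So your $C_1$, and hence your final constant, would depend on $\phi$; also the assertion that the contribution of $B$ is ``negligible compared to that of $A$'' is not what one ends up using.

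The paper sidesteps this by comparing the sum over the rings directly with the integral $3\int_0^\infty x\exp\left(-\tfrac1{10}x^2Nr^2\right)dx=15/(Nr^2)$, which is already of the target form $C/(Nr^2)$, and reserves the conversion $u\,e^{-\alpha u}\leq 1/(\alpha e)$ --- exactly your closing device --- for the single exponential term coming from $\{X<\tfrac1{90}\}$. With that one adjustment your argument reproduces the paper's proof, constant $2^{11}$ included; everything else in your proposal, including the verification that the bracket $2s/90-\tfrac18(s-\tfrac1{90})^2$ is uniformly negative on $(\lowers,\uppers)$, matches the paper.
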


\begin{proof}
We follow the same line as in the proof of Lemma \ref{lemma:lambda-restriction-lq},
but somewhat improve on one of our estimates. We have
\[
\lambda_s(t) = \prod_{j=1}^{N} \big(1 +  2 s r \cos \nu^j t \big)
\leq \exp \Big( 2 s r \sum_{j=1}^{N}
\cos \nu^j t \Big),
\]
so using \eqref{eq:x-cosines-orlicz} we get
\begin{equation}
\label{eq:lambda-max-orlicz}
\lambda_s(t) \leq \exp \left( 2 s N r^2 X(t) \right).
\end{equation}

We now apply Lemma \ref{lemma:lambda-concentration-orlicz}, and for
$\nu \geq \nu(r)$ we use \eqref{eq:lambda-concentration-orlicz} and
\eqref{eq:lambda-max-orlicz}. The same argument as in the proof of Lemma
\ref{lemma:lambda-restriction-lq} will show that
\[
\int_{\big\{t \; : \; X(t) < \frac1{90}\big\}} \lambda_s^2(t) \; \frac{dt} {2\pi}
\leq 3 \exp \left(- 2^{-10} N r^2 \right).
\]
Also for any integer $90 \leq k \leq 1/r$,
\[
\begin{aligned}
&\int_{\big\{t \; : \; k < X(t) \leq k+1\big\}} \lambda_s^2(t) \;
\frac{dt}{2\pi}
\leq \int_{\big\{t \; : \; k < X(t) \leq k+1\big\}} \lambda_s(t) \;
\frac{dt}{2\pi}
\cdot \max_{\big\{t \; : \; k < X(t) \leq k+1\big\}} \lambda_s(t)\\[4pt]
&\qquad \qquad \leq 3 \exp \left( -\tfrac1{8}(k - s)^2 N r^2 \right) \cdot
\exp \left(2 s N r^2 (k+1) \right)\\[6pt]
&\qquad \qquad \leq 3 \exp \left(- \tfrac1{10} k^2 N r^2 \right),
\end{aligned}
\]
and so, since $X(t) \leq 1/r$ for every $t$,
\[
\begin{aligned}
\int_{\big\{t \; : \; X(t) > 90\big\}} \lambda_s^2(t) \; \frac{dt}{2\pi}
&\leq 3 \sum_{90 \leq k \leq 1/r} \exp \left(- \tfrac1{10} k^2 N r^2 \right)\\
&\leq 3 \int_{0}^{\infty} x \cdot \exp \left(- \tfrac1{10} x^2 N r^2 \right) \, dx
= \frac{15}{N r^2} \, .
\end{aligned}
\]
We thus get
\[
\int_{\T \setminus K'} \lambda_s^2(t) \; \frac{dt}{2\pi}
\leq 3 \exp \left(- 2^{-10} N r^2 \right) + \frac{15}{N r^2}
< \frac{2^{11}}{N r^2} \, .
\]
Using \eqref{eq:def-n-orlicz}, the result follows.
\end{proof}

\begin{remark}
As one can see from the above proof, the estimate
\eqref{lemma:lambda-restriction-rough-estimate} is very rough.
However, it will be enough for our purpose.
\end{remark}


\section{Proof of main result}

To finish the proof of Theorem \ref{thm:piatetski-theorem-orlicz}
it will be enough to establish the following analog of Lemma
\ref{lemma:principal-lq}. Once this is done, the theorem
follows by an iteration procedure as in the proof of Theorem
\ref{thm:piatetski-theorem-lq}, which we shall not repeat.

\begin{lemma}
\label{lemma:principal-orlicz}
Let $\eps > 0$. There exists $r = r(\eps)$, $0 < r < \eps$
such that the following holds: for any $\nu \geq \nu(r)$ there is a
$C^\infty$ function $f: \T \to \C$ satisfying
\begin{enumerate-math}
\item
$f$ is supported by
\begin{equation}
\label{eq:k-orlicz}
K = \big\{ t \in \T \; : \; \tfrac1{100} \leq X(t) \leq 100\big\}
\end{equation}
where $X$ is the trigonometric polynomial defined by
\eqref{eq:x-cosines-orlicz}.
\item
$f(t) = 1 + \sum_{n \neq 0} \ft{f}(n)e^{int}$,
where 
$\|\{\ft{f}(n) : n \neq 0\}\|_{\phi} < \eps$.
\end{enumerate-math}
\end{lemma}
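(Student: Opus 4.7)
The proof will parallel Lemma \ref{lemma:principal-lq}, replacing the $\ell_q$-norm bookkeeping by an Orlicz-norm argument that exploits the submultiplicativity hypothesis (iii) and the condition (i)$'$. Given $\eps>0$, I fix parameters in the order $\delta\to\rho\to r\to N\to\nu$. Via Lemma \ref{lemma:kahane-lemma} I choose a signed measure $\rho$ supported in $(\lowers,\uppers)$ with $\int d\rho=1$ and $|\int s^k\,d\rho|<\delta$ for $k\geq 1$; pick $r>0$ small; set $N=N(r)$ by \eqref{eq:def-n-orlicz}; and finally choose $\nu$ large enough for the concentration lemmas. Then define
\[
\lambda(t)=\int\lambda_s(t)\,d\rho(s),\quad h=\lambda\cdot\1_{K'},\quad g=h/\ft{h}(0),\quad f=g*\psi,
\]
with $\psi$ a non-negative $C^\infty$ kernel of mass one supported in a small neighbourhood of $0$. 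By \eqref{eq:lambda-coeff-orlicz} we have $\ft{\lambda}(0)=1$, and every other coefficient has the form $\ft{\lambda}(n)=r^{k(\tau)}c_\tau$ with $k(\tau)=\sum|\tau_j|\geq 1$ and $|c_\tau|<\delta$.

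The core estimate is to show $\sum_{n\neq 0}\phi(|\ft\lambda(n)|/\eps)\leq 1$. Iterating submultiplicativity $k$ times (legitimate once $r$ and $\delta/\eps$ lie below the threshold $t_0$) gives $\phi(r^k|c_\tau|/\eps)\leq M^k\phi(r)^k\phi(\delta/\eps)$. Grouping by $k$ and summing over the $\binom{N}{k}2^k$ vectors with $k(\tau)=k$, the binomial theorem yields
\[
\sum_{n\neq 0}\phi\Big(\tfrac{|\ft\lambda(n)|}{\eps}\Big)\leq\phi(\delta/\eps)\bigl[(1+2M\phi(r))^N-1\bigr]\leq\phi(\delta/\eps)\cdot e^{2M(1+\phi(r))},
\]
the last bound using $N\phi(r)\leq 1+\phi(r)$ from \eqref{eq:def-n-orlicz}. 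Since the right-hand side stays $\leq\phi(\delta/\eps)\cdot e^{4M}$ uniformly for small $r$, I can fix $\delta=\delta(\eps,M)$ so small that this sum is at most $1$; this yields $\|\{\ft\lambda(n)\}_{n\neq 0}\|_\phi<\eps$ and determines $\rho$ and $\|\rho\|_M$.

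To control $\|\lambda-h\|_{A_\phi}$ I exploit the embedding $L^2(\T)\hookrightarrow A_\phi(\T)$ coming from condition (i)$'$: the bound $\phi(t)\leq Ct^2$ near $0$ gives $\|a\|_\phi\leq\max(\sqrt{C},1/t_0)\|a\|_{\ell_2}$ by a straightforward homogeneity argument, hence $\|\cdot\|_{A_\phi}\leq C_0\|\cdot\|_{L^2}$ on all of $L^2$ by Parseval. Combining with Lemma \ref{lemma:lambda-restriction-orlicz},
\[
\|\lambda-h\|_{A_\phi}\leq C_0\,\|\lambda-h\|_{L^2}\leq C_0\cdot 2^{11/2}\,\frac{\sqrt{\phi(r)}}{r}\,\|\rho\|_M.
\]
At this point $\|\rho\|_M$ is already fixed, but the prefactor is free: by the $\liminf$ half of (i)$'$, along a sequence $r\to 0$ one has $\phi(r)/r^2\to 0$, so $r$ can be chosen to force the right-hand side below $\eps$. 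Then $N=N(r)$ is determined, and $\nu$ is selected per Lemmas \ref{lemma:lambda-concentration-orlicz}--\ref{lemma:lambda-restriction-orlicz}.

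Finally, $|\ft{h}(0)-1|\leq\|\lambda-h\|_{L^2}$ is small, so $g=h/\ft{h}(0)$ has $\ft{g}(0)=1$ and tail Orlicz norm comparable to that of $\{\ft{h}(n)\}_{n\neq 0}$; convolving with $\psi$ multiplies coefficients by $\ft\psi(n)$ with $|\ft\psi|\leq 1$, which cannot increase the tail Orlicz norm while making $f$ smooth. Since $X$ is continuous, the support of $f$ is contained in $K$ provided the support of $\psi$ is sufficiently small. The main obstacle is the Orlicz coefficient estimate: the clean geometric bound available in the $\ell_q$ setting must be replaced by the iterated submultiplicativity factorisation $\phi(r^k c)\leq M^k\phi(r)^k\phi(c)$, which is where hypothesis (iii) enters essentially; a delicate separation of the roles of $r$ (driving decay in $k$) and $\delta$ (driving the common factor) is needed to keep the sequential choice $\delta\to r$ consistent, and the relaxation from (i) to (i)$'$ is accommodated by using $\liminf$ rather than $\lim$ in the final selection of $r$.
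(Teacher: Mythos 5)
Your proposal is correct and follows essentially the same route as the paper's proof: Kahane's measure, the averaged Riesz product, iterated submultiplicativity to bound $\sum_{n\neq 0}\phi(|\ft{\lambda}(n)|/\cdot)$ via $(1+2M\phi(r))^N\leq e^{4M}$ with $N\phi(r)\leq 1+\phi(r)$, the embedding $\|\cdot\|_{A_\phi}\leq C\|\cdot\|_{L^2}$ coming from the $\limsup$ half of (i)$'$, and the $\liminf$ half to select $r$ after $\rho$ is fixed. The only (immaterial) difference is in absorbing the constant $e^{4M}$: the paper divides the coefficients by $\delta$, gets the sum $\leq e^{4M}$, and then rescales by $e^{-4M}$ using convexity of $\phi$ to conclude $\|\lambda-1\|_{A_\phi}\leq e^{4M}\delta$, whereas you keep the extra factor $\phi(\delta/\eps)$ from the $k$-fold submultiplicativity and shrink $\delta$ up front.
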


\begin{proof}
As in the proof of Lemma \ref{lemma:principal-lq}, given $\delta > 0$
we choose a measure $\rho$ supported by the interval $(\lowers, \uppers)$
such that \eqref{eq:kahane-lemma} holds. We then define
\[
\lambda(t) = \int \lambda_s(t) \, d\rho(s),
\]
and so by \eqref{eq:lambda-coeff-orlicz}
\[
\lambda(t) = 1 + \sum_{\tau \neq 0}
\Big\{ r^{\sum |\tau_j|} \int s^{\sum |\tau_j|} d\rho(s) \Big\}
\; e^{i (\sum \tau_j \nu^j) t}.
\]
It follows that
\begin{equation}
\label{eq:norm-approx-one-orlicz-1}
\sum_{n \neq 0} \phi(|\ft{\lambda}(n)| / \delta)
\leq \sum_{0 \neq \bar{\tau} \in \{-1,0,1\}^N} \phi(r^{\sum |\tau_j|}).
\end{equation}
We now use the submultiplicativity at zero: there exists $M$ and
$t_0 > 0$ such that $\phi(st) \leq M \phi(s) \phi(t)$ for every
$s,t < t_0$. It is easy to check that this implies
\begin{equation}
\label{eq:orlicz-submult-powers}
\phi(r^k) \leq M^{k-1} \phi(r)^k
\end{equation}
for all sufficiently small $r$. We will assume that $M \geq 1$,
as we clearly may. We then apply \eqref{eq:orlicz-submult-powers} to
\eqref{eq:norm-approx-one-orlicz-1} to get
\[
\begin{aligned}
\sum_{n \neq 0} \phi(|\ft{\lambda}(n)| / \delta)
\leq \sum_{\bar\tau \in \{-1,0,1\}^N} (M \phi(r))^{\sum |\tau_j|}
= (1 + 2M \phi(r))^N < e^{2M N \phi(r)}.
\end{aligned}
\]
From \eqref{eq:def-n-orlicz} we have $N \phi(r) < 1 + \phi(r)$, and
since $\phi$ is continuous and $\phi(0) = 0$ this shows that for
sufficiently small $r$, $N \phi(r) \leq 2$. So we arrive at the estimate
\begin{equation}
\label{eq:norm-approx-one-orlicz-2}
\sum_{n \neq 0} \phi(|\ft{\lambda}(n)| / \delta) \leq e^{4M}.
\end{equation}
Observe that since $\phi$ is convex, $\phi(0) = 0$ and $e^{4M} > 1$,
we have $\phi(e^{-4M} x) \leq e^{-4M} \phi(x)$ for any $x > 0$. 
Thus using \eqref{eq:norm-approx-one-orlicz-2},
\[
\sum_{n \neq 0} \phi(|\ft{\lambda}(n)| e^{-4M} / \delta)
\leq e^{-4M} \sum_{n \neq 0} \phi(|\ft{\lambda}(n)| / \delta) \leq 1,
\]
and according to the definition of the Orlicz norm we conclude that
\begin{equation}
\label{eq:norm-approx-one-orlicz-3}
\|\lambda - 1\|_{A_\phi} \leq e^{4M} \delta.
\end{equation}
We now use the conditions
\[
\liminf_{t \to \infty} \phi(t)/t^2 = 0
\quad \text{and} \quad
\limsup_{t \to \infty} \phi(t)/t^2 < \infty.
\]
First, it is easy to check that from
$\limsup_{t \to \infty} \phi(t)/t^2 < \infty$
it follows that a constant $C$ exists such that
$\|\{x_n\}\|_{\phi} \leq C \|\{x_n\}\|_{2}$
for any sequence $\{x_n\} \in \ell_2$. Second, from
$\liminf_{t \to \infty} \phi(t)/t^2 = 0$ it follows
that there exist arbitrarily small values of $r$ such that
\[
\left( 2^{11} \cdot \frac{\phi(r)}{r^2} \right)^{1/2}
< \frac{\delta}{C \; \|\rho\|_M}.
\]
In particular, we may assume that $r$ is small enough so that
\eqref{eq:norm-approx-one-orlicz-3} holds. Using Lemma
\ref{lemma:lambda-restriction-orlicz}, it follows that for
$\nu \geq \nu(r)$
\[
\left( \int_{\T \setminus K'} \lambda_s^2(t) \; \frac{dt}{2\pi}
\right)^{1/2} < \frac{\delta}{C \; \|\rho\|_M}
\]
for every $\lowers < s < \uppers$. The function $h := \lambda \cdot \1_{K'}$
is then supported by $K'$, and
\[
\|\lambda - h\|_{A_\phi} \leq C \|\lambda - h\|_{L^2(\T)}
= C \|\lambda\|_{L^2(\T \setminus K')}
\leq C \int \|\lambda_s\|_{L^2(\T \setminus K')} \; |d\rho(s)|
< \delta.
\]
We then continue as in the proof of Lemma \ref{lemma:principal-lq}.
\end{proof}


\backmatter

\end{document}